\definecolor{ffffff}{rgb}{1.0,1.0,1.0}
\definecolor{qqqqff}{rgb}{0.0,0.0,1.0}
\definecolor{ffqqqq}{rgb}{1.0,0.0,0.0}
\definecolor{zzzzqq}{rgb}{0.6,0.6,0.0}
\definecolor{marronet}{rgb}{0.6,0.2,0}
\definecolor{negre}{rgb}{0,0,0}
\definecolor{vermell}{rgb}{0.8,0.05,0.05}
\definecolor{blau}{rgb}{0.3,0.2,1.}
\definecolor{blauclar}{rgb}{0.,0.,1.}
\definecolor{grisfosc}{rgb}{0.25098039215686274,0.25098039215686274,0.25098039215686274}
\definecolor{verd}{rgb}{0.1,0.6,0.1}
\definecolor{taronja}{rgb}{0.9,0.6,0.05}
\definecolor{vermellclar}{rgb}{1.,0.,0.}
\definecolor{verdet}{rgb}{0,0.8,0.1}
\definecolor{blauverd}{rgb}{0,0.4,0.2}
\definecolor{grisclar}{rgb}{0.6274509803921569,0.6274509803921569,0.6274509803921569}
\newcommand{\R}{{\mathbb R}}       
\newcommand{\N}{{\mathbb N}}       
\newcommand{\HH}{{\mathbf H}}
\newcommand{\dist}{{\rm dist}}
\newcommand{\red}{{\rm red}}
\newcommand{\Beta}{{\beta}}
\newcommand{\rf}[1]{{(\ref{#1})}}
\newcommand{\supp}{{\rm supp}}
\newcommand{\norm}[1]{{\left\| {#1} \right\|}}
\DeclareMathOperator{\dive}{div}
\DeclareMathOperator{\loc}{loc}
\newtheorem{theorem}{Theorem}
\newtheorem*{theorem*}{Theorem}
\newtheorem{lemma}[theorem]{Lemma}
\newtheorem{corollary}[theorem]{Corollary}
\newtheorem*{corollary*}{Corollary}
\newtheorem{proposition}[theorem]{Proposition}
\newtheorem{definition}[theorem]{Definition}
\newtheorem{remark}[theorem]{Remark}
\numberwithin{subsection}{section}
\numberwithin{theorem}{section}
\numberwithin{equation}{section}
\numberwithin{figure}{section}
\title{Minimizers for the thin one-phase free boundary problem}
\author{Max Engelstein \thanks{ME (University of Minnesota Twin Cities, Minnesota): \texttt{maxe@mit.edu}}\,,
Aapo Kauranen \thanks{AK (Universitat Aut\`onoma de Barcelona-BGSMath, Catalonia): \texttt{aapo.p.kauranen@jyu.fi}} \,,
Mart\'i Prats \thanks{MP (Universitat Aut\`onoma de Barcelona, Catalonia): \texttt{mprats@mat.uab.cat}} \,,
Georgios Sakellaris \thanks{GS (Universitat Aut\`onoma de Barcelona, Catalonia): \texttt{gsakellaris@mat.uab.cat}} \,,
Yannick Sire \thanks{YS (Johns Hopkins University, Maryland): \texttt{sire@math.jhu.edu}}
}
\begin{document}
\maketitle
\bibliographystyle{alpha}

\begin{abstract}
We consider the ``thin one-phase" free boundary problem, associated to minimizing a weighted Dirichlet energy of the function in $\mathbb R^{n+1}_+$ plus the area of the positivity set of that function in $\mathbb R^n$. We establish full regularity of the free boundary for dimensions $n \leq 2$, prove almost everywhere regularity of the free boundary in arbitrary dimension and provide content and structure estimates on the singular set of the free boundary when it exists. All of these results hold for the full range of the relevant weight. 

While our results are typical for the calculus of variations, our approach does not follow the standard one first introduced in \cite{AltCaffarelli}. Instead, the nonlocal nature of the distributional measure associated to a minimizer necessitates arguments which are less reliant on the underlying PDE. 
\end{abstract}


\setcounter{tocdepth}{2} 
\tableofcontents

\section{Introduction}\label{secIntro}

This article is devoted to the study of the regularity properties of a weighted version of the thin one-phase problem. More precisely we investigate even, nonnegative minimizers of the following functionals: denote $x\in\R^{n+1}$ by $x=(x',y)\in \R^n\times \R$, and for $\beta \in (-1,1)$ we define

\begin{equation}\label{eqLocalizedFunctional}
\mathcal{J}(v,\Omega):= \int_\Omega |y|^\beta |\nabla v|^2\ dx + m(\{v>0\}\cap\R^n\cap\Omega),
\end{equation}
where $m$ stands for the $n$-dimensional Lebesgue measure. 
Here, and throughout the paper, the integration is done with respect to the $(n+1)$-dimensional Lebesgue measure unless stated otherwise.
This functional is finite for open sets, $\Omega$, and functions in the weighted Hilbert space,
$$H^1(\beta,\Omega): = \{ v\in L^2(\Omega;|y|^\beta) : \nabla v\in L^2(\Omega;|y|^\beta)\},$$
equipped with the usual weighted norm.

Our main concern is to investigate fine regularity properties of the free boundary of minimizers $v$ of \eqref{eqLocalizedFunctional}, that is the set,
$$
F(v):=\partial_{\mathbb R^n} \left \{ v(x,0)>0 \right \} \cap \Omega. 
$$
Since the free boundary lies on a codimension 1 subspace of the ambient space $\R^{n+1}$, such a problem is called a {\sl thin one-phase free boundary problem}. This type of free boundary problem has been investigated for the first time by Caffarelli, Roquejoffre and the last author in \cite{CaffarelliRoquejoffreSire} in relation with the theory of semi-permeable membranes (see, e.g., \cite{duvautLions}). As we will describe later this is an analogue of the classical one-phase problem (also called the Bernoulli problem) but for the fractional Laplacian.

The Bernoulli problem was first treated in a rigorous mathematical way by Alt and Caffarelli in the seminal paper \cite{AltCaffarelli}: in the Bernoulli problem we consider minimizers of \eqref{eqLocalizedFunctional} where $\beta = 0$ and the second term  is replaced by $\mathcal L^{n+1}(\{v>0\}\cap\Omega)$ (where $\mathcal L^{n+1}$ stands for the Lebesgue measure in $\R^{n+1}$). In particular, for the Bernoulli problem, the free boundary fully sits in the ambient space, $\R^{n+1}$. In \cite{AltCaffarelli}, the authors provided a general strategy to attack this type of problem.  Out of necessity we needed to modify this blueprint in several substantial ways (see below for a more detailed comparison).  For more information on the one-phase problem (and some of its variants) we refer to the book of Caffarelli and Salsa (and  references therein) \cite{CaffarelliSalsa}, and to the more recent survey of De Silva, Ferrari and Salsa \cite{DeSilvaFerrariSalsaSurvey}. 

As noticed in \cite{CaffarelliRoquejoffreSire}, problem \eqref{eqLocalizedFunctional} is related in a tight way to the {\sl standard } one-phase free boundary problem but with the Dirichlet energy replaced by the Gagliardo semi-norm $[u]_{\dot H^\alpha}$, for $\alpha=\frac{1-\beta}{2} \in (0,1)$. This connection suggests that the thin one-phase problem is actually intrinsically a {\sl nonlocal } problem, though the energy in \eqref{eqLocalizedFunctional} is clearly local. 
\subsection*{Connection with the fractional one-phase problem}
As previously mentioned, the functional $\mathcal{J}$  introduced by Caffarelli, Roquejoffre and the last author in \cite{CaffarelliRoquejoffreSire} is a local version of the following nonlocal free boundary problem: given a function $f\in L^1_{\loc}(\R^n)$ with suitable decay at infinity, we can define its fractional Laplacian at $x\in\R^n$ by
$$(-\Delta)^\alpha f(x) = c_{n,\alpha}\, p.v. \int_{\R^n} \frac{f(x)-f(\xi)}{|x-\xi|^{n+2\alpha}}\, d\xi.$$
At the formal level, we are interested in solutions of the free boundary problem  
\begin{equation}\label{eqEulerLagrangeFrac}
\begin{cases}
(-\Delta)^\alpha f = 0  & \mbox{in } \Omega \cap \{f>0\},\\
\partial_\nu^\alpha f = A  & \mbox{on } \Omega\cap F(f),
\end{cases}
\end{equation}
where $\partial_\nu^\alpha f (x):=\lim_{ \Omega \cap \{f>0\} \ni \xi\to x}\frac{f(\xi)-f(x)}{((\xi-x)\cdot \nu(x))^\alpha}$ and where $f$ satisfies a given ``Dirichlet boundary condition" on the complement of $\Omega$. 


As in the case of the classical Laplacian (see \cite{AltCaffarelli}), we are interested in obtaining equation \rf{eqEulerLagrangeFrac} as the Euler-Lagrange equation of a certain functional. Given a locally integrable function $f$, consider its fractional Sobolev energy
$$\left[f\right]_{\dot H^\alpha (\R^n)}:=\iint_{\R^{2n}} \frac{|f(x)-f(\xi)|^2}{|x-\xi|^{n+2\alpha}}\, d\xi\, dx.$$
Since we want to study competitors which vary only in a certain domain $\Omega$, it is natural to consider only the integration region which may suffer variations when changing candidates. Thus, we define the energy 
\begin{equation}\label{eqNonLocalFunctional}
J(f,\Omega):= c_{n,\alpha} \iint_{\R^{2n}\setminus (\Omega^c)^2} \frac{|f(x)-f(\xi)|^2}{|x-\xi|^{n+2\alpha}}\, d\xi\, dx + m(\{f>0\}\cap\Omega).
\end{equation}
We say that $f\in L^1_{\loc}$ is a minimizer of $J$ in $\Omega$ if $J(f,\Omega)$ is finite and $ J(f,\Omega)\leq J(g,\Omega)$ for every $g$ satisfying that $f-g\in \dot H^\alpha(\R^n)$ and such that $f(x)=g(x)$ for almost every $x\in \Omega^c$. We say that $f$ is a global minimizer if it is a minimizer for every open set $\Omega\subset \R^n$. Note that both terms in \rf{eqNonLocalFunctional} are in competition, since a minimizer of the fractional Sobolev energy in $\Omega$ is $\alpha$-harmonic and, thus, if it is non-negative outside of $\Omega$ it is strictly positive inside of $\Omega$, maximizing the second term.



Consider now the Poisson kernel for fixed $n\in\N$ and $0<\alpha<1$
\begin{equation}\label{eqPoisson}
P_y(\xi):=P_{n,\alpha}(\xi,y)=c_{n,\alpha}  \frac{|y|^{2\alpha}}{|(\xi,y)|^{n+2\alpha}} \quad\quad \mbox{ for every }(\xi,y)\in \R^n\times\R.
\end{equation}
The Poisson extension of $f\in L^1_{\loc}(\R^n)$ is given by
\begin{equation}\label{eqPoissonExtension}
u(x',y):= f * P_y(x')=\int_{\R^n} P_{n,\alpha}(\xi,y) f(x'-\xi)\, d\xi \quad\quad \mbox{ for every }(x',y)\in \R^n\times\R.
\end{equation}
By \cite{CaffarelliSilvestre}, with a convenient choice of the constant one gets
$$\lim_{y \searrow 0}y^{1-2\alpha} u_y(x',y)=-(-\Delta)^\alpha f(x')$$ in every point where $f$ is regular enough. Moreover, the extension satisfies the localized equation $\nabla\cdot (|y|^\beta \nabla u)=0$ weakly, away from $\R^n\times \{0\}$. The whole point is that local minimizers of \eqref{eqNonLocalFunctional} can be extended via the previous Poisson kernel $P_y$ to (even) minimizers of \eqref{eqLocalizedFunctional} (see the Appendix for a precise statement). Therefore, the thin one-phase problem appears as a ``localization" of the one-phase problem for the fractional Laplacian. Notice that, and this is of major importance for us, this localization technique does not carry over to other types of nonlocal operators besides pure powers of second-order elliptic operators. This is a major drawback of the theory, in the sense that, at the moment, it seems to be impossible to tackle one-phase problems involving more general operators than the fractional Laplacian. The main point is we do not know how to prove any kind of monotonicity for general integral operators.

This connection between the nonlocal analogue of the Bernoulli problem and our thin one-phase problem allows us to simplify several arguments by working in the purely nonlocal setting. However, this underlying nonlocality is also the reason why several results, which came more easily in the setting of \cite{AltCaffarelli}, are non-trivial or substantially harder for us. For example, perturbations of solutions need to take into account long range effects which makes classical, local, perturbation arguments much more difficult. 



In the paper \cite{CaffarelliRoquejoffreSire}, the authors proved basic properties of the minimizers for the functional $\mathcal J$ such as optimal regularity, non-degeneracy near the free boundary, and  positive densities of phases. Also they provided an argument for $n=2$ showing that Lipschitz free boundaries are $C^1$. A feature of the functional $\mathcal J$ is that the weight $|y|^\beta$ is either degenerate or singular at $\left \{ y=0 \right \}$ (except in the case $\beta=0$). Such weights belong to the Muckenhoupt class $A_2$ and the seminal paper of Fabes, Kenig and Serapioni \cite{FabesKenigSerapioni} investigated regularity issues for elliptic PDEs involving such weights (among other things). After that, \cite{deSilvaSavinSire} proved an $\varepsilon$-regularity result and \cite{Allen} showed the existence of a monotonicity formula for this setting.

In the case $\beta=0$, the problem is still degenerate in the sense that derivatives near the free boundary blow up. The case $\beta=0$ has been thoroughly investigated in the series of papers by De Silva, Savin  and Roquejoffre \cite{desilvaRoque,DSSJDE,deSilvaSavinJEMS}. 

The main goal of our paper is to provide a full picture of the regularity of the free boundary for any power $\beta \in (-1,1)$, both in terms of measure-theoretic statements and partial (or full) regularity results. From this point of view our contribution is a complement of the paper by De Silva and Savin \cite{deSilvaSavinJEMS} for $\beta=0$. It has to be noticed that the standard approach to regularity of Lipschitz  free boundaries as developed by Caffarelli (see the monograph \cite{CaffarelliSalsa}) does not seem to work in our setting.

\subsubsection*{Our approach to regularity}

In \cite{AltCaffarelli} (and many subsequent works), the minimizing property of the solution is used to prove that the distributional Laplacian of that solution is an Ahlfors-regular measure supported on the free boundary. This implies (amongst other things) that the free boundary is a set of (locally) finite perimeter, and thus almost every point on the free boundary has a measure theoretic tangent. One can then work purely with the weak formula (i.e. the analogue of \eqref{eqEulerLagrangeFrac}) to prove a ``flat implies smooth" result which, together with the existence almost everywhere of a measure theoretic tangent, has as a consequence that the free boundary is almost everywhere a smooth graph and the free boundary condition in \eqref{eqEulerLagrangeFrac} holds in a classical sense at the smooth points. 

A similar ``flat implies smooth" result exists in our context (this is essentially due to De Silva, Savin and the last author, \cite{deSilvaSavinSire}, see Theorem \ref{theoImprovement} below). However, showing that the free boundary is the boundary of a set of finite perimeter proves to be much more difficult. Due to the nonlocal nature of the problem, $-\mathrm{div}(|y|^\beta \nabla u)$ (considered as a distribution) is not supported on the free boundary. Furthermore, the scaling of this measure does not allow us to conclude that the free boundary has the correct dimension (much less that it is Ahlfors regular).



To prove finite perimeter, we take the following approach inspired by the work of de Silva and Savin: after establishing some preliminaries 
we prove crucial compactness results. This, along with a monotonicity formula originally due to Allen \cite{Allen}  allows us to run a dimension reduction argument in the vein of Federer or (in the context of free boundary problems) Weiss \cite{WeissMinimum}. With this tool in hand, we show that the set of points at which no blow-up is flat is a set of lower dimension. Locally finite perimeter and regularity for the reduced boundary then follow from a covering argument and some standard techniques.

 Here and throughout  the paper, we will denote the ball of radius $r$ in $\R^{n+1}$ centered at the origin by  $B_r$, and $B_r':=B_r\cap \R^n\times\{0\}$. Moreover, for the definition of $\HH^\beta$, see Section \ref{s:preliminaries}. We may then summarize our regularity results in the following theorem.

\begin{theorem}\label{t:mainregularity}[Main Regularity Theorem]
Let $u\in \HH^\beta(B_1)$ be a (non-negative, even) local minimizer of $\mathcal{J}$ in $B_1 \subset \mathbb R^{n+1}$. Let $B_{1,+}'(u):=\{x=(x',0)\in B_1: u(x)>0\}$, let $F(u)$ be the boundary of $B_{1,+}'(u)$ inside of $\R^n\times \{0\}$  and assume that $0\in F(u)$. Then,
\begin{enumerate}
\item $B_{1,+}'(u)$ (as a subset of $\R^n\times \{0\}$) is a set of locally finite perimeter in $B_1'$. 
\item We can write the free boundary as a disjoint union $F(u) = \mathcal R(u) \cup \Sigma(u)$, where $\mathcal R(u)$ is open inside $F(u)$, and for $x \in \mathcal R(u)$ there exists an $r_x > 0$ such that $B(x, r_x) \cap F(u)$ can be written as the graph of a $C^{1,s}$-continuous function. 
\item Furthermore, the set $\Sigma(u)$ is of Hausdorff dimension $\leq n-3$ (and, therefore, of $\mathcal H^{n-1}$-measure zero). In particular, for $n\leq 2$, $\Sigma(u)$ is empty, and moreover, if $n=3$ then $\Sigma(u)$ is discrete.
\end{enumerate}
The constants (implicit in the set of finite perimeter, and the H\"older continuity of the functions whose graph gives the free boundary) depend on $n$ and $\beta$ but not on $\norm{u}_{\HH^\beta(B_1)}$. 
\end{theorem}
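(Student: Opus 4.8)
Throughout, I would move freely between minimizers of the local functional $\mathcal{J}$ and minimizers of its nonlocal counterpart $J$, via the Poisson–extension correspondence recalled in the Appendix, choosing whichever picture is more convenient at each step. The two engines of the argument are: first, the improvement–of–flatness theorem (Theorem~\ref{theoImprovement}, essentially \cite{deSilvaSavinSire}), which says that if $F(u)$ is sufficiently flat in a ball then it is a $C^{1,s}$ graph in a smaller ball; and second, the Weiss–type monotonicity formula of Allen \cite{Allen}: writing $\alpha=\frac{1-\beta}{2}$ and $u_r(x):=r^{-\alpha}u(rx)$, the boundary–adjusted normalized energy
\[
W(u,r):=r^{-n}\int_{B_r}|y|^\beta|\nabla u|^2\,dx+r^{-n}\,m(\{u>0\}\cap B_r')-\alpha\, r^{-n-1}\int_{\partial B_r}|y|^\beta u^2\,d\sigma
\]
is nondecreasing in $r$, and is constant in $r$ exactly when $u$ is homogeneous of degree $\alpha$. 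Before using these, I would collect the basic properties of minimizers from \cite{CaffarelliRoquejoffreSire}: optimal regularity, nondegeneracy, and uniform positive density of both $\{u>0\}$ and $\{u=0\}$ along $F(u)$. I would then prove the compactness statements that make the rest of the scheme work: a sequence of minimizers of uniformly bounded energy has a subsequence converging, locally uniformly and in the weighted energy space, to a minimizer, with the free boundaries converging in local Hausdorff distance and the density estimates passing to the limit.

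\textbf{Blow-ups and the regular set.}
Fix $x\in F(u)$ (by translation, the origin). Monotonicity of $W$ together with nondegeneracy and the compactness results show that every blow-up sequence $u_{r_j}$ admits a subsequential limit $u_0$ which is an $\alpha$-homogeneous global minimizer with $0\in F(u_0)$. Call $x$ a \emph{regular point} if \emph{some} blow-up of $u$ at $x$ is a half-plane solution, i.e.\ a rotation (in the $x'$ variables) of the one-dimensional model solution $c\,U(\langle x',\nu\rangle,y)$; let $\mathcal{R}(u)$ be the set of such points and $\Sigma(u):=F(u)\setminus\mathcal{R}(u)$. If $x\in\mathcal{R}(u)$, then for large $j$ the rescaled free boundary is as flat as we please in $B_1$, so Theorem~\ref{theoImprovement} applies at a small scale and $F(u)$ is a $C^{1,s}$ graph near $x$. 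Hence $\mathcal{R}(u)$ is relatively open in $F(u)$ and claim (2) holds.

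\textbf{Dimension of the singular set, and finite perimeter.}
The crux is the classification of low–dimensional homogeneous minimizers. For trace dimension $n=1$, homogeneity and nondegeneracy force $\{u>0\}\cap\R$ to be a half-line, so the only homogeneous global minimizers with a free boundary are half-plane solutions. For $n=2$ I would show, combining the cone structure with the $C^1$ regularity of Lipschitz free boundaries for $n=2$ from \cite{CaffarelliRoquejoffreSire}, that again every $\alpha$-homogeneous global minimizer is a half-plane solution; in particular there is no homogeneous global minimizer with trace dimension $\le 2$ having an isolated singularity. Feeding this into a Federer–Almgren dimension–reduction argument (in the free–boundary form of Weiss \cite{WeissMinimum}), which is legitimate precisely because of the compactness and monotonicity above, yields $\dim_{\mathcal{H}}\Sigma(u)\le n-3$, hence $\mathcal{H}^{n-1}(\Sigma(u))=0$. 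Therefore $\mathcal{H}^{n-1}$-a.e.\ point of $F(u)$ lies in $\mathcal{R}(u)$, where $F(u)$ is smooth, and a covering argument together with the density estimates (standard geometric measure theory from this point) shows that $B_{1,+}'(u)$ has locally finite perimeter in $B_1'$ and that its reduced boundary agrees with $\mathcal{R}(u)$ up to an $\mathcal{H}^{n-1}$-null set — this is claim (1). For $n\le 2$ we get $\Sigma(u)=\varnothing$; for $n=3$ the bound $\dim_{\mathcal{H}}\Sigma(u)\le 0$ upgrades to discreteness, since a non-isolated singular point would blow up to a homogeneous global minimizer with trace dimension $2$ and a singular free boundary, contradicting the classification. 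The uniformity of the constants is structural: the flatness improvement, the compactness, and the dimension reduction are all scale invariant and never invoke $\norm{u}_{\HH^\beta(B_1)}$.

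\textbf{Where the difficulty lies.}
The genuinely new obstacles, compared with the local Alt–Caffarelli theory, are: (i) the compactness of minimizers and the stability of the density estimates — modifying a competitor in a ball changes the nonlocal interaction energy with the far field, so long–range effects have to be quantitatively controlled rather than localized away; (ii) the classification of two–dimensional homogeneous minimizers, which is not soft and is where the input of \cite{CaffarelliRoquejoffreSire} is essential; and (iii) keeping every estimate independent of $\norm{u}_{\HH^\beta(B_1)}$, which forces the whole blow-up and dimension–reduction machinery to be phrased quantitatively rather than through a single global compactness argument. I expect (i) to absorb the bulk of the technical effort.
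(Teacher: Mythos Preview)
Your overall architecture --- compactness, Allen's monotonicity, blow-ups to homogeneous cones, $\varepsilon$-regularity, Federer--Weiss dimension reduction, then finite perimeter via covering --- matches the paper's. There are, however, two places where your proposal diverges from the paper in a way that matters.

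\medskip
\textbf{The classification of two-dimensional cones.} You propose to rule out nontrivial $\alpha$-homogeneous minimizers in $\R^{2+1}$ by ``combining the cone structure with the $C^1$ regularity of Lipschitz free boundaries for $n=2$ from \cite{CaffarelliRoquejoffreSire}''. It is not clear how this is meant to work. The free boundary of a homogeneous cone in $\R^2$ is a finite union of rays through the origin; away from the origin each ray is already flat, so the CRS Lipschitz$\Rightarrow C^1$ result adds nothing there, and at the origin the free boundary is \emph{not} a Lipschitz graph if more than one ray is present, so the result does not apply. In the classical Alt--Caffarelli setting one can close this by an explicit separation-of-variables computation in the sector, but for the degenerate extension problem with general $\beta$ no such computation is available in the literature. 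The paper instead follows De~Silva--Savin and Savin--Valdinoci: one builds competitors $V^\pm_R$ by composing the cone with a compactly supported bi-Lipschitz shear $Z(x)=x+\psi_R(|x|)e_1$, and shows that the homogeneity $|y|^\beta|\nabla V|^2$ has total degree $-1$, so the error term $\int|y|^\beta|\nabla V|^2\|A\|^2$ is $O(1/\log R)$ and a nontrivial cone cannot minimize. This perturbation argument is genuinely two-dimensional and is the step in which the bound $k^*_\alpha\ge 3$ is actually produced; your sketch does not supply a substitute for it.

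\medskip
\textbf{Independence from $\norm{u}_{\HH^\beta(B_1)}$.} You write that the uniformity of the constants ``is structural'' because the ingredients ``are all scale invariant and never invoke $\norm{u}_{\HH^\beta(B_1)}$''. This is not the case: the optimal regularity and nondegeneracy from \cite{CaffarelliRoquejoffreSire}, the $\varepsilon$ in Theorem~\ref{theoImprovement}, and hence the compactness you rely on, all carry an a priori dependence on $E_0=\norm{u}_{\dot\HH^\beta}$. The paper removes this in a separate and rather delicate step (Section~\ref{s:UniformBounds}): one first proves \emph{uniform} nondegeneracy from scratch using the monotonicity formula and a barrier (Lemma~\ref{lemNonDegeneracy}), then analyses the distributional measure $\lambda=\nabla\cdot(|y|^\beta\nabla u)$ (Theorems~\ref{theoLambdaBoundAbove} and~\ref{theoMeasureAway}) and uses Green-function estimates for the degenerate operator (Theorem~\ref{theoDimension}, Lemma~\ref{lemContinuousGreen}) to obtain a uniform $C^\alpha$ bound near the free boundary. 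Only after this bootstrap do the constants in the $\varepsilon$-regularity, the finite-perimeter bound, and the dimension estimate become independent of $E_0$. Your remark that the machinery must be ``phrased quantitatively'' points in the right direction but does not indicate this mechanism; scale invariance alone does not do it.
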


As usual $\Sigma(u) \subset F(u)$ is called the \emph{singular set} of the free boundary: the set of points around which $F(u)$ cannot be parameterized as a smooth graph and all the blow-ups will be non-trivial minimal cones, see Theorem \ref{theoImprovement}.

Our second contribution concerns the structure and size of the singular set. It builds on recent major works on quantitative stratification \cite{NaberValtortaRectifiable}, extended to free boundary problems (in particular the one-phase problem) by Edelen and the first author \cite{EdelenEngelstein}. 

\begin{theorem}\label{t:singset}
Let $u\in \HH^\beta(B_1)$ be a (non-negative, even) local minimizer of $\mathcal{J}$ in $B_1$ and $0\in F(u)$. Let $B_{1,+}'(u):=\{x=(x',0)\in B_1: u(x)>0\}$ and $F(u)$ be the boundary of $B_{1,+}'(u)$ inside  $B_1'$. Then, there exists a $ k_\alpha^*\geq 3$  such that $\Sigma(u)$ is $(n-k_\alpha^*)$-rectifiable and $$\mathcal{H}^{n-k^*_\alpha}(\Sigma(u)\cap D)\leq C_{n,\alpha, \dist(D,\partial B_1)} \quad\quad \mbox{for every }D\subset\subset B_1.$$

\end{theorem}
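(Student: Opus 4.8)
The plan is to deduce Theorem~\ref{t:singset} from the quantitative stratification machinery of Naber--Valtorta~\cite{NaberValtortaRectifiable} — which, notably, does not require uniqueness of blow-ups — in the form developed for one-phase type free boundary problems by Edelen and the first author~\cite{EdelenEngelstein}; the bulk of the work is to check that its structural hypotheses hold in our weighted, nonlocal setting. Three ingredients are needed. First, a monotone, upper semicontinuous density: here the Weiss-type energy $W(u,x_0,r)$ attached to Allen's monotonicity formula~\cite{Allen}, which is nondecreasing in $r$, whose limit $\Theta(u,x_0)$ exists, and which is \emph{rigid} in the sense that $r\mapsto W(u,x_0,r)$ is constant precisely when $u$ is homogeneous about $x_0$. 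Second, compactness of suitably normalized minimizers together with the identification of blow-ups as homogeneous global minimizers — both established in the earlier sections of this paper — and upper semicontinuity of $x_0\mapsto\Theta(u,x_0)$ under this compactness. Third, a cone-splitting principle: if $u$ is sufficiently $W$-pinched on $B_r(x_0)$ and is nearly invariant under $k$ linearly independent translations along $\R^n\times\{0\}$, then some blow-up at $x_0$ is $(k)$-symmetric; the key point here is that the relevant symmetries are counted only with respect to translations along the thin space, since the weight $|y|^\beta$ and the evenness preclude invariance in the $y$-direction.

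Granting these, the abstract theorem yields, for every $\eta>0$, Minkowski content bounds for the quantitative strata $S^{k}_{\eta,r}$ and $(n-k^*_\alpha)$-rectifiability of $S^{n-k^*_\alpha}_\eta$ together with $\mathcal{H}^{n-k^*_\alpha}(S^{n-k^*_\alpha}_\eta\cap D)\le C$, where $C$ is independent of $\eta$. It then remains to identify $\Sigma(u)$ with a single stratum. By the $\varepsilon$-regularity statement (Theorem~\ref{theoImprovement}), a point of $F(u)$ is regular exactly when one of its blow-ups is flat, i.e.\ $(n-1)$-symmetric, so $\Sigma(u)=S^{n-1}(u)$. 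A Federer-type dimension reduction — producing at any non-flat blow-up a non-flat homogeneous minimizer in one lower dimension on the spine — shows there is a smallest dimension $k^*_\alpha$ admitting a non-flat homogeneous global minimizer, and the absence of singular cones for $n\le 2$ (Theorem~\ref{t:mainregularity}(3)) forces $k^*_\alpha\ge 3$. Hence any $(n-k^*_\alpha+1)$-symmetric blow-up splits as $\R^{n-k^*_\alpha+1}$ times a homogeneous minimizer in $\R^{k^*_\alpha-1}$, which is necessarily flat, so the blow-up is itself flat; therefore $S^{n-1}(u)=S^{n-k^*_\alpha}(u)=\bigcup_{\eta>0}S^{n-k^*_\alpha}_\eta(u)$, a countable union of $(n-k^*_\alpha)$-rectifiable sets with uniformly bounded $\mathcal{H}^{n-k^*_\alpha}$ measure on $D$, which gives the rectifiability and the estimate $\mathcal{H}^{n-k^*_\alpha}(\Sigma(u)\cap D)\le C_{n,\alpha,\dist(D,\partial B_1)}$. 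The independence of the constant from $\norm{u}_{\HH^\beta(B_1)}$ follows because, when $0\in F(u)$, monotonicity together with the optimal regularity and non-degeneracy bounds of the previous sections controls $W(u,x_0,r_0)$ for $x_0\in D$ and $r_0\sim\dist(D,\partial B_1)$ by a quantity depending only on $n$ and $\alpha$.

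I expect the main difficulty to be the cone-splitting/quantitative rigidity step: although the extended functional $\mathcal{J}$ is local, turning a small defect in the monotonicity formula into quantitative closeness to a $(k)$-symmetric configuration requires quantitative rigidity (approximate homogeneity) estimates for the degenerate/singular operator $\dive(|y|^\beta\nabla\cdot)$ and a careful separation of the roles of the $(n+1)$-dimensional bulk and the $n$-dimensional free boundary. A secondary point is to propagate the a priori estimates so that the compactness and the upper semicontinuity of $\Theta$ hold with the uniformity in the normalization of $u$ needed for the stated dependence of the final constant.
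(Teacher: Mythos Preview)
Your approach is the paper's: run the Naber--Valtorta/Edelen--Engelstein machinery on the quantitative strata, using Allen's monotonicity, the compactness of Section~\ref{s:Compactness}, cone-splitting, and the $\varepsilon$-regularity of Theorem~\ref{theoImprovement}, then identify $\Sigma(u)$ with the correct stratum via the definition of $k^*_\alpha$. There is, however, one genuine gap in the way you extract the Hausdorff-measure bound.

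You assert that the abstract theorem gives $\mathcal H^{n-k^*_\alpha}(S^{n-k^*_\alpha}_\eta\cap D)\le C$ with $C$ \emph{independent of $\eta$}, and then conclude by writing $\Sigma(u)=\bigcup_{\eta>0}S^{n-k^*_\alpha}_\eta$. The framework does not give this: the constants in the $L^2$-best-approximation estimate (the one bounding $\beta$-numbers by the density drop) and hence in the packing/Discrete-Reifenberg step depend on $\eta$ and degenerate as $\eta\to 0$; in the paper's notation the bound is $C_{n,\alpha,\epsilon}$. Your union argument therefore yields the rectifiability (a countable union of rectifiable sets is rectifiable) but \emph{not} the uniform Hausdorff-measure estimate.

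The correct step, which the paper carries out as Proposition~\ref{propoSigma}, is to show $\Sigma(u)\subset S^{n-k^*_\alpha}_\epsilon$ for a \emph{single} $\epsilon=\epsilon(n,\alpha)>0$. This is a compactness/contradiction argument using the very ingredients you list: if $x_i\in\Sigma(u_i)$ with $u_i$ $(n-k^*_\alpha+1,\epsilon_i)$-symmetric in $B_{r_i}(x_i)$ and $\epsilon_i\to 0$, rescale and pass to a limit $u_0$ that is genuinely $(n-k^*_\alpha+1)$-symmetric; by your own dimension-reduction $u_0$ is flat near the limit point, and then Hausdorff convergence of the free boundaries together with Theorem~\ref{theoImprovement} forces $x_i\in F_{\rm red}(u_i)$ for large $i$, a contradiction. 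Once $\Sigma(u)$ sits in a single quantitative stratum, the measure bound follows with $C=C_{n,\alpha,\epsilon(n,\alpha)}$.

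A minor indexing point: with the convention $x\in S^k$ iff no blow-up at $x$ is $(k+1)$-symmetric, the flat solution is $(n-1)$-symmetric, so a regular point lies in $S^{n-1}\setminus S^{n-2}$ and hence $\Sigma(u)\subset S^{n-2}$, not $S^{n-1}$; your chain should read $\Sigma(u)\subset S^{n-2}=\cdots=S^{n-k^*_\alpha}$.
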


In \cite{DSJ}, De Silva and Jerison constructed a singular minimizer for the Alt-Caffarelli one-phase problem in dimension $7$, giving the dimension bound $k^* \leq 8$ in the previous theorem in this case (see \cite{EdelenEngelstein}). This result is not known for the thin one-phase problem. The reason is that the one-phase problem, seen from the nonlocal point of view involving the fractional Laplacian, is related to the so-called nonlocal minimal surfaces introduced by Caffarelli, Roquejoffre and Savin \cite{CaffarelliRoquejoffreSavin}. Indeed, in \cite{savinValdinoci}, the authors proved that a fractional version of Allen-Cahn equation converges variationally to the standard perimeter functional for $\alpha \geq 1/2$ and to the so-called nonlocal minimal surfaces for $\alpha <1/2$. We can then conjecture the bound $k^*_\alpha \leq 8$ for $\alpha \geq 1/2$ by analogy with the result for the standard one-phase problem but the bound for $\alpha <1/2$ is not clear at all. However, one knows that there is no singular cone in dimension $2$ for nonlocal minimal surfaces \cite{SVcone} and that the Bernstein problem is known for those in dimensions 2 and 3 \cite{figalliValdinoci}.

We would like also to make a last remark about a result which is of purely nonlocal nature. In the case of the one-phase problem, one can show that the distributional Laplacian is a Radon measure along the free boundary. In the case of the thin one-phase free boundary problem, due to the nonlocality of the problem, such a behavior does not happen in the sense that we will show that the fractional Laplacian  is an absolutely continuous  measure with respect to $n$-dimensional Lebesgue measure with a precise behavior. This phenomenon is of purely nonlocal nature and similar to the fact that the fractional harmonic measure is of trivial nature.  More precisely, every minimizer $u$ satisfies $\nabla\cdot (|y|^\beta \nabla u)=0$ weakly, away from $\R^n\cap \{u \leq 0\}$. Thus, equation \rf{eqEulerLagrangeFrac} above can be understood as an Euler-Lagrange equation for the functional $\mathcal{J}$ in the sense that the restriction to $\R^n$ of a given minimizer $u$ in $\Omega\subset \R^{n+1}$ and with asymptotic behavior $u(x,y)=\mathcal{O}(|(x,y)|^\alpha)$ is always a solution to \rf{eqEulerLagrangeFrac} for $A=A(\alpha)$ at ``nice" points of the free boundary.

A brief summary of this paper  follows. In Sections \ref{s:Compactness} and  \ref{s:monotonicity} we discuss  compactness of minimizers and we recall Allen's monotonicity formula to derive some immediate consequences. In Section \ref{s:finperimeter} we show that the positive phase is a set of locally finite perimeter, establishing the first part of Theorem \ref{t:mainregularity} (modulo energy bounds), and we show that the singular set can be identified using the Allen-Weiss density. Section \ref{s:r2} is devoted to deducing full regularity of minimizers in $\R^{2+1}$ concluding the proof of Theorem \ref{t:mainregularity}.

Once we have established the finite perimeter, in Section \ref{s:UniformBounds} we remove the dependence of the estimates on the energy of the minimizer in the previous theorems, using a rather subtle argument which combines results from all the previous sections. A crucial step is to analyze some basic properties of the distributional fractional Laplacian of our minimizer. As stated above this analysis will not be enough to establish that the positivity set of the minimizer is a set of locally finite perimeter.  We believe that many of these results may be of independent interest. For example, corresponding results for the classical Bernoulli problem have been used to understand free boundary problems for harmonic measure (see \cite{KenigToroDCDS}).

Finally, Section \ref{s:Rect} is devoted to the proof of Theorem \ref{t:singset}.

\subsubsection*{Notation}

We denote the constants that depend on the dimension $n$, $\alpha$ and perhaps some other fixed parameters which are clear from the context  by $C$. Their value may change from an occurrence to another. On the other hand, constants with subscripts as $C_0$ retain their values along the text. For $a,b\geq 0$, we write $a\lesssim b$ if there is $C>0$ such that $a\leq Cb$. We write $a\approx b$ to mean $a\lesssim b\lesssim a$.

Let $u$ be a continuous function in $\R^{n+1}$. Then we write $\Omega_+(u):=\Omega\cap \{u>0\}$, and we denote the zero phase, the positive phase and the free boundary by
\begin{align*}
&\Omega_0(u):=\{x\in \R^n\times\{0\}: u(x)=0\}^\circ{},\\
&\Omega_+'(u):=\Omega_+\cap (\R^n\times\{0\})=\{x\in \R^n\times\{0\}: u(x)>0\},\mbox{ and}\\
&F(u):=F_\Omega(u)=\partial (\Omega_+(u)\cap \R^n\times\{0\}) \cap \Omega,
\end{align*}
respectively. Here both the boundary and the interior are taken with respect to the standard topology in $\R^n$.
Note that $\R^n\times \{0\}$ is the disjoint union of  $\Omega_0(u)$, $\Omega_+'(u)$ and $F(u)$ whenever $u$ is non-negative. We also call $F_{\rm red}(u)=F_{{\rm red},\Omega}(u)$ the points of $F_\Omega(u)$ where the free boundary is expressed locally as a $C^1$ surface. Finally, let $\Sigma(u)=\Sigma_\Omega(u)=F_\Omega(u)\setminus F_{{\rm red},\Omega}(u)$. In general we will write $\Omega':=\Omega\cap(\R^n\times\{0\})$.

Throughout the paper we will often fix $\beta \in (-1, 1)$ but then refer to $\alpha\in (0,1)$ or vice versa. These two numbers are always connected by the relationship $\alpha = \frac{1-\beta}{2}$.

%

\section{Preliminaries}\label{s:preliminaries}

In this section, we provide the known results concerning the problem under consideration. We say that a function $u$ is \emph{even} if it is symmetric with respect to the hyperplane $\R^n\times \{0\}$, that is, $u(x',y)=u(x',-y)$. The function spaces that we will consider are the following
$$ \HH^\beta (\Omega):= \{u \in H^1(\beta,\Omega) : u\mbox{ is even and non-negative}\}$$
and
$$ \HH^\beta_{\loc}(\Omega):= \{u\in L^2_{\loc}(\Omega): u \in \HH^\beta(B) \mbox{ for every ball } B\subset \subset \Omega\}.$$
We will omit $\Omega$ in the notation when it is clear from the context.
\begin{definition}\label{defMinimizer}
We say that a function $u\in \HH^\beta_{\loc}(\Omega)$ is a (local) minimizer of $\mathcal{J}$ in a domain $\Omega$ if for every ball $B\subset\subset \Omega$ and for every function $v\in \HH^\beta(B)$ such that the traces $v|_{\partial B} \equiv u|_{\partial B}$, the inequality
$$\mathcal{J}(u,B)\leq \mathcal{J}(v,B)$$
holds.
%
\end{definition}

As usual for several free boundary problems, it is a natural question to exhibit a particular (global) solution so that one gets an idea of the qualitative properties of general solutions. Let us consider the following function: for every $x\in \R^n$ let $$f_{n,\alpha}(x):= c_{n,\alpha} (x_n)_+^\alpha ,$$
where $a_+=\max\{0,a\}$.
If $n=1$, $f_{1,\alpha}$ is a solution to \rf{eqEulerLagrangeFrac} for a convenient choice of $c_{1,\alpha}$ (see \cite[Theorem 3.1.4]{BucurValdinoci}). In fact one can see that the same is true for $n\geq 1$ using Fubini's Theorem conveniently, with 
\begin{equation}\label{eqLaplacianTrivial}
-(-\Delta)^\alpha f_{n,\alpha}(x) = c_{n,\alpha} (x_n)_-^{-\alpha},
\end{equation}
where $a_-=\max\{0,-a\}$.

As a  toy question we wonder whether the trivial solutions are minimizers. Indeed, this is the case, as we will see later in Section \ref{secReduction}.
\begin{proposition}\label{propoTrivial}
Let $n\in\N$ and $0<\alpha<1$. Then the trivial solution $u_{n,\alpha}:=f_{n,\alpha}*P_y$ is a minimizer of $\mathcal{J}$ in every ball $B\subset \R^{n+1}$.
\end{proposition}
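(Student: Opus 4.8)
The plan is to establish minimality of the trivial solution $u_{n,\alpha}$ by exploiting the connection, recalled in the Appendix, between minimizers of the localized functional $\mathcal{J}$ in \eqref{eqLocalizedFunctional} and minimizers of the nonlocal functional $J$ in \eqref{eqNonLocalFunctional}: since $u_{n,\alpha}=f_{n,\alpha}*P_y$ is the Poisson extension of $f_{n,\alpha}$, it suffices to show that $f_{n,\alpha}(x)=c_{n,\alpha}(x_n)_+^\alpha$ is a minimizer of $J$ in every ball $B'\subset\R^n$ (equivalently, in every bounded open set). This reduces an $(n+1)$-dimensional variational problem with a weight to a purely $n$-dimensional nonlocal problem where the Euler--Lagrange structure is more transparent, and it is the strategy the excerpt advertises (``this localization allows us to simplify several arguments by working in the purely nonlocal setting'').

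First I would reduce to the one-dimensional case. The function $f_{n,\alpha}$ depends only on $x_n$, and one expects — by a slicing/Fubini argument along the lines of the one used to derive \eqref{eqLaplacianTrivial} — that if $f_{1,\alpha}$ minimizes $J$ in every interval then $f_{n,\alpha}$ minimizes $J$ in every ball in $\R^n$: given a competitor $g$ on a ball $B'$, one integrates the one-dimensional minimality inequality over the lines parallel to the $x_n$-axis, using the decomposition of the Gagliardo energy and of the Lebesgue measure of the positivity set. This step is mostly bookkeeping but one must be careful that the nonlocal energy does not split as a clean sum of one-dimensional energies; one instead has to compare, line by line, the full-space interactions, which still works because $f_{n,\alpha}$ is constant in the transverse directions and the competitor agrees with it outside $B'$.

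The core of the argument is then the one-dimensional claim: $f_{1,\alpha}(t)=c_{1,\alpha}(t)_+^\alpha$ minimizes $J(\cdot,I)$ for every interval $I$. Here I would argue by convexity/first-variation. The Dirichlet-type term $f\mapsto [f]_{\dot H^\alpha}^2$ (restricted to the relevant integration region) is convex, so for any admissible competitor $g$ with $f-g\in\dot H^\alpha$ and $f=g$ off $I$ one has a lower bound
$$c_{1,\alpha}\iint_{\R^2\setminus(I^c)^2}\frac{|g(x)-g(\xi)|^2}{|x-\xi|^{1+2\alpha}}\,d\xi\,dx \;\geq\; c_{1,\alpha}\iint_{\R^2\setminus(I^c)^2}\frac{|f(x)-f(\xi)|^2}{|x-\xi|^{1+2\alpha}}\,d\xi\,dx \;+\; 2\langle(-\Delta)^\alpha f,\,g-f\rangle,$$
and by \eqref{eqLaplacianTrivial} the pairing equals $-2\int_I c_{1,\alpha}(x_1)_-^{-\alpha}(g-f)\,dx_1$. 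Since any admissible $g$ is $\geq 0$ on the support of $(x_1)_-^{-\alpha}$ where $f=0$, this pairing is $\geq 0$, so the Dirichlet term of $f$ never exceeds that of $g$; as $f>0$ a.e. on $\{x_1>0\}$ its positivity set is as large as possible, so the measure term also cannot be beaten. Adding the two comparisons gives $J(f,I)\leq J(g,I)$. (One can alternatively replace this with a direct ``lift the competitor'' computation, but the convexity route is cleanest.)

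The main obstacle I anticipate is making the first-variation/convexity inequality rigorous at the level of the \emph{localized} (restricted) nonlocal energy $J(\cdot,\Omega)$ rather than the full-space energy $[\cdot]_{\dot H^\alpha}^2$: the competitor class only fixes $g$ on $\Omega^c$ and asks $f-g\in\dot H^\alpha(\R^n)$, so one must check that the cross terms are finite and that the integration by parts producing $(-\Delta)^\alpha f$ is legitimate given the $|x|^\alpha$ growth of $f_{n,\alpha}$ at infinity (the $p.v.$ fractional Laplacian of $f_{n,\alpha}$ is only defined in the principal-value sense, and $f_{n,\alpha}\notin L^1$). I would handle this by a truncation/exhaustion argument — restrict $f_{n,\alpha}$ suitably far out, note the competitor equals $f$ off a fixed ball, and pass to the limit using that $f-g$ has compact support modulo an $\dot H^\alpha$ error — and by invoking the precise ``localization'' statement from the Appendix to transfer the conclusion back to $\mathcal{J}$ and $u_{n,\alpha}$.
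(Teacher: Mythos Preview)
Your convexity/first-variation argument has a genuine gap, and in fact two related errors that compound.

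First, the sign of the pairing is wrong. From \eqref{eqLaplacianTrivial} one has $(-\Delta)^\alpha f_{n,\alpha}(x)=-c_{n,\alpha}(x_n)_-^{-\alpha}\le 0$, so on $\{x_n<0\}$ (where $f=0$ and $g-f=g\ge 0$ for nonnegative competitors) the term $2\langle(-\Delta)^\alpha f,\,g-f\rangle$ is $\le 0$, not $\ge 0$. Hence the convexity inequality only yields $J_0(g,\Omega)\ge J_0(f,\Omega)+(\text{something}\le 0)$, which does \emph{not} give $J_0(f,\Omega)\le J_0(g,\Omega)$.

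Second, and more seriously, the comparison of the measure term is backwards. Since $f_{n,\alpha}>0$ exactly on the half-space $\{x_n>0\}$, its positivity set inside $\Omega$ is typically \emph{large}, not small; a competitor $g$ can easily have $m(\{g>0\}\cap\Omega)<m(\{f>0\}\cap\Omega)$ (e.g.\ push $g$ to zero on part of $\{x_n>0\}\cap\Omega$). So even if the Dirichlet comparison had worked, the measure term would go the wrong way. The underlying reason is that $J$ is \emph{not} convex --- the map $u\mapsto m(\{u>0\})$ is neither convex nor continuous --- so a first-variation/calibration argument of this kind cannot by itself certify minimality; it is precisely the competition between the two terms that creates the free boundary, and a purely convex bound on one term ignores that competition.

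For comparison, the paper does not attempt a direct verification at all. Instead it argues indirectly: starting from \emph{some} minimizer with nonempty free boundary (existence is known), it takes an iterated sequence of blow-ups, each time gaining one direction of translational invariance (Lemma~\ref{lemConeBlowup} plus the invariance already present). After $n-1$ steps one is forced onto an $\alpha$-homogeneous global minimizer depending on a single variable, which by the free-boundary condition P\ref{itemNormal} is exactly the trivial solution. Since blow-ups of minimizers are minimizers (P\ref{itemBlowup}), this shows $u_{n,\alpha}$ is a global minimizer, and Proposition~\ref{propoExtension} transfers this to $\mathcal{J}$. The key idea you are missing is this ``produce the trivial solution as a limit of minimizers rather than compare it against competitors'' strategy.
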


Next we collect the main properties of minimizers in the unit ball proven in \cite[Theorems 1.1-1.4, Proposition 3.3 and Corollary 3.4]{CaffarelliRoquejoffreSire}. 
\begin{theorem}\label{theoCRS}
If $u\in \HH^\beta(B_1)$ is a minimizer of $\mathcal{J}$ in $B_1$  with $\norm{u}_{\dot \HH^\beta(B_1)}:=\norm{\nabla u}_{L^2(B_1,|y|^\beta)} \leq E_0$ and $x_0\in F(u)\cap B_{\frac12}$, then it satisfies
\begin{enumerate}[P1:]
\item Optimal regularity (see \cite[Theorem 1.1]{CaffarelliRoquejoffreSire}): $\norm{u}_{\dot C^\alpha(B_{1/2})}\leq C{(1+E_0)}$.
\item Nondegeneracy (see \cite[Theorem 1.2]{CaffarelliRoquejoffreSire}): $u(x)\geq C \dist(x,F(u))^\alpha $ for $x\in B_{\frac12}'$.
\item Interior corkscrew condition (see \cite[Proposition 3.3]{CaffarelliRoquejoffreSire}):  there exists $x_+\in B_r'(x_0)$ so that  $B'(x_+, C_0r)\subset \Omega_+'(u)$.
\item Positive density (see \cite[Theorem 1.3]{CaffarelliRoquejoffreSire}): $|\Omega_0\cap B_r'(x_0)| \gtrsim r^n$. \label{itemPositive}
\item Blow-ups are minimizers (see \cite[Corollary 3.4]{CaffarelliRoquejoffreSire}): The limit of a blow-up sequence $u_k(x):=\frac{u(x_0+\rho_k x)}{\rho_k^\alpha}$ converging weakly in $H^1(\beta,B_1)$ and uniformly is a global minimizer.\label{itemBlowup}
\item Normal behavior at the free boundary (see \cite[Theorem 1.4]{CaffarelliRoquejoffreSire}): the boundary condition in \rf{eqEulerLagrangeFrac} is satisfied at every point on the free boundary with a measure theoretic normal (see \cite{EvansGariepy}) for a prescribed value of $A$.\label{itemNormal}
\end{enumerate}
All the constants depend on $n$ and $\alpha$; and also on  $E_0$ except for the one in \emph{P1}.
\end{theorem}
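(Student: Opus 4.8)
Since this is a compilation of known results from \cite{CaffarelliRoquejoffreSire}, the plan is essentially to recall the arguments there and indicate how the various constants depend on the parameters; I will not reprove each statement from scratch but sketch the structure.

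The starting point for P1 (optimal $\dot C^\alpha$ regularity) is the observation that $u$ solves $\dive(|y|^\beta\nabla u)=0$ weakly in $B_1\setminus(\R^n\times\{0\})$, so by the De Giorgi--Nash--Moser theory for $A_2$-weighted equations (Fabes--Kenig--Serapioni \cite{FabesKenigSerapioni}) $u$ is locally H\"older continuous away from the thin set; the point is to get the \emph{sharp} exponent $\alpha$ up to the free boundary. First I would establish an energy bound: comparing $u$ against the competitor obtained by harmonically replacing $u$ in a ball and using the density estimate P4, one controls $\int_{B_r(x_0)}|y|^\beta|\nabla u|^2$ by $Cr^{n-1+2\alpha}$ for $x_0\in F(u)$, i.e. a Morrey-type decay of the Dirichlet energy. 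Combined with the equation and a Campanato/Morrey iteration across the thin set, and using the nondegeneracy P2 to rule out faster decay, this yields $u\in\dot C^\alpha$ with the constant controlled by $1+E_0$. The nondegeneracy P2 itself comes from a lower barrier argument: if $u$ were too small on $B_r'(x_0)$, then replacing $u$ by a slightly smaller function would decrease the Dirichlet energy by more than the gain in the measure term $m(\{u>0\})$, contradicting minimality; the barrier is built from the explicit extension of $(x_n)_+^\alpha$, i.e. from $u_{1,\alpha}$.

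For P3 and P4, the interior corkscrew and positive density of the zero phase, the arguments are the two sides of a nondegeneracy dichotomy. The corkscrew condition P3 follows from P2: nondegeneracy forces $u$ to be of size $\approx r^\alpha$ somewhere in $B_r'(x_0)$, and then H\"older continuity P1 propagates positivity to a fixed fraction of that ball, giving $B'(x_+,C_0 r)\subset\Omega_+'(u)$. The positive density of $\Omega_0$ in P4 is the measure-theoretic statement that the free boundary cannot be too thin from the zero side; it is obtained by a comparison argument showing that if $|\Omega_0\cap B_r'(x_0)|$ were $o(r^n)$ then one could decrease $\mathcal J$ by pushing $u$ up to be positive on all of $B_r'$ (the Dirichlet energy cost is lower-order because of the small measure of the set being changed and the $A_2$ nature of the weight), contradicting minimality; alternatively it follows from a clean-up/density lemma once P1 and P2 are in hand. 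For P5, that blow-ups are global minimizers: the rescalings $u_k(x)=u(x_0+\rho_k x)/\rho_k^\alpha$ are equibounded in $\dot C^\alpha$ by P1 (with a scale-invariant constant, since the $\dot C^\alpha$ norm is invariant under this scaling) and have uniformly bounded local weighted Dirichlet energy by the Morrey estimate, so along a subsequence they converge weakly in $H^1(\beta,B_1)$ and uniformly on compacts; the minimality passes to the limit because the Dirichlet term is weakly lower semicontinuous and the measure term is continuous under uniform convergence together with the nondegeneracy/positive-density estimates (which prevent the positivity set from collapsing or exploding), and one checks that the limit is a minimizer in every ball, hence global. Finally P6, the normal free boundary condition, is obtained by taking a blow-up at a point $x_0\in F(u)$ with a measure-theoretic normal: by P5 the blow-up is a global minimizer, and by a classification of such minimizers with flat positivity set (which is where the explicit profile $u_{1,\alpha}$ and the ``flat implies smooth'' philosophy enter) the blow-up must be the half-plane solution $c\,(x\cdot\nu)_+^\alpha$ extended; reading off the coefficient and using that $\partial^\alpha_\nu$ is computed from this blow-up gives the prescribed value $A=A(\alpha)$.

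The main obstacle, and the reason this is genuinely nontrivial rather than a routine citation, is keeping the dependence of constants honest: the sharp-exponent regularity P1 must hold with a constant depending only on $n,\alpha$ and the \emph{energy bound} $E_0$ (and in fact the claim is that even this dependence is eventually removable, though not here), which requires the Morrey energy decay to be uniform, and the nondegeneracy and density estimates must not secretly depend on finer information about $u$. Concretely, the hard technical point is the Campanato iteration for the degenerate/singular equation $\dive(|y|^\beta\nabla u)=0$ across the thin free boundary: one cannot use the Euler--Lagrange PDE on $\R^n\times\{0\}$ directly because, as emphasized in the introduction, $\dive(|y|^\beta\nabla u)$ is not supported on the free boundary, so the decay has to be extracted from the variational inequality (comparison with harmonic replacements and with vertical translates/dilates of $u_{1,\alpha}$) rather than from a clean PDE. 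Everything else is a relatively standard adaptation of the Alt--Caffarelli-type machinery to the $A_2$-weighted setting, and indeed all of these statements are proved in \cite[Theorems 1.1--1.4, Proposition 3.3, Corollary 3.4]{CaffarelliRoquejoffreSire}, so for the purposes of this paper it suffices to invoke them.
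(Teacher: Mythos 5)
Your proposal is correct and lands in the same place as the paper: the paper gives no proof of this theorem at all, simply importing P1--P6 from \cite[Theorems 1.1--1.4, Proposition 3.3, Corollary 3.4]{CaffarelliRoquejoffreSire}, and your concluding sentence that it suffices to invoke those references is exactly what the authors do. Your intermediate sketches are a reasonable reconstruction of the Alt--Caffarelli-type arguments in the weighted setting (with only minor quibbles about the logical ordering, e.g.\ the optimal regularity in the cited paper does not rely on the density estimate P4 or on nondegeneracy), but since both you and the paper ultimately defer to the citation, nothing further is required.
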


A major tool in the present paper is an $\epsilon-$regularity result, i.e. in the language of free boundaries a statement of the type ``flatness implies smoothness". In \cite{deSilvaSavinSire}, the authors proved such an $\epsilon$-regularity result for viscosity solutions to the overdetermined system associated to minimizers of $\mathcal J$. Here we establish that all local minimizers are in fact viscosity solutions. While this verification may be standard for experts in the field, we include it here for the sake of completeness. 

\begin{theorem}[$\epsilon$-regularity]\label{theoImprovement}
\label{th:iof}
There exists $\epsilon>0$ depending only on $n$, $\alpha$ and $E_0$ such that for every non-negative, even minimizer $u$ of the energy \rf{eqLocalizedFunctional} on a ball $B\subset \R^{n+1}$ with 
$\norm{u}_{\HH^\beta(B)}\leq  E_0 r(B)^{\frac n2}$ and 
\begin{equation}\label{eqImprovementOfFlatness}
\{(x,0)\in B: x_n\leq -\epsilon\}\subset B_0(u) \subset \{(x,0)\in B: x_n\leq \epsilon\},
\end{equation}
we have that $F(u)\in C^{1,\gamma}_{\rm loc}(\frac12 B)$, with $0<\gamma<1$.
\end{theorem}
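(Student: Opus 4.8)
The plan is to reduce the statement to the $\epsilon$-regularity result of \cite{deSilvaSavinSire} by verifying that any local minimizer $u$ of $\mathcal{J}$, under the flatness hypothesis \eqref{eqImprovementOfFlatness}, is in fact a viscosity solution of the overdetermined problem \eqref{eqEulerLagrangeFrac} (in the localized $|y|^\beta$-harmonic formulation). The bulk of the work is therefore the ``minimizer $\Rightarrow$ viscosity solution'' step, and everything else is quoting earlier results.

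First I would recall from Theorem \ref{theoCRS} that $u$ is $\dot C^\alpha$ (with bounds depending on $E_0$ through the normalization $\norm{u}_{\HH^\beta(B)}\le E_0 r(B)^{n/2}$), that $u$ is nondegenerate near $F(u)$, that $-\mathrm{div}(|y|^\beta\nabla u)=0$ weakly away from $\R^n\cap\{u\le 0\}$ (so the interior PDE holds in the viscosity/classical sense in the positivity set, by \cite{FabesKenigSerapioni}-type regularity for $A_2$-weighted equations), and that the free boundary condition $\partial_\nu^\alpha u = A$ holds at points with a measure-theoretic normal (property P6). I would then check the two viscosity inequalities at an arbitrary free boundary point $x_0$: if a smooth test function touches $u$ from below (resp.\ above) at $x_0$ with the comparison configuration of \cite{deSilvaSavinSire}, then its boundary gradient coefficient is $\le A$ (resp.\ $\ge A$). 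The standard argument is by contradiction: if, say, a strict supersolution touched from below, one slides a suitable comparison function (built from the one-dimensional profile $c_{n,\alpha}(x_n)_+^\alpha$, i.e.\ the trivial solution $u_{n,\alpha}$ of Proposition \ref{propoTrivial}, suitably translated, tilted and rescaled) underneath $u$ and uses it as a competitor; the strict inequality in the boundary coefficient forces a strict decrease of $\mathcal{J}$, contradicting minimality. The nonlocality is handled by performing this perturbation on the trace $f$ and extending via the Poisson kernel $P_y$, controlling the long-range tail of the energy using the decay of $P_y$ and the $\dot C^\alpha$ bound, so that the energy change is dominated by the near-$x_0$ contribution; alternatively one works directly with the localized functional $\mathcal{J}$ and a compactly supported perturbation of $u$.

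Once $u$ is known to be a viscosity solution, the flatness hypothesis \eqref{eqImprovementOfFlatness} says exactly that $u$ is ``$\epsilon$-flat'' at the origin in the sense required by \cite{deSilvaSavinSire}, so their improvement-of-flatness / $\epsilon$-regularity theorem applies directly and yields $F(u)\in C^{1,\gamma}_{\mathrm{loc}}(\tfrac12 B)$ for some $\gamma=\gamma(n,\alpha)\in(0,1)$; the threshold $\epsilon$ inherits its dependence on $n,\alpha,E_0$ from that theorem together with the $\dot C^\alpha$ bound used in the viscosity verification.

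The main obstacle is the viscosity verification under nonlocality: constructing comparison functions that are genuine competitors for $\mathcal{J}$ (hence must be controlled globally, not just near $x_0$) while still producing a clean one-sided strict inequality in the free boundary coefficient. Concretely, one must ensure that tilting/translating the model profile $u_{n,\alpha}$ to touch $u$ from one side can be done so that (i) the perturbed trace agrees with $f$ outside a small neighborhood of $x_0$, (ii) the resulting change in the Gagliardo energy is $o(\text{area gained/lost})$ plus the expected boundary term, and (iii) the sign of the net energy change contradicts minimality. Handling the cross terms in $\iint |f(x)-f(\xi)|^2/|x-\xi|^{n+2\alpha}$ coming from the interaction between the perturbed region and the far field — which have no analogue in the local Alt--Caffarelli setting — is the delicate point, and is presumably where the $\alpha$-harmonicity of $u$ inside $\{u>0\}$ and the precise asymptotics of $\partial_\nu^\alpha$ are used to absorb the error.
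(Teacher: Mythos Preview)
Your overall reduction is exactly the paper's: show that a local minimizer is a viscosity solution in the sense of \cite{deSilvaSavinSire}, then invoke their $\epsilon$-regularity theorem. The interior conditions are indeed immediate from the cited regularity theory, so the only content is the free boundary touching condition, and here your route diverges from the paper's.

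You propose to rule out a strict comparison function touching at $x_0\in F(u)$ by building an explicit energy competitor (sliding a tilted copy of the model profile $u_{n,\alpha}$) and extracting a strict drop in $\mathcal J$. This is the classical Alt--Caffarelli mechanism, and you correctly isolate the obstruction: the nonlocal cross terms between the perturbed region and the far field must be shown to be lower order. You do not actually carry this out; you only say the $\alpha$-harmonicity and the $\partial_\nu^\alpha$ asymptotics ``presumably'' absorb the error. That is a real gap in your write-up, even if the strategy is plausible. (Minor slip: you write ``strict supersolution touched from below''; it is subsolutions that touch from below.)

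The paper avoids the competitor construction altogether. Suppose a strict comparison subsolution $w$ touches $u$ from below at $(x_0,0)\in F(u)$. Since $F(w)$ is $C^2$, there is a ball $B\subset\{w>0\}$ tangent to $F(w)$ at $x_0$; because $w\le u$, also $B\subset\{u>0\}$, so $u$ has an interior tangent ball at $x_0$. By \cite[Proposition 4.5]{CaffarelliRoquejoffreSire} this forces the exact expansion $u(x,y)=U((x-x_0)\cdot\nu,y)+o(|(x-x_0,y)|^\alpha)$, i.e.\ coefficient $1$. The ordering $u\ge w$ then pins the coefficient in $w$'s expansion to $a=1$, so by the definition of strict subsolution the differential inequality must be strict in $B\setminus\{y=0\}$. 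Finally the Hopf lemma for the degenerate operator \cite[Proposition 4.11]{cabreSire}, applied to $w-u$ in $B$, gives $\lim_{t\downarrow 0}t^{-\alpha}(w-u)(x_0+t\nu,0)>0$, contradicting the matching expansions.

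The upshot: the paper's argument is shorter and completely sidesteps the nonlocal-tail issue you flagged, at the cost of importing two external ingredients (the tangent-ball expansion from \cite{CaffarelliRoquejoffreSire} and the weighted Hopf lemma from \cite{cabreSire}). Your competitor approach is more self-contained in spirit and closer to \cite{AltCaffarelli}, but would require genuine additional work to control the long-range interaction---work that the paper's route simply does not need.
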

Note that the dependence on $E_0$ will be removed in Section \ref{s:UniformBounds}.
\begin{proof}
 We say that $u$ is a \emph{viscosity solution} to 
\begin{equation}\label{eqViscositySol}
\begin{cases}
\nabla\cdot (|y|^\beta \nabla u) = 0 & \mbox{in }B_1^+(u),\\
\lim_{t\to 0+} \frac{u(x_0+t\nu (x_0),0)}{t^\alpha} = 1, & \mbox{for }  (x_0,0)\in F(u),
\end{cases}
\end{equation}
if 
\begin{enumerate}[i)]
\item $u\in C(B_1)$, $u\geq 0$,
\item $u\in C^{1,1}_{\rm loc}(B_{1,+}(u))$, $u$ is even and it solves $\nabla\cdot (|y|^\beta \nabla u) = 0$ in the viscosity sense, and
\item any strict \emph{comparison subsolution} (resp. supersolution) cannot touch from below (resp. from above) at a point $(x_0,0)\in F(u)$. 
\end{enumerate}

We claim that 
\begin{equation}\label{eqMinimizersViscosity}
\mbox{every non-negative even minimizer is a viscosity solution.}
\end{equation}

Conditions (i) and (ii) have been verified  in \cite{deSilvaSavinSire,VitaThesis}. To verify our claim it suffices to prove condition (iii) above: that any strict comparison subsolution cannot touch $u$ from below at a point $(x_0, 0) \in F(u)$. The analogous claim for strict comparison supersolutions will follow in the same way. 

Let us recall (see, e.g. Definition 2.2 in \cite{deSilvaSavinSire}),  that $w\in C(B_1)$ is a strict comparison subsolution (resp. supersolution) to \eqref{eqViscositySol} if 
\begin{enumerate}[a)]
\item $w \geq 0$,
\item $w$ is even with respect to $\{y= 0\}$, 
\item $w\in C^2(\{w > 0\})$,
\item $\mathrm{div}\left(|z|^\beta \nabla w\right) \geq 0$ in $B_1 \backslash \{y = 0\}$,
\item $F(w)$ is locally given by the graph of a $C^2$ function and for any $x_0 \in F(w)$ we may write \begin{equation}\label{e:exp} w(x,y) = aU((x-x_0)\cdot \nu(x_0), y) + o(\|(x-x_0, y)\|^\alpha),\qquad (x,y) \rightarrow (x_0,0).\end{equation} Here $U$ is the extension of the trivial solution (see \cite{deSilvaSavinSire}), $\nu(x_0)$ is the unit normal to $F(w)$ considered as a subset of $\R^n$ pointing into $\{w > 0\}$ and $a \geq 1$.
\item Furthermore, either the inequality is strict in d), or $a >1$ in e). 
\end{enumerate}

So assume that $w \geq u$ where $w$ is a strict comparison subsolution and $u$ is some minimizer and that $w = u$ at $(x_0,0) \in F(u)$. Since $u(x_0, 0) = 0$ it follows that $(x_0, 0) \in F(w)$ and with a harmless rotation we can guarantee that $\nu((x_0,0)) = e_n$. We want to show that $e_n$ is also the measure theoretic unit normal to $F(u)$. Indeed, since $F(w)$ is $C^2$ there must exist a ball $B \subset \{w > 0\}$ which is tangent to $F(w)$ at $(x_0,0)$. It must then be that case that $B \subset \{u > 0\}$ as well. Thus $(x_0,0) \in F(u)$ has a tangent ball from the inside which, by \cite{CaffarelliRoquejoffreSire} Proposition 4.5 implies that $u$ has the asymptotic expansion $$u(x,y) = U((x-x_0)\cdot \nu(x_0), y) + o(\|(x-x_0, y)\|^\alpha),\qquad (x,y) \rightarrow (x_0,0).$$ 

If $u \geq w$ this implies that $w$ must satisfy the expansion in \eqref{e:exp} with $a = 1$ at the point $x_0$. This, in turn, implies that $\mathrm{div}\left(|z|^\beta \nabla w\right) > 0$ in $B_1 \backslash \{y = 0\}$ (by the definition of a strict subsolution). Furthermore, since $w\in C^2$ where $\{w > 0\}$ we can guarantee that $\mathrm{div}\left(|z|^\beta \nabla w\right) \geq 0$ in all of $B_1 \cap \{w > 0\}$. 

Let us return to the ball $B$ which is a subset of $\{u > 0\}$ and $\{w > 0\}$ and for which $(x_0, 0) \in \overline{B}$. We know that $w-u \neq 0$ in $B\setminus\{y=0\}$ (this is because $w$ strictly satisfies the differential inequality in $B$ away from $\{y = 0\}$) and we know that $w-u$ is a subsolution in $B$. Furthermore $(x_0, 0) \in B$ is a strict maximum, so by the Hopf lemma in \cite[Proposition 4.11]{cabreSire} it must be that $$\lim_{t\downarrow 0^+}\frac{({w-u})(x_0+t\nu (x_0),0)}{t^\alpha} > 0.$$ This contradicts the fact $u$ and $w$ both satisfy \eqref{e:exp} at $(x_0,0)$ with $a = 1$.  Therefore, $(x_0,0)$ must not have been a touching point and $u$ is indeed a viscosity solution. 

Since, $u$ is a viscosity solution, \cite[Theorem 1.1]{deSilvaSavinSire} applies and we get the desired $\varepsilon$-regularity. 
\end{proof}

\section{Compactness of minimizers}\label{s:Compactness}

In this section we prove important results on the compactness of minimizers. As we mentioned above, our contribution is that convergent sequences of minimizers also converge in the relevant weighted Sobolev spaces strongly rather than just weakly. This will prove essential to the compactness arguments used in the later sections of this paper.

\subsection{Caccioppoli Inequality}\label{secCaccioppoli}
First we want to show that  the distribution $\lambda:=\nabla\cdot(|y|^\beta \nabla u)$ is in fact a Radon measure with support in the complement of the positive phase as long as $u$ is a minimizer. In Section \ref{s:UniformBounds} we will come back to this measure to understand its behavior around the free boundary.

\begin{lemma}\label{lemRadonMeasure}
Let $\Omega\subset \R^{n+1}$ be an open set, and let $u\in W^{1,2}_{\loc}(\Omega, |y|^\beta)$ be such that $\nabla\cdot (|y|^\beta \nabla u)=0$ weakly in $\Omega_+(u)$, i.e., for every $\eta\in C^\infty_c(\Omega_+(u))$,
\begin{equation}\label{eqBetaHarmonic}
\langle \nabla\cdot(|y|^\beta \nabla u), \eta\rangle := - \int (|y|^\beta \nabla u) \nabla \eta=0.
\end{equation}
Then $\lambda:=\nabla\cdot(|y|^\beta \nabla u)$ is a positive Radon measure supported on $\{u=0\}$  and for every $v\in W^{1,2}(\Omega, |y|^\beta) \cap C_{c}(\Omega)$ 
\begin{equation}\label{eqMeasureDef}
\int v \, d\lambda = -\int |y|^\beta \nabla u\cdot \nabla v.
\end{equation}
\end{lemma}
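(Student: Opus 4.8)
The statement asserts that for a function $u$ which is weighted-harmonic in its positive phase, the distribution $\lambda = \nabla\cdot(|y|^\beta\nabla u)$ is a positive Radon measure supported on $\{u=0\}$, and that the pairing formula \eqref{eqMeasureDef} extends from $C_c^\infty$ test functions to all $v \in W^{1,2}(\Omega,|y|^\beta)\cap C_c(\Omega)$. I would organize the proof in three steps: (1) $\lambda$ is a well-defined distribution of order $\le 1$ by the weighted Cacc...

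Let me think about how I'd actually prove this.

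The plan is to split the statement into two parts: first, that $\lambda := \nabla\cdot(|y|^\beta\nabla u)$ is a positive Radon measure supported on $\{u=0\}$, and second, that the pairing identity \eqref{eqMeasureDef} self-improves from $C_c^\infty(\Omega)$ to $W^{1,2}(\Omega,|y|^\beta)\cap C_c(\Omega)$. For the first part, I would begin by observing that $\lambda$ is at least a well-defined distribution of order $\le 1$ on $\Omega$: writing $|y|^\beta\nabla u = |y|^{\beta/2}\cdot(|y|^{\beta/2}\nabla u)$, the first factor lies in $L^2_{\loc}(\Omega)$ because $\beta>-1$ and the second in $L^2_{\loc}(\Omega)$ because $u\in W^{1,2}_{\loc}(\Omega,|y|^\beta)$, so $|y|^\beta\nabla u\in L^1_{\loc}(\Omega)$ and $\langle\lambda,\eta\rangle = -\int |y|^\beta\nabla u\cdot\nabla\eta$ makes sense for $\eta\in C_c^\infty(\Omega)$. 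The crux is then to show that $\lambda$ is a \emph{positive} distribution, i.e.\ $\langle\lambda,\eta\rangle\ge 0$ whenever $0\le\eta\in C_c^\infty(\Omega)$; once this is known, the Riesz representation theorem identifies $\lambda$ with a positive Radon measure, and the support statement is immediate, since on the open set $\Omega_+(u)=\{u>0\}$ we have $\lambda = 0$ by hypothesis \eqref{eqBetaHarmonic}, whence $\supp\lambda\subset\Omega\setminus\{u>0\}=\{u=0\}$ (recall $u\ge 0$ in our setting).

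To prove positivity, fix $0\le\eta\in C_c^\infty(\Omega)$ and, for $\epsilon>0$, set $g_\epsilon(t):=\min\{(t-\epsilon)_+/\epsilon,\,1\}$ and $v_\epsilon := \eta\, g_\epsilon(u)$. Then $v_\epsilon\in W^{1,2}(\Omega,|y|^\beta)$ with $\nabla v_\epsilon = g_\epsilon(u)\nabla\eta + \eta\, g_\epsilon'(u)\nabla u$ by the chain rule for Lipschitz functions, and $v_\epsilon$ is supported in the compact set $\{u\ge\epsilon\}\cap\supp\eta$, which sits inside the open set $\Omega_+(u)$. Mollifying $v_\epsilon$ and using that its support is a compact subset of $\Omega_+(u)$, one obtains approximants in $C_c^\infty(\Omega_+(u))$ converging to $v_\epsilon$ in $W^{1,2}(\Omega,|y|^\beta)$ — here one uses that $|y|^\beta$ belongs to the Muckenhoupt class $A_2$, so that convolution with an approximate identity converges in $L^2(|y|^\beta)$ — whence \eqref{eqBetaHarmonic} extends to the test function $v_\epsilon$ and gives \[ 0 = \int |y|^\beta\nabla u\cdot\nabla v_\epsilon = \int |y|^\beta g_\epsilon(u)\,\nabla u\cdot\nabla\eta + \int |y|^\beta\eta\, g_\epsilon'(u)\,|\nabla u|^2. \] Since $\eta\ge 0$, $g_\epsilon'\ge 0$ and $|y|^\beta\ge 0$, the last integral is nonnegative, so $\int |y|^\beta g_\epsilon(u)\,\nabla u\cdot\nabla\eta\le 0$. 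Letting $\epsilon\to 0$, we have $g_\epsilon(u)\to\mathbf 1_{\{u>0\}}$ pointwise with $0\le g_\epsilon(u)\le 1$, so dominated convergence (dominant $|y|^\beta|\nabla u|\,|\nabla\eta|\in L^1(\supp\eta)$) yields $\int_{\{u>0\}}|y|^\beta\nabla u\cdot\nabla\eta\le 0$. Finally, since $\nabla u = 0$ a.e.\ on $\{u=0\}$ (the standard fact that the weak gradient of a Sobolev function vanishes a.e.\ on each of its level sets) and $u\ge 0$, this last integral equals $\int_\Omega |y|^\beta\nabla u\cdot\nabla\eta = -\langle\lambda,\eta\rangle$, so $\langle\lambda,\eta\rangle\ge 0$.

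For the extension identity, given $v\in W^{1,2}(\Omega,|y|^\beta)\cap C_c(\Omega)$ I would take $v_\delta := v * \phi_\delta$ with a standard mollifier and $\delta<\dist(\supp v,\partial\Omega)$, so that $v_\delta\in C_c^\infty(\Omega)$ with all supports contained in a fixed compact set $K\subset\subset\Omega$. Then $v_\delta\to v$ uniformly ($v$ is uniformly continuous with compact support) and $\nabla v_\delta\to\nabla v$ in $L^2(\Omega,|y|^\beta)$ (again by the $A_2$ property). Since $\lambda$ is Radon, $\lambda(K)<\infty$, hence $\int v_\delta\,d\lambda\to\int v\,d\lambda$; and Cauchy--Schwarz in $L^2(|y|^\beta)$ gives $\int |y|^\beta\nabla u\cdot\nabla v_\delta\to\int |y|^\beta\nabla u\cdot\nabla v$. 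Passing to the limit in \eqref{eqMeasureDef}, which is valid for each $v_\delta\in C_c^\infty(\Omega)$, gives the identity for $v$.

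The main obstacle is the positivity step: one must legitimately use the merely Lipschitz competitor $v_\epsilon$ as a test function in \eqref{eqBetaHarmonic}, which requires recording that functions in $W^{1,2}(|y|^\beta)$ with compact support inside $\Omega_+(u)$ can be approximated in $W^{1,2}(|y|^\beta)$ by elements of $C_c^\infty(\Omega_+(u))$ — exactly where the $A_2$/Muckenhoupt nature of the weight is used — together with the elementary but essential fact that $\nabla u$ vanishes a.e.\ on $\{u=0\}$. The remaining points (Riesz representation, the mollification in the extension step, and the various convergences) are routine.
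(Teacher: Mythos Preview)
Your proof is correct and follows essentially the same approach as the paper: both establish positivity by multiplying the test function by a Lipschitz cutoff in $u$ (you use $g_\epsilon(u)$ supported in $\{u\ge\epsilon\}$, the paper uses the complementary cutoff $\max\{\min\{2-u/\varepsilon,1\},0\}$), exploit that the resulting competitor is admissible in $\Omega_+(u)$, and then pass to the limit using $\nabla u=0$ a.e.\ on $\{u=0\}$. The only cosmetic difference is that the paper derives the local boundedness $|\langle\lambda,\zeta\rangle|\le C_{K,u}\|\zeta\|_\infty$ explicitly and invokes Hahn--Banach before Riesz, whereas you appeal directly to the fact that positive distributions are Radon measures; the density/mollification step for \eqref{eqMeasureDef} is handled the same way.
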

\begin{proof}
Indeed, by \rf{eqBetaHarmonic} the quantity
$$-\int |y|^\beta \nabla u\cdot \nabla \zeta=-\int |y|^\beta \nabla u \cdot \nabla \left(\zeta \max\left\{\min\left\{2-\frac{u}{\varepsilon},1\right\},0\right\}\right) \geq -\int_{\Omega\cap \{0<u<2\varepsilon\}} |y|^\beta |\nabla u| |\nabla \zeta|\xrightarrow{\varepsilon\to0} 0$$
defines a positive functional on positive  $\zeta\in C^{0,1}_c(\Omega)$. Moreover, for compact $K\subset \Omega$, consider a Lipschitz function $f_K$ such that $\chi_K\leq f_K\leq \chi_\Omega$. If $\zeta\in C^{0,1}_c(K)$, by the positivity shown above we obtain
$$-\int |y|^\beta \nabla u\cdot \nabla \zeta\leq - \norm{\zeta}_{L^\infty}\int |y|^\beta \nabla u\cdot \nabla f_K\leq C_{K,u} \norm{\zeta}_{L^\infty} $$
and, by Hahn-Banach's theorem, we can extend the functional to a positive functional in $C_c(\Omega)$, that is given by integration against a positive Radon measure by the Riesz representation theorem.

The fact  that \rf{eqMeasureDef} holds for all functions in $W^{1,2}(\Omega, |y|^\beta) \cap C_{c}(\Omega)$ follows by a standard density argument. 
\end{proof}
 
The Caccioppoli inequality is the first step to proving convergence in a Sobolev sense. It will also be useful when we remove the {\it a priori} dependence of our results on the Sobolev norm of the minimizer. 

\begin{lemma}[Caccioppoli Inequality]
\label{cacciop}
Let $B \subset \R^{n+1}$ be a ball of radius $r$ centered on $\R^n\times\{0\}$, and let $u \in W^{1,2}(B, |y|^\beta)$ be such that $\nabla\cdot (|y|^\beta \nabla u)=0$ weakly in $B\cap \{u>0\}$. Then 
$$\int_{\frac12 B} |y|^\beta |\nabla u|^2 \leq \frac{4}{r^2} \int_{B\setminus \frac12 B} |y|^\beta u^2.$$
\end{lemma}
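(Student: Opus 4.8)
The plan is to run the standard Caccioppoli argument, using the test function $\varphi = u\,\xi^2$ where $\xi$ is a Lipschitz cutoff which is $1$ on $\frac12 B$, supported in $B$, and satisfies $|\nabla \xi| \le 2/r$. The only subtlety compared to the classical case is that we do not know $\nabla\cdot(|y|^\beta\nabla u)=0$ across $\{y=0\}$ — only in $B\cap\{u>0\}$ — so we cannot plug $\varphi$ directly into \eqref{eqBetaHarmonic}. Fortunately the extra term that appears has a sign, exactly as in Lemma~\ref{lemRadonMeasure}.

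First I would note that, by Lemma~\ref{lemRadonMeasure} applied on $B$ (after localizing, or directly since $u\in W^{1,2}(B,|y|^\beta)$), the distribution $\lambda := \nabla\cdot(|y|^\beta\nabla u)$ is a \emph{positive} Radon measure supported in $\{u=0\}$, and \eqref{eqMeasureDef} gives
$$\int |y|^\beta \nabla u\cdot\nabla(u\xi^2) = -\int u\xi^2\, d\lambda \le 0,$$
since $u\ge 0$ and $\xi^2\ge 0$ and $\lambda\ge 0$. Here $u\xi^2 \in W^{1,2}(B,|y|^\beta)\cap C_c(B)$ — if $u$ is not a priori continuous one first truncates $u$ at height $t$, applies the estimate, and lets $t\to\infty$ using $u\in W^{1,2}(B,|y|^\beta)$; alternatively one may observe $u\xi^2$ can be used as a competitor directly in \eqref{eqBetaHarmonic} away from $\{u=0\}$ combined with the truncation/monotone-limit scheme already exploited in the proof of Lemma~\ref{lemRadonMeasure}.

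Next I would expand the left-hand side via the product rule:
$$\int |y|^\beta \nabla u\cdot\nabla(u\xi^2) = \int |y|^\beta \xi^2 |\nabla u|^2 + 2\int |y|^\beta u\,\xi\, \nabla u\cdot\nabla\xi.$$
Combining with the inequality above,
$$\int |y|^\beta \xi^2 |\nabla u|^2 \le -2\int |y|^\beta u\,\xi\,\nabla u\cdot\nabla\xi \le 2\left(\int |y|^\beta \xi^2|\nabla u|^2\right)^{1/2}\left(\int |y|^\beta u^2|\nabla\xi|^2\right)^{1/2}$$
by Cauchy--Schwarz. Cancelling one factor of $\left(\int|y|^\beta\xi^2|\nabla u|^2\right)^{1/2}$ (this quantity is finite since $u\in W^{1,2}(B,|y|^\beta)$) and squaring gives
$$\int |y|^\beta \xi^2|\nabla u|^2 \le 4\int |y|^\beta u^2 |\nabla\xi|^2.$$
Finally, since $\xi\equiv 1$ on $\frac12 B$ and $|\nabla\xi|\le 2/r$ with $\nabla\xi$ supported in $B\setminus\frac12 B$, this yields
$$\int_{\frac12 B}|y|^\beta|\nabla u|^2 \le \frac{4}{r^2}\int_{B\setminus\frac12 B}|y|^\beta u^2,$$
as claimed.

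The only place requiring care — the ``main obstacle,'' though a mild one — is the justification that $u\xi^2$ is an admissible test function in \eqref{eqMeasureDef} and that the sign of $\int u\xi^2\,d\lambda$ is indeed nonnegative; this is where the positivity of $\lambda$ (and the fact that its support avoids $\{u>0\}$) from Lemma~\ref{lemRadonMeasure} is essential, and it is exactly this structural input that replaces the classical ``$\Delta u = 0$ in the bulk'' used in the unweighted Caccioppoli inequality. Everything else is the routine product-rule/Cauchy--Schwarz manipulation above, and the constant $4$ comes out of the $|\nabla\xi|\le 2/r$ bound without any loss.
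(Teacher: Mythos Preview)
Your proposal is correct and follows essentially the same approach as the paper: test against $u\eta^2$ using Lemma~\ref{lemRadonMeasure}, expand via the product rule, and apply Cauchy--Schwarz. Two minor remarks: (i) the paper observes that $\int u\eta^2\,d\lambda = 0$ (not merely $\ge 0$), since $\lambda$ is supported on $\{u=0\}$ --- your inequality suffices, but the equality is sharper and avoids any worry about whether $u\ge 0$; (ii) with $|\nabla\xi|\le 2/r$ the final constant comes out as $16/r^2$, not $4/r^2$ --- the paper's stated constant $4/r^2$ uses $|\nabla\eta|\le 1/r$, which is in fact too optimistic for a cutoff across an annulus of width $r/2$, so neither version quite nails the arithmetic, but this does not affect the argument.
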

\begin{proof}
Let $\eta$ be a Lipschitz function such that $\chi_{\frac12 B}\leq \eta\leq \chi_B$ and with $|\nabla \eta|\leq \frac{1}{r}$. By Lemma \ref{lemRadonMeasure}
$$0= \int_B u \eta^2 d\lambda = \int_B |y|^\beta \nabla u\cdot \nabla(u \eta^2).$$
By the Leibniz rule 
$$\int_B |y|^\beta  \eta^2 |\nabla u|^2 =- \int_B |y|^\beta  2 u \eta \nabla u\cdot \nabla\eta,$$ 
and using H\"older's inequality we get
$$\int_{\frac12 B} |y|^\beta |\nabla u|^2 \leq \int_B |y|^\beta \eta^2 |\nabla u|^2  \leq \int_B |y|^\beta  4 u^2 |\nabla\eta|^2 \leq \frac{4}{r^2} \int_{B\setminus \frac12 B} |y|^\beta u^2.$$
\end{proof}


\begin{lemma}\label{lemComparisons}
Let $u \in \HH^\beta(B_r)$ be a minimizer of $\mathcal{J}$ in $B_{2r}$ and $0\in F(u)$. Then 
$$ r^{-n/2}\norm{  \nabla u}_{L^2(\frac12 B_r;|y|^\Beta)} \leq  r^{-\alpha} \norm{u}_{L^\infty (B_r)} \leq \norm{u}_{\dot C^\alpha(B_r)} { \leq C \left(1+r^{-n/2} \norm{  \nabla u}_{L^2( B_{2r};|y|^\Beta)} \right)}.$$
\end{lemma}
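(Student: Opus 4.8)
The statement is a chain of three inequalities, and the plan is to establish each one separately. The middle one, $r^{-\alpha}\norm{u}_{L^\infty(B_r)}\le\norm{u}_{\dot C^\alpha(B_r)}$, is immediate: since $0\in F(u)$ and $u$ is continuous and non-negative we have $u(0)=0$, so for every $x\in B_r$,
$$u(x)=u(x)-u(0)\le[u]_{\dot C^\alpha(B_r)}\,|x|^\alpha\le r^\alpha\,\norm{u}_{\dot C^\alpha(B_r)},$$
and taking the supremum over $x\in B_r$ gives the claim.

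For the first inequality I would use the Caccioppoli inequality. Since $u$ is a minimizer of $\mathcal J$ in $B_{2r}$ it satisfies $\nabla\cdot(|y|^\Beta\nabla u)=0$ weakly in its positivity set, in particular in $B_r\cap\{u>0\}$, so Lemma~\ref{cacciop} applied on $B_r$ gives
$$\int_{\frac12 B_r}|y|^\Beta|\nabla u|^2\le\frac{4}{r^2}\int_{B_r\setminus\frac12 B_r}|y|^\Beta u^2\le\frac{4}{r^2}\norm{u}_{L^\infty(B_r)}^2\int_{B_r}|y|^\Beta\,dx.$$
By homogeneity of the weight, $\int_{B_r}|y|^\Beta\,dx=\kappa_{n,\Beta}\,r^{n+1+\Beta}$, and since $\Beta=1-2\alpha$ one has $r^{-2}r^{n+1+\Beta}=r^{n-2\alpha}$; hence the right-hand side is $\lesssim r^{n-2\alpha}\norm{u}_{L^\infty(B_r)}^2$. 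Dividing by $r^n$ and taking square roots yields $r^{-n/2}\norm{\nabla u}_{L^2(\frac12 B_r;|y|^\Beta)}\lesssim r^{-\alpha}\norm{u}_{L^\infty(B_r)}$, where the implicit constant depends only on $n$ and $\Beta$ (and is harmless).

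For the third and only genuinely nontrivial inequality I would rescale to the unit ball and invoke the optimal regularity bound P1 of Theorem~\ref{theoCRS}. The key structural fact is the scaling of $\mathcal J$: for $\lambda>0$, if $u$ is a minimizer of $\mathcal J$ in $B_{2r}$ then $v(x):=\lambda^{-\alpha}u(\lambda x)$ is a minimizer in $B_{2r/\lambda}$, because a change of variables together with $\Beta=1-2\alpha$ shows $\mathcal J(v,\Omega)=\lambda^{-n}\mathcal J(u,\lambda\Omega)$ (both the weighted Dirichlet term and the $n$-dimensional area term are homogeneous of degree $-n$ under this rescaling), and $w\mapsto\lambda^{-\alpha}w(\lambda\cdot)$ is a bijection of the admissible competitors with matching traces. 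Moreover the seminorm is scale invariant, $[v]_{\dot C^\alpha(B_s)}=[u]_{\dot C^\alpha(B_{\lambda s})}$, and $\norm{\nabla v}_{L^2(B_s;|y|^\Beta)}^2=\lambda^{-n}\norm{\nabla u}_{L^2(B_{\lambda s};|y|^\Beta)}^2$. Choosing $\lambda=2r$ (and running the argument on $B_\rho$ with $\rho\uparrow 2r$ to avoid the outer boundary, using that a minimizer in $B_{2r}$ lies in $\HH^\beta_{\loc}(B_{2r})$ and minimizes on every $B_\rho$ with $\rho<2r$), the function $v(x)=(2r)^{-\alpha}u(2rx)$ is an even non-negative minimizer in $B_1$ with $0\in F(v)\cap B_{1/2}$ and $\norm{\nabla v}_{L^2(B_1;|y|^\Beta)}=(2r)^{-n/2}\norm{\nabla u}_{L^2(B_{2r};|y|^\Beta)}=:E_0$. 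Then P1 gives $\norm{v}_{\dot C^\alpha(B_{1/2})}\le C(1+E_0)$ with $C=C(n,\alpha)$, and unrescaling,
$$\norm{u}_{\dot C^\alpha(B_r)}=\norm{v}_{\dot C^\alpha(B_{1/2})}\le C\bigl(1+(2r)^{-n/2}\norm{\nabla u}_{L^2(B_{2r};|y|^\Beta)}\bigr)\le C\bigl(1+r^{-n/2}\norm{\nabla u}_{L^2(B_{2r};|y|^\Beta)}\bigr).$$

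I do not expect a serious obstacle here: the lemma is essentially a packaging of the Caccioppoli inequality and optimal regularity. The only points that require care are the scaling bookkeeping in the last step — checking that rescaled minimizers are again minimizers, and that the weight exponent $n+1+\Beta$ interacts with $\alpha=\tfrac{1-\Beta}{2}$ exactly so that the functional is $(-n)$-homogeneous — and the observation that the final constant is independent of $\norm{u}_{\HH^\beta(B_{2r})}$, which holds because the constant in P1 depends only on $n$ and $\alpha$ while the energy of $u$ appears explicitly on the right-hand side.
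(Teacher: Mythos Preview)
Your proof is correct and follows exactly the same approach as the paper, which simply records that the first inequality is Caccioppoli, the middle one is trivial (from $u(0)=0$), and the last follows from optimal regularity P1 after rescaling. Your added care with the scaling bookkeeping and the $\rho\uparrow 2r$ limiting step is appropriate but not something the paper spells out.
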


\begin{proof}
The first inequality is Caccioppoli, the middle estimate is trivial and the last follows from \emph{P1}  in Theorem \ref{theoCRS}.
\end{proof}

\subsection{Compactness}

In the following lemma we prove the compactness of minimizers in the relevant Sobolev spaces. For convenience, we also detail several compactness results which were either already proven in \cite{CaffarelliRoquejoffreSire} or are standard consequences of the non-degeneracy estimates in Theorem \ref{theoCRS}. Nevertheless, we include full proofs here for the sake of completeness. We note here (as we did above and will do again below) that while we currently need to assume the uniform bound on the H\"older norm of the functions $u_k$ we can get rid of  this assumption in the light of the results of Section \ref{s:UniformBounds}.

\begin{lemma}[Compactness results]\label{lemCompactness}
 Let $\{u_k\}_{k=1}^\infty\subset \HH^\beta_{\loc}(\Omega)$ be a sequence of minimizers in a domain $\Omega\subset \R^{n+1}$ with $\norm{u_k}_{\dot C^\alpha(\Omega)}\leq E_0$ with non-empty free boundary. Then there exists a subsequence converging 
to some $u_0\in\HH^\beta_{\loc}(\Omega)$ such that for every bounded open set 
$G\subset \subset \Omega$ we have
 \begin{enumerate}
  \item\label{eka} $u_k\rightarrow u_0$ in $C^{\beta}(G)$ for 
every $\beta<\alpha$,
  \item $u_k\rightarrow u_0$ in $L^p(G)$ for every $p\leq\infty$,
  \item $\partial \{u_k>0\}\cap \bar G \rightarrow \partial\{u_0>0\}\cap \bar 
G$  in the
Hausdorff distance,
  \item $\chi_{\{u_k>0\}}\rightarrow\chi_{\{u_0>0\}}$ in $L^1(G)$, and
  \item $\nabla u_k \to  \nabla u_0$ in  $L^p(G;|y|^\beta)$ for every $p\leq 2$.
  \end{enumerate}
  \end{lemma}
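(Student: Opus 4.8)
The plan is to prove the five convergence statements in a natural order, leveraging the a priori $\dot C^\alpha$-bound together with the non-degeneracy and density estimates from Theorem \ref{theoCRS}.

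\textbf{Step 1 (uniform Hölder bounds and extraction).} Since $\norm{u_k}_{\dot C^\alpha(\Omega)}\le E_0$ and the $u_k$ are non-negative with non-empty free boundary (so, after passing to a subsequence and using that $0$ — or some point — is on each free boundary, they are uniformly bounded on compacta), the family is equibounded and equicontinuous on every $G\subset\subset\Omega$. By Arzelà--Ascoli and a diagonal argument over an exhaustion of $\Omega$, a subsequence converges locally uniformly to some $u_0\in C^\alpha_{\loc}(\Omega)$, giving item \rf{eka} (convergence in $C^\beta$ for $\beta<\alpha$ follows by interpolating the uniform $C^0$ convergence against the uniform $\dot C^\alpha$ bound). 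Item (2) is then immediate: $L^\infty_{\loc}$ convergence plus uniform local boundedness gives $L^p_{\loc}$ convergence for all $p\le\infty$.

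\textbf{Step 2 (Sobolev bound and weak limit).} The Caccioppoli inequality (Lemma \ref{cacciop}), applicable since each $u_k$ solves $\nabla\cdot(|y|^\beta\nabla u_k)=0$ weakly on $\{u_k>0\}$, combined with the uniform $L^\infty_{\loc}$ bound from Step 1, yields a uniform bound on $\norm{\nabla u_k}_{L^2(G';|y|^\beta)}$ for $G'\subset\subset\Omega$. Hence (a further subsequence of) $\nabla u_k$ converges weakly in $L^2_{\loc}(|y|^\beta)$; by uniqueness of distributional limits the weak limit is $\nabla u_0$, so $u_0\in\HH^\beta_{\loc}(\Omega)$. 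This already gives lower semicontinuity of the Dirichlet energy.

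\textbf{Step 3 (convergence of the positivity sets).} For (3), Hausdorff convergence of $\partial\{u_k>0\}\cap\bar G$ to $\partial\{u_0>0\}\cap\bar G$: one inclusion (limit points of free boundary points are free boundary points) is immediate from uniform convergence since $u_k\to u_0$ uniformly forces $u_0=0$ at such limits, while non-degeneracy P2 prevents the limit point from being an interior positivity point. The reverse inclusion uses the interior corkscrew condition P3 and positive density of the zero phase P4: near any $x_0\in F(u_0)$ there are both points where $u_0>0$ (and by uniform convergence $u_k>0$ for large $k$) and a definite portion of zero phase of $u_0$; non-degeneracy P2 upgrades the former into $u_k$ being non-degenerate there, forcing $F(u_k)$ to come close to $x_0$. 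Item (4), $L^1_{\loc}$ convergence of characteristic functions, follows from (3) together with the positive density estimate P4 (which rules out the free boundary having positive Lebesgue measure in the limit, so $\chi_{\{u_k>0\}}\to\chi_{\{u_0>0\}}$ pointwise a.e. and then in $L^1_{\loc}$ by dominated convergence).

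\textbf{Step 4 (strong Sobolev convergence — the crux).} The main obstacle is item (5): upgrading weak to strong convergence of $\nabla u_k$ in $L^2_{\loc}(|y|^\beta)$ (and then $L^p$, $p\le 2$, by interpolation with the uniform $L^2$ bound). The idea is a standard-in-spirit but technically delicate energy-comparison argument: use the minimality of each $u_k$ against a competitor built from $u_0$ (glued to $u_k$ near $\partial G'$ via a cutoff) to compare $\int_{G'}|y|^\beta|\nabla u_k|^2$ with $\int_{G'}|y|^\beta|\nabla u_0|^2$ up to errors controlled by (4) (the measure terms $m(\{u_k>0\})\to m(\{u_0>0\})$) and by the cutoff region (controlled via the uniform $L^\infty$ and Caccioppoli bounds). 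Combined with weak lower semicontinuity this pins down $\limsup_k\int_{G'}|y|^\beta|\nabla u_k|^2\le\int_{G'}|y|^\beta|\nabla u_0|^2$, and weak convergence plus convergence of norms in the Hilbert space $L^2(G';|y|^\beta)$ gives strong convergence of $\nabla u_k$. Care is needed because the competitor must have the right trace and the nonlocal nature does not enter here (we work with the local functional $\mathcal{J}$), but the cutoff/gluing and the handling of the weight near $\{y=0\}$ (using that $|y|^\beta\in A_2$) are where the real work lies.
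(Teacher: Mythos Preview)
Your Steps 1--3 are correct and essentially coincide with the paper's argument for items (1)--(4): Arzel\`a--Ascoli plus interpolation for (1)--(2), non-degeneracy and uniform convergence for the two inclusions giving Hausdorff convergence in (3), and positive density of the zero phase to get $m(F(u_0))=0$ and hence (4).

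For item (5) your route diverges from the paper, and there is a genuine gap. Your competitor argument produces, with a cutoff on the annulus $B_r\setminus B_{r-\delta}$, an estimate of the form
\[
\int_{B_r}|y|^\beta|\nabla u_k|^2 \;\le\; \int_{B_{r-\delta}}|y|^\beta|\nabla u_0|^2 \;+\; C\int_{B_r\setminus B_{r-\delta}}|y|^\beta\bigl(|\nabla u_0|^2+|\nabla u_k|^2\bigr) \;+\; o_k(1)+O(\delta).
\]
To conclude $\limsup_k\int_{B_r}|y|^\beta|\nabla u_k|^2\le\int_{B_r}|y|^\beta|\nabla u_0|^2$ you must send $\delta\to 0$, but the term $\int_{B_r\setminus B_{r-\delta}}|y|^\beta|\nabla u_k|^2$ is not known to be small \emph{uniformly in $k$} as $\delta\to 0$: the uniform Caccioppoli bound only controls the energy on a fixed ball, not its equi-integrability near $\partial B_r$. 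You flag the cutoff region and the weight as ``where the real work lies'', but the weight is harmless here; the actual obstruction is this uniform annulus control, and you have not indicated how to obtain it. (It can be rescued, e.g.\ by a De Giorgi--type slicing/averaging over radii to pick a good $r$ for each $k$ followed by a diagonal argument, but this is nontrivial and absent from your sketch.)

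The paper avoids this difficulty altogether via a different mechanism (Lemma~\ref{lemStronger}): one writes
\[
\int |y|^\beta\eta\,|\nabla u_0-\nabla u_k|^2
=\int |y|^\beta\eta\,(\nabla u_0-\nabla u_k)\cdot\nabla u_0
-\int |y|^\beta\eta\,(\nabla u_0-\nabla u_k)\cdot\nabla u_k,
\]
handles the first term by weak convergence, and for the second integrates by parts against the distributional measures $\lambda_j=\nabla\cdot(|y|^\beta\nabla u_j)$ of Lemma~\ref{lemRadonMeasure}. The key structural fact is that $\lambda_j$ is supported on $\{u_j=0\}$, so $\int\eta\,u_j\,d\lambda_j=0$ for every $j$ (including $j=0$, using that $u_0$ is itself a minimizer). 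The remaining cross terms are then controlled directly by uniform convergence and the Caccioppoli bound, with no annulus error to absorb. This is both shorter and sidesteps the equi-integrability issue entirely.
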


\begin{proof}
 The first claim follows from uniform H\"older continuity and compact 
embeddings  of H\"older spaces. The claim (2) follows from (1) easily. 
 
 We now prove the third claim. Let $\epsilon>0.$  We will first  show that 
for $x \in 
\R^n$ we have 
\begin{equation}
\label{hausconv1}
d(x, 
F(u_0))>\epsilon \Rightarrow  d(x,F(u_k))>\frac\epsilon2
\end{equation}
for large $k.$ 
This implies that $F(u_k)\subset \{x\colon\, d(F(u_0),x)<2\epsilon\}$ for $k$ 
large enough.

Let $B(x,\epsilon)\subset F(u_0)^c.$ If $u_0$ is positive in $B(x,\epsilon)$ then 
it is bounded from below by a positive number in  $B(x,\epsilon/2).$ In this 
case $u_k$ are also positive in  $B(x,\epsilon/2)$ for large $k$ due to uniform 
convergence in $G$. Thus $B(x,\epsilon/2)\subset F(u_k)^c$ for large $k.$  If 
$u\equiv0$ in $B'(x,\epsilon)$ then due to the uniform convergence we know that  
for $k$ large enough $u_k<C\epsilon^\alpha$ in $B'(x,\epsilon)$, where $C$ is a constant given by \emph{P2} in Theorem \ref{theoCRS} 
so that   $u_k$ has no free boundary points in  $B(x,\epsilon/2)$ for all large $k.$  This proves \eqref{hausconv1}.

Next we will show that for all large $k$
\begin{equation}
\label{hausconv2}
F(u_0)\subset \{x\colon\, d(F(u_k),x)<\epsilon\}.
\end{equation}
 If this was not true we could find a point $x\in F(u_0)$ and a subsequence  of 
$u_k$ such that $B'(x,\epsilon)\subset F(u_k)^c$ for every $k$ in the subsequence.
If the subsequence contains infinitely many $u_k$ such that $u_k\equiv0$ in 
$B(x,\epsilon)$ then also $u_0\equiv 0$ due to uniform convergence.
Otherwise, the sequence contains infinitely many $u_k$ for which $B(x,\epsilon)$
is contained in the positive phase. In this case the non-degeneracy implies 
that 
in $B(x,\epsilon/2)$ we have $u_k>C\epsilon^\alpha,$ with $C$ independent of $k.$ Again 
uniform 
convergence implies the same lower bound for $u_0,$ which contradicts our 
choice 
 $x\in F(u_0).$  
 
 To show the fourth claim we notice that $F(u_0)$ has zero Lebesgue measure by the Lebesgue differentiation Theorem and the positive density of the zero phase. Take 
an 
open set $V\supset F(u_0)$ with $m(V\cap  G )<\epsilon.$ For large $k$ 
we have $F(u_k)\cup F(u_0)\subset  V\cap  G$, so  $\norm{\chi_{\{u_k>0\}}- \chi_{\{u_0>0\}}}_{L^1(G)}<\epsilon.$

Also the sequence is uniformly bounded in $H^{1,p}(G;|y|^\beta)$ by the Caccioppoli inequality. This implies 
by compactness \cite[1.31 Theorem]{HeinonenKilpelainenMartio} the  
weak convergence of $\nabla u_k$ in $L^p(G;|y|^\beta).$ To obtain strong convergence, use Lemma \ref{lemStronger} below.
 
\end{proof}

It remains to show that weak convergence implies strong convergence. 

\begin{lemma}\label{lemStronger}
Any sequence of minimizers  $\{u_k\}_{k=0}^\infty$ in $\Omega\subset \R^{n+1}$ with $u_k\rightarrow u_0$ uniformly and 
$\nabla u_k\rightharpoonup \nabla u_0$ weakly in  $L^2_{\loc}(\Omega,|y|^\beta)$ satisfies that $\nabla u_k\to \nabla u_0$ in  $L^2_{\loc}(\Omega,|y|^\beta)$.
\end{lemma}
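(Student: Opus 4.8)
The plan is to exploit the minimality of $u_k$ to compare its energy against a well-chosen competitor that agrees with $u_k$ on the boundary of a ball but is built out of $u_0$ in the interior, and then let $k\to\infty$. Fix a ball $B=B_r(x_0)\subset\subset\Omega$ centered on $\R^n\times\{0\}$; it suffices to prove $\nabla u_k\to\nabla u_0$ in $L^2(\tfrac12 B,|y|^\beta)$. Since $\nabla u_k\rightharpoonup\nabla u_0$ weakly, we have the lower semicontinuity $\int_{B}|y|^\beta|\nabla u_0|^2\le\liminf_k\int_{B}|y|^\beta|\nabla u_k|^2$, so the real content is the matching $\limsup$ inequality on (a slightly smaller ball) $\tfrac12 B$, from which strong convergence follows by the Hilbert-space identity $\|\nabla u_k-\nabla u_0\|^2=\|\nabla u_k\|^2-2\langle\nabla u_k,\nabla u_0\rangle+\|\nabla u_0\|^2$ together with weak convergence of the cross term.

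First I would construct the competitor. Choose a cutoff $\varphi\in C_c^\infty(B)$ with $\varphi\equiv1$ on $\tfrac12 B$ and $0\le\varphi\le1$, and set $v_k:=u_k+\varphi(u_0-u_k)$, so that $v_k=u_k$ near $\partial B$ and $v_k=u_0$ on $\tfrac12 B$; since $u_0\ge0$ is even and $v_k\in H^1(\beta,B)$ with the right trace, $v_k^+$ (or $v_k$ itself if it is already non-negative, which we can arrange on the set where the difference is small) is an admissible competitor for the minimality of $u_k$ on $B$. Minimality gives
\begin{equation}\label{eqPlanMin}
\int_B |y|^\beta|\nabla u_k|^2 + m(\{u_k>0\}\cap B') \le \int_B |y|^\beta|\nabla v_k|^2 + m(\{v_k>0\}\cap B').
\end{equation}
On the annulus $A:=B\setminus\tfrac12 B$ we have $\nabla v_k=\nabla u_k+\varphi\nabla(u_0-u_k)+(u_0-u_k)\nabla\varphi$; expanding $|\nabla v_k|^2$ there, the term $(u_0-u_k)\nabla\varphi\to0$ strongly in $L^2(A,|y|^\beta)$ by uniform convergence of $u_k\to u_0$ (and boundedness of the weight on $A$), while the terms involving $\varphi\nabla(u_0-u_k)$ are controlled by $\|\nabla u_k\|_{L^2(A,|y|^\beta)}+\|\nabla u_0\|_{L^2(A,|y|^\beta)}$, which is bounded. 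On $\tfrac12 B$ we have $\nabla v_k=\nabla u_0$ exactly. Combining these with \eqref{eqPlanMin}, rearranging, and using that $m(\{u_k>0\}\cap B')\to m(\{u_0>0\}\cap B')$ by claim (4) of Lemma \ref{lemCompactness} (so the measure terms contribute $o(1)$), one obtains
$$
\int_{\frac12 B}|y|^\beta|\nabla u_k|^2 \le \int_{\frac12 B}|y|^\beta|\nabla u_0|^2 + \Big|\int_A |y|^\beta\big(|\nabla v_k|^2-|\nabla u_k|^2\big)\Big| + o(1).
$$
The remaining obstacle is to show that the annulus error term can be made small; this is where I would choose the cutoff region carefully. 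Since $\int_B|y|^\beta|\nabla u_k|^2$ is uniformly bounded, for each $k$ one can select, among finitely many disjoint dyadic annuli between $\tfrac12 B$ and $B$, one annulus on which $\int|y|^\beta|\nabla u_k|^2$ is small (pigeonhole), and locate the cutoff transition there; the cross terms $\int_A|y|^\beta\nabla u_k\cdot\nabla(u_0-u_k)$ and the quadratic term $\int_A|y|^\beta\varphi^2|\nabla(u_0-u_k)|^2$ are then bounded by a quantity that tends to $0$ after taking $k\to\infty$ and then shrinking the annulus. Feeding this back gives $\limsup_k\int_{\frac12 B}|y|^\beta|\nabla u_k|^2\le\int_{\frac12 B}|y|^\beta|\nabla u_0|^2$, which together with weak lower semicontinuity forces $\int_{\frac12 B}|y|^\beta|\nabla u_k|^2\to\int_{\frac12 B}|y|^\beta|\nabla u_0|^2$. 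By the Hilbert-space argument above, $\nabla u_k\to\nabla u_0$ strongly in $L^2(\tfrac12 B,|y|^\beta)$, and since $B$ was arbitrary, in $L^2_{\loc}(\Omega,|y|^\beta)$.

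The main obstacle, then, is twofold: ensuring the competitor $v_k$ is genuinely admissible (non-negativity and correct trace, which the cutoff construction handles, possibly after replacing $v_k$ by $v_k^+$ and checking the energy only decreases under truncation because $u_0\ge0$), and controlling the annulus energy uniformly — the pigeonhole choice of the transition annulus is the device that makes the latter work without any elliptic estimate, in keeping with the paper's emphasis on arguments "less reliant on the underlying PDE."
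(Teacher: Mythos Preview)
Your approach is correct in spirit but takes a genuinely different route from the paper. The paper does \emph{not} build a competitor; instead it expands $\int |y|^\beta\eta|\nabla u_0-\nabla u_j|^2$ directly and uses the PDE structure: by Lemma \ref{lemRadonMeasure} the distribution $\lambda_j=\nabla\cdot(|y|^\beta\nabla u_j)$ is a nonnegative Radon measure supported on $\{u_j=0\}$, so $\int\eta u_j\,d\lambda_j=0$ for every $j$, and after an integration by parts the cross term $\int|y|^\beta(\nabla u_0-\nabla u_j)\cdot\nabla(\eta u_j)$ reduces to $\int\eta(u_j-u_0)\,d\lambda_0$, which vanishes by uniform convergence. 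Your argument, by contrast, is purely variational: you compare $u_k$ against the glued competitor $u_k+\varphi(u_0-u_k)$ and use minimality plus a De Giorgi--type pigeonhole on annuli. That is a legitimate and standard alternative, and (ironically, given your closing remark) it is \emph{less} reliant on the underlying equation than the paper's proof.

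Two points to tighten. First, as written your pigeonhole step yields $\limsup_k\int_{\frac12 B}|y|^\beta|\nabla u_k|^2\le \int_{B_{\mathrm{in},k}}|y|^\beta|\nabla u_0|^2+C/N$, and since $B_{\mathrm{in},k}$ varies with $k$ you only get $\le\int_B|y|^\beta|\nabla u_0|^2$ after $N\to\infty$, not $\int_{\frac12 B}$; you recover the correct conclusion by placing the $N$ annuli inside $B\setminus(1-\delta)B$ and then sending $\delta\to 0$ at the very end, using $\int_{B\setminus(1-\delta)B}|y|^\beta|\nabla u_0|^2\to0$. Second, you invoke item (4) of Lemma \ref{lemCompactness} to kill the measure terms, but that item is stated in the paper under the extra uniform $\dot C^\alpha$ bound, which is not among the hypotheses of Lemma \ref{lemStronger}; this is harmless in every application (and is removed later in Section \ref{s:UniformBounds}), but it is a small logical wrinkle that the paper's PDE-based proof avoids entirely.
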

\begin{proof}
Let $\eta\in C^{0,1}_c(\Omega)$ be a non-negative function. We claim that for every $\varepsilon>0$ there exists $j_0$ so that 
$$\int |y|^\beta \eta |\nabla u-\nabla u_j|^2\leq \varepsilon$$
for $j\geq j_0$.

First we isolate the main difficulty
$$\int |y|^\beta \eta |\nabla u_0-\nabla u_j|^2=\int |y|^\beta \eta (\nabla u_0-\nabla u_j)\cdot\nabla u_0  - \int |y|^\beta \eta (\nabla u_0-\nabla u_j)\cdot\nabla u_j.$$
By weak convergence, 
$$\left|\int |y|^\beta \eta (\nabla u_0-\nabla u_j)\cdot\nabla u_0\right|\leq \varepsilon/4 $$
for $j$ big enough. Note that this is true even if the $u_j$ are not minimizers. The bound on the second term, however, needs the minimization property. 

We observe that \begin{equation}\label{e:usedivergence} \int |y|^\beta \eta (\nabla u_0-\nabla u_j)\cdot\nabla u_j = \underbrace{\int |y|^\beta  (\nabla u_0-\nabla u_j)\cdot\nabla (\eta u_j)}_{=:I} - \underbrace{ \int |y|^\beta  u_j (\nabla u_0-\nabla u_j)\cdot \nabla \eta}_{=:II}.\end{equation}

To estimate $I$ in \eqref{e:usedivergence}, let $\lambda_j$ be the measures corresponding to $u_j$ from Lemma \ref{lemRadonMeasure}. By \rf{eqMeasureDef} we get that
$$\int |y|^\beta  (\nabla u_0-\nabla u_j)\cdot\nabla (\eta u_j) = \int \eta u_j \, d\lambda_0 - \int \eta u_j\, d\lambda_j.$$
Since $\lambda_j$ is supported on $\{u_j = 0\}$ we have that 
$$\int \eta u_j \, d\lambda_j=0$$
for every $j$ (including $j = 0$ as $u_0$ is also a minimizer to $\mathcal J$, see Corollary 3.4 in \cite{CaffarelliRoquejoffreSire}). 

 To finish the estimate on $I$ in \eqref{e:usedivergence} we observe that
$$\int \eta u_j \, d\lambda_0=\int \eta (u_j-u_0) \, d\lambda_0\leq \sup_{\supp\;  \eta} |u_j-u_0| \int \eta \, d\lambda_0 .$$
By uniform convergence on compact subsets, for $j$ big enough, $\sup_{\supp\;  \eta} |u_j-u_0|\leq \frac{\epsilon}{4\norm{\eta}_{L^1(\lambda_0)}}$. 

We turn towards estimating $II$ in \eqref{e:usedivergence}:
\begin{align}\label{eqbreak3}
|II|= \left| \int |y|^\beta  u_j (\nabla u_0-\nabla u_j)\cdot \nabla \eta\right|
\nonumber 	&  \leq  \left| \int |y|^\beta   (\nabla u_0-\nabla u_j)\cdot (u_0 \nabla \eta)\right| \\
	& \quad + \sup_{\supp\; \eta} |u_j-u_0|  \norm{\nabla u_0-\nabla u_j}_{L^2(\Omega,|y|^\beta)}\norm{\nabla \eta}_{L^2(\Omega,|y|^\beta)}.
 \end{align}
The first term goes to zero by weak convergence of $\nabla u_j$ to $\nabla u_0$. The second term satisfies 
$$\sup_{\supp \eta} |u_j-u_0|  \norm{\nabla u_0-\nabla u_j}_{L^2(\supp \eta,|y|^\beta)}\norm{\nabla \eta}_{L^2(\Omega, |y|^\beta)}\leq \varepsilon/4$$
for $j$ big enough, by uniform convergence and the uniform bound of $\norm{\nabla u_j}_{L^2(\supp\; \eta,|y|^\beta)}$ derived from the Caccioppoli inequality in Lemma \ref{cacciop} together with uniform convergence. 
\end{proof}

%
%
%
%
%
%
%
%
%
%
%
%

Lemma \ref{lemCompactness} implies that minimizers converge to minimizers (which was observed in Corollary 3.4 in \cite{CaffarelliRoquejoffreSire}), but also implies the stronger fact that the energy is continuous under this convergence:

\begin{corollary}\label{c:ContinuityofEnergy}
Let $u_k$ be a sequence of minimizers in $\Omega \subset \mathbb R^{n+1}$ with $u_k \rightarrow u_0$ locally uniformly and $\sup_k \|u_k\|_{\HH^\beta} < \infty$. Then $u_0$ is also a minimizer to $\mathcal J$ in $\Omega$ and for any $B \subset \subset \Omega$ we have $\mathcal{J}(u_k, B) \rightarrow \mathcal{J}(u_0, B)$. 
\end{corollary}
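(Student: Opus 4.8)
The plan is to prove the two assertions in Corollary~\ref{c:ContinuityofEnergy} separately: first that $u_0$ is a minimizer, and then that the energies converge. For the first part, the key input is the strong convergence $\nabla u_k \to \nabla u_0$ in $L^2_{\loc}(\Omega, |y|^\beta)$ from Lemma~\ref{lemStronger} (whose hypotheses are met after passing to the subsequence provided by Lemma~\ref{lemCompactness}, using the uniform bound $\sup_k \|u_k\|_{\HH^\beta}<\infty$ and Caccioppoli to get uniform $L^2(|y|^\beta)$ gradient bounds, hence weak $L^2_{\loc}$ convergence of a further subsequence). Actually, since this is essentially Corollary~3.4 of \cite{CaffarelliRoquejoffreSire}, I would simply invoke that for the minimality of $u_0$, but I will re-derive it as a byproduct of the energy continuity argument below, which is cleaner.

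For the energy convergence, fix $B\subset\subset\Omega$ and split $\mathcal{J}(u_k,B)$ into the Dirichlet part $D_k := \int_B |y|^\beta |\nabla u_k|^2$ and the measure part $M_k := m(\{u_k>0\}\cap B')$. The measure part is the easy one: by claim (4) of Lemma~\ref{lemCompactness}, $\chi_{\{u_k>0\}}\to\chi_{\{u_0>0\}}$ in $L^1(G)$ for any $G\subset\subset\Omega$; taking $G$ a neighborhood of $\overline{B}$ and noting that the $L^1$ convergence of the indicator functions on $\R^{n+1}$ restricts to $L^1$ convergence of the traces on $\R^n\times\{0\}$ (here one should be slightly careful and either argue via the fact that, by nondegeneracy P2 and positive density P4, the set $\{u_0=0, u_0 \text{ not identically zero nearby}\}$ has measure zero trace-wise, or just cite the already-proven claim (4) which is stated for the relevant sets) gives $M_k \to M_0$. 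The Dirichlet part is where strong convergence does its work: $\int_B |y|^\beta|\nabla u_k|^2 \to \int_B |y|^\beta |\nabla u_0|^2$ follows directly from $\nabla u_k\to\nabla u_0$ in $L^2(B,|y|^\beta)$ (choosing a Lipschitz cutoff $\eta$ with $\chi_B \le \eta \le \chi_G$ and applying Lemma~\ref{lemStronger} with that $\eta$, then using that $\|\nabla u_k\|_{L^2(B,|y|^\beta)} \to \|\nabla u_0\|_{L^2(B,|y|^\beta)}$ is immediate from norm convergence under strong $L^2$ convergence). Combining, $\mathcal{J}(u_k,B)\to\mathcal{J}(u_0,B)$.

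To then deduce that $u_0$ is a minimizer: let $v\in\HH^\beta(B)$ with $v|_{\partial B}\equiv u_0|_{\partial B}$ for some ball $B\subset\subset\Omega$. One constructs competitors $v_k$ for $u_k$ in a slightly larger ball $B_{(1+\delta)}$ (or on $B$ directly after adjusting boundary values) that agree with $u_k$ near $\partial B$ and are close to $v$ inside, using the uniform convergence $u_k\to u_0$ to patch $v$ to $u_k$'s boundary data in a thin annulus at a controlled energy cost; minimality of $u_k$ gives $\mathcal{J}(u_k,B)\le\mathcal{J}(v_k,B)$, and passing to the limit using the energy continuity just established on the left and the controlled patching estimate on the right yields $\mathcal{J}(u_0,B)\le\mathcal{J}(v,B)$. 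Since this is standard and already in \cite{CaffarelliRoquejoffreSire}, I would keep this step brief.

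The main obstacle is the lower-order patching/diagonalization in the minimality argument: constructing the competitors $v_k$ so that their energy converges to that of $v$ while matching $u_k$'s (varying) boundary trace on $\partial B$. This is the usual annular-interpolation trick, and the nonlocality is not an issue here since we are working with the \emph{local} functional $\mathcal{J}$; the weight $|y|^\beta$ being $A_2$ causes no trouble because the relevant trace and Poincaré-type estimates in weighted Sobolev spaces (as in \cite{FabesKenigSerapioni}) are available. Everything else — the measure term, the Dirichlet term — is a direct consequence of the compactness results in Lemma~\ref{lemCompactness} and the strong gradient convergence in Lemma~\ref{lemStronger}, so there is no real difficulty beyond careful bookkeeping.
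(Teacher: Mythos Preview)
Your proposal is correct and is essentially the paper's own (unwritten) argument: the paper states this corollary as an immediate consequence of Lemma~\ref{lemCompactness}, citing \cite[Corollary~3.4]{CaffarelliRoquejoffreSire} for the minimality of $u_0$, and your write-up simply spells out that the measure term converges by item~(4) and the Dirichlet term by item~(5)/Lemma~\ref{lemStronger}. One small cleanup: item~(4) of Lemma~\ref{lemCompactness} already concerns the trace indicators on $\R^n\times\{0\}$, so your detour through ``restricting $L^1$ convergence on $\R^{n+1}$ to traces'' is unnecessary.
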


\section{Monotonicity formula and some immediate consequences}\label{s:monotonicity}

From \cite{Allen} we have the following monotonicity formula:
\begin{theorem}[Monotonicity formula, see { \cite[Theorem 4.3]{Allen}} ]\label{theoMonotonicityMinimum}
Let $u\in \HH^\beta(B_\delta(x_0))$ be a minimizer in $B_\delta(x_0)$ for the functional $\mathcal{J}$ with $x_0\in F(u)$.
Then the function
$$r\mapsto \Psi^u_r(x):=\Psi(r)=\frac{\mathcal{J}(u,B_r(x_0))}{r^{n}}  - \frac{\alpha}{ r^{n+1}}\int_{\partial B_r(x_0)} |y|^\Beta u^2 \, d\mathcal{H}^{n}$$
is defined and nondecreasing in $(0,\delta)$, and for $0<\rho<\sigma<\delta$, it satisfies
$$\Psi(\sigma)-\Psi(\rho) = \int_{B_\sigma(x_0)\setminus B_\rho(x_0)} |y|^\Beta\frac{2\left|\alpha u (x) -(x-x_0)\cdot \nabla u(x)\right|^2 }{|x_0-x|^{n+2}}dx \geq 0.$$
\end{theorem}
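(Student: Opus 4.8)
The plan is to follow the classical Weiss-type computation, adapted to the weighted functional. First I would fix $x_0 \in F(u)$ and, after translating, assume $x_0 = 0$. The key object is $\Psi(r) = r^{-n}\mathcal{J}(u, B_r) - \alpha r^{-n-1}\int_{\partial B_r} |y|^\beta u^2\, d\mathcal{H}^n$. I would first justify that $\Psi$ is absolutely continuous (hence differentiable a.e.\ in $r$), using that $u \in H^1(\beta, B_\delta)$ together with the coarea formula, so that both $r \mapsto \int_{B_r}|y|^\beta|\nabla u|^2$ and $r \mapsto \int_{\partial B_r}|y|^\beta u^2\,d\mathcal{H}^n$ are locally absolutely continuous and the free boundary term $r \mapsto m(\{u>0\}\cap B_r')$ is monotone, hence differentiable a.e.

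The heart of the argument is to compute $\Psi'(r)$ for a.e.\ $r$ and show it is nonnegative. The standard tool is the \emph{interior variation / domain scaling} identity: for a minimizer, comparing $u$ with the competitor $u_t(x) = u((1+t)x)$ (suitably interpreted, so that boundary data on $\partial B_r$ is preserved after reparametrizing, or equivalently differentiating the minimality inequality $\mathcal{J}(u, B_r) \le \mathcal{J}(v, B_r)$ along domain dilations) yields a Rellich–Pohozaev-type identity. Concretely, testing the equation $\nabla\cdot(|y|^\beta\nabla u) = 0$ (valid weakly in $\{u>0\}$ by Lemma \ref{lemRadonMeasure}) against the vector field $x\cdot\nabla u$ on $B_r$ produces, after integration by parts and accounting for the weight $|y|^\beta$ which is homogeneous of degree $\beta$, a boundary identity on $\partial B_r$ relating $\int_{\partial B_r}|y|^\beta|\nabla u|^2$, $\int_{\partial B_r}|y|^\beta (\partial_\nu u)^2$, and $\int_{B_r}|y|^\beta|\nabla u|^2$; the free boundary (area) term contributes via its scaling, being $(n)$-homogeneous on $\R^n$, exactly the term that makes the constant $\alpha = \frac{1-\beta}{2}$ appear. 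Combining this Pohozaev identity with the minimality of $u$ under the dilation competitor, and using the homogeneity exponent $\alpha$ built into the $r^{-n}$ and $r^{-n-1}$ normalizations, one arrives after algebra at
$$\Psi'(r) = \frac{2}{r^{n+2}}\int_{\partial B_r}|y|^\beta \left(\alpha u - x\cdot\nabla u\right)^2 \frac{d\mathcal{H}^n}{r} \cdot (\text{const}) \ge 0,$$
and then integrating $\Psi'$ from $\rho$ to $\sigma$ and re-expressing the boundary integrals as a solid integral over the annulus $B_\sigma\setminus B_\rho$ (via coarea, writing $d\mathcal{H}^n$ on $\partial B_r$ and $dr$ together as $|x|^{-n}\,dx$ up to the Jacobian, which produces the $|x|^{-n-2}$ weight) gives the stated formula.

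The main obstacle, and the place requiring genuine care, is the Pohozaev/interior-variation computation in the presence of the $A_2$ weight $|y|^\beta$, which is singular or degenerate on $\{y=0\}$ — exactly where the free boundary lives. One cannot naively integrate by parts the vector field $x\cdot\nabla u$ across $\{y=0\}$: instead I would work on $B_r \setminus \{|y| < \tau\}$, derive the identity there, and pass to the limit $\tau \to 0$, controlling the flux through $\{|y| = \tau\}$ using the optimal $C^\alpha$ regularity (P1 of Theorem \ref{theoCRS}) and the fact that $|y|^\beta|\nabla u|^2$ is integrable; the evenness of $u$ kills the leading-order boundary term on $\{y = \tau\}$ as $\tau\to 0$. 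A secondary subtlety is handling the free boundary term $m(\{u>0\}\cap B_r')$ rigorously: since this is only monotone, not smooth, one argues with difference quotients and uses that the competitor $u_t$ changes the positivity set precisely by dilation, so the area term scales exactly like $(1+t)^{-n}$, and no error is incurred. Since the excerpt permits citing Allen's result directly, I would present this as a sketch emphasizing the role of the homogeneity exponent $\alpha$ and refer to \cite{Allen} for the full details of the limiting argument near $\{y=0\}$.
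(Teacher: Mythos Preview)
The paper does not include its own proof of this statement: it simply cites \cite[Theorem 4.3]{Allen} and moves directly to the corollary. Your sketch follows the standard Weiss-type approach (domain-scaling competitor, Rellich--Pohozaev identity for the weighted operator, careful limiting near $\{y=0\}$), which is precisely the strategy Allen employs; so your proposal is correct and aligned with the cited proof, though strictly speaking there is nothing in this paper to compare it against.
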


As a consequence, the blow-up limits are cones, in the sense of the following corollary.
\begin{corollary}\label{coroWeakLimits}
Let $u\in \HH^\beta(B_\delta(x_0))$ be a  minimizer in $B_\delta(x_0)$ with $x_0=(x_0',0)$. Consider a decreasing sequence $0<\rho_k\xrightarrow{k\to\infty}0$ and the associated rescalings $u_k(x):=\frac{u(x_0+\rho_k x)}{r^\alpha}$.
Then  the \emph{Allen-Weiss density}
$$\Psi^{u}_0(x_0):=\lim_{r\searrow 0} \Psi^{u}_r(x_0)$$
is well defined. Furthermore, for every bounded open set $D\subset\R^{n+1}$ and $k\geq k(D)$ this subsequence $u_{k}$ is bounded in $H^{1,2}(D;|y|^\beta)$ and, passing to a subsequence $u_{k_j}$, converges (in the sense of Lemma \ref{lemCompactness}) to $u_0$ which is a globally defined minimizer of $\mathcal J$ that is homogeneous of degree $\alpha$.
\end{corollary}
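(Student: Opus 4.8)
The plan is to establish Corollary~\ref{coroWeakLimits} in three stages: first showing the density limit exists, then extracting a convergent blow-up subsequence with a limit that is a global minimizer, and finally proving the limit is $\alpha$-homogeneous. For the first stage, I would invoke the monotonicity formula (Theorem~\ref{theoMonotonicityMinimum}): since $r\mapsto \Psi^u_r(x_0)$ is nondecreasing on $(0,\delta)$, the limit $\Psi^u_0(x_0)=\lim_{r\searrow 0}\Psi^u_r(x_0)$ exists as soon as we know $\Psi^u_r(x_0)$ is bounded below near $r=0$. Boundedness below follows because $\mathcal J(u,B_r(x_0))\ge 0$ and the boundary term $\frac{\alpha}{r^{n+1}}\int_{\partial B_r(x_0)}|y|^\beta u^2\,d\mathcal H^n$ is controlled using the optimal regularity \emph{P1} of Theorem~\ref{theoCRS}: since $u(x_0)=0$ and $u\in \dot C^\alpha$, one has $u(x)^2\lesssim |x-x_0|^{2\alpha}$, so after dividing by $r^{n+1}$ and integrating over a sphere of radius $r$ the term is $O(1)$. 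Hence $\Psi^u_r(x_0)$ is monotone and bounded, so the limit exists (and is finite).

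For the second stage, fix a bounded open $D\subset\R^{n+1}$; for $k$ large (depending on $D$ and $\delta$) the rescaled functions $u_k(x)=u(x_0+\rho_k x)/\rho_k^\alpha$ are defined and are minimizers of $\mathcal J$ on $D$ — this is the scaling invariance of the functional, which is the standard computation showing $\mathcal J(u_k,B_r)=\rho_k^{-n}\mathcal J(u,B_{\rho_k r}(x_0))$. By \emph{P1}, $\norm{u_k}_{\dot C^\alpha(D)}$ is uniformly bounded (the $\dot C^\alpha$ seminorm is scale-invariant under this rescaling), and since $0\in F(u_k)$ and the $u_k$ vanish at $0$, they are uniformly bounded in $C^\alpha(D)$, hence (via Caccioppoli, Lemma~\ref{cacciop}) uniformly bounded in $H^{1,2}(D;|y|^\beta)$. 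Applying the compactness result Lemma~\ref{lemCompactness} along a diagonal sequence of exhausting domains, we extract a subsequence $u_{k_j}$ converging to some $u_0\in \HH^\beta_{\loc}(\R^{n+1})$ in the senses listed there (locally uniformly, $L^1$ convergence of positivity sets, strong $L^2(|y|^\beta)$ convergence of gradients). By Corollary~\ref{c:ContinuityofEnergy} (or \emph{P5} of Theorem~\ref{theoCRS}), $u_0$ is a global minimizer of $\mathcal J$.

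The third stage — homogeneity — is where the monotonicity formula does the real work, and I expect the careful bookkeeping there to be the main obstacle. The idea is: for any fixed $0<\rho<\sigma$, the Weiss-type monotonicity identity gives
$$\Psi^{u_{k_j}}_\sigma(0)-\Psi^{u_{k_j}}_\rho(0)=\int_{B_\sigma\setminus B_\rho}|y|^\beta\frac{2|\alpha u_{k_j}(x)-x\cdot\nabla u_{k_j}(x)|^2}{|x|^{n+2}}\,dx.$$
But $\Psi^{u_{k_j}}_r(0)=\Psi^u_{\rho_{k_j}r}(x_0)\to \Psi^u_0(x_0)$ as $j\to\infty$ for each fixed $r$, so the left-hand side tends to $0$. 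Using the strong $L^2(|y|^\beta)$ convergence of $\nabla u_{k_j}\to\nabla u_0$ (this is exactly the strengthening provided by Lemma~\ref{lemStronger}, and the reason weak convergence would not suffice) together with uniform convergence of $u_{k_j}\to u_0$, one passes to the limit inside the integral to conclude
$$\int_{B_\sigma\setminus B_\rho}|y|^\beta\frac{2|\alpha u_0(x)-x\cdot\nabla u_0(x)|^2}{|x|^{n+2}}\,dx=0.$$
Since $\rho,\sigma$ were arbitrary, $\alpha u_0-x\cdot\nabla u_0\equiv 0$ a.e., which is precisely the Euler identity characterizing $\alpha$-homogeneity; integrating along rays (or noting $\frac{d}{dt}(t^{-\alpha}u_0(tx))=0$) gives $u_0(tx)=t^\alpha u_0(x)$ for $t>0$. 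Two technical points require care: that the boundary-term portion of $\Psi^{u_{k_j}}_r$ passes to the limit (which uses trace continuity on spheres, handled by choosing generic radii or using the strong Sobolev convergence), and justifying the interchange of limit and integral on the annulus (dominated convergence using the uniform $H^{1,2}$ bounds and the integrable weight $|y|^\beta/|x|^{n+2}$ away from the origin). Neither is deep, but the argument must be assembled with the strong convergence from Section~\ref{s:Compactness} firmly in place — that is the crux that makes this dimension-reduction machinery run.
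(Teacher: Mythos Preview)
Your argument is correct and follows exactly the standard Weiss approach that the paper cites (\cite[Theorem~2.8]{WeissMinimum}) in lieu of a proof. One small remark: in the homogeneity step, weak $L^2(|y|^\beta)$ convergence of the gradients together with uniform convergence of $u_{k_j}$ already suffices, since the $L^2$ norm is weakly lower semicontinuous and you only need the limit integral to be $\le 0$; the strong convergence of Lemma~\ref{lemStronger} makes the passage direct but is not strictly required here, so your parenthetical that weak convergence ``would not suffice'' is a slight overclaim.
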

The proof is the same as  in \cite[Theorem 2.8]{WeissMinimum}

\begin{remark}[Non-uniqueness of blow-ups]
We call the function $u_0$ appearing in Corollary \ref{coroWeakLimits} a \emph{blow-up} of $u$ at $x_0$. A priori, the function $u_0$ may depend on the subsequence $u_{k_j}$. However, a simple scaling argument shows that for all radii $r \geq 0$ and all blow-ups $u_0$ to $u$ at $x_0$ we have $$\Psi^{u_0}_r(0) \equiv \Psi^u_0(x_0).$$ 
\end{remark}

\subsection{Dimension reduction}\label{secReduction}
We use the homogeneity of the blow-ups to obtain dimension estimates on the points in the free boundary for which there exists a non-flat blow-up. This process is known as ``dimension reduction" and has been applied to a variety of situations (see \cite{WeissMinimum} for its application to the Bernoulli problem).

The first lemma shows that blow-up limits of blow-up limits have additional symmetry:

\begin{lemma}\label{lemConeBlowup}
Let $u\in \HH^\beta_{\loc}(\R^{n+1})$ be an $\alpha$-homogeneous  minimizer of $\mathcal{J}$ and let $x_0\in F(u)\setminus\{0\}$. Then any blow-up limit $u_0$ at $x_0$ is invariant in the direction of $x_0$, i.e., for every $x\in \R^{n+1}$ and every $\lambda\in\R$,
$$u_0(x+\lambda x_0)=u_0(x) .$$
\end{lemma}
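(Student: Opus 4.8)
The plan is a standard dimension-reduction / homogeneity argument, exploiting that $u$ is already $\alpha$-homogeneous about the origin and that $x_0 \neq 0$ is a free boundary point. First I would set up the blow-up sequence at $x_0$: choose $\rho_k \searrow 0$ and let $v_k(x) := u(x_0 + \rho_k x)/\rho_k^\alpha$, so that (by Corollary \ref{coroWeakLimits} and Theorem \ref{theoCRS} P5) a subsequence converges in the sense of Lemma \ref{lemCompactness} to a global minimizer $u_0$ that is $\alpha$-homogeneous about the origin. The key observation is that, because $u$ is $\alpha$-homogeneous about $0$, translating the base point $x_0$ along the ray $\R x_0$ is, after rescaling, the same as a translation in space; so the blow-up should not see the $x_0$-direction at all.

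The main step is to make this precise. For $t\in\R$ fixed and $k$ large, consider the point $x_0 + t\rho_k\, e$ where $e := x_0/|x_0|$ (equivalently work directly with $x_0 + t\rho_k x_0/|x_0|$). I would compare $v_k(x + t e/|x_0|\cdot|x_0|)$... more cleanly: write, using the $\alpha$-homogeneity of $u$ about $0$,
\[
\frac{u\big(x_0 + \rho_k x + \lambda\rho_k x_0\big)}{\rho_k^\alpha}
= \frac{u\big((1+\lambda\rho_k)\,x_0 + \rho_k x\big)}{\rho_k^\alpha}
= (1+\lambda\rho_k)^\alpha\,\frac{u\big(x_0 + \tfrac{\rho_k}{1+\lambda\rho_k} x\big)}{\big(\tfrac{\rho_k}{1+\lambda\rho_k}\big)^\alpha}.
\]
The left side is $v_k(x + \lambda x_0)$ and the right side is $(1+\lambda\rho_k)^\alpha\, v_{k}'(x)$ where $v_k'$ is the blow-up of $u$ at $x_0$ with scale $\rho_k' := \rho_k/(1+\lambda\rho_k)$. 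Since $\rho_k' / \rho_k \to 1$ as $k\to\infty$, a routine argument (uniform convergence on compact sets, together with the uniform $C^\alpha$ bound from Theorem \ref{theoCRS} P1 and the strong Sobolev convergence of Lemma \ref{lemStronger}) shows $v_k'$ has the same limit $u_0$ along the subsequence; and $(1+\lambda\rho_k)^\alpha \to 1$. Passing to the limit gives $u_0(x + \lambda x_0) = u_0(x)$ for every $\lambda\in\R$ and every $x$, which is the claim.

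A couple of technical points need care. First, one should check the blow-up sequences $\{v_k\}$ and $\{v_k'\}$ actually converge to the \emph{same} limit and not merely to two distinct $\alpha$-homogeneous minimizers; this is where I would use that $\rho_k'/\rho_k \to 1$ to show $\|v_k - v_k'\|_{L^\infty(K)} \to 0$ on compact sets $K$ (using the equicontinuity from the uniform H\"older bound), so any subsequential limit of one is a subsequential limit of the other. Second, one must make sure $0 \in F(u)$ is not needed and that $x_0 + \lambda x_0 \neq 0$ issues (the point $(1+\lambda\rho_k)x_0$) stay away from the origin for $k$ large and $\lambda$ in a fixed bounded range, which is automatic since $\rho_k \to 0$.

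\textbf{Main obstacle.} The genuinely delicate point is the interchange of limits: showing that replacing the scale $\rho_k$ by the slightly perturbed scale $\rho_k' = \rho_k/(1+\lambda\rho_k)$ does not change the blow-up limit. Morally this is because $\rho_k'/\rho_k\to1$, but to turn it into a proof one needs the uniform modulus of continuity (Theorem \ref{theoCRS} P1) to compare $v_k$ and $v_k'$ pointwise, plus the strong convergence of gradients from Lemma \ref{lemStronger} if one wants the identity to hold in the Sobolev sense rather than merely pointwise. Everything else is bookkeeping with the homogeneity identity.
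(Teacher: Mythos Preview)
Your argument is correct and arguably cleaner than the paper's, though it follows a genuinely different route. The paper decomposes $x=\widetilde{x}+\lambda x_0$ with $\widetilde{x}\perp x_0$ and then compares the values of $u$ at three points $P_1=x_0+\rho_k\widetilde{x}$, $P_2=x_0+\rho_k x$, and $P_3$ (the intersection of the line through $x_0,P_1$ with the ray from $0$ through $P_2$); homogeneity gives $u(P_2)=u(P_3)(|P_2|/|P_3|)^\alpha$, and Thales' theorem yields $|P_1-P_3|=O(\rho_k^2)$, after which the $C^\alpha$ estimate closes the argument. You instead use the exact algebraic identity $u\big((1+\lambda\rho_k)x_0+\rho_k x\big)=(1+\lambda\rho_k)^\alpha\,u\big(x_0+\rho_k' x\big)$ with $\rho_k'=\rho_k/(1+\lambda\rho_k)$, reducing the question to the stability of blow-up limits under perturbations of the scale satisfying $\rho_k'/\rho_k\to1$.

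Both approaches ultimately rest on the same two ingredients---$\alpha$-homogeneity and the uniform $C^\alpha$ bound---but yours isolates the mechanism more transparently: the ``obstacle'' you flag (that $v_k$ and $v_k'$ share the same limit) is dispatched by the estimate $|v_k(x)-v_k'(x)|\lesssim \rho_k^{-\alpha}|\rho_k-\rho_k'|^\alpha|x|^\alpha + |x|^\alpha\,|1-(1+\lambda\rho_k)^\alpha|\to0$, using $\rho_k-\rho_k'=O(\rho_k^2)$ and $u(x_0)=0$. The paper's geometric picture has the advantage of being self-contained at a single scale (no need to compare two blow-up sequences), while your formulation makes the translation invariance emerge from a one-line identity and would generalize more readily to other homogeneous settings.
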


\begin{proof}
Let $x \in \R^{n+1}$, and consider its decomposition $x=\widetilde{x}+\lambda x_0$ with $\widetilde{x}\in \langle x_0 \rangle^\bot$. We only need to check that \begin{equation}\label{eqTargetIdentity}
u_0(x)=u_0(\widetilde{x}).
\end{equation}

\begin{figure}[ht]
\center
\begin{tikzpicture}[line cap=round,line join=round,>=triangle 45,x=0.7cm,y=0.7cm]
\clip(-3.96,-2.14) rectangle (6.78,3.7);
\draw [line width=.5pt, ] (4.266094252316256,2.487656762197886)-- (-2.78,0.58);
\draw [line width=.5pt] (3.3643812272510014,2.2435273355638077)-- (-2.78,0.58);
\draw [line width=.5pt] (-2.78,0.58)-- (3.38,0.64);
\draw [line width=.5pt] (3.38,0.64)-- (3.3620891085978095,2.4788515172915373);
\draw [line width=.5pt] (3.3620891085978095,2.4788515172915373)-- (4.266094252316256,2.487656762197886);
\draw [line width=.5pt] (3.3643812272510014,2.2435273355638077)-- (3.38,0.64);
\draw [line width=.5pt] (3.38,0.64) circle (2.420080008203852);
\draw (4.2,3.62) node[anchor=north west] {$B_{\rho_k}(x_0)$};
\draw (2.6,3.1) node[anchor=north west] {$P_1$};
\draw (4.2,2.72) node[anchor=north west] {$P_2$};
\draw (3.2,2.4) node[anchor=north west] {$P_3$};
\draw (-3.22,0.5) node[anchor=north west] {$O$};
\draw (3.38,0.88) node[anchor=north west] {$x_0$};
\begin{scriptsize}
\draw [fill=blau] (-2.78,0.58) circle (1.pt);
\draw [fill=blau] (3.38,0.64) circle (1.pt);
\draw [fill=blau] (3.3620891085978095,2.4788515172915373) circle (1.pt);
\draw [fill=blau] (4.266094252316256,2.487656762197886) circle (1.pt);
\draw [fill=blau] (3.3643812272510014,2.2435273355638077) circle (1.pt);
\end{scriptsize}
\end{tikzpicture}
\caption{The distance $\dist(P_1,P_3)=\mathcal{O}(\rho_k^2)$.}\label{figTrigonometry}
\end{figure}
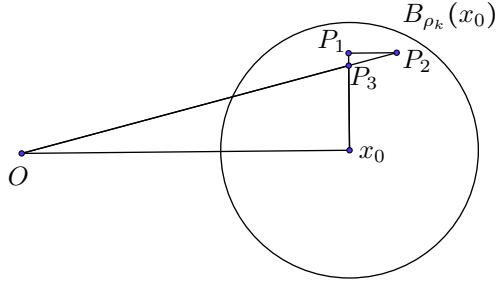

Consider a ball $B=B(0,r) \subset \R^{n+1}$ so that $\widetilde{x}, x\in B$.
Let $\{\rho_k\}$ be a sequence of radii converging to zero and such that $u_k(x):=\frac{u(x_0+\rho_k x)}{\rho_k^\alpha}$ converges  to $u_0$
uniformly on $B_r$. For $k$ big enough, $\norm{u_k-u_0}_{L^\infty(B_r)}<\varepsilon$. Then,
\begin{equation}\label{eqPartialStep}
|u_0(x)-u_0(\widetilde{x})|\leq 2\varepsilon + |u_k(x)-u_k(\widetilde{x})|.
\end{equation}

To control the last term above, we use the homogeneity of $u$.
Writing $P_1:=x_0+\rho_k \widetilde{x}$ and $P_2:=x_0+\rho_k x$ we have $\rho_k^\alpha u_k(\widetilde{x})=u(P_1)$ and $\rho_k^\alpha u_k(x)=u(P_2)$. Let $P_3$ be the intersection between the line through $P_1$ and $x_0$ and the line through the origin and $P_2$ (see Figure \ref{figTrigonometry}). By homogeneity of $u$
$$u(P_2)=u(P_3)\left(\frac{|P_2|}{|P_3|}\right)^\alpha=u(P_3)\left(1 \pm \frac{|P_2-P_3|}{|P_3|}\right)^\alpha.$$
Thus,
\begin{align*}
\rho_k^\alpha|u_k(x)-u_k(\widetilde{x})|
    & \leq \left|u(P_1) - u(P_3)\left(1 + \mathcal{O}(\rho_k)\right)^\alpha \right|
           \leq \left|u(P_1) - u(P_3)\right| + \left|u(P_3)\right|\mathcal{O}(\rho_k).
\end{align*}
By Thales' Theorem, $|P_1- P_3|=\frac{|P_1- P_2||P_3-x_0|}{|x_0|}=\mathcal{O}(\rho_k^2)$ and using the $\dot{C}^\alpha$ character of $u$ and the fact that $u(x_0)=0$, we get
\begin{align*}
\rho_k^\alpha|u_k(x)-u_k(\widetilde{x})|
     &    \leq \norm{u}_{\dot{C}^\alpha}\left( \left|P_1- P_3\right|^\alpha  + \left|P_3\right|^\alpha \mathcal{O}(\rho_k)\right)
         =  \mathcal{O}(\rho_k^{2\alpha} )+ \mathcal{O}(\rho_k),
\end{align*}
and \rf{eqTargetIdentity} follows by \rf{eqPartialStep} since $\rho_k\to 0$.
\end{proof}

We then recall that a minimizer with a translational symmetry is actually a minimizer without that symmetry in one dimension less. This is known as ``cone splitting":

\begin{lemma}\label{lemDimensionReduction}
Let $u\in \HH^\beta_{\loc}(\R^{n+1})$ be an $\alpha$-homogeneous minimizer of $\mathcal{J}$ in $\R^{n+1}$ which is invariant in the direction $e_n$. Then  $\widetilde{u}(x',y):=u(x',0,y)$ is a minimizer of $\mathcal{J}$ in one dimension less.
 \end{lemma}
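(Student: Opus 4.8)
The plan is to reduce a competitor in $\R^n$ for $\widetilde u$ to a competitor in $\R^{n+1}$ for $u$ by "tensoring with the $e_n$ direction," exploiting the fact that both terms of $\mathcal J$ essentially factor once we integrate out $x_n$. Concretely, fix a ball $B' \subset \R^n$ (with coordinates $(x_1,\dots,x_{n-1},y)$, so $B'\subset\R^{(n-1)+1}$) and a competitor $\widetilde v \in \HH^\beta(B')$ with $\widetilde v = \widetilde u$ on $\partial B'$. I would like to show $\mathcal J(\widetilde u, B') \leq \mathcal J(\widetilde v, B')$. The issue is that $u$ is defined on all of $\R^{n+1}$ and is $e_n$-invariant, so I cannot directly compare on a bounded set; instead I work on a slab $B' \times (-L,L)$ in the $x_n$ variable (more precisely on $Q_L := \{(x',x_n,y) : (x',y) \in B', |x_n| < L\}$) and let $L \to \infty$ after normalizing. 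First I would record that, because $u(x',x_n,y) = \widetilde u(x',y)$ is independent of $x_n$, for any domain of the slab form one has
\begin{equation}\label{e:slabenergy}
\int_{Q_L} |y|^\beta |\nabla u|^2 \, dx = 2L \int_{B'} |y|^\beta |\nabla \widetilde u|^2 \, dx', \qquad m\bigl(\{u>0\}\cap \R^n \cap Q_L\bigr) = 2L\, m\bigl(\{\widetilde u > 0\}\cap \R^{n-1} \cap B'\bigr),
\end{equation}
so that $\mathcal J(u, Q_L) = 2L\, \mathcal J(\widetilde u, B')$; here I use that the free-boundary/positivity term in \eqref{eqLocalizedFunctional} is $n$-dimensional Lebesgue measure on $\R^n$, which also factors as (length in $x_n$) $\times$ ($(n-1)$-measure), and that all of these are cylinders over $B'$. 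The analogous identity with $\widetilde v$ in place of $\widetilde u$ requires a competitor on $Q_L$; the natural choice is $v(x',x_n,y) := \widetilde v(x',y)$, which lies in $\HH^\beta(Q_L)$ and agrees with $u$ on the lateral boundary $\partial B' \times (-L,L)$ but \emph{not} on the caps $B' \times \{\pm L\}$.

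To fix the boundary data on the caps, I would not use a sharp cutoff but rather interpolate: take a small $\delta>0$ and on the region $L-\delta < |x_n| < L$ linearly interpolate (in $x_n$) between $\widetilde v$ and $\widetilde u$, i.e. set
\begin{equation*}
v_{L,\delta}(x',x_n,y) := \theta(x_n)\, \widetilde v(x',y) + (1-\theta(x_n))\, \widetilde u(x',y),
\end{equation*}
where $\theta$ is a Lipschitz function, $\theta \equiv 1$ for $|x_n| \le L-\delta$ and $\theta \equiv 0$ for $|x_n| \ge L$. This $v_{L,\delta}$ agrees with $u$ on all of $\partial Q_L$, so it is an admissible competitor for the minimizer $u$ on the ball-like (after a harmless enlargement, or simply by Definition \ref{defMinimizer} applied to a ball containing $Q_L$ — here one should either first observe that minimality on balls implies minimality on nice Lipschitz subdomains by exhausting, or carry out the whole argument with $Q_L$ replaced by a genuine ball $B_R$ and a cylindrical competitor supported away from $\partial B_R$). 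The extra energy coming from the transition layer is controlled: $|\nabla v_{L,\delta}|^2 \lesssim |\nabla \widetilde v|^2 + |\nabla \widetilde u|^2 + \delta^{-2}|\widetilde v - \widetilde u|^2$ on a set of $x_n$-length $2\delta$, so
\begin{equation*}
\mathcal J(v_{L,\delta}, Q_L) \le 2L\, \mathcal J(\widetilde v, B') + C\delta \Bigl(\|\nabla \widetilde v\|_{L^2(B',|y|^\beta)}^2 + \|\nabla \widetilde u\|_{L^2(B',|y|^\beta)}^2\Bigr) + \frac{C}{\delta}\|\widetilde v - \widetilde u\|_{L^2(B',|y|^\beta)}^2 + C\delta,
\end{equation*}
the last $C\delta$ absorbing the measure term on the transition slab. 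Choosing $\delta = \delta(L)$ (say $\delta = L^{1/2}$, or optimizing) makes the error term $o(L)$ as $L \to \infty$.

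Combining, minimality of $u$ on $Q_L$ gives $\mathcal J(u, Q_L) \le \mathcal J(v_{L,\delta}, Q_L)$, i.e. $2L\, \mathcal J(\widetilde u, B') \le 2L\, \mathcal J(\widetilde v, B') + o(L)$; dividing by $2L$ and sending $L \to \infty$ yields $\mathcal J(\widetilde u, B') \le \mathcal J(\widetilde v, B')$, which is exactly the minimality of $\widetilde u$. One also needs $\widetilde u \in \HH^\beta_{\loc}(\R^{(n-1)+1})$: evenness in $y$ and nonnegativity are inherited directly, and local finiteness of $\int |y|^\beta(|\widetilde u|^2 + |\nabla \widetilde u|^2)$ on $B'$ follows from \eqref{e:slabenergy} and $u \in \HH^\beta_{\loc}(\R^{n+1})$. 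The main obstacle I anticipate is purely bookkeeping rather than conceptual: matching boundary data on a genuine ball (as in Definition \ref{defMinimizer}) rather than on the artificial slab $Q_L$, and making sure the transition-layer construction stays admissible there. This is handled either by a preliminary lemma that minimality on balls propagates to minimality on bounded Lipschitz subdomains (via exhausting the subdomain by balls, or by a standard cutoff/gluing argument since $\mathcal J$ is a local energy), or by redoing the estimate directly inside a large ball $B_R \supset Q_L$ with the cylindrical competitor extended by $u$ itself outside $Q_L$ — both routes are routine and I would simply cite the first as "standard." The factorization identities and the $\delta$-layer estimate are the only computations, and neither presents a real difficulty.
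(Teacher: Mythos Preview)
Your argument is correct and is exactly the slab/cylinder construction from \cite[Lemma~3.2]{WeissMinimum} that the paper cites as its proof; you have simply written out the details the paper omits. One minor simplification: you may take $\delta>0$ fixed (independent of $L$), since the transition-layer error is then $O(1)=o(L)$, so there is no need to optimize $\delta=\delta(L)$.
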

 \begin{proof}
 The proof is a slight variation of {\cite[Proof of Lemma 3.2]{WeissMinimum}}.
  \end{proof}

Next we provide a non-standard proof of Proposition \ref{propoTrivial}, that is, to show that the trivial solution is a minimizer. We use \emph{P\ref{itemBlowup}} in a sequence of conveniently chosen blow-ups and a dimension reduction argument, based on the following lemma. Note that the proposition could also be proven via a classical dimension reduction argument.

\begin{proof}[Proof of Proposition \ref{propoTrivial}]
Consider a non-zero minimizer $u$ with non-empty free boundary (see \cite[Pro\-position 3.2]{CaffarelliRoquejoffreSire} for its existence), choose a free boundary point $x_0\in F(u)$ and consider $u_0$ to be a blow-up weak limit at this point, which exists and is $\alpha$-homogeneous by Lemma \ref{coroWeakLimits}. Then $u_0$  is also  a global minimizer by \emph{P\ref{itemBlowup}} and not null by the nondegeneracy condition.

Next we argue by induction: given $0\leq j\leq n-2$ let $u_j$ be an $\alpha$-homogeneous global minimizer different from $0$ such that it is invariant in a $j$-dimensional linear subspace $H_j\subset \R^n$, i.e., for every $v\in H_j$ and every $x'\in\R^{n}$,
$$u_j(x',y)=u(x'+v,y).$$

Consider a point $x_j \in F(u_j)\setminus (H_j\times\{0\})$ which exists as long as $j<n-1$ by the interior corkscrew condition and positive density, and let $u_{j+1}$ be a blow-up limit at this point, which is again an $\alpha$-homogeneous global minimizer. We claim that $u_{j+1}$ is invariant in fact in the $(j+1)$-dimensional subspace $H_j+\langle x_j' \rangle$.

Indeed $u_{j+1}$ is invariant in $\langle x_j'\rangle$ by Lemma \ref{lemConeBlowup}. On the other hand, since $u_j$ is invariant in $H_j$, so are the functions in the blow-up sequence and, thus, $u_{j+1}$ is invariant in $H_j$. Thus, for $v\in H_j$, $v_0\in \langle x_j'\rangle$ and $x\in\R^{n+1}$ we get
$$u(x+v+x_j')=u(x+v)=u(x),$$
and the claim follows.

Thus, after $n-1$ steps, we obtain $u_{n-1}$ which is an $\alpha$-homogeneous global minimizer invariant in an $(n-1)$-dimensional space $H_{n-1}$, with non-empty free boundary. Thus,
$$u_{n-1}(x',0)=C_{n,\alpha}(x'_n)_+^\alpha,$$
where the constant is given by \emph{P\ref{itemNormal}}.
The proposition follows by  Proposition \ref{propoExtension}.
\end{proof}

\subsection{Upper semicontinuity}
Next we show that Allen-Weiss' energy at a fixed radius is continuous both with respect to the minimizer and with respect to the point:
\begin{lemma}\label{lemContinuity}
Let $u_j\in \HH^\beta_{\loc}(\Omega)$ be minimizers of $\mathcal{J}$ in $\Omega$ and $u_j \to u_0$ in the sense of Lemma \ref{lemCompactness}. Then, for $x_j\to x_0$ and $r<\dist(x_0,\partial\Omega)$, 
$$  \Psi_r^{u_j}(x_j) \xrightarrow{j\to\infty} \Psi_r^{u_0}(x_0).$$
\end{lemma}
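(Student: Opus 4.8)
The plan is to unpack the definition of $\Psi_r$ into the two pieces
$$
\Psi_r^{u}(x) = \frac{\mathcal{J}(u, B_r(x))}{r^n} - \frac{\alpha}{r^{n+1}} \int_{\partial B_r(x)} |y|^\beta u^2 \, d\mathcal{H}^n,
$$
and to prove convergence of each piece separately. Since $r$ is fixed and $x_j \to x_0$ with $r < \dist(x_0, \partial\Omega)$, for $j$ large we have $B_r(x_j) \subset\subset \Omega$ uniformly, so all quantities are well-defined and we may work on a fixed compact neighborhood $G \subset\subset \Omega$ containing all the balls $\overline{B_r(x_j)}$.

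First I would handle the bulk term $\mathcal{J}(u_j, B_r(x_j))/r^n$. The Dirichlet part is $\int_{B_r(x_j)} |y|^\beta |\nabla u_j|^2$; by Lemma \ref{lemCompactness}(5) (strong $L^2(G; |y|^\beta)$ convergence of the gradients, which by the Caccioppoli bound actually upgrades to $L^2$ on a slightly larger ball — here one should be a touch careful and replace $B_r$ by $B_{r'}$ with $r < r' < \dist(x_0,\partial\Omega)$ to get a uniform gradient bound) together with $\|\chi_{B_r(x_j)} - \chi_{B_r(x_0)}\|_{L^1} \to 0$, we get $\int_{B_r(x_j)} |y|^\beta |\nabla u_j|^2 \to \int_{B_r(x_0)} |y|^\beta |\nabla u_0|^2$. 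The measure-of-positivity part is $m(\{u_j > 0\} \cap B_r'(x_j))$; by Lemma \ref{lemCompactness}(4), $\chi_{\{u_j>0\}} \to \chi_{\{u_0>0\}}$ in $L^1(G')$ (on the thin slice $G \cap (\R^n \times \{0\})$, which follows from the same argument as (4) using the positive density of the zero phase so that $F(u_0)$ is $m$-null in $\R^n$), and combined with $m(B_r'(x_j) \triangle B_r'(x_0)) \to 0$ this gives convergence of the second term. One subtlety: we must know $m(F(u_0) \cap \partial B_r'(x_0)) = 0$ so that the indicator of the ball behaves well on the boundary — this is automatic since $F(u_0)$ has $m$-measure zero in $\R^n$.

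For the boundary term $\int_{\partial B_r(x)} |y|^\beta u^2 \, d\mathcal{H}^n$, a direct trace estimate would be delicate because a priori we only control $u_j$ in $L^2(|y|^\beta)$ on the solid ball. The cleanest route is to avoid the sphere entirely: the standard trick (as in \cite{WeissMinimum}) is to note that, for fixed center, $r \mapsto \int_{\partial B_r(x)} |y|^\beta u^2\, d\mathcal{H}^n$ is the $r$-derivative of $r \mapsto \int_{B_r(x)} |y|^\beta u^2$, and this solid integral converges by Lemma \ref{lemCompactness}(2) (uniform convergence, hence $L^2(|y|^\beta)$ convergence on compacts, together with $L^1$ convergence of the ball indicators). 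So it suffices to pass the derivative through the limit, which is justified because the functions $\rho \mapsto \int_{B_\rho(x_j)} |y|^\beta u_j^2$ are monotone in $\rho$, uniformly Lipschitz on a compact $\rho$-interval (the derivative $\int_{\partial B_\rho} |y|^\beta u_j^2$ is bounded using the uniform $\dot C^\alpha$-bound $\|u_j\|_\infty \lesssim E_0$ on $G$ so that $u_j^2 \lesssim E_0^2$ pointwise, and $\int_{\partial B_\rho} |y|^\beta \lesssim \rho^n$), and the limit function is differentiable at $\rho = r$ for a.e.\ $r$ — and in fact the relevant sphere integral is continuous in $r$ because $u_0$ is continuous and $|y|^\beta$ is a fixed $A_2$ weight. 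Alternatively, and perhaps more transparently, I would combine this with the precise formula in Theorem \ref{theoMonotonicityMinimum}: from the Weiss identity the quantity $\Psi_r^{u_0}(x_0)$ can be recovered as $\lim_{\rho \to r}$ of an average, making the continuity in $r$ manifest.

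The main obstacle is the boundary (sphere) integral: strong gradient convergence only holds in $L^p(|y|^\beta)$ for $p \le 2$ and only for the solid integral, so there is no direct trace control, and one must instead exploit the monotonicity/regularity of the radial functions $\rho \mapsto \int_{B_\rho(x)} |y|^\beta u^2$ and the continuity of $\rho \mapsto \int_{\partial B_\rho(x)} |y|^\beta u_0^2 \, d\mathcal{H}^n$ (which uses $u_0 \in C^\alpha$ and that $|y|^\beta \in A_2$) to pass to the limit in $\rho$, then send $\rho \to r$. Everything else is a routine consequence of Lemma \ref{lemCompactness} and the uniform bounds in Theorem \ref{theoCRS}.
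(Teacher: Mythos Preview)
Your treatment of the bulk term $\mathcal{J}(u_j,B_r(x_j))$ is essentially identical to the paper's: split into Dirichlet and positivity parts, use strong $L^2(|y|^\beta)$ gradient convergence together with $|y|^\beta |\nabla u_0|^2 \in L^1_{\loc}$ and $\|\chi_{B_r(x_j)} - \chi_{B_r(x_0)}\|_{L^1} \to 0$ for the first, and $\chi_{\{u_j>0\}} \to \chi_{\{u_0>0\}}$ in $L^1$ for the second.

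For the sphere term, however, you have misidentified the obstacle and taken a much more roundabout route than necessary. The boundary integral involves $u_j^2$, not $|\nabla u_j|^2$, so trace control on gradients is irrelevant; what matters is that Lemma~\ref{lemCompactness}(1) already gives \emph{uniform} convergence $u_j \to u_0$ on compact subsets of $\Omega$. The paper simply writes
\[
\left| \int_{\partial B_r(x_j)} |y|^\beta u_j^2 \, d\mathcal{H}^n - \int_{\partial B_r(x_0)} |y|^\beta u_0^2 \, d\mathcal{H}^n \right| \to 0
\]
as a ``straight consequence of the uniform convergence and the continuity of $u_0$'': uniform convergence handles $u_j^2 \to u_0^2$ on a fixed compact containing all the spheres, and continuity of $u_0$ (together with $\int_{\partial B_\rho}|y|^\beta\,d\mathcal{H}^n \approx \rho^n$) handles the moving center $x_j \to x_0$. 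Your derivative trick and Lipschitz-in-$\rho$ argument would also work, but it is unnecessary machinery --- indeed you invoke the pointwise bound $u_j^2 \lesssim E_0^2$ to justify the uniform Lipschitz estimate, and that bound alone already lets you pass to the limit directly on the sphere.
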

\begin{proof}
Let $\varepsilon>0$. We want to check that  for $j$ big enough, 
$$\left|  \Psi_r^{u_j}(x_j) -\Psi_r^{u_0}(x_0)\right|\leq \varepsilon.$$

We will consider the three terms of the energy separately. 
For the first term, 
$$ \int_{B_r(x_j)}|y|^\Beta |\nabla u_j|^2- \int_{B_r(x_0)}|y|^\Beta |\nabla u_0|^2 \leq r^n \varepsilon/3$$
follows from the $L^2$ convergence of the gradients. Indeed, if $\delta_j:=|x_j-x_0|\leq \delta $ for $j$ big enough and $B_{r+\delta}\subset \Omega$, then
\begin{align*}
 \int_{B_r(x_j)}|y|^\Beta |\nabla u_j|^2- \int_{B_r(x_0)}|y|^\Beta |\nabla u_0|^2 
 	& \leq
 \int_{B_r(x_j)}|y|^\Beta \left(|\nabla u_j|^2- |\nabla u_0|^2\right)+ \int_{B_r(x_j)\Delta B_r(x_0)}|y|^\Beta |\nabla u_0|^2  \\
 	& \leq \int_{B_{r+\delta}(x_j)}|y|^\Beta \left(|\nabla u_j|^2- |\nabla u_0|^2\right)+ \int_{(B_{r+\delta_j}\setminus B_{r-\delta_j})(x_0)}|y|^\Beta |\nabla u_0|^2  \\
	&\leq r^n \varepsilon/3.
	\end{align*}

For the measure, we estimate
$$\left|\int_{B_r(x_j)'} \chi_{\Omega_+(u_j)} dm - \int_{B_r(x_0)'} \chi_{\Omega_+(u_0)}dm \right| \leq r^n \varepsilon/3$$
for $j$ big enough as a consequence of $\chi_{\Omega_+(u_j)}\to \chi_{\Omega_+(u_0)}$ in $L^1_{\loc}$ as before.
The fact that
$$\alpha \left| \int_{\partial B_r(x_j)} u_j^2- \int_{\partial B_r(x_0)} u_0^2  \right| \leq r^{n+1} \varepsilon/3 $$
for $j$ big enough is a straight consequence of the uniform convergence and the continuity of $u_0$.
\end{proof}

It is well known that the limit of a decreasing sequence of continuous functions is upper semicontinuous (see {\cite[Theorem 1.8]{DalMaso}}). The monotonicity formula also implies the following result.
\begin{lemma}\label{lemUSC}
Let $u_j\in \HH^\beta_{\loc}(\Omega)$  be minimizers of $\mathcal{J}$ in $\Omega$ and $u_j \xrightarrow{j\to\infty}u_0$ in the sense of Lemma \ref{lemCompactness}, with $x_j\in F(u_j)$ for $j\in \N$. Then, if $x_j\to x_0$ and $r_j\to 0$,
$$\limsup_{j} \Psi_0^{u_j}(x_j)\leq \limsup_{j} \Psi_{r_j}^{u_j}(x_j) \leq \Psi_0^u(x_0).$$
\end{lemma}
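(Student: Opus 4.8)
The plan is to establish the two inequalities separately. The second inequality, $\limsup_j \Psi_{r_j}^{u_j}(x_j) \le \Psi_0^u(x_0)$, is the substantive one and relies on the monotonicity formula (Theorem \ref{theoMonotonicityMinimum}) together with the continuity established in Lemma \ref{lemContinuity}. The first inequality, $\limsup_j \Psi_0^{u_j}(x_j) \le \limsup_j \Psi_{r_j}^{u_j}(x_j)$, follows immediately from monotonicity in $r$ for each individual minimizer: since $\Psi_0^{u_j}(x_j) = \lim_{r \searrow 0}\Psi_r^{u_j}(x_j)$ and $r \mapsto \Psi_r^{u_j}(x_j)$ is nondecreasing (here I use that $u_j$ is a minimizer near $x_j \in F(u_j)$, so Theorem \ref{theoMonotonicityMinimum} applies), we have $\Psi_0^{u_j}(x_j) \le \Psi_{r_j}^{u_j}(x_j)$ for every $j$; taking $\limsup_j$ of both sides gives the claim.

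For the second inequality, fix $\delta > 0$ with $B_\delta(x_0) \subset\subset \Omega$. Since $r_j \to 0$, for all $j$ large enough we have $r_j < \delta$, and so by the monotonicity formula applied to $u_j$ at the free boundary point $x_j$ we obtain $\Psi_{r_j}^{u_j}(x_j) \le \Psi_\delta^{u_j}(x_j)$. Now I take $\limsup$ in $j$: by Lemma \ref{lemContinuity} (continuity of $\Psi$ at a fixed radius, jointly in the minimizer and the base point, using $u_j \to u_0$ in the sense of Lemma \ref{lemCompactness} and $x_j \to x_0$), we get $\limsup_j \Psi_\delta^{u_j}(x_j) = \Psi_\delta^{u_0}(x_0)$. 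Hence $\limsup_j \Psi_{r_j}^{u_j}(x_j) \le \Psi_\delta^{u_0}(x_0)$. Since this holds for every sufficiently small $\delta > 0$, letting $\delta \searrow 0$ and using the definition of the Allen-Weiss density $\Psi_0^{u_0}(x_0) = \lim_{\delta \searrow 0} \Psi_\delta^{u_0}(x_0)$ (valid because $u_0$ is itself a minimizer with $x_0 \in F(u_0)$ — the latter by the Hausdorff convergence of free boundaries in Lemma \ref{lemCompactness}) yields $\limsup_j \Psi_{r_j}^{u_j}(x_j) \le \Psi_0^{u_0}(x_0)$, as desired.

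The main point requiring care is the interplay of the two limiting operations: one must first pass to the limit in $j$ at a \emph{fixed} radius $\delta$ (where continuity is available via Lemma \ref{lemContinuity}) and only afterwards send $\delta \to 0$, rather than attempting to control $\Psi_{r_j}^{u_j}(x_j)$ directly as $j \to \infty$ with $r_j \to 0$ simultaneously — the latter is exactly where upper semicontinuity can genuinely drop the value. A secondary technical point is checking that $x_0 \in F(u_0)$ so that the monotonicity formula and the density $\Psi_0^{u_0}(x_0)$ make sense at $x_0$; this is guaranteed by part (3) of Lemma \ref{lemCompactness}, since $x_j \in F(u_j)$ and $x_j \to x_0$ force $x_0 \in \partial\{u_0 > 0\}$. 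No new estimates are needed beyond what has already been assembled.
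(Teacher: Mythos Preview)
Your proof is correct and follows essentially the same approach as the paper: use monotonicity to bound $\Psi_{r_j}^{u_j}(x_j)$ by $\Psi_\delta^{u_j}(x_j)$ at a fixed radius $\delta$, invoke Lemma \ref{lemContinuity} to pass to the limit in $j$, and then let $\delta\searrow 0$. Your write-up is in fact slightly more careful than the paper's, as you explicitly verify $x_0\in F(u_0)$ via the Hausdorff convergence of free boundaries.
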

\begin{proof}
The first inequality comes from monotonicity.

To see that
$$\limsup_{j} \Psi_{r_j}^{u_j}(x_j) \leq \Psi_0^{u_0}(x_0),$$
it is enough to check that for every $r>0$
$$ \limsup_{j} \Psi_{r_j}^{u_j}(x_j) \leq \Psi_r^{u_0}(x_0),$$
or using monotonicity it suffices to show that for every $\varepsilon>0$ and $j$ big enough, 
$$  \Psi_r^{u_j}(x_j) - \Psi_r^{u_0}(x_0) \leq \varepsilon.$$
But this is true for $j$ big enough because the left-hand side converges to $0$ by the continuity of the energy from Lemma \ref{lemContinuity}.
\end{proof}

\section{Measure-theoretic properties}\label{s:finperimeter}

\subsection{Finite perimeter}
We will show that $\Omega'_+(u)$ is a set of locally finite perimeter. Then  $F_{\rm red}(u)$  will coincide with the measure-theoretic reduced boundary by the $\epsilon$-regularity theorem, see \cite[Sections 4.6 and 4.7]{AltCaffarelli}.

\begin{definition}
For every $0<\alpha<1$ we can define
 $$k^*_\alpha:=\inf \left\{k\in\N: \exists\mbox{ an  $\alpha$-homogeneous  minimizer $u\in \HH^\beta_{\loc}(\R^{k+1})$ such that $\Sigma(u)=\{0\}$}\right\}.$$
\end{definition}
Note that, to the best of our knowledge, there is no result showing that $k^*_\alpha$ needs to be finite.

\begin{lemma}\label{lemTrivialIsTheChosen}
Let $u$ be an $\alpha$-homogeneous minimizer of $\mathcal{J}$ in $\R^{n+1}$ with $n < k^*_\alpha$. Then $u$ is a rotation of the trivial solution.
\end{lemma}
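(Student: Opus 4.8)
The plan is to argue by induction on the dimension $n$, using the dimension-reduction machinery (Lemmas \ref{lemConeBlowup} and \ref{lemDimensionReduction}) together with the definition of $k^*_\alpha$. First I would dispose of the base case: if $n = 0$ (or, more comfortably, $n = 1$), an $\alpha$-homogeneous minimizer in $\R^{n+1}$ with nonempty free boundary must, after a rotation, have trace of the form $C_{n,\alpha}(x_n)_+^\alpha$. For $n=1$ this follows because the only $\alpha$-homogeneous, nonnegative, even one-dimensional profiles compatible with the free boundary condition \emph{P\ref{itemNormal}} in Theorem \ref{theoCRS} are the rotations of the trivial solution; one then invokes the uniqueness of the Poisson extension (Proposition \ref{propoExtension}) to upgrade this from the trace to $u$ itself. (Alternatively, one can run the same cone-splitting argument used in the proof of Proposition \ref{propoTrivial} all the way down.)

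For the inductive step, suppose $u$ is an $\alpha$-homogeneous minimizer in $\R^{n+1}$ with $n < k^*_\alpha$, and suppose the lemma holds in all lower dimensions. If $\Sigma(u) = \{0\}$, then by definition of $k^*_\alpha$ we would need $n \geq k^*_\alpha$, a contradiction; hence either $\Sigma(u) \neq \{0\}$, i.e.\ there is a singular free boundary point $x_0 \in F(u) \setminus \{0\}$, or $F(u)$ is smooth away from nowhere-dense issues. The key point is that \emph{some} free boundary point $x_0 \neq 0$ exists (guaranteed by the interior corkscrew condition \emph{P3} and positive density \emph{P4} whenever $F(u) \neq \emptyset$, which holds by nondegeneracy since $u \not\equiv 0$), and that at such a point a blow-up $u_0$ is, by Corollary \ref{coroWeakLimits}, again an $\alpha$-homogeneous global minimizer, and by Lemma \ref{lemConeBlowup} it is invariant in the direction $x_0$. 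After a rotation we may take this direction to be $e_n$, and then Lemma \ref{lemDimensionReduction} produces $\widetilde{u_0}(x',y) := u_0(x', 0, y)$, an $\alpha$-homogeneous minimizer in one dimension less, i.e.\ in $\R^{n}$. Since $n - 1 < n < k^*_\alpha$, the inductive hypothesis applies: $\widetilde{u_0}$ is a rotation of the $(n-1)$-dimensional trivial solution, so $u_0$ itself (being $\widetilde{u_0}$ extended trivially in the $e_n$ direction) is a rotation of the $n$-dimensional trivial solution. In particular $\Sigma(u_0) = \{0\}$ would force $n \geq k^*_\alpha$ — wait, no: the trivial solution has \emph{empty} singular set, so this is consistent. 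The real conclusion to extract is about $u$, not $u_0$, so I need to push the information back up.

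To transfer regularity from the blow-up $u_0$ back to $u$: the blow-up $u_0$ being (a rotation of) the trivial solution means $u_0$ is "flat" at the origin, so by the $\epsilon$-regularity theorem (Theorem \ref{theoImprovement}), the original $u$ has a $C^{1,\gamma}$ free boundary in a neighborhood of $x_0$; since $x_0 \in F(u) \setminus \{0\}$ was arbitrary, $F(u) \setminus \{0\}$ is entirely smooth, i.e.\ $\Sigma(u) \subseteq \{0\}$. If $\Sigma(u) = \{0\}$, then by definition $n \geq k^*_\alpha$, contradicting $n < k^*_\alpha$; hence $\Sigma(u) = \emptyset$, so $F(u)$ is a smooth $\alpha$-homogeneous (hence conical) hypersurface in $\R^n \times \{0\}$ that is smooth even at the origin — the only such cone is a hyperplane through $0$. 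Therefore $\Omega_+'(u)$ is a half-space, and the free boundary condition \emph{P\ref{itemNormal}} pins down the trace of $u$ to be exactly $C_{n,\alpha}(x\cdot\nu)_+^\alpha$ for some unit vector $\nu$; a final appeal to the uniqueness of the Poisson extension (Proposition \ref{propoExtension}) shows $u$ is the corresponding rotation of the trivial solution.

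\textbf{Main obstacle.} The delicate step is the transfer of flatness from $u_0$ to $u$ via Theorem \ref{theoImprovement}: one must check that the hypotheses of the $\epsilon$-regularity theorem are genuinely verified at $x_0$, which requires knowing that the blow-up being the trivial solution implies, for small enough scales, the flatness condition \eqref{eqImprovementOfFlatness} for the rescalings of $u$ around $x_0$ — this uses the Hausdorff convergence of free boundaries and the uniform convergence from Lemma \ref{lemCompactness}, plus care that the energy normalization $\|u\|_{\HH^\beta} \leq E_0 r^{n/2}$ is respected along the blow-up (which is where the eventual removal of the $E_0$-dependence, promised in Section \ref{s:UniformBounds}, or a direct scaling bound, is needed). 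A secondary subtlety is ensuring the induction is well-founded down to a genuinely handled base case and that "a rotation of the trivial solution" is interpreted as allowing the free boundary hyperplane to be any hyperplane through the origin in $\R^n \times \{0\}$.
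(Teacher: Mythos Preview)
Your argument is correct and is precisely the Federer--Weiss dimension-reduction scheme that the paper invokes by citing \cite[Section 3]{WeissMinimum}: blow up at any $x_0\in F(u)\setminus\{0\}$, use Lemmas \ref{lemConeBlowup} and \ref{lemDimensionReduction} plus the inductive hypothesis to see the blow-up is trivial, apply $\epsilon$-regularity to conclude $\Sigma(u)\subset\{0\}$, and then rule out $\Sigma(u)=\{0\}$ by the definition of $k^*_\alpha$. The only streamlining worth noting is the final step: once $\Sigma(u)=\emptyset$ you know $0\in F_{\rm red}(u)$, and since $u$ is $\alpha$-homogeneous the blow-up of $u$ at $0$ is $u$ itself, hence $u$ equals its own (trivial) blow-up---this replaces your appeal to ``smooth cone $\Rightarrow$ hyperplane'' plus \emph{P\ref{itemNormal}} plus Proposition \ref{propoExtension}.
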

See \cite[Section 3]{WeissMinimum} for the proof.

From the positive density properties, we know that $k^*_\alpha\geq 2$.  From the homogeneity of the blow-ups we find out that the free boundary in $\R^{1+1}$ is in fact  a collection of isolated points. Later in Theorem \ref{FullRegDim2} we will show that in fact $k^*_\alpha\geq 3.$
\begin{lemma}[Isolated singularities]
\label{IsolSing}
Let $u\in \HH^\beta_{\loc}(\Omega)$ for $\Omega\subset \R^{1+1}$ be a minimizer of $\mathcal{J}$ in $\Omega$. Then $F(u)$ has no accumulation points in $\Omega$.
\end{lemma}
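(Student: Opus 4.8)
The plan is to argue by contradiction using the compactness and blow-up machinery assembled in the previous sections, together with the dimension-reduction results. Suppose $F(u)$ has an accumulation point $x_0 \in \Omega$; after translating we may take $x_0 = 0$, so there is a sequence of free boundary points $z_k \in F(u)$ with $z_k \to 0$ and $z_k \neq 0$. Set $\rho_k := |z_k| \to 0$ and consider the rescalings $u_k(x) := u(\rho_k x)/\rho_k^\alpha$. By the optimal regularity \emph{P1}, the non-degeneracy \emph{P2} and the Caccioppoli inequality (Lemma \ref{cacciop}), the $u_k$ are bounded in $\dot C^\alpha$ and in $H^1(\beta, B_R)$ for each fixed $R$, so by Lemma \ref{lemCompactness} (compactness of minimizers) a subsequence converges, in the strong sense described there, to a global minimizer $u_0$ on $\R^{1+1}$. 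Moreover, by Corollary \ref{coroWeakLimits} we may choose the $\rho_k$ so that $u_0$ is in addition $\alpha$-homogeneous. The normalization $|z_k| = \rho_k$ means the rescaled points $z_k/\rho_k$ lie on the unit sphere, so up to a further subsequence $z_k/\rho_k \to \zeta$ with $|\zeta| = 1$; by Lemma \ref{lemContinuity} (or the Hausdorff convergence of free boundaries in Lemma \ref{lemCompactness}(3)) together with non-degeneracy, $\zeta \in F(u_0)$.

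Now I invoke the structure of $\alpha$-homogeneous global minimizers in low dimension. Since we are in $\R^{1+1}$, i.e. $n = 1$, and we know $k^*_\alpha \geq 2$ from the positive-density property \emph{P4} (as recorded in the text just before the statement), Lemma \ref{lemTrivialIsTheChosen} applies with $n = 1 < k^*_\alpha$: the homogeneous global minimizer $u_0$ must be a rotation of the trivial solution $u_{1,\alpha}$. But the trivial solution on $\R^{1+1}$ has free boundary equal to a single point $\{0\}$ (its trace is $c_{1,\alpha}(x_1)_+^\alpha$, whose positivity set has boundary $\{0\}$ in $\R$), hence $F(u_0) = \{0\}$ as well after rotation. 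This contradicts $\zeta \in F(u_0)$ with $|\zeta| = 1 \neq 0$. Therefore $F(u)$ can have no accumulation point, which is the claim.

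The main obstacle — really the only non-formal point — is making sure the accumulating free boundary points survive the rescaling limit as genuine free boundary points of $u_0$, i.e. that $\zeta \in F(u_0)$ rather than ending up in the interior of the positive phase or in the zero phase. This is exactly what the Hausdorff convergence in Lemma \ref{lemCompactness}(3) is designed to give, but it requires the uniform non-degeneracy estimate \emph{P2} to rule out the free boundary ``disappearing'' into the positive set, and the uniform H\"older bound \emph{P1} to rule out it disappearing into the zero set; both hold here because $u$ is a fixed minimizer and rescaling preserves the relevant normalized bounds. A secondary, purely bookkeeping point is verifying the $H^1(\beta)$ bound needed to enter Lemma \ref{lemCompactness}: this follows from Caccioppoli (Lemma \ref{cacciop}) applied on each fixed ball $B_R$ for $k$ large, since $B_{2R}$ is eventually contained in $\rho_k^{-1}\Omega$. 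With those two inputs in hand the rest is a direct application of the already-established compactness, monotonicity, and dimension-reduction toolkit.
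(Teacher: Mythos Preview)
Your proof is correct and follows the same blow-up/compactness strategy as the paper. The only difference lies in how the contradiction is extracted from the homogeneous limit $u_0$. You invoke Lemma~\ref{lemTrivialIsTheChosen} (using $n=1<k^*_\alpha$) to conclude that $u_0$ is the trivial solution and hence $F(u_0)=\{0\}$, which contradicts the existence of the limit free-boundary point $\zeta$ with $|\zeta|=1$. The paper instead argues more directly: it arranges the accumulating points to have positive first coordinate so that the rescaled free-boundary point is always $(1,0)$, and then uses the interior corkscrew and non-degeneracy to produce a point $z_0>0$ on the same ray with $u_0(z_0,0)>0$; this is immediately incompatible with $u_0(1,0)=0$ and $\alpha$-homogeneity, without any appeal to the classification of one-dimensional cones. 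Your route leans on slightly heavier machinery (the classification lemma) but is perfectly valid and arguably more conceptual; the paper's route is more elementary, trading the classification step for one additional use of corkscrew/non-degeneracy.
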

\begin{proof}
Arguing by contradiction, we assume that $F(u)$ has an interior accumulation point which, without loss of generality, we assume to be the origin. 

Let $(x_k,0)$  be a sequence of singular points converging to $0$ with $x_k >0$. Consider the blow-up rescaling $u_k(x):=\frac{u(x_k x)}{x_k^\alpha}$. Note that $u_k(0,0)=u_k(1,0)=0$. Moreover, by the interior corkscrew condition, there exist $z_k\in (1/2, 3/2)$ such that $u_k|_{B'_c(z_k, 0)}>0$, so $u(z_k,0)\gtrsim C$ by the non-degeneracy condition. 

Choosing a subsequence, we may assume that $z_k\to z_0\geq 1/2$, and $u_k\to u_0$ in the sense of Lemma \ref{lemCompactness}. In particular $u_0$ is homogeneous by Corollary \ref{coroWeakLimits}, reaching a contradiction with the fact that $u_0(1,0)=0$ and $u_0(z_0,0)\gtrsim C$.
\end{proof}

We will prove the local finiteness of the perimeter of the free boundary adapting a proof of De Silva and Savin in \cite{deSilvaSavinJEMS}. Our proof is essentially the same, but we repeat it for the sake of completeness.

As in \cite{deSilvaSavinJEMS} we say that a set $A\subset \R^n$ satisfies the property (P) if following holds: for every $x\in A$ there exists an $r_x>0$ such that for every $0<r<r_x$, every subset $S$ of $B(x,r)\cap A$ can be covered with a finite number of balls $B(x_i,r_i)$ with $x_i\in S$ such that 
\begin{equation}
 \label{PropP}
 \sum_i r_i \leq r^\alpha/2.
\end{equation}

\begin{lemma}
\label{MinimDim}
 If  $\mathcal H^t (\Sigma(U))=0$ for some $\alpha>0$ and for every minimal cone $U$ in $\R^{n+1}$ then $\mathcal H^t (\Sigma(u))=0$ for every minimizer $u$ of $\mathcal J$ defined on $\Omega\subset \R^{n+1}$ 
\end{lemma}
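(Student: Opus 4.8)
The plan is to prove the contrapositive statement via a blow-up and covering argument, exploiting the compactness results of Section~\ref{s:Compactness} and the dimension-reduction machinery of Section~\ref{secReduction}. Suppose, for contradiction, that $\mathcal H^t(\Sigma(u))>0$ for some minimizer $u$ on $\Omega\subset\R^{n+1}$. By inner regularity of Hausdorff measure we may find a point $x_0\in\Sigma(u)$ which is a \emph{density point} of $\Sigma(u)$ in the sense that $\limsup_{r\to 0} r^{-t}\,\mathcal H^t_\infty(\Sigma(u)\cap B_r(x_0))>0$ (here $\mathcal H^t_\infty$ is the Hausdorff content, which behaves well under the rescalings we will use). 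Translate so that $x_0=0$ and consider the blow-up sequence $u_r(x):=u(rx)/r^\alpha$. By Theorem~\ref{theoCRS} (P\ref{itemBlowup}), Corollary~\ref{coroWeakLimits} and Lemma~\ref{lemCompactness}, along a subsequence $r_k\to 0$ the $u_{r_k}$ converge (uniformly and in $H^{1,2}_{\loc}(|y|^\beta)$) to an $\alpha$-homogeneous global minimizer $U$, i.e.\ a minimal cone.

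The key step is to transfer the positive lower bound on the Hausdorff content from $\Sigma(u)$ at scale $r_k$ to $\Sigma(U)$. First I would observe that under the rescaling $x\mapsto x/r_k$, the set $\Sigma(u)\cap B_{r_k}$ becomes $\Sigma(u_{r_k})\cap B_1$, and since the Hausdorff content scales like $\mathcal H^t_\infty(E/r_k)=r_k^{-t}\mathcal H^t_\infty(E)$, the density-point property at $0$ gives a uniform lower bound $\mathcal H^t_\infty(\Sigma(u_{r_k})\cap B_1)\geq c_0>0$ along the subsequence. Then I would use the Hausdorff-distance convergence of the free boundaries (Lemma~\ref{lemCompactness}(3)) together with the upper semicontinuity of the Allen--Weiss density (Lemma~\ref{lemUSC}) to show that singular points cannot disappear in the limit: if $x_k\in\Sigma(u_{r_k})$ and $x_k\to x_\infty$, then $x_\infty\in F(U)$ and $\Psi^U_0(x_\infty)=\lim\Psi^{U}_0(x_k')\geq\Psi^{u_{r_k}}_{0}(x_k)$ forces $x_\infty$ to have a non-flat blow-up, hence $x_\infty\in\Sigma(U)$ (using Lemma~\ref{lemTrivialIsTheChosen} / the $\epsilon$-regularity theorem to characterize the singular set as the set of points with density strictly above that of the trivial solution). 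Combining this ``no escape'' property with the content lower bound and the upper semicontinuity of $E\mapsto\mathcal H^t_\infty(E)$ under Hausdorff convergence of compacta, I would conclude $\mathcal H^t_\infty(\Sigma(U)\cap \overline{B_1})\geq c_0>0$, contradicting the hypothesis $\mathcal H^t(\Sigma(U))=0$ (which forces $\mathcal H^t_\infty(\Sigma(U))=0$ as well).

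The main obstacle I anticipate is the second step: carefully justifying that $\mathcal H^t_\infty$ is upper semicontinuous along the Hausdorff convergence $\Sigma(u_{r_k})\cap\overline{B_1}\to \Sigma(U)\cap\overline{B_1}$, and in particular that the limiting set really contains a chunk of $\Sigma(U)$ and not merely of $F(U)$. The Hausdorff convergence of Lemma~\ref{lemCompactness} is stated for the \emph{full} free boundaries $F(u_{r_k})\to F(U)$, not for the singular parts, so one must rule out the possibility that all the singular points of $u_{r_k}$ accumulate onto regular points of $U$. This is exactly where the monotonicity formula and its semicontinuity (Lemma~\ref{lemUSC}) are essential: a point $x_k\in\Sigma(u_{r_k})$ has $\Psi^{u_{r_k}}_0(x_k)$ bounded below by the density of the trivial cone plus a fixed gap (a consequence of Lemma~\ref{lemTrivialIsTheChosen} and compactness—there are no ``almost trivial'' minimal cones), and Lemma~\ref{lemUSC} passes this gap to the limit point. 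A secondary technical point is choosing the right notion of ``density point'' so that it survives rescaling; using Hausdorff content rather than Hausdorff measure itself, as in the De Silva--Savin argument being adapted, is what makes this clean. Once these pieces are in place, the contradiction—and hence the lemma—follows; I expect the actual estimates here to be routine given the tools already assembled in Sections~\ref{s:Compactness}--\ref{s:monotonicity}.
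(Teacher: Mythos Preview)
Your approach is the classical Federer-style dimension reduction (density point + blow-up + upper semicontinuity of Hausdorff content), which is a legitimate alternative to the paper's argument. The paper instead introduces an auxiliary covering property (P) for $\Sigma(u)$, establishes it by a blow-up contradiction, and then iterates (P) on the sets $D_k=\{r_x\geq 1/k\}$ to force $\mathcal H^t(\Sigma(u))=0$. Both routes hinge on the same geometric fact: along a blow-up sequence, singular points of $u_{r_k}$ cannot accumulate at regular points of the limiting cone $U$. Your route is slightly more direct; the paper's property-(P) packaging is closer to the De~Silva--Savin template it is adapting.

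There is, however, a circularity in your justification of that key fact. You characterize singular points via a strict lower bound on the Allen--Weiss density (a ``gap'' above the trivial-cone value) and then invoke Lemma~\ref{lemUSC}. But that gap is Proposition~\ref{propoAlmostGap} and its Energy Gap corollary, which are proved \emph{after} the present lemma and in fact rely on it (through Corollary~\ref{coroZeroHn} and the finite-perimeter machinery of Section~\ref{s:finperimeter}). The clean fix---and exactly the mechanism the paper's own proof uses---is to argue directly with $\epsilon$-regularity: if $x_k\in\Sigma(u_{r_k})$, $x_k\to x_\infty$, and $x_\infty\in F_{\rm red}(U)$, then $F(U)$ is a $C^{1,\gamma}$ graph near $x_\infty$, hence flat; by the Hausdorff convergence of Lemma~\ref{lemCompactness}(3) so is $F(u_{r_k})$ for large $k$, and Theorem~\ref{theoImprovement} then forces $x_k$ to be regular, a contradiction. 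With this replacement your argument goes through; the remaining steps (existence of a content-density point, Blaschke selection on $\Sigma(u_{r_k})\cap\overline{B_1}$, the inequality $\limsup_k\mathcal H^t_\infty(K_k)\leq\mathcal H^t_\infty(K)$ for compacta $K_k\to K$ in Hausdorff distance) are standard.
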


\begin{proof}
 We first show that $\Sigma(u)$ satisfies the property (P). If (P) does not hold we find a point $y\in \Sigma(u)$ for (P) is violated for a sequence $r_k\rightarrow 0.$ We consider the blow-up sequence 
 \begin{equation}
  u_{r_k}(x)=r_k^{-\alpha}u(y+r_k x).
 \end{equation}
By Corollary \ref{c:ContinuityofEnergy} we may assume, by taking  a subsequence,  that  $u_{r_k}$ converges to a minimal cone $U$. By our assumptions we may cover $\Sigma(U)\cap B(0,1)$ with a finite collection of balls $\{B(x_i,\frac{\rho_i}{10})\}_{i=1}^k$  with 
$$
\sum_i \rho_i^t\leq \frac12.
$$

By Lemma \ref{lemCompactness} we know that free boundaries converge in Hausdorff sense and thus the set
$F(u_{r_k})\cap B(0,1)\setminus \bigcup_i B(x_i,\rho_i/5)$ is flat for all large $k.$ From Theorem \ref{th:iof} we infer that all singularities must be covered by the same balls, that is, for all $k\geq k_0$
\begin{equation}
 \Sigma(u_{r_k})\cap B(0,1)\subset \bigcup_i B(x_i,\rho_i/5).
\end{equation}
After rescaling we see that $u$ satisfies the condition for property (P) in the ball $B(y,r_k),$ which is a contradiction. Therefore the property (P) holds as claimed.

Consider the set $D_k:=\{y\in \Sigma(u): r_y\geq1/k \}.$ Fix a point $y_0\in D_k.$  By property (P) applied to $r_0=1/k$ we find  a finite cover of $D_k\cap B(y_0,r_0)$ with balls $B(y_i,r_i),$ $y_i\in D_k,$ satisfying
$$
\sum_i r_i^t\leq r_0^t/2.
$$
Similarly, for each ball $B(y_i,r_i)$ in the cover we use the property (P) to find a finite number of balls $B(y_{ij}, r_{ij}),$ $y_{ij}\in D_k,$ which cover $D_k\cap B(y_i,r_i)$ and satisfy
$$
\sum_j r_{ij}^t\leq r_i^t/2, 
$$
and thus $\sum_{i,j} r_{ij}\leq r_0^t/4$.
By repeating the argument $N$ times we obtain a cover of $D_k\cap B(y_0,r_0)$ by balls $B(z_l,r_l)$ which satisfies
$$
\sum_l r_l^t \leq 2^{-N}r_0^t.
$$
This implies that $\mathcal H^t(B(y_0,r_0)\cap D_k)=0$ and thus $\mathcal H^t(D_k)=0.$ By countable additivity we obtain the claim.

\end{proof}

\begin{lemma}
\label{MeasReduction}
 If  $\mathcal H^t (\Sigma(U))=0$ for some $t>0$ and for every minimal cone in $\R^{n+1}$, we then have that $\mathcal H^{t+1} (\Sigma(V))=0$ for every minimal cone $V$ in $\R^{(n+1)+1}.$ 
\end{lemma}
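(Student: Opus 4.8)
The strategy is a standard Federer-type dimension reduction argument, but organized so that one can push through induction on dimension. Suppose $V$ is a minimal cone in $\R^{(n+1)+1}$, i.e.\ an $\alpha$-homogeneous global minimizer of $\mathcal{J}$ in $\R^{(n+1)+1}$. If $\Sigma(V) = \{0\}$ there is nothing to do (a point has $\mathcal{H}^{t+1}$-measure zero since $t+1 > 0$), so assume there is some $x_0 \in \Sigma(V) \setminus \{0\}$. First I would pass to a blow-up $V_0$ of $V$ at $x_0$: by Corollary \ref{coroWeakLimits} this is again an $\alpha$-homogeneous global minimizer, and by Lemma \ref{lemConeBlowup} it is invariant in the direction $x_0$. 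By Lemma \ref{lemDimensionReduction}, the slice $\widetilde V_0(x',y) := V_0(x',0,y)$ is a minimizer of $\mathcal{J}$ in $\R^{n+1}$ (one dimension less), and it is still $\alpha$-homogeneous, hence a minimal cone in $\R^{n+1}$. Moreover $x_0 \in \Sigma(\widetilde V_0) \times \langle x_0\rangle$ in the obvious sense — the point here is that flatness (and its failure) is inherited under slicing, so $\Sigma(V_0) = \Sigma(\widetilde V_0) \times \langle x_0\rangle$, up to the identification of $\R^{(n+1)+1}$ with $\R^{n+1} \times \langle x_0 \rangle$.

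The key observation is then: $\Sigma(V_0) \setminus \{0\}$ is nonempty (it contains $x_0$), so $\Sigma(\widetilde V_0) \ne \emptyset$; by hypothesis $\mathcal{H}^t(\Sigma(\widetilde V_0)) = 0$, and since $\Sigma(V_0)$ is (locally, away from $0$) a product of $\Sigma(\widetilde V_0)$ with a line, we get $\mathcal{H}^{t+1}(\Sigma(V_0)) = 0$ by the elementary product inequality for Hausdorff measures ($\mathcal{H}^{t+1}(A \times \R) = 0$ whenever $\mathcal{H}^t(A) = 0$, applied on bounded pieces and summed). So the blow-up of $V$ at \emph{any} singular point $x_0 \ne 0$ has a small singular set. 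The remaining work is to upgrade this information about blow-ups back to $V$ itself. This is exactly the mechanism already used in Lemma \ref{MinimDim}: one shows $\Sigma(V) \setminus \{0\}$ satisfies a covering property of type (P) (using that blow-ups at points of $\Sigma(V)\setminus\{0\}$ have $\mathcal{H}^{t+1}$-null singular set together with the Hausdorff convergence of free boundaries from Lemma \ref{lemCompactness}, Corollary \ref{c:ContinuityofEnergy} and the $\epsilon$-regularity Theorem \ref{theoImprovement}), and then iterates the covering to conclude $\mathcal{H}^{t+1}(\Sigma(V) \setminus \{0\}) = 0$. Since $\mathcal{H}^{t+1}(\{0\}) = 0$, this gives $\mathcal{H}^{t+1}(\Sigma(V)) = 0$. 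Actually the cleanest route is to invoke Lemma \ref{MinimDim} in dimension $(n+1)+1$: having shown that every minimal cone $U$ in $\R^{(n+1)+1}$ has $\mathcal{H}^{t+1}(\Sigma(U)) = 0$ — which is precisely what the blow-up/slicing argument establishes, since the blow-up of $U$ at a singular point $\ne 0$ is handled by the induction hypothesis and the finitely many covering balls coming from it — one is in the setting to run the (P)-property argument verbatim.

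The main obstacle I anticipate is the bookkeeping in the step ``$\Sigma(V_0) = \Sigma(\widetilde V_0) \times \langle x_0 \rangle$'': one must check carefully that a point is a regular (flat) free boundary point of $V_0$ if and only if the corresponding point is a regular free boundary point of the slice $\widetilde V_0$. The $\Leftarrow$ direction uses that translation-invariant flat graphs in $\R^{n+1}$ lift to flat graphs in $\R^{(n+1)+1}$; the $\Rightarrow$ direction uses the $\epsilon$-regularity statement (Theorem \ref{theoImprovement}) together with the fact that blow-ups at a translation-invariant point detect the slice. A secondary subtlety is that $x_0$ ranges over all of $\Sigma(V)\setminus\{0\}$, so one genuinely needs the conclusion for \emph{every} such blow-up, not just one; but since each blow-up's singular set is covered by finitely many balls of controlled radii (by the induction hypothesis applied to the slice, plus homogeneity to handle the extra line direction), this feeds exactly into the hypothesis needed to run Lemma \ref{MinimDim}. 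Everything else — the product formula for Hausdorff measure, the homogeneity of blow-ups, the iteration of property (P) — is routine given the tools already assembled in the excerpt.
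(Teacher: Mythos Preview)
Your proposal is correct and follows essentially the same route as the paper: blow up $V$ at a singular point $x_0\neq 0$, use Lemma~\ref{lemConeBlowup} and Lemma~\ref{lemDimensionReduction} to reduce the blow-up to a minimal cone in $\R^{n+1}$, apply the hypothesis there, pass back via the product structure to get $\mathcal H^{t+1}(\Sigma(V_{x_0}))=0$, and then feed this into the covering argument of Lemma~\ref{MinimDim}. The paper's own proof is terser (it skips the explicit product identification $\Sigma(V_0)=\Sigma(\widetilde V_0)\times\langle x_0\rangle$ that you spell out), but the logic is the same.
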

\begin{proof}
Without loss of generality we may assume $\Sigma(V)\neq \{0\}.$ 
Let $x\in \Sigma(V)\setminus \{0\}.$
By Corollary \ref{c:ContinuityofEnergy} the blow-ups at any point of $\Sigma(V)\cap \partial B$ converge to a minimal cone  in dimension $(n+1)+1$ up to a subsequence. Let $V_x$ be a blow-up at $x.$ By Lemma \ref{lemDimensionReduction} $V_x$ is a minimal cone which is invariant in at least one direction. By Lemma \ref{lemDimensionReduction}, using our assumption this implies that $\mathcal H^{t+1}(\Sigma(V_x))=0$,  and thus the singular set of every possible blow-up cone of any minimizer $V$  has zero $\mathcal H^{t+1}$-measure. 

Arguing as in Lemma \ref{MinimDim} we obtain  $\mathcal H^{t+1}(\Sigma(V))=0.$

%
%
%
%
%

\end{proof}

Combining Lemmas \ref{IsolSing}, \ref{MinimDim} and  \ref{MeasReduction} we obtain the following corollary. Notice that we will be able to replace $n-1$ by $n-2$ by Theorem \ref{FullRegDim2}.

\begin{corollary}\label{coroZeroHn}
Every minimizer satisfies 
$$\mathcal{H}^{n-1}(\Sigma(u))=0.$$
\end{corollary}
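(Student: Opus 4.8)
The plan is to combine the dimension reduction machinery (Lemmas~\ref{IsolSing}, \ref{MinimDim}, \ref{MeasReduction}) in an inductive fashion, with Lemma~\ref{IsolSing} furnishing the base case. Recall that by Lemma~\ref{IsolSing}, in ambient dimension $1+1$ any minimizer has a free boundary consisting of isolated points, so in particular $\Sigma(u)$ is discrete and hence $\mathcal H^0(\Sigma(u))=0$ for every minimizer $u$ on a domain $\Omega\subset\R^{1+1}$ (here we use that isolated singular points form a set which is at most countable and has $\mathcal H^0$-measure zero when intersected with any compact set only if finite --- more carefully, for the covering argument what matters is that $\mathcal H^0(\Sigma(U))=0$ for every minimal \emph{cone} $U$ in $\R^{1+1}$, and a minimal cone in $\R^{1+1}$ that is a nontrivial $\alpha$-homogeneous minimizer with $\Sigma(U)=\{0\}$ would contradict $k_\alpha^*\ge 2$, so in fact there is no such cone and $\Sigma(U)=\emptyset$).

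More precisely, I would argue by induction on the ambient dimension. \textbf{Claim:} for every minimal cone $U$ in $\R^{m+1}$ one has $\mathcal H^{m-1}(\Sigma(U))=0$, and consequently (by Lemma~\ref{MinimDim}) $\mathcal H^{m-1}(\Sigma(u))=0$ for every minimizer $u$ on a domain in $\R^{m+1}$. The base case $m=1$: a minimal cone $U$ in $\R^{1+1}$ has $\Sigma(U)\subset\{0\}$ by homogeneity and the structure of the free boundary (Lemma~\ref{IsolSing} applied to the cone itself, or simply because $k_\alpha^*\ge 2$ forces $\Sigma(U)=\emptyset$), so $\mathcal H^0(\Sigma(U))=0$; then Lemma~\ref{MinimDim} with $t=0$ gives $\mathcal H^0(\Sigma(u))=0$ for minimizers in $\R^{1+1}$. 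For the inductive step, assume the claim holds in dimension $m+1$, i.e. $\mathcal H^{m-1}(\Sigma(U))=0$ for every minimal cone $U$ in $\R^{m+1}$. Then Lemma~\ref{MeasReduction} (with $n=m$, $t=m-1$) yields $\mathcal H^{m}(\Sigma(V))=0$ for every minimal cone $V$ in $\R^{(m+1)+1}$, and Lemma~\ref{MinimDim} (with $t=m$, ambient dimension $(m+1)+1$) upgrades this to $\mathcal H^{m}(\Sigma(u))=0$ for every minimizer $u$ in $\R^{(m+1)+1}$. This closes the induction, and taking $m=n$ gives exactly $\mathcal H^{n-1}(\Sigma(u))=0$.

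The main subtlety --- and the step I expect to require the most care --- is getting the base case and the bookkeeping of indices to line up, since the three lemmas quantify over ``minimal cones in $\R^{n+1}$'' versus ``minimizers in $\R^{n+1}$'' and one must be careful that Lemma~\ref{IsolSing} is being invoked in the form needed to feed Lemma~\ref{MinimDim}: namely, that $\mathcal H^0(\Sigma(U))=0$ for every minimal cone $U$ in $\R^{1+1}$. This follows because such a cone is a one–dimensional ($\R^{1+1}$) minimizer, so its free boundary is a set of isolated points by Lemma~\ref{IsolSing}, and being an $\alpha$-homogeneous cone its free boundary is moreover scale invariant, hence either empty or $\{0\}$; either way its singular set has vanishing $\mathcal H^0$ locally. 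Everything else is a mechanical alternation of Lemma~\ref{MeasReduction} (which raises the dimension of the cone by one and the Hausdorff exponent by one) and Lemma~\ref{MinimDim} (which passes from cones to general minimizers at fixed exponent), so no genuinely new estimate is needed beyond what is already established.

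\begin{proof}
We argue by induction on the ambient dimension, proving the following statement for all $m\ge 1$:
\begin{equation}\label{eqInductiveClaim}
\mathcal H^{m-1}\bigl(\Sigma(U)\bigr)=0 \quad\text{for every minimal cone $U$ in $\R^{m+1}$.}
\end{equation}
Once \eqref{eqInductiveClaim} is known for $m=n$, Lemma~\ref{MinimDim} (applied with $t=n-1$) gives $\mathcal H^{n-1}(\Sigma(u))=0$ for every minimizer $u$ of $\mathcal J$ on a domain $\Omega\subset\R^{n+1}$, which is the desired conclusion.

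For the base case $m=1$, let $U$ be a minimal cone in $\R^{1+1}$. By Lemma~\ref{IsolSing}, applied to $U$ as a minimizer in $\R^{1+1}$, the free boundary $F(U)$ has no accumulation points; since $U$ is $\alpha$-homogeneous, $F(U)$ is invariant under dilations, so it is either empty or equal to $\{0\}$. In either case $\Sigma(U)\subset\{0\}$, so $\mathcal H^0(\Sigma(U))=0$, which is \eqref{eqInductiveClaim} for $m=1$.

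Now assume \eqref{eqInductiveClaim} holds for some $m\ge 1$. Then Lemma~\ref{MeasReduction}, applied with $n$ replaced by $m$ and $t=m-1$, yields
$$
\mathcal H^{m}\bigl(\Sigma(V)\bigr)=0\quad\text{for every minimal cone $V$ in $\R^{(m+1)+1}$,}
$$
which is precisely \eqref{eqInductiveClaim} with $m$ replaced by $m+1$. This completes the induction and hence the proof.
\end{proof}
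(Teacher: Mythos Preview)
Your approach—an induction on the ambient dimension alternating Lemma~\ref{MeasReduction} (to go from cones in $\R^{m+1}$ to cones in $\R^{(m+1)+1}$) with Lemma~\ref{MinimDim} (to pass from cones to general minimizers)—is exactly the combination the paper has in mind.

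Two points in your formal proof need repair. First, the base case: you write ``$\Sigma(U)\subset\{0\}$, so $\mathcal H^0(\Sigma(U))=0$'', but $\mathcal H^0$ is the counting measure and $\mathcal H^0(\{0\})=1$. The statement $\mathcal H^0(\Sigma(U))=0$ is equivalent to $\Sigma(U)=\emptyset$, and for this you must actually rule out $0\in\Sigma(U)$. You already identified the right reason in your preamble: $k_\alpha^*\ge 2$ (equivalently, Lemma~\ref{lemTrivialIsTheChosen} with $n=1$) forces every minimal cone in $\R^{1+1}$ to be a rotation of the trivial solution, hence to have empty singular set. Replace the faulty sentence with this argument.

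Second, Lemma~\ref{MeasReduction} is stated with the hypothesis $t>0$, so your first inductive step ($m=1\to m=2$, i.e.\ $t=0$) is formally outside its scope. This is harmless—the proof of Lemma~\ref{MeasReduction} works verbatim for $t=0$, since the covering argument it invokes at the end runs at exponent $t+1\ge 1>0$, and the product step $\mathcal H^0(\Sigma(\tilde V))=0\Rightarrow\Sigma(\tilde V)=\emptyset\Rightarrow\mathcal H^1(\Sigma(\tilde V)\times\R)=0$ is trivial—but you should say so explicitly rather than cite the lemma as a black box.
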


\begin{lemma}
\label{l:cover}
 Let $u\in \HH^\beta (2B)$ be a minimizer of $\mathcal J$ in $2B$ with $\norm{u}_{\dot C^\alpha(2B)}<E_0.$ Then there exists a constant $C$ depending on $n$, $\alpha$ and $E_0$  and a finite collection of balls $\{B(X_i,r_i)\}$ s.t.
 \begin{equation}\label{eq:noballs}
  \mathcal H^{n-1}\left ( (F(u)\cap B)\setminus \bigcap_{i= 1}^m B(X_i,r_i) \right)\leq C
 \end{equation}
and
 \begin{equation}
 \label{eq:balls}
  \sum_{i= 1}^m r_i^{n-1} \leq \frac12.
 \end{equation}
\end{lemma}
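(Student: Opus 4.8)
The plan is to absorb the singular set into the balls $\{B(X_i,r_i)\}$ — which costs essentially nothing in $\mathcal{H}^{n-1}$, since $\Sigma(u)$ is $\mathcal{H}^{n-1}$-null — and then to control the $\mathcal{H}^{n-1}$-measure of what remains of $F(u)\cap B$, which consists only of regular points and is, at a uniform scale, a Lipschitz graph with small constant. For the first step, $\Sigma(u)\cap\overline B$ is compact ($\overline B\subset 2B$, and $\Sigma(u)=F(u)\setminus F_{\rm red}(u)$ is relatively closed in $2B$, being the complement in the relatively closed set $F(u)$ of the relatively open set $F_{\rm red}(u)$). By Corollary \ref{coroZeroHn}, $\mathcal{H}^{n-1}(\Sigma(u))=0$, so its $(n-1)$-dimensional Hausdorff content vanishes: for any prescribed $\eta>0$ we can cover $\Sigma(u)\cap\overline B$ by countably many open balls with $\sum r_i^{n-1}<\eta$, keep finitely many of them by compactness, and (dilating each by $2$ about a nearby point of $\Sigma(u)$ if we want the $X_i$ on $\Sigma(u)$) obtain \eqref{eq:balls} by choosing $\eta$ small.

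Set $K:=(F(u)\cap\overline B)\setminus\bigcup_{i=1}^m B(X_i,r_i)$, a compact set disjoint from $\Sigma(u)$, hence $K\subset F_{\rm red}(u)$. The crux is to produce a scale $\rho_0>0$, controlled by $n$, $\alpha$, $E_0$ (and $B$), such that for every $x\in K$ the set $F(u)\cap B(x,\rho_0)$ is $\epsilon$-flat, $\epsilon$ being the constant of Theorem \ref{th:iof}. Granting this, Theorem \ref{th:iof} — whose normalization hypothesis $\norm{u}_{\HH^\beta}\lesssim E_0\,(\mathrm{radius})^{n/2}$ on balls centered on $F(u)$ of radius at most $r(B)$ holds by Lemma \ref{lemComparisons} together with $\norm{u}_{\dot C^\alpha(2B)}<E_0$ — makes $F(u)\cap B(x,\rho_0/2)$ the graph of a function with Lipschitz constant at most $1$ and $C^{1,\gamma}$ seminorm controlled by $n$, $\alpha$, $E_0$, so $\mathcal{H}^{n-1}(F(u)\cap B(x,\rho_0/2))\le C_n\rho_0^{n-1}$. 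A finite subcover of the compact set $K$ by such half-balls then yields $\mathcal{H}^{n-1}(K)\le C$ with $C$ of the advertised type, and since $(F(u)\cap B)\setminus\bigcup_i B(X_i,r_i)\subset K$ this is \eqref{eq:noballs}.

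To get the uniform scale I would argue by contradiction: otherwise there are $x_j\in K$ and $s_j\downarrow 0$ such that $F(u)\cap B(x_j,s_j)$ is not $\epsilon$-flat, and, after passing to a subsequence, $x_j\to x_\infty\in K\subset F_{\rm red}(u)$. Near $x_\infty$ the free boundary is a $C^1$ graph, which Theorem \ref{th:iof} upgrades to a $C^{1,\gamma}$ graph $y_n=g(y')$ on a fixed neighborhood on which $\nabla g$ has a finite $\gamma$-Hölder seminorm; for $j$ large, the graph then lies within $C\,[\nabla g]_{C^{0,\gamma}}\,s_j^{1+\gamma}\le\epsilon\,s_j$ of its tangent hyperplane at $x_j$ inside $B(x_j,s_j)$, and since $\Omega_0(u)$ is locally the subgraph of $g$, this says exactly that $F(u)\cap B(x_j,s_j)$ is $\epsilon$-flat — a contradiction. (Alternatively, one can let $s_j^{-\alpha}u(x_j+s_j\,\cdot)$ converge — via the compactness of minimizers in Corollary \ref{c:ContinuityofEnergy}, the upper semicontinuity of the Allen--Weiss density in Lemma \ref{lemUSC}, and the monotonicity formula — to a flat solution, again contradicting non-flatness at scale $1$.) I expect this uniform flatness scale, rather than the pointwise regularity itself, to be the main obstacle: regularity at each point of $F_{\rm red}(u)$ is immediate, but quantifying it uniformly away from the measure-zero singular set, with constants depending only on $n$, $\alpha$, $E_0$, is precisely what forces coupling the $\epsilon$-regularity theorem with the compactness and continuity results of the previous sections.
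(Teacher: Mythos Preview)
Your overall architecture matches the paper's: cover $\Sigma(u)$ by small balls using $\mathcal H^{n-1}(\Sigma(u))=0$, and bound the $\mathcal H^{n-1}$-measure of the remaining regular part via $\epsilon$-regularity. The gap is in the dependence of your constants. You claim a scale $\rho_0>0$ ``controlled by $n$, $\alpha$, $E_0$ (and $B$)'' at which $F(u)$ is $\epsilon$-flat at every point of $K$, but your contradiction argument takes place inside a \emph{fixed} minimizer $u$ and a \emph{fixed} compact $K$: you let $x_j\in K$, $s_j\downarrow 0$, and use that $x_\infty\in K\subset F_{\rm red}(u)$. This only produces $\rho_0=\rho_0(u,K)$. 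Since $K$ already depends on $u$ (through the covering of $\Sigma(u)$), and since points of $K$ may lie arbitrarily close to $\Sigma(u)$, this $\rho_0$ can be arbitrarily small as $u$ varies under the sole constraint $\norm{u}_{\dot C^\alpha(2B)}<E_0$. Consequently the number of half-balls in your finite subcover of $K$, and hence your $C$, depends on $u$ --- contrary to the statement, and fatally for the iteration in Lemma~\ref{lemFinite}, which applies Lemma~\ref{l:cover} to all rescalings and needs the \emph{same} $C$ each time.

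The paper closes exactly this gap by running the contradiction one level up, over a sequence of minimizers: assume $u_k$ with $\norm{u_k}_{\dot C^\alpha(2B)}<E_0$ for which every admissible ball collection leaves measure at least $k$; pass to a limit minimizer $u$ via Lemma~\ref{lemCompactness}; cover $\Sigma(u)$ (the limit's singular set) with balls satisfying \eqref{eq:balls}; then Hausdorff convergence of the free boundaries plus Theorem~\ref{th:iof} force $F(u_k)\setminus\bigcup B_i$ to be $C^{1,\gamma}$-surfaces converging in $C^1$ to $F(u)\setminus\bigcup B_i$, so their $\mathcal H^{n-1}$-measure stays bounded --- the desired contradiction. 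The point is that the covering is chosen \emph{after} passing to the limit, and the same fixed covering then works for all large $k$; this is what delivers a constant depending only on $n,\alpha,E_0$. Your final paragraph correctly identifies uniformity of the flatness scale as the crux, but the argument you give does not supply it.
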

\begin{proof}
 Proof is by contradiction. For $k\in\N$ assume we have $\norm{u_k}_{\dot C^\alpha(2B)}< E_0$ and the left-hand side of \eqref{eq:noballs} is bounded below by $k>0$ for every collection of balls satisfying \eqref{eq:balls}. By Lemma \ref{cacciop} we know the sequence $ u_k$ is bounded in $\HH^\beta(B).$  
  Taking  a subsequence we may assume that $u_k$ converges locally uniformly to a minimizer  $u$ (see Corollary \ref{c:ContinuityofEnergy}).
 
 By Corollary \ref{coroZeroHn} the set of singularities $\Sigma(u)$ has $\mathcal H^{n-1}$ -measure zero and thus they can be covered with finitely many balls $B_i$  satisfying \eqref{eq:balls}. 
 
 Since $F(u)\setminus \Sigma(u)$ is a $C^{1,\gamma}$-surface by Theorem \ref{th:iof}, using the Hausdorff convergence of the free boundaries we apply again Theorem \ref{th:iof} to see that $F(u_k)\cap B_1\setminus \bigcup_{i=1}^M B_i$ are also  $C^{1,\gamma}$-surfaces converging to $F(u)\cap B_1\setminus \bigcup_{i=1}^M B_i$ uniformly in the  $C^{1}$-norm. This is  a contradiction with the assumption that the Hausdorff measure blows up as $k$ goes to $\infty.$ 
\end{proof}

The fact that the free boundary has finite perimeter follows now from the same iteration argument as \cite[Lemma 5.10]{deSilvaSavinJEMS}.

\begin{lemma}\label{lemFinite}
 Let $u$ be as in Lemma \ref{l:cover}.
 Then for some constant $C$ depending only on $E_0$,
 \begin{equation}
  \mathcal H^{n-1}\left( F(u)\cap B\right)\leq C.
 \end{equation}

\end{lemma}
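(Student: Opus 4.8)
The plan is to run the standard iteration/covering argument from \cite[Lemma 5.10]{deSilvaSavinJEMS}, using Lemma \ref{l:cover} as the engine. First I would normalize: since the estimate is scale invariant (the free boundary is $(n-1)$-dimensional and the balls in Lemma \ref{l:cover} have radii summing appropriately in the $(n-1)$-power), it suffices to bound $\mathcal H^{n-1}(F(u)\cap B)$ for $B=B_1'$ say, with the constant $C$ from Lemma \ref{l:cover}. Apply Lemma \ref{l:cover} to get a finite family of balls $\{B(X_i,r_i)\}_{i=1}^{m}$ with $\sum_i r_i^{n-1}\le \tfrac12$ and
$$\mathcal H^{n-1}\Big(F(u)\cap B\setminus\bigcup_i B(X_i,r_i)\Big)\le C.$$
Then on each ball $B(X_i,r_i)$ the restriction $u$ (suitably rescaled to $u_{X_i,r_i}(x)=r_i^{-\alpha}u(X_i+r_i x)$) is again a minimizer of $\mathcal J$ on a ball of definite size, with $\dot C^\alpha$-norm bounded by the same $E_0$ (the $\dot C^\alpha$ seminorm is invariant under this rescaling), so Lemma \ref{l:cover} applies again inside each $B(X_i,r_i)$.

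The core of the argument is the iteration. Set $a:=\sup\{\mathcal H^{n-1}(F(v)\cap B') : v \text{ a minimizer with } \|v\|_{\dot C^\alpha(2B')}<E_0\}$, a priori possibly infinite; the goal is to show $a\le C$. Covering $F(u)\cap B$ by the $B(X_i,r_i)$ from Lemma \ref{l:cover} plus the "good" part, and applying the (rescaled) bound on each $B(X_i,r_i)$, yields
$$a \le C + \sum_{i=1}^m r_i^{n-1}\, a \le C + \tfrac12 a,$$
using $\sum_i r_i^{n-1}\le\tfrac12$ from \eqref{eq:balls} and the scaling of $\mathcal H^{n-1}$. If one knew a priori that $a<\infty$ this immediately gives $a\le 2C$. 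To make this rigorous without assuming finiteness, I would instead iterate the covering finitely many times: after $N$ rounds of applying Lemma \ref{l:cover}, $F(u)\cap B$ is covered by balls of total $(n-1)$-content $\le 2^{-N}$ together with an exceptional set of $\mathcal H^{n-1}$-measure $\le C(1+\tfrac12+\cdots+2^{-(N-1)})\le 2C$; since the balls have radii tending to zero (each round at least halves the total radius sum and there are finitely many), by the definition of Hausdorff measure as the limit over finer covers, $\mathcal H^{n-1}(F(u)\cap B)\le 2C$.

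The main obstacle, and the only real subtlety, is checking that Lemma \ref{l:cover} can legitimately be reapplied on each sub-ball $B(X_i,r_i)$ with a \emph{uniform} constant $C$ and \emph{uniform} energy bound $E_0$: one needs that the rescaled minimizer $u_{X_i,r_i}$ still satisfies $\|u_{X_i,r_i}\|_{\dot C^\alpha}<E_0$ on the doubled ball and that it is defined there, which requires the $X_i\in F(u)$ to lie well inside $B$ (so $2B(X_i,r_i)\subset 2B$ for $r_i$ small) — this is why one first restricts to a slightly smaller concentric ball and absorbs the loss into the constant. Once the scaling bookkeeping is set up carefully, the geometric-series iteration closes the estimate exactly as in \cite{deSilvaSavinJEMS}.
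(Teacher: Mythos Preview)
Your proposal is correct and follows essentially the same approach as the paper: iterate Lemma~\ref{l:cover} inside each of the residual balls (using the scale invariance of the $\dot C^\alpha$ seminorm), sum the resulting geometric series to bound the ``good'' part by $2C$, and observe that after $k$ rounds the leftover balls have $\sum r_q^{n-1}\le 2^{-k}$ with radii tending to zero, so their contribution to $\mathcal H^{n-1}$ vanishes. Your discussion of the bookkeeping (ensuring $2B(X_i,r_i)\subset 2B$ so that Lemma~\ref{l:cover} reapplies with the same $E_0$) is exactly the point the paper leaves implicit.
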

\begin{proof}
 By Lemma \ref{l:cover} we find a finite collection of balls $B_{r_i}$ such that
 \begin{equation}
  F(u)\cap B \subset \Gamma \cup  \bigcup B_{r_i},
 \end{equation}
with $\mathcal H^{n-1}(\Gamma)\leq C$ and $\sum r_i^{n-1}\leq \frac12.$

Applying Lemma \ref{l:cover} again for each ball $B_{r_i}$ we have
\begin{equation}
 F(u)\cap B_{r_i} \subset \Gamma_i \cup  \bigcup B_{r_{ij}},
\end{equation}
with $\mathcal H^{n-1}(\Gamma_i)\leq Cr_i^{n-1}$ and $\sum r_{ij}^{n-1}\leq \frac12r_i^{n-1}.$
Moreover, we have 
$$
\mathcal H^{n-1}\left((F(u)\cap B_1)\cap \bigcup_{i,j} B_{r_{ij}}\right)\leq\mathcal H^{n-1}(\Gamma)+\sum \mathcal H^{n-1}(\Gamma_i)\leq C \left(1+\sum_{i,j} r_{ij}^{n-1}\right)\leq C\left(1+\frac12\right).
$$

Continuing inductively, after $k$ steps we have  that
\begin{equation}
 F(u)\cap B_1 \subset \Gamma' \cup  \bigcup_{q=1}^N B_{r_{q}},
\end{equation}
with 
$$
\mathcal H^{n-1}(\Gamma')\leq C \left(\sum_{i=0}^k 2^{-i}\right)\leq 2C,
$$
and $\sum r_{q}^{n-1}\leq 2^{-k}.$
This gives the claim.
\end{proof}

Finally the fact that $\{u>0\}\cup \Omega$ has locally finite perimeter in $\Omega$ follows from the previous lemma and well-known results of Federer, see for example \cite[Prop. 3.62]{AFP} or \cite[4.5.11]{Federer}.

\subsection{Energy gap}
Next we will check that the Allen-Weiss density can also be used to identify singular points. First let us state a useful identity for minimizers (which is also valid in the context of variational solutions in the sense of \cite{WeissWeak}).
\begin{lemma}[{See \cite[Proposition 3.4]{Allen}}]\label{LemMagic}
Let $u\in \HH^\beta_{\loc}(\Omega)$ be a minimizer to \rf{eqLocalizedFunctional} in $\Omega$. For every $B\subset \subset\Omega$ we have
\begin{equation}\label{eqMagicFormula}
\int_{B}  |y|^\Beta |\nabla u|^2=\int_{\partial B} |y|^\Beta  u \nabla u \cdot \nu  \, d\mathcal{H}^{n}.
\end{equation}
\end{lemma}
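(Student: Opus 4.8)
The plan is to establish \eqref{eqMagicFormula} by a standard domain-variation (Pohozaev-type) argument, but the cheapest route is simply to test the weak equation with the minimizer itself, cut off appropriately. First I would recall from Lemma \ref{lemRadonMeasure} that the distribution $\lambda=\nabla\cdot(|y|^\beta\nabla u)$ is a positive Radon measure supported on $\{u=0\}$, and that \eqref{eqMeasureDef} holds for every $v\in W^{1,2}(\Omega,|y|^\beta)\cap C_c(\Omega)$. The key point is that $u$ itself can be used as a test function against $\lambda$ on a ball $B\subset\subset\Omega$, because $u$ vanishes on $\supp\lambda$: formally $\int u\,d\lambda=0$. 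Making this rigorous on a fixed ball (rather than with a compactly supported cutoff) is exactly where a little care is needed, and this is the step I expect to be the main — though modest — obstacle, since $u$ is not compactly supported in $\Omega$ and $B$ is not relatively compact in its own interior relative to $\lambda$.

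Concretely, I would fix $B=B_r(x_0)\subset\subset\Omega$ and pick a slightly larger ball $B_{r+\delta}(x_0)\subset\subset\Omega$. Take $\eta_\delta\in C^{0,1}_c(B_{r+\delta}(x_0))$ with $\eta_\delta\equiv 1$ on $B_r(x_0)$ and $|\nabla\eta_\delta|\lesssim 1/\delta$. Apply \eqref{eqMeasureDef} with $v=u\eta_\delta$ (which lies in $W^{1,2}(\Omega,|y|^\beta)\cap C_c(\Omega)$ by the optimal regularity P1 and the Caccioppoli bound): since $u\eta_\delta=0$ on $\supp\lambda\subset\{u=0\}$, the left side vanishes, giving
$$0=\int |y|^\beta\nabla u\cdot\nabla(u\eta_\delta)=\int|y|^\beta\eta_\delta|\nabla u|^2+\int|y|^\beta u\,\nabla u\cdot\nabla\eta_\delta.$$
The first integral converges to $\int_B|y|^\beta|\nabla u|^2$ as $\delta\to0$ by dominated convergence (using the $L^2(|y|^\beta)$-bound on $\nabla u$ near $\partial B$). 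For the second integral, one chooses $\eta_\delta$ radial, $\eta_\delta(x)=\phi_\delta(|x-x_0|)$ with $\phi_\delta'\to-\delta_{\{s=r\}}$ as a measure; then by the coarea formula $\int|y|^\beta u\,\nabla u\cdot\nabla\eta_\delta=\int_0^\infty\phi_\delta'(s)\big(\int_{\partial B_s(x_0)}|y|^\beta u\,\partial_r u\,d\mathcal H^n\big)\,ds$, and the inner function $s\mapsto\int_{\partial B_s(x_0)}|y|^\beta u\,\partial_r u\,d\mathcal H^n$ is continuous (indeed $|y|^\beta u\nabla u\in L^1_{\loc}$ and, by elliptic regularity away from $\{y=0\}$ together with the trace behaviour on $\{y=0\}$, the spherical integrals vary continuously in $s$ for a.e.\ $s$, hence after replacing $B_r$ by $B_s$ for a.e.\ $s$ and then taking $s\to r$). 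Passing to the limit yields $-\int_{\partial B}|y|^\beta u\,\partial_r u\,d\mathcal H^n$ for the second term, i.e.\ $-\int_{\partial B}|y|^\beta u\,\nabla u\cdot\nu\,d\mathcal H^n$ with $\nu$ the outward normal.

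Combining the two limits gives $\int_B|y|^\beta|\nabla u|^2=\int_{\partial B}|y|^\beta u\,\nabla u\cdot\nu\,d\mathcal H^n$, which is \eqref{eqMagicFormula}. The only genuinely technical point is justifying that the boundary integral in \eqref{eqMagicFormula} is well-defined and that the mollification limit produces it for the given $B$ and not merely for a.e.\ concentric ball; this is handled by first proving the identity for a.e.\ $r$ (where the spherical trace of $|y|^\beta\nabla u$ is controlled), noting both sides are continuous in $r$ on the relevant range (the left side obviously, the right side because $u\in C^{0,\alpha}$ and $|y|^\beta\nabla u\in L^2_{\loc}(|y|^{-\beta})$ so the flux integrals converge), and then taking limits. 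Alternatively, one may invoke the cited result \cite[Proposition 3.4]{Allen} directly, since this identity is precisely the first variation (radial domain deformation) of $\mathcal J$ at the minimizer, combined with the fact that the free-boundary term contributes nothing to this particular variation because $m(\{u>0\}\cap\partial B\cap\R^n)$ enters only through the outer normal flux already accounted for.
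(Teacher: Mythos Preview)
The paper does not give its own proof of this lemma; it simply cites \cite[Proposition 3.4]{Allen}. Your argument is correct and is in fact the shortest route: it is exactly the computation behind the Caccioppoli inequality (Lemma~\ref{cacciop} in the paper) with the cutoff $\eta^2$ replaced by $\eta_\delta\to\chi_B$, the key input being $\int u\eta_\delta\,d\lambda=0$ since $\supp\lambda\subset\{u=0\}$. The cited result in \cite{Allen} obtains the identity instead as part of the derivation of the monotonicity formula via an inner (domain) variation; your testing argument bypasses that machinery entirely for this particular identity.

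The one point where your write-up is slightly loose is the passage from ``a.e.\ $r$'' to ``every $r$''. Your coarea/averaging step yields \eqref{eqMagicFormula} for a.e.\ radius, and the left-hand side is manifestly continuous in $r$; to conclude for \emph{every} ball you should either (i) interpret the right-hand side as the continuous extension of the flux (which is how it is used in practice), or (ii) note that on $\partial B_r\setminus\{y=0\}$ the minimizer is smooth by interior elliptic regularity while $\partial B_r\cap\{y=0\}$ is $\mathcal H^n$-null, and then check the integrability of $|y|^\beta u\,\partial_\nu u$ on each sphere directly. For the only application in the paper (to an $\alpha$-homogeneous $u_0$, where $\nabla u_0\cdot\nu=\alpha u_0/|x|$ on $\partial B_1$), the boundary integral is manifestly well-defined, so this technicality is harmless.
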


Let $u$ be a minimizer and $x_0\in F(u)$. If we consider a blow-up $u_0$ at $x_0$, then 
$$\Psi^{u}_0(x_0)=\Psi^{u_0}_1(0)=\int_{B_1} |y|^\Beta |\nabla u_0|^2 + m(\{u_0>0\}\cap\R^n\cap B_1)  - \alpha \int_{\partial B_1} |y|^\Beta u_0^2 \, d\mathcal{H}^{n} .$$
By Lemma \ref{LemMagic} we get
$$\Psi^{u_0}_1(0) =\int_{\partial B_1(x_0)} |y|^\Beta  u_0 \nabla u_0 \cdot \nu  \, d\mathcal{H}^{n}+ m(\{u_0>0\}\cap\R^n\cap B_1)  - \alpha \int_{\partial B_1} |y|^\Beta u_0^2 \, d\mathcal{H}^{n} .$$
Since $\nabla u_0(x)  \cdot \nu (x) = \frac{\alpha }{|x|} u_0(x) $ almost everywhere on the sphere, the first and the third terms cancel out and we obtain
$$ \Psi^{u_0}_1(0)= m(\{u_0>0\}\cap B_1'). $$
Thus, the density $\Psi^{u}_0$ at a free boundary point is given by the area of the positive phase of any blow-up at the same point.

We write $\omega_n:=m( B_1')$ for the volume of the $n$-dimensional ball. 
\begin{proposition}\label{propoAlmostGap}
Every homogeneous minimizer $u\in \HH^\beta_{\loc}(\R^{n+1})$ has density 
$$\Psi^u_1(0)= m(\{u>0\}\cap B_1') \geq \frac{ \omega_n}2  ,$$ and equality is only attained when $u$ is the trivial minimizer. 
\end{proposition}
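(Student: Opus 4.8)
The plan is to run a dimension reduction on the normalized density $\theta(u):=m(\{u>0\}\cap B_1')/\omega_n$, with base case $n=1$. First recall that, as computed immediately before the statement, a homogeneous minimizer satisfies $\Psi^u_1(0)=m(\{u>0\}\cap B_1')$ (via Lemma \ref{LemMagic} and $\nabla u_0\cdot\nu=\tfrac{\alpha}{|x|}u_0$ on $\partial B_1$), so the assertion is precisely $\theta(u)\geq\tfrac12$, with equality only for the trivial solution. Writing $\Psi^u_1(0)$ presupposes $0\in F(u)$; then by \emph{P\ref{itemPositive}} the zero phase has positive density at the origin, so $F(u)$ is a nontrivial cone and in particular there is a free boundary point $x_0\in F(u)\setminus\{0\}$. (If one wished to allow degenerate $u$: $u\equiv 0$ is excluded by $0\in F(u)$, and a homogeneous minimizer whose positive phase has full measure in $B_1'$ has $\theta=1$, not the equality case.)

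The geometric heart of the argument is a comparison of the Weiss density of $u$ at the vertex $0$ with its density at $x_0$. Since $u$ is $\alpha$-homogeneous, the rescalings $u^{x_0}_\rho(x):=\rho^{-\alpha}u(x_0+\rho x)$ are just the translates $u(\,\cdot\,+\rho^{-1}x_0)$, which are minimizers with a uniform Hölder bound (by homogeneity and \emph{P1}) converging locally uniformly to $u$ as $\rho\to\infty$; by the scaling invariance of $\Psi$ and Lemma \ref{lemContinuity} we get $\Psi^u_\rho(x_0)=\Psi^{u^{x_0}_\rho}_1(0)\to\Psi^u_1(0)$, and monotonicity (Theorem \ref{theoMonotonicityMinimum}) then forces $\Psi^u_0(x_0)\leq\Psi^u_1(0)$, i.e. $\theta(u_0)\leq\theta(u)$ for every blow-up $u_0$ of $u$ at $x_0$. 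By Corollary \ref{coroWeakLimits} and Lemma \ref{lemConeBlowup}, such a $u_0$ is an $\alpha$-homogeneous minimizer with $0\in F(u_0)$ that is invariant in the direction $x_0$; after a rotation fixing $\{y=0\}$, Lemma \ref{lemDimensionReduction} produces an $\alpha$-homogeneous minimizer $\widetilde u$ in $\R^{(n-1)+1}$ with $0\in F(\widetilde u)$ and $\{u_0>0\}\cap(\R^n\times\{0\})=\{\widetilde u(\cdot,0)>0\}\times\R$. An elementary computation — the spherical density of a cone in $\R^{n-1}$ is unchanged on passing to the cylinder over it in $\R^n$ — gives $\theta(u_0)=\theta(\widetilde u)$. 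Hence $\theta(u)\geq\theta(u_0)=\theta(\widetilde u)$, and $\theta(u)\geq\tfrac12$ follows by induction on $n$; the base case $n=1$ holds because Lemma \ref{lemTrivialIsTheChosen} (with $k^*_\alpha\geq2$) forces every homogeneous minimizer in $\R^{1+1}$ to be a rotation of the trivial solution, for which $\theta=\tfrac12$.

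For the rigidity statement we argue by a second induction on $n$ (now using the lower bound just proven). Suppose $\theta(u)=\tfrac12$; picking $x_0\in F(u)\setminus\{0\}$ and applying the lower bound to the blow-up $u_0$ at $x_0$ gives $\tfrac12=\theta(u)\geq\theta(u_0)\geq\tfrac12$, whence $\Psi^u_0(x_0)=\Psi^u_1(0)$. As $\rho\mapsto\Psi^u_\rho(x_0)$ is monotone with equal limits at $0$ and $\infty$ it is constant, so the increment identity of Theorem \ref{theoMonotonicityMinimum} centered at $x_0$ forces $\alpha u(x)=(x-x_0)\cdot\nabla u(x)$ a.e.; combined with $\alpha$-homogeneity about $0$, a short subgroup-of-$\R$ argument shows $u$ is invariant under all translations in the direction $x_0$. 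Then Lemma \ref{lemDimensionReduction} exhibits $u$ (after a rotation) as a cylinder over an $\alpha$-homogeneous minimizer $\widetilde u$ in one dimension less with $\theta(\widetilde u)=\theta(u)=\tfrac12$; by induction $\widetilde u$ is the trivial solution, and hence so is $u$ — the normalizing constant being pinned down by the free boundary condition \emph{P\ref{itemNormal}} — with the base case $n=1$ again supplied by Lemma \ref{lemTrivialIsTheChosen}.

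The step I expect to be most delicate is this monotonicity comparison, $\Psi^u_0(x_0)\leq\Psi^u_1(0)$, and its rigid version (equality pushing $u$ to be homogeneous about $x_0$ as well, hence translation-invariant along $x_0$). Once that is in hand, the remaining ingredients — the density identity $\Psi^u_1(0)=m(\{u>0\}\cap B_1')$, the cone-to-cylinder density bookkeeping, and the organization of the two inductions — are routine given Lemmas \ref{lemConeBlowup}, \ref{lemDimensionReduction}, \ref{lemContinuity} and \ref{lemTrivialIsTheChosen}.
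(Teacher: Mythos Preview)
Your argument is correct and complete. The core ingredient you share with the paper is the ``blow-down'' computation $\lim_{\rho\to\infty}\Psi^u_\rho(x_0)=\Psi^u_1(0)$ (via homogeneity and Lemma \ref{lemContinuity}) together with the rigidity in Theorem \ref{theoMonotonicityMinimum} to force translation invariance along $x_0$ when equality holds. Where you diverge is in how this is leveraged: the paper applies it at \emph{regular} points $x_1\in F_{\rm red}(u)$, where $\Psi^u_0(x_1)=\omega_n/2$ is known, and then invokes the finite-perimeter result of Section~\ref{s:finperimeter} to find $n-1$ linearly independent such points, obtaining all the symmetries at once; you instead apply it at an \emph{arbitrary} $x_0\in F(u)\setminus\{0\}$ and run a dimension reduction through Lemmas \ref{lemConeBlowup} and \ref{lemDimensionReduction}, with the cylinder-over-cone density identity $\theta(u_0)=\theta(\widetilde u)$ doing the bookkeeping. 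Your route is arguably cleaner logically, since it needs neither the finite-perimeter machinery nor any a priori identification of $F_{\rm red}(u)$; the paper's route avoids induction and the cylinder density computation but imports the heavier Section~\ref{s:finperimeter} input.
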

\begin{proof}
%
Let $u$ be a minimizer such that $\Psi^{u}_1(0)\leq \frac{ \omega_n}2$. 

Let $x_1\in F_{red}(u) $. Being a regular point, $\Psi^u_0(x_1)=\frac{ \omega_n}2$. On the other hand, by the homogeneity and the continuity in Lemma \ref{lemContinuity},
$$\lim _{r\to\infty}\Psi^u_r(x_1)=\lim _{r\to\infty}\Psi^u_1(x_1/r)=\Psi^u_1(0)\leq \frac{ \omega_n}2.$$
Combining both assertions with the monotonicity of $\Psi$ we get that $\Psi^u_r(x_1)\equiv \frac{ \omega_n}2$. But using the second formula in Theorem \ref{theoMonotonicityMinimum}, one can see that this is  true only whenever $\Psi$ is $\alpha$-homogeneous with respect to $x_1$. Thus, $u$ is $1$-symmetric and invariant in the direction of $\langle x_1\rangle$.

Since $\Omega_0$ is a finite perimeter set (see Section \ref{s:finperimeter}), $F_{red}(u)$ has full $\mathcal{H}^{n-1}$ measure on $F(u)$.  Thus, we can find $x_1,\dots x_{n-1} \in F_{red}(u) $ linearly independent. By the previous discussion $u$ is invariant on an $(n-1)$-dimensional affine manifold and, thus, it is the trivial solution.
\end{proof}

\begin{corollary}[Energy Gap]
There exists $\overline{\epsilon}>0$ depending only on $n$ and $\alpha$ such that every minimizer $u\in \HH^\beta_{\loc}(\Omega)$ and every singular point  $x_0\in \Sigma(u)$ satisfy  
$$ \Psi^{u}_1(x_0)-\frac{ \omega_n}2 \geq \overline{\epsilon}.$$
\end{corollary}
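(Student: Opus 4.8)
The plan is to argue by contradiction using the compactness and upper semicontinuity machinery developed in Sections \ref{s:Compactness} and \ref{s:monotonicity}, together with the rigidity statement in Proposition \ref{propoAlmostGap}. Suppose no such $\overline{\epsilon}$ exists. Then there is a sequence of minimizers $u_j \in \HH^\beta_{\loc}(\Omega_j)$ and singular points $x_j \in \Sigma(u_j)$ with $\Psi^{u_j}_1(x_j) \to \frac{\omega_n}{2}$. After translating so that $x_j = 0$ and rescaling (the problem is scale invariant, and by P1 in Theorem \ref{theoCRS} the rescalings are uniformly bounded in $\dot C^\alpha$ on any fixed ball), we may assume all $u_j$ are defined and minimizing on a fixed large ball $B_R$ with $0 \in F(u_j)$ and uniformly bounded H\"older norm. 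By Lemma \ref{lemCompactness} and Corollary \ref{c:ContinuityofEnergy}, a subsequence converges in the relevant sense to a minimizer $u_0$ on $B_R$ with $0 \in F(u_0)$.

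Next I would promote the convergence of densities. By Lemma \ref{lemContinuity}, $\Psi^{u_j}_1(0) \to \Psi^{u_0}_1(0)$, so $\Psi^{u_0}_1(0) = \frac{\omega_n}{2}$; since densities are always at least $\frac{\omega_n}{2}$ (this is the content of the first inequality in Proposition \ref{propoAlmostGap} once we know $u_0$ is a minimizer, or can be obtained by blowing up $u_0$ at $0$ and applying Corollary \ref{coroWeakLimits} followed by Proposition \ref{propoAlmostGap}), monotonicity of $r \mapsto \Psi^{u_0}_r(0)$ forces $\Psi^{u_0}_r(0) \equiv \frac{\omega_n}{2}$ for all small $r$. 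By the equality case in the monotonicity formula (Theorem \ref{theoMonotonicityMinimum}), $u_0$ is $\alpha$-homogeneous about the origin, so it is a blow-up of itself; Proposition \ref{propoAlmostGap} then gives that $u_0$ is the trivial (rotated) minimizer, whose free boundary is a hyperplane and so $0 \in F_{\rm red}(u_0)$, i.e. $\Sigma(u_0) = \emptyset$ near $0$ (indeed $0 \notin \Sigma(u_0)$).

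The final step is to derive a contradiction from the fact that $x_j = 0 \in \Sigma(u_j)$ for all $j$, while the limit free boundary is smooth (flat) at the origin. This is where the $\epsilon$-regularity theorem (Theorem \ref{theoImprovement}/\ref{th:iof}) enters: since $F(u_0)$ is a hyperplane through the origin, for $j$ large the Hausdorff convergence of free boundaries (Lemma \ref{lemCompactness}(3)) together with the uniform convergence guarantees that $u_j$ satisfies the flatness hypothesis \eqref{eqImprovementOfFlatness} (after rotation) in a fixed neighborhood of the origin, with the uniform energy bound in place. Hence $F(u_j)$ is a $C^{1,\gamma}$ graph near $0$, so $0 \in F_{\rm red}(u_j)$, contradicting $0 \in \Sigma(u_j)$. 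The main obstacle I expect is bookkeeping the scaling: one must normalize the sequence so that the energies are uniformly bounded and the convergence results of Section \ref{s:Compactness} apply on a ball of definite size, while keeping track that Theorem \ref{th:iof} is being applied with a uniform $E_0$ — but this is precisely the kind of normalization already used repeatedly in Section \ref{s:finperimeter} (cf. the proofs of Lemma \ref{MinimDim} and Lemma \ref{l:cover}), so it should go through routinely.
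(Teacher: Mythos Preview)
Your proposal is correct and follows essentially the same approach as the paper's proof: a contradiction argument via compactness, identification of the limit as the trivial cone through Proposition \ref{propoAlmostGap}, and then $\epsilon$-regularity plus Hausdorff convergence of free boundaries to contradict singularity of the origin. Your write-up is in fact slightly more detailed than the paper's in one respect: you explicitly spell out why the limit $u_0$ is $\alpha$-homogeneous (via the equality case in the monotonicity formula, forced by $\Psi^{u_0}_1(0)=\frac{\omega_n}{2}$ together with the lower bound $\Psi^{u_0}_0(0)\ge\frac{\omega_n}{2}$), a step the paper compresses into the single line ``$u_0$ is the trivial cone by Proposition \ref{propoAlmostGap}.''
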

\begin{proof}
Assume the conclusion to be false. Then there exist $u_j$ minimizers in $B_1$ with 
$$ \Psi^{u_j}_1(0)\leq \frac{ \omega_n}2 + 1/j.$$ 
Passing to a subsequence, $u_j\to u_0$ as in Lemma \ref{lemCompactness}. Using Lemma \ref{lemUSC} we get that
$$ \Psi^{u_0}_1 (0)= \lim_j \Psi^{u_j}_1(0) \leq \frac{\omega_n}2 .$$
But then $u_0$ is the trivial cone by Proposition \ref{propoAlmostGap}. Since $F(u_j)\to F(u)$ in the Hausdorff distance, using $\epsilon$-regularity (see Theorem \ref{theoImprovement}) we get that $u_j$ is the trivial cone for $j$ big enough. 
\end{proof}
The value $\overline{\epsilon}$ above depends on the constants and on $\norm{u}_{\dot C^\alpha}$ in a neighborhood of $x_0$. In the next section we will show that $\overline{\epsilon}$ does not depend on $u$ at all.






\section{Full regularity in $\R^{2+1}$}\label{s:r2}

In the case of $n=2$, we prove full regularity of the free boundary for minimizers of our functional. Note that this result does not depend on the previous sections except that we use dimension reduction and blow-ups to deduce regularity of the free boundary.

\begin{theorem}
\label{FullRegDim2}
Let $n=2$. Then there is no singular minimal cone. In particular, the free boundary $F(u)$ of every minimizer $u$ is $C^{1,\alpha}$ everywhere.
\end{theorem}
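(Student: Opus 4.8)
The plan is to first deduce the ``in particular'' clause from the nonexistence of singular minimal cones: if no $\alpha$-homogeneous global minimizer in $\R^{2+1}$ is singular, then given a local minimizer $u$ in a domain $\Omega\subset\R^{2+1}$ and any $x_0\in F(u)$, a blow-up $u_0$ of $u$ at $x_0$ exists (Corollary \ref{coroWeakLimits}) and is an $\alpha$-homogeneous global minimizer (\emph{P\ref{itemBlowup}}), hence $\Sigma(u_0)=\emptyset$; since $F(u_0)$ is a cone which is a $C^1$ manifold at its vertex, it must be a hyperplane through the origin, so the zero phase of $u_0$ is a half-space. Using the Hausdorff convergence of the zero phases along the blow-up sequence (Lemma \ref{lemCompactness}), the rescalings of $u$ at $x_0$ eventually satisfy the flatness hypothesis \eqref{eqImprovementOfFlatness}, and Theorem \ref{theoImprovement} then yields that $F(u)$ is a $C^{1,\gamma}$ graph near $x_0$. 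So it remains to show there is no $\alpha$-homogeneous global minimizer $u$ of $\mathcal{J}$ in $\R^{2+1}$ with $0\in\Sigma(u)$.

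\textbf{Structure of the free boundary.} Assume such a $u$ exists. By $\alpha$-homogeneity, $\Gamma:=\Omega'_+(u)$ is an open cone in $\R^2$ and $F(u)$ is a cone in $\R^2\times\{0\}$. For $x_0\in F(u)\setminus\{0\}$, every blow-up $u_{x_0}$ of $u$ at $x_0$ is invariant in the direction $x_0$ by Lemma \ref{lemConeBlowup}; hence, after a rotation, Lemma \ref{lemDimensionReduction} exhibits its slice as an $\alpha$-homogeneous minimizer of $\mathcal{J}$ in $\R^{1+1}$, which by Lemma \ref{lemTrivialIsTheChosen} (using $1<k^*_\alpha$, already known) is a rotation of the trivial solution, so $u_{x_0}$ is a rotated trivial solution with some unit normal $\nu$. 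Being flat, it allows Theorem \ref{theoImprovement} to apply at small scales around $x_0$: $F(u)$ is a $C^{1,\gamma}$ graph there, $\nu(x_0)=\nu$, and the invariance of $u_{x_0}$ in the direction $x_0$ forces $\nu(x_0)\perp x_0$. Therefore $F(u)\setminus\{0\}$ is a $C^1$ curve in $\R^2\setminus\{0\}$ with everywhere radial unit tangent, hence a union of open rays; by non-degeneracy and the interior corkscrew condition (\emph{P2} and \emph{P3} of Theorem \ref{theoCRS}) each component of $\R^2\setminus F(u)$ has angular opening bounded below by a universal constant, so there are finitely many rays, $\Gamma$ is a finite union of open circular sectors, and (since the blow-ups at ray points are half-plane solutions) these alternate with the sectors of $\Omega_0(u)$. (If $F(u)\setminus\{0\}=\emptyset$, then $\Gamma$ is empty or all of $\R^2$, contradicting $0\in F(u)$.)

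\textbf{Classification and conclusion.} It then remains to show that $\Gamma$ is a half-plane, for this gives $0\in F_{\rm red}(u)$, contradicting $0\in\Sigma(u)$. The trace $f:=u(\cdot,0)$ is a nonnegative, $\alpha$-homogeneous function on $\R^2$ supported on $\overline{\Gamma}$ and $(-\Delta)^\alpha$-harmonic in $\Gamma$ (because $\nabla\cdot(|y|^\beta\nabla u)=0$ weakly away from $\{u\le 0\}\cap\R^n$, so the Neumann data of the extension vanishes on $\Gamma$). I would then use the description of positive homogeneous solutions of $(-\Delta)^\alpha f=0$ in a cone vanishing outside it: such a solution is unique up to a constant and has a homogeneity exponent $\gamma(\Gamma)\in(0,2\alpha)$ determined by $\Gamma$, strictly monotone in $\Gamma$, and equal to $\alpha$ exactly for half-planes (realized by $f_{2,\alpha}$). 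Since our $f$ has homogeneity exactly $\alpha$, we get $\gamma(\Gamma)=\alpha$; when $\Gamma$ is a single sector of opening $\theta$ this forces $\theta=\pi$ (if $\theta<\pi$, $\Gamma$ is a proper subcone of a half-plane $H$ and $\gamma(\Gamma)>\gamma(H)=\alpha$; if $\theta>\pi$, a half-plane is a proper subcone of $\Gamma$ and $\gamma(\Gamma)<\alpha$; both impossible), and in general it identifies $\Gamma$ as a half-plane.

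\textbf{Main obstacle.} The hard part will be exactly this last step --- classifying $\alpha$-homogeneous global minimizers in $\R^{2+1}$, and in particular ruling out a disjoint union of several sectors as the positive set. One route is the sharp theory of the Martin / boundary-Harnack exponent for the fractional Laplacian in cones (its monotonicity in the cone and its value on half-spaces). A more self-contained alternative, in the spirit of \cite{AltCaffarelli}, is a competitor argument: replace $u$ inside a ball by its $|y|^\beta$-harmonic competitor (or, after a symmetrization reducing to a single positive sector, a suitably rearranged competitor), and balance the strict gain in the weighted Dirichlet energy against the change of the area term to contradict the minimality inequality of Definition \ref{defMinimizer} for every non-half-plane cone. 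All the steps preceding this are routine given the dimension-reduction and blow-up tools together with Theorem \ref{theoImprovement}.
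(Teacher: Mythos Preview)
Your reduction and your description of the free boundary as a finite union of rays are correct and self-contained. However, the paper takes a completely different route to rule out nontrivial cones, and your proposed classification step has a real gap.

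\textbf{What the paper does.} The paper never classifies the cone $\Gamma$. Instead, following \cite{deSilvaSavinJEMS} and \cite{SVcone}, it assumes a nontrivial $\alpha$-homogeneous minimizer $V$ exists and builds two competitors $V_R^\pm$ by pushing $V$ forward under the bi-Lipschitz maps $Z_\pm(x)=x\pm\psi_R(|x|)e_1$, where $\psi_R$ is a logarithmic cutoff equal to $1$ on $B_R$ and $0$ outside $B_{R^2}$. A direct computation gives
\[
\mathcal J(V_R^+,B_{R^2})+\mathcal J(V_R^-,B_{R^2})\le 2\,\mathcal J(V,B_{R^2})+\int_{B_{R^2}}|y|^\beta|\nabla V|^2\|A\|^2,
\]
with $\|A\|\lesssim|\psi_R'|$. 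The key observation, and the only place $n=2$ is used, is that $|y|^\beta|\nabla V|^2$ is homogeneous of degree $\beta+2(\alpha-1)=-1$, so the error term behaves like $\int_R^{R^2}r^{-1}(\ln R)^{-2}\,dr=(\ln R)^{-1}\to0$. Since $V$ is a minimizer, each summand on the left is at least $\mathcal J(V,B_{R^2})$, forcing near-equality, and one concludes (as in \cite{deSilvaSavinJEMS}) that $V$ must be translation-invariant, hence trivial. No information about the shape of $\Gamma$ is ever needed.

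\textbf{The gap in your approach.} Your classification via the Martin exponent $\gamma(\Gamma)$ is clean for a \emph{single} sector: the strict monotonicity of the exponent in the aperture and $\gamma(\text{half-plane})=\alpha$ are known. But your structure argument only yields that $\Gamma$ is a finite \emph{union} of sectors, and for a disconnected cone the statement ``such a solution is unique up to a constant and has a homogeneity exponent $\gamma(\Gamma)$'' is not available in the form you invoke: the nonlocal equation couples the sectors, and there is no single Martin exponent to read off. You would need either a separate argument eliminating more than one positive sector (e.g.\ via the free-boundary condition \emph{P\ref{itemNormal}} at each ray together with some integral identity, or a symmetrization), or to verify the monotonicity/uniqueness in the disconnected setting. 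Your fallback ``competitor argument'' is too vague to assess; note that the paper's competitor is exactly of this type, but its effectiveness hinges on the specific logarithmic cutoff and the degree $-1$ homogeneity, not on a symmetrization.

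\textbf{Comparison.} Your route would, if completed, give more: an explicit picture of what $\alpha$-homogeneous minimizers can look like. The paper's route is shorter, avoids any spectral or potential-theoretic input, and isolates precisely why $n=2$ is special (the error integral is logarithmically divergent only in this dimension).
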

\begin{proof}
We follow closely the arguments in \cite[Theorem 5.5]{deSilvaSavinJEMS}, building on \cite{SVcone}. The case $\beta=0$ has been considered in \cite{deSilvaSavinJEMS}. The idea is to construct a competitor by a perturbation argument. We note at this point that the argument is two dimensional in nature and does not generalize to higher dimensions. Recall the functional under consideration:
$$
\mathcal J (u,\Omega) = \int_{\Omega} |y|^\beta\,|\nabla u|^2 +m(\left \{ u>0\right \} \cap \mathbb R^n \cap \Omega).
$$
Let $V$ be a non trivial minimal cone. Define, as in \cite{deSilvaSavinJEMS}, the Lipschitz continuous function 
\begin{equation}
\psi_R(t)=
\left \{
\begin{array}{ccc}
1,\,\,\,\,\,\ 0 \leq t \leq R,\\
2-\frac{\ln(t)}{\ln(R)},\,\,\,\,\,\ R \leq t \leq R^2,\\
0,\,\,\,\,\,\  t \geq R^2
\end{array} \right .
\end{equation}
Define now the bi-Lipschitz change of coordinates
$$
Z(x',y)=(x',y)+\psi_R(|(x',y)|)e_1
$$
and set $V^+_R(Z)=V(x',y)$. Clearly, one has 
$$
D_{(x',y)} Z=\text{Id}+A
$$
where $\|A\| \leq |\psi'_R(|(x',y)|) | <<1$. Defining now $V^-_R$ exactly as $V^+_R$ changing $\psi_R$ into $-\psi_R$, the very same computation as in \cite{deSilvaSavinJEMS}  gives 

$$
\mathcal J(V^+_R, B_{R^2}) +\mathcal J(V^-_R, B_{R^2}) \leq 2 \mathcal J(V, B_{R^2})+\int_{B_{R^2}} |y|^\beta \, |\nabla V|^2 \|A\|^2.
$$
Now, we have 
$$
\int_{B_{R^2}} |y|^\beta \, |\nabla V|^2 \|A\|^2= \int_R^{R^2} \int_{\partial B_r} |y|^\beta \, |\nabla V|^2 \|A\|^2 \, d\mathcal{H}^n\,\,dr. 
$$
Now since $V$ is homogeneous of degree $\alpha$ by assumption, the function $g(x,y)=|y|^\beta  \, |\nabla V|^2$ is homogeneous of degree $\beta+2\alpha-2=-1$. Therefore by a trivial change of variables on the sphere of radius $r$ and using the fact that $n=2$, we get the very same estimate 
$$
\int_{B_{R^2}} |y|^\beta \, |\nabla U|^2 \|A\|^2 \leq \frac{C}{\ln (R)} \xrightarrow{R\to \infty} 0. 
$$
The rest of the proof follows verbatim \cite{deSilvaSavinJEMS}, page $1318$ since this is only based on energy considerations and we refer the reader to it. 
\end{proof}

\section{Uniform bounds around the free boundary}\label{s:UniformBounds}
The optimal regularity bound and the non-degeneracy described in Theorem \ref{theoCRS} were obtained in \cite{CaffarelliRoquejoffreSire} with bounds that depend on the seminorm $\norm{u}_{\dot \HH^\beta(B_1)}$. As a consequence, this dependence propagates to many of our estimates above. 
In this chapter we use the semi-norm dependent estimates (e.g. Lemma \ref{lemFinite}) to prove semi-norm \emph{independent} non-degeneracy estimates. Re-running the arguments above yields the  semi-norm independent results presented in our main Theorem \ref{t:mainregularity}. 

The question of semi-norm independence may seem purely technical; however, independence allows the compactness arguments of the next section to work without additional assumptions on the minimizers involved. 

\subsection{Uniform non-degeneracy}\label{s:UniformNondegeneracy}
We will begin by showing uniform non-degeneracy  from scratch to deduce uniform H\"older character from this fact, reversing the usual arguments in the literature. 

The following lemma was shown in \cite[Corollary 4.2]{Allen} in a more general setting. Here we give a more basic approach based on \cite[Lemma 3.4]{AltCaffarelli}. The main difference is that where Alt and Caffarelli could use the energy to directly control the $H^1$ norm of the minimizer, in our case we need to find an alternative because the measure term of the functional is computed on the thin phase (as opposed to the $H^1$ norm which is computed on the whole space). To bypass this difficulty we will use Allen's monotonicity formula.

The drawback of our approach is that we need the ball to be centered on the free boundary, while in the original lemma, Alt and Caffarelli could center the ball in the zero phase, allowing for a slightly better result.
\begin{lemma}\label{lemNonDegeneracy}
Let $u$ be a minimizer in $B_r$ with $0\in F(u)$. Then $\sup_{\partial B_r} u  \geq Cr^\alpha$ with $C$ depending only on $n$ and $\alpha$.  
\end{lemma}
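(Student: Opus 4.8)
The plan is to reduce to unit scale and then pit a lower bound for $\mathcal J(u,B_{1/4})$ coming from Allen's monotonicity formula against an upper bound for the same quantity coming from minimality. By the scaling $u\mapsto u(r\,\cdot)/r^{\alpha}$ — which turns a minimizer in $B_r$ into a minimizer in $B_1$, preserves the free boundary, and multiplies $\sup_{\partial B_\rho}$ by $r^{-\alpha}$ — it suffices to prove $m:=\sup_{\partial B_1}u\ge c$ for a dimensional constant $c=c(n,\alpha)>0$. First I would observe that $m>0$: by Lemma~\ref{lemRadonMeasure} the distribution $\lambda=\nabla\cdot(|y|^\beta\nabla u)$ is a nonnegative measure, so $u$ is $\beta$-subharmonic, and if $m=0$ the maximum principle for the $A_2$-weighted operator would force $u\equiv0$, contradicting $0\in F(u)$. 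The same maximum principle (or comparison with the $\beta$-harmonic replacement of $u$ in $B_1$) gives $u\le m$ everywhere in $B_1$, which I will use repeatedly.

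Next I would extract a quadratic gradient bound from Caccioppoli: applying Lemma~\ref{cacciop} at scale $1$ and using $u\le m$ gives
\[
\int_{B_{1/2}}|y|^\beta|\nabla u|^2\ \le\ 4\int_{B_1\setminus B_{1/2}}|y|^\beta u^2\ \le\ 4m^2\int_{B_1}|y|^\beta\, dx\ =\ C(n,\beta)\,m^2 .
\]

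The heart of the argument is the choice of competitor, and this is the step I expect to require the most care. Let $\psi(x)=\psi(|x|)$ be the radial Lipschitz function equal to $0$ on $B_{1/4}$, equal to $1$ outside $B_{1/2}$, and linear in $|x|$ on $B_{1/2}\setminus B_{1/4}$, so that $|\nabla\psi|\le4$, and set $v:=0$ on $B_{1/4}$, $v:=\psi u$ on $B_{1/2}\setminus B_{1/4}$, and $v:=u$ on $B_{3/4}\setminus B_{1/2}$. Then $v\in\HH^\beta(B_{3/4})$, it has the same trace as $u$ on $\partial B_{3/4}$, and $\{v>0\}\cap B_{3/4}'=(\{u>0\}\cap B_{3/4}')\setminus B_{1/4}'$. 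Testing Definition~\ref{defMinimizer} with $v$ on $B_{3/4}$, the Dirichlet energies over $B_{3/4}\setminus B_{1/2}$ and the area terms over $B_{3/4}'\setminus B_{1/4}'$ cancel, leaving
\[
\int_{B_{1/2}}|y|^\beta|\nabla u|^2 + m\bigl(\{u>0\}\cap B_{1/4}'\bigr)\ \le\ \int_{B_{1/2}\setminus B_{1/4}}|y|^\beta|\nabla(\psi u)|^2 .
\]
Bounding $|\nabla(\psi u)|^2\le 2|\nabla u|^2+2|\nabla\psi|^2u^2$, using $u\le m$ on the second term and the Caccioppoli estimate above on the first, the right-hand side is at most $C'(n,\beta)\,m^2$; since $\mathcal J(u,B_{1/4})\le \int_{B_{1/2}}|y|^\beta|\nabla u|^2 + m(\{u>0\}\cap B_{1/4}')$, we conclude $\mathcal J(u,B_{1/4})\le C'm^2$. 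This is exactly where the thin nature of the problem forces a detour from \cite{AltCaffarelli}: because the area term of $\mathcal J$ is computed only on $\{y=0\}$, the raw bound $\mathcal J(u,B_{1/4})\lesssim1$ carries no information about the weighted Dirichlet energy; switching $u$ off on the small ball $B_{1/4}$ while keeping the entire transition region $B_{1/2}\setminus B_{1/4}$ within reach of a single Caccioppoli estimate is what produces the crucial quadratic factor $m^2$.

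Finally I would close with monotonicity and the energy gap. Since $0\in F(u)$, Theorem~\ref{theoMonotonicityMinimum} makes $\rho\mapsto\Psi^u_\rho(0)$ nondecreasing on $(0,1)$, hence $\mathcal J(u,B_{1/4})\ge (1/4)^n\Psi^u_{1/4}(0)\ge (1/4)^n\Psi^u_0(0)$. Taking any blow-up $u_0$ of $u$ at $0$, Corollary~\ref{coroWeakLimits} says $u_0$ is an $\alpha$-homogeneous global minimizer, the scaling invariance of the Allen--Weiss density gives $\Psi^u_0(0)=\Psi^{u_0}_1(0)$, and the computation following Lemma~\ref{LemMagic} together with Proposition~\ref{propoAlmostGap} gives $\Psi^{u_0}_1(0)=m(\{u_0>0\}\cap B_1')\ge\omega_n/2$. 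Therefore $\mathcal J(u,B_{1/4})\ge 4^{-n}\omega_n/2$, and combining with $\mathcal J(u,B_{1/4})\le C'm^2$ yields $m\ge c(n,\alpha)$; undoing the rescaling gives $\sup_{\partial B_r}u\ge c(n,\alpha)\,r^\alpha$. All constants in this chain are dimensional — in particular the density threshold $\omega_n/2$ does not depend on $\norm{u}_{\HH^\beta}$ — which is precisely why this argument belongs in the present section on uniform bounds.
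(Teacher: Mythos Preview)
Your argument is correct and yields the same uniform constant depending only on $n$ and $\alpha$, but it proceeds by a genuinely different route than the paper. Both proofs hinge on the same two non-trivial inputs --- Allen's monotonicity formula (Theorem~\ref{theoMonotonicityMinimum}) and the density lower bound of Proposition~\ref{propoAlmostGap} --- and both use the subharmonicity of $u$ to get $u\le\ell$ on $B_1$. The divergence is in the competitor and the shape of the resulting inequality. The paper builds a barrier $\widetilde v$ from the fundamental solution $\Gamma$ of $\mathcal L$, tests minimality with $\min\{u,\widetilde v\}$, and obtains a \emph{linear} bound $\mathcal J_{1/2}(u)\le C\ell\int_{\partial B_{1/2}}u\,d\sigma$; it then uses monotonicity plus Proposition~\ref{propoAlmostGap} a second time to absorb the boundary integral back into $\mathcal J_{1/2}(u)$, reaching the self-improving estimate $0<\mathcal J_{1/2}(u)\le C\ell\,\mathcal J_{1/2}(u)$ and hence $\ell\ge C^{-1}$. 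Your version replaces the $\Gamma$-barrier by a plain radial cutoff $\psi$, combines minimality with Caccioppoli and the pointwise bound $u\le m$ to get a \emph{quadratic} upper bound $\mathcal J(u,B_{1/4})\le C'm^2$, and closes directly against the dimensional lower bound $\mathcal J(u,B_{1/4})\ge 4^{-n}\omega_n/2$. Your route is slightly more elementary --- no fundamental solution, hence no special attention to the borderline $n=1,\ \alpha=1/2$ --- while the paper's argument stays closer to the Alt--Caffarelli template and produces the structurally pleasing ``$\ell\cdot\mathcal J\gtrsim\mathcal J$'' cancellation.
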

\begin{proof}
By rescaling we can assume that $r=1$. 

Let $\mathcal{L}u := -\nabla\cdot (|y|^\beta \nabla u)$, consider $\Gamma(x)=\frac{1}{|x|^{n-2\alpha}}$ which is a solution of $\mathcal{L}\Gamma=0$ away from the origin (or $\Gamma(x)=\log |x|$ if $n=1$ and $\alpha=1/2$), and let 
$$\widetilde{v}(x):=\ell  \frac{\max\{1-\Gamma(2x),0\}}{1-\Gamma(2)},$$
where 
$$\ell:=\sup_{\partial B_1} u.$$

It follows that $u\leq \widetilde{v}$ on $\partial B_1$ and thus
$$\mathcal{J}(u, B_1 )\leq \mathcal{J}(\min\{u,\widetilde{v}\},B_1),$$
 and observing that $\tilde{v} = 0$ on $B_{1/2}$ and $\tilde{v} > 0$ on the annulus $A:=B_{1}\setminus B_{1/2}$, we get
\begin{align*}
\int_{B_{\frac12}}|y|^\Beta |\nabla u|^2 + m\left(B_{\frac12,+}(u)\right)
\nonumber	& \leq \int_{A} |y|^\Beta (|\nabla (\min\{u,\widetilde{v}\})|^2-|\nabla u|^2) + m(A_{+}'(\min\{u,\widetilde{v}\})) - m(A_{+}'(u)) \\
\nonumber	& \leq -2\int_{A} |y|^\beta \nabla\max\{u-\widetilde{v},0\}\cdot \nabla \widetilde{v}.
 \end{align*}
By Green's theorem,  writing $d\sigma=|y|^\beta d\mathcal{H}^n$ we get
\begin{align}\label{eqBoundByEll}
\int_{B_{\frac12}}|y|^\Beta |\nabla u|^2 + m\left(B_{\frac12,+}(u)\right)
 \leq -2\int_{\partial B_{\frac12}}  u \partial_\nu \widetilde{v}\, d\sigma 
	 = C_{n, \alpha} \ell \int_{\partial {B_{\frac12}}} u\, d\sigma,
\end{align}
with $C_{n,\alpha} > 0$. 


Using the monotonicity formula and  Proposition \ref{propoAlmostGap}, we get that $\psi^u(r) \geq \psi^u(0)\geq \frac{\omega(B_1)}{2}$ and, therefore
\begin{equation}\label{eqControlJBelow}
\frac{\alpha}{r} \int_{\partial {B_r}} u^2\, d\sigma + \frac{\omega(B_1) r^d}{2}\leq \mathcal{J}_r(u),
\end{equation}
so using H\"older's inequality and the AM-GM inequality we obtain 
\begin{equation}\label{eqControlByMoreThanHolder}
\int_{\partial {B_{\frac12}}} u\, d\sigma\leq \left(\int_{\partial {B_{\frac12}}} u^2\, d\sigma\right)^\frac12 C_{n,\alpha}^\frac12\leq \frac12 \int_{\partial {B_{\frac12}}} u^2\, d\sigma + \frac12 C_{n,\alpha} \leq C_{n,\alpha} \mathcal{J}_{\frac12}(u).
\end{equation}

Combining  \rf{eqBoundByEll}, \rf{eqControlJBelow} and \rf{eqControlByMoreThanHolder} we obtain
$$ 0 < \mathcal{J}_{\frac12}(u)\leq C_{n, \alpha} \ell  \mathcal{J}_{\frac12}(u),$$
and therefore $\ell\geq C_{n,\alpha}^{-1}$.

\end{proof}

To show averaged non-degeneracy we need a mean value principle which is well-known, but we include its proof for the sake of completeness.

\begin{lemma}[Mean value principle]\label{lemMVP}
Let $u\in H^1(\beta,\Omega)$ be a  weak solution to $\mathcal{L}u:=\nabla\cdot (|y|^\beta \nabla u)=0$ in $\Omega$, and let $x_0\in \R^n \times\{0\}$ with $B_r(x_0) \subset \Omega$. Then 
$$u(x_0)=\fint_{B_r}  u \, d\omega$$
where the mean is taken with respect to the measure $d\omega:= |y|^\beta \, dx$.
\end{lemma}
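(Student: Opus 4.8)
The plan is to reduce the statement to the fact that the \emph{weighted spherical averages} of $u$ do not depend on the radius. Write $d\sigma:=|y|^\Beta\,d\mathcal H^n$ and, for $0<\rho<r$, set
$$\phi(\rho):=\frac{1}{\sigma(\partial B_\rho(x_0))}\int_{\partial B_\rho(x_0)}u\,d\sigma,\qquad \sigma(\partial B_\rho(x_0)):=\int_{\partial B_\rho(x_0)}d\sigma.$$
Since $x_0\in\R^n\times\{0\}$, scaling gives $\sigma(\partial B_\rho(x_0))=c_{n,\beta}\,\rho^{\,n+\beta}$ with $c_{n,\beta}=\int_{\partial B_1}|\theta_y|^\beta\,d\mathcal H^n(\theta)\in(0,\infty)$, where $\theta_y$ denotes the last coordinate of $\theta$ and the integrability uses $\beta\in(-1,1)$. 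Because $|y|^\Beta$ is an $A_2$-Muckenhoupt weight, the De~Giorgi--Nash--Moser theory of Fabes, Kenig and Serapioni \cite{FabesKenigSerapioni} guarantees that $u$ has a continuous representative which is moreover $C^{1,1}_{\loc}$ off $\{y=0\}$; in particular $\phi(\rho)\to u(x_0)$ as $\rho\downarrow 0$. It therefore suffices to prove that $\phi$ is constant on $(0,r)$.

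To see this, change variables $x=x_0+\rho\theta$ to write $\phi(\rho)=c_{n,\beta}^{-1}\int_{\partial B_1}u(x_0+\rho\theta)\,|\theta_y|^\beta\,d\mathcal H^n(\theta)$. Differentiating (the map $\rho\mapsto\phi(\rho)$ is locally absolutely continuous, since by the coarea formula $\int_{\partial B_\rho(x_0)}|\nabla u|^2\,d\sigma<\infty$ for a.e.\ $\rho$, which together with Cauchy--Schwarz bounds $\phi'$ locally), one gets for a.e.\ $\rho$
$$\phi'(\rho)=\frac{1}{c_{n,\beta}}\int_{\partial B_1}\nabla u(x_0+\rho\theta)\cdot\theta\,|\theta_y|^\beta\,d\mathcal H^n(\theta)=\frac{\rho^{-(n+\beta)}}{c_{n,\beta}}\int_{\partial B_\rho(x_0)}|y|^\Beta\,\partial_\nu u\,d\mathcal H^n,$$
with $\nu$ the outer unit normal. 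The key point is that this last integral vanishes. I would prove this directly from the weak formulation of $\mathcal Lu=0$ in $B_r(x_0)$: test against a radial Lipschitz cutoff $\eta_{\rho,\delta}\in\Lipc(B_r(x_0))$ with $\eta_{\rho,\delta}\equiv 1$ on $B_\rho(x_0)$, $\eta_{\rho,\delta}\equiv 0$ off $B_{\rho+\delta}(x_0)$, and $\nabla\eta_{\rho,\delta}=-\delta^{-1}\nu$ on the annulus; then $0=\int|y|^\Beta\nabla u\cdot\nabla\eta_{\rho,\delta}=-\delta^{-1}\int_{B_{\rho+\delta}(x_0)\setminus B_\rho(x_0)}|y|^\Beta\,\partial_\nu u$, and letting $\delta\downarrow 0$ the coarea formula applied to the $L^1(|y|^\Beta)$-function $\partial_\nu u$ yields $\int_{\partial B_\rho(x_0)}|y|^\Beta\,\partial_\nu u\,d\mathcal H^n=0$ for a.e.\ $\rho$. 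Hence $\phi'\equiv 0$ a.e., so $\phi$ is constant and equals $u(x_0)$; equivalently $\int_{\partial B_\rho(x_0)}u\,d\sigma=u(x_0)\,\sigma(\partial B_\rho(x_0))$ for a.e.\ (hence, by continuity of both sides in $\rho$, for every) $\rho\in(0,r)$.

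Finally, integrating this identity in $\rho$ and using the coarea formula once more,
$$\int_{B_r(x_0)}u\,d\omega=\int_0^r\Big(\int_{\partial B_\rho(x_0)}u\,d\sigma\Big)\,d\rho=u(x_0)\int_0^r\sigma(\partial B_\rho(x_0))\,d\rho=u(x_0)\int_{B_r(x_0)}d\omega,$$
which is exactly $u(x_0)=\fint_{B_r(x_0)}u\,d\omega$. The only genuinely delicate step is the vanishing of $\int_{\partial B_\rho(x_0)}|y|^\Beta\,\partial_\nu u\,d\mathcal H^n$, i.e.\ the legitimacy of integration by parts across the degenerate (or singular) hyperplane $\{y=0\}$; working straight from the weak formulation with Lipschitz test functions — which makes sense precisely because $\mathcal Lu=0$ is assumed to hold across $\{y=0\}$ — avoids ever needing a pointwise trace of $|y|^\Beta\partial_y u$ there. (Alternatively, one may exhaust $B_\rho(x_0)$ by $B_\rho(x_0)\cap\{|y|>\varepsilon\}$, apply the classical divergence theorem on this smooth region, and send $\varepsilon\downarrow 0$, the flux through $\{|y|=\varepsilon\}$ tending to $0$ again by the weak formulation.)
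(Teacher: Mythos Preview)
Your proof is correct and takes essentially the same route as the paper: both establish the vanishing of the weighted flux $\int_{\partial B_\rho(x_0)}|y|^\beta\,\partial_\nu u\,d\mathcal H^n=0$ from the weak formulation and use it to show that the weighted radial averages are constant, then pass to the limit $\rho\downarrow 0$ via the continuity of $u$ from \cite{FabesKenigSerapioni}. The only cosmetic difference is that the paper works directly with the solid-ball means, writing $A(r)-A(\rho)=\int_\rho^r\widetilde A(t)\,dt=0$ via Fubini and the ACL property, while you track spherical means and then integrate; your cutoff argument for the vanishing flux is in fact more explicit than the paper's brief appeal to Green's formula.
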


\begin{proof}
Changing variables, we have that
\begin{equation*}
A(\rho):=\frac{1}{\rho^{\beta+n+1}}\int_{B_\rho(x_0)} |y|^\beta u(x) dx=\int_{B_1} |y|^\beta u(\rho x+x_0) dx.
\end{equation*}
On the other hand, set
\begin{align*}
\widetilde{A}(\rho)	
		& := \int_{B_1} |y|^\beta \nabla u(\rho x + x_0)\cdot x \,dx\\
		& = \int_{B_\rho(x_0)} \left(\frac{|y|}{\rho}\right)^\beta  \frac{\nabla u(x)\cdot (x-x_0)}{\rho} \frac{dx}{\rho^{n+1}}
		= \frac{1}{2 \rho^{\beta+n+2}} \int_{B_\rho(x_0)} |y|^\beta  \nabla u(x)\cdot \nabla |x-x_0|^2 \, dx.
\end{align*}
Since $u$ is a weak solution to $\nabla\cdot (|y|^\beta \nabla u)=0$ in $\Omega$, we can apply Green's formula twice to obtain
\begin{align*}
\widetilde{A}(\rho)
	&= \frac{1}{2 \rho^{\beta+n+2}} \int_{\partial B_\rho(x_0)}  |x-x_0|^2 |y|^\beta  \nabla u(x)\cdot \nu \, dx = \frac{1}{2 \rho^{\beta+n}} \int_{\partial B_\rho(x_0)}  |y|^\beta  \nabla u(x) \cdot \nu \, dx=0.
\end{align*}
Since $u$ is absolutely continuous on lines (see \cite[Theorem 4.21]{EvansGariepy}), for almost every $x$ we have $\int_\rho^r \nabla u(t x + x_0)\cdot x \, dt=u(rx+x_0)-u(\rho x+x_0)$. Applying Fubini's Theorem we get
\begin{equation*}
\int_\rho^r \widetilde{A}(t) \, dt = \int_{B_1}|y|^\beta \int_\rho^r \nabla u(t x + x_0)\cdot x \, dt \,dx = \int_{B_1}|y|^\beta  (u(rx+x_0)-u(\rho x+x_0))\,dx = A(r)-A(\rho) .
\end{equation*}
So
$A(r)-A(\rho)= 0$ for all $\rho<r$.

On the other hand, taking the mean with respect to the measure $d\omega:= |y|^\beta \, dx$ and using the continuity of $u$ (see \cite[Theorem 2.3.12]{FabesKenigSerapioni})
we obtain 
$$\left|u(x_0) -\frac{1}{\omega(B_1)}\lim_{\rho \to 0} A(\rho) \right| =  \lim_{\rho \to 0} \frac{1}{\omega(B_\rho(x_0))} \left|\int_{B_\rho(x_0)} (u(x_0) -u(x)) \, d\omega(x)\right| \leq  
\lim_{\rho \to 0} o_{\rho\to 0} (1)=0.$$
\end{proof}

\begin{corollary}\label{coroMeanBoundedBelow}
Let $u$ be a minimizer in $B_r$ with $0\in F(u)$ and let $d\sigma=|y|^\beta d\mathcal{H}^n$. Then $\fint_{\partial B_r} u \, d\sigma \geq Cr^\alpha$ with $C$ depending only on $n$ and $\alpha$. 
\end{corollary}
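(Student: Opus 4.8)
The plan is to feed the pointwise lower bound of Lemma~\ref{lemNonDegeneracy} into the mean value principle of Lemma~\ref{lemMVP}. The obstruction to doing this directly is that the only point whose spherical average one controls via Lemma~\ref{lemMVP} is the center $0$, and $u(0)=0$ because $0\in F(u)$. The way around this is to pass to the \emph{$\mathcal{L}$-harmonic replacement} of $u$, which dominates $u$ (since a minimizer is an $\mathcal{L}$-subsolution by Lemma~\ref{lemRadonMeasure}) and whose value at $0$ \emph{is} the spherical average we want to bound.

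Concretely, first I would let $h$ solve $\mathcal{L}h:=\nabla\cdot(|y|^\beta\nabla h)=0$ in $B_r$ with $h-u\in H^1_0(\beta,B_r)$; solvability of this Dirichlet problem is part of the Fabes--Kenig--Serapioni $A_2$-theory \cite{FabesKenigSerapioni}, applicable since $|y|^\beta\in A_2$ exactly when $\beta\in(-1,1)$. Since $\lambda=\mathcal{L}u$ is a nonnegative Radon measure by Lemma~\ref{lemRadonMeasure}, the difference $h-u$ is a weak supersolution with vanishing boundary trace, so the minimum principle gives $h\ge u\ge 0$ in $B_r$. Next, Lemma~\ref{lemMVP} applied to the $\mathcal{L}$-harmonic function $h$ at $x_0=0\in\R^n\times\{0\}$ gives $h(0)=\fint_{B_\rho}h\,d\omega$ for every $\rho<r$, with $d\omega=|y|^\beta\,dx$; writing $\int_{B_\rho}h\,d\omega=\int_0^\rho\big(\int_{\partial B_t}h\,d\sigma\big)\,dt$ and differentiating in $\rho$ yields the spherical form $h(0)=\fint_{\partial B_r}h\,d\sigma$. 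As $h=u$ on $\partial B_r$, this says precisely that $\fint_{\partial B_r}u\,d\sigma=h(0)$, so the whole matter reduces to bounding $h(0)$ from below.

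For that last step I would apply Lemma~\ref{lemNonDegeneracy} at radius $r/2$ (legitimate, since a minimizer in $B_r$ restricts to a minimizer in $B_{r/2}$ with $0\in F(u)$): there is $x^\ast\in\partial B_{r/2}$ with $u(x^\ast)\ge C_1(r/2)^\alpha$, hence $h(x^\ast)\ge C_1(r/2)^\alpha$. Since $h\ge 0$ is $\mathcal{L}$-harmonic in $B_{3r/4}\subset B_r$, the Harnack inequality of \cite{FabesKenigSerapioni} gives $\sup_{\overline{B_{r/2}}}h\le C_H\inf_{\overline{B_{r/2}}}h\le C_H\,h(0)$, so that $h(0)\ge C_H^{-1}C_1(r/2)^\alpha=:Cr^\alpha$, and therefore $\fint_{\partial B_r}u\,d\sigma=h(0)\ge Cr^\alpha$. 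Crucially, $C_1$ comes from Lemma~\ref{lemNonDegeneracy} and depends only on $n,\alpha$, while $C_H$ and the constants hidden in the solvability and maximum principle above depend only on $n$ and the $A_2$-characteristic of $|y|^\beta$, i.e. on $n$ and $\alpha$; hence $C=C(n,\alpha)$ as claimed.

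I expect no genuinely hard step here: the content is the \emph{idea} of inserting the harmonic replacement between $u$ and the averaging identity, rather than any computation. The only routine technical points to check are the passage from the solid to the spherical mean value identity (a coarea-formula argument) and that $h$ has enough integrability near $\partial B_r$ to justify letting $\rho\uparrow r$; both are standard and cause no trouble given the regularity already available for minimizers in the preceding sections.
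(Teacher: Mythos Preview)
Your proposal is correct and follows essentially the same route as the paper: introduce the $\mathcal{L}$-harmonic replacement of $u$ in $B_r$, use the comparison principle to get it dominates $u$, identify its value at the origin with the spherical mean $\fint_{\partial B_r}u\,d\sigma$ via the (differentiated) mean value identity, and then combine Lemma~\ref{lemNonDegeneracy} at radius $r/2$ with the Harnack inequality to bound that value from below. The paper compresses all of this into the single chain $Cr^\alpha\le \sup_{B_{r/2}}u\le \sup_{B_{r/2}}v\le C\fint_{\partial B_r}u\,d\sigma$, but the content is identical to what you wrote.
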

\begin{proof}
Let $v$ be the $\mathcal{L}-$harmonic replacement of $u$ in $B_r$, that is, the solution to
\begin{equation}\label{eqHarmonicReplacement}
\begin{cases}
\mathcal{L}v=0 & \mbox{ in } B_{r}, \\
v\equiv u &\mbox{ on }\partial B_{r},
\end{cases}
\end{equation}
see \cite[Theorem 3.17]{HeinonenKilpelainenMartio}. After differentiating with respect to the radius, by the mean value principle we get that $v(0)=\fint_{\partial B_r} u \, d\sigma$. By the comparison principle and the Harnack inequality we get that 
\begin{equation}\label{eqComparisonAndHarnack}
C r^\alpha\leq \sup_{B_{r/2}} u \leq \sup_{B_{r/2}} v \leq C \fint_{\partial B_r} u \, d\sigma.
\end{equation}
\end{proof}


\subsection{Behavior of the distributional fractional Laplacian}\label{ss:growth}


Next we use an idea of \cite{AltCaffarelli} and investigate the behavior of the distributional $\alpha$-Laplacian of the minimizer introduced in Section \ref{s:Compactness}. As mentioned in the introduction, in \cite{AltCaffarelli} this investigation immediately yields that the positivity set is a set of locally finite perimeter, and more precisely, that it is Ahlfors regular of the correct dimension. However, the nonlocal nature of this problem indicates that the distributional fractional Laplacian may not be supported on the free boundary and thus we cannot expect to immediately gain such strong geometric information. 


%
%

First we can bound the growth of the fractional Laplacian measure around a free boundary point. Note that this growth is the natural counterpart to the upper Ahlfors regularity in the case of Alt-Caffarelli minimizers.
\begin{theorem}\label{theoLambdaBoundAbove}
Let $u \in \HH^\beta(B_{2r}(x_0))$ be a minimizer of $\mathcal{J}$ in $B_{2r}(x_0)$, and let $x_0\in F(u)$. 
Then, we have
$$\lambda(B_r(x_0)) \leq C r^{n-\alpha}.$$
 In particular $\lambda(F(u))=0$. 
\end{theorem}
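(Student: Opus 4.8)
The plan is to adapt the classical Alt--Caffarelli estimate for the upper Ahlfors regularity of the Laplacian of a minimizer, using in place of the distributional Laplacian the positive Radon measure $\lambda=\nabla\cdot(|y|^{\beta}\nabla u)$ furnished by Lemma \ref{lemRadonMeasure} and testing it against a suitable cut-off. Since here $\lambda$ need not be supported on $F(u)$, one cannot hope for genuine Ahlfors regularity of $\lambda$; the natural substitute is precisely the growth bound $\lambda(B_r(x_0))\lesssim r^{\,n-\alpha}$, and to obtain it only the weighted Caccioppoli inequality, the optimal regularity, and scaling are needed.

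First I would fix a Lipschitz cut-off $\eta$ with $\chi_{B_r(x_0)}\le\eta\le\chi_{B_{2r}(x_0)}$, compactly supported in $B_{2r}(x_0)$, and with $|\nabla\eta|\le C/r$. Since $\lambda\ge 0$ and $\eta\equiv 1$ on $B_r(x_0)$, applying \eqref{eqMeasureDef} to $v=\eta$ gives
\[
\lambda(B_r(x_0))\le\int\eta\,d\lambda=-\int|y|^{\beta}\,\nabla u\cdot\nabla\eta\le\frac{C}{r}\int_{B_{2r}(x_0)}|y|^{\beta}\,|\nabla u|,
\]
and the Cauchy--Schwarz inequality with respect to the measure $|y|^{\beta}\,dx$ then yields
\[
\lambda(B_r(x_0))\le\frac{C}{r}\Big(\int_{B_{2r}(x_0)}|y|^{\beta}\,|\nabla u|^{2}\Big)^{1/2}\Big(\int_{B_{2r}(x_0)}|y|^{\beta}\,dx\Big)^{1/2}.
\]
Since the ball is centered on $\R^{n}\times\{0\}$ and $\beta>-1$, a direct computation in spherical coordinates gives $\int_{B_{2r}(x_0)}|y|^{\beta}\,dx\le C_{n,\beta}\,r^{\,n+1+\beta}$. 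For the energy factor I would use the optimal regularity estimate $\|u\|_{L^{\infty}(B_{2r}(x_0))}\le C\,r^{\alpha}$ from P1 of Theorem \ref{theoCRS} together with the Caccioppoli inequality of Lemma \ref{cacciop}, applied over a finite family of balls adapted to the weight (described in the last paragraph), which yield $\int_{B_{2r}(x_0)}|y|^{\beta}|\nabla u|^{2}\le C\,r^{-2}\,r^{2\alpha}\,r^{\,n+1+\beta}=C\,r^{n}$, where the last equality uses $2\alpha+\beta=1$; alternatively this bound is immediate from $u\in\HH^{\beta}(B_{2r}(x_0))$ once the scale-invariant normalization $r^{-n/2}\|u\|_{\HH^{\beta}(B_{2r}(x_0))}\le E_{0}$ is imposed, and the resulting constant will be made independent of $u$ in the remaining subsections. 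Plugging the two estimates into the previous display and using $\tfrac{1+\beta}{2}=1-\alpha$ we obtain
\[
\lambda(B_r(x_0))\le\frac{C}{r}\,r^{\,n/2}\,r^{\,(n+1+\beta)/2}=C\,r^{\,n-\alpha},
\]
which is the asserted bound.

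For the assertion $\lambda(F(u))=0$, fix a compact set $K$ contained in the domain of $u$. By Lemma \ref{lemFinite} the free boundary has finite $\mathcal{H}^{n-1}$-measure in a neighborhood of $K$, so for every $\delta>0$ one may cover $F(u)\cap K$ by countably many balls $B_{r_i}(x_i)$ with $x_i\in F(u)$, $r_i<\delta$, and $\sum_{i} r_i^{\,n-1}\le C(K)$. Applying the bound just proved at each ball,
\[
\lambda(F(u)\cap K)\le\sum_{i}\lambda(B_{r_i}(x_i))\le C\sum_{i} r_i^{\,n-\alpha}=C\sum_{i} r_i^{\,1-\alpha}\,r_i^{\,n-1}\le C\,\delta^{\,1-\alpha}\sum_{i} r_i^{\,n-1}\le C(K)\,\delta^{\,1-\alpha}.
\]
Since $1-\alpha>0$, letting $\delta\to 0$ gives $\lambda(F(u)\cap K)=0$, and exhausting the domain by such compacta yields $\lambda(F(u))=0$.

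The only genuine difficulty is the energy estimate $\int_{B_{2r}(x_0)}|y|^{\beta}|\nabla u|^{2}\lesssim r^{n}$ with a constant that does not itself depend on $u$. Extracting the weighted interior $L^{2}$-bound for $|\nabla u|$ up to the degenerate set $\{y=0\}$ requires a short covering argument combining balls centered on $\R^{n}\times\{0\}$, to which the weighted Caccioppoli inequality of Lemma \ref{cacciop} applies, with finitely many balls contained in $\{y\neq 0\}$, where $w\mapsto\nabla\cdot(|y|^{\beta}\nabla w)$ is uniformly elliptic and the classical Caccioppoli inequality applies; a priori the constant there is controlled only by the rescaled $\HH^{\beta}$-norm of $u$, and its removal is the purpose of the rest of Section \ref{s:UniformBounds}. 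Everything else is a routine manipulation of $\lambda$ together with the scaling identity $2\alpha+\beta=1$.
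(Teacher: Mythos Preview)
Your argument for the growth bound is correct but follows a different, more elementary route than the paper. You test $\lambda$ against a cutoff and combine Cauchy--Schwarz, the weighted Caccioppoli inequality, and optimal regularity. The paper instead compares $u$ to its $\mathcal L$-harmonic replacement $v$ on $B_{2r}$: since $u\equiv 0$ on $\supp\lambda$ and $\inf_{B_r}v\gtrsim M:=\fint_{\partial B_{2r}}u\,d\sigma$ by Harnack and the mean value principle, one has
\[
\lambda(B_r)\lesssim \frac{1}{M}\int_{B_{2r}}(v-u)\,d\lambda=\frac{1}{M}\int_{B_{2r}}|y|^\beta\bigl(|\nabla u|^2-|\nabla v|^2\bigr)\le \frac{m(B_{2r}')}{M}\lesssim\frac{r^n}{M},
\]
where the last inequality uses \emph{minimality} ($\mathcal J(u,B_{2r})\le \mathcal J(v,B_{2r})$) rather than any a~priori energy bound. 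The uniform non-degeneracy of Corollary~\ref{coroMeanBoundedBelow} then gives $M\gtrsim r^\alpha$ with a constant depending only on $n,\alpha$. Your proof of $\lambda(F(u))=0$ is essentially identical to the paper's.

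The substantive difference is the dependence of the constant. Your bound on $\int_{B_{2r}}|y|^\beta|\nabla u|^2$ inevitably carries the rescaled norm $E_0=r^{-n/2}\|u\|_{\dot\HH^\beta(B_{2r})}$ (Caccioppoli pushes the dependence to $\|u\|_{L^\infty}$, and P1 controls that only in terms of $E_0$), whereas the paper's replacement argument yields $C=C(n,\alpha)$. You defer removing the $E_0$ dependence to ``the rest of Section~\ref{s:UniformBounds}'', but this is circular in the paper's logic: the uniform H\"older bound of Theorem~\ref{theoDimension}---which is precisely what eliminates $E_0$ from all other estimates---\emph{invokes} Theorem~\ref{theoLambdaBoundAbove} in its proof and needs the constant there to already be independent of $u$. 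So while your cutoff argument is a valid proof of the inequality with an $E_0$-dependent constant, it does not deliver the uniformity that the paper's subsequent bootstrap requires; the harmonic-replacement argument is not merely an alternative but the mechanism by which minimality is leveraged to break that dependence.
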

A glance at \eqref{eqLaplacianTrivial} will convince the reader that these estimates are sharp, for they cannot be  improved even in the case of the trivial solution. 

\begin{proof}
Without loss of generality we may assume that $x_0=0$.  Let $\mathcal{L}u := -\nabla\cdot (|y|^\beta \nabla u)$ and let $v$ be the $\mathcal{L}$-harmonic replacement of $u$ in $B_{2r}$, see \rf{eqHarmonicReplacement}. Write   $d\sigma=|y|^\beta d\mathcal{H}^n$ and $M:=\fint_{\partial B_{2r}} u \, d\sigma$. By Harnack's inequality (see \cite{CaffarelliRoquejoffreSire}, for instance) and the mean value principle in Lemma \ref{lemMVP},
$$\inf_{B_r} v\geq C v(0)=CM.$$
We have that
$$\lambda(B_r)=\int_{B_r}d\lambda\leq \frac{1}{CM}\int_{B_r}v d\lambda.$$
Since $u\equiv 0$ in the support of  $\lambda$ and $u$ is $\mathcal{L}$-subharmonic  (see \cite[Lemma 2.2]{AltCaffarelli}) we get
$$\int_{B_r}v d\lambda=\int_{B_r}(v-u)d\lambda \leq \int_{B_{2r}}(v-u)d\lambda .$$
By the properties of the measure $\lambda$, we obtain
$$\int_{B_{2r}}(v-u)d\lambda= -\int_{B_{2r}}|y|^\beta \nabla (v-u)\cdot\nabla u =  \int_{B_{2r}}|y|^\beta \left(|\nabla u|^2-|\nabla v|^2\right),$$
and using the definition of the functional and the fact that $u$ is a minimizer, we get
$$\int_{B_{2r}}|y|^\beta \left(|\nabla u|^2-|\nabla v|^2\right) = \mathcal{J}(u,B_{2r})-m(B_{2r}^+(u)) -\mathcal{J}(v,B_{2r})+m(B_{2r}') \leq C r^n.$$
All together, we have that
$$\lambda(B_r)\leq \frac{1}{CM}Cr^n,$$
and, since uniform non-degeneracy (see Lemma \ref{lemNonDegeneracy}) implies that $M\geq Cr^\alpha$ we can conclude the proof of the first statement.

To show the second one, note that since the free boundary has locally finite $(n-1)$-dimensional Hausdorff measure, given a set $E\subset F(u)$ and $k\in\N$ we can find a collection of balls $I_k=\{B^k_i\}_i$ such that 
$$E\subset \bigcup_{B\in I_k} B, \quad\quad \sup_{B\in I_k}r(B)\leq 1/k \quad\quad \mbox{and}\quad\quad \sum_{B\in I_k} r(B)^{n-1}\leq 2\mathcal{H}^{n-1}(E).$$
Thus, 
$$\lambda(E)\leq\sum_{B\in I_k} \lambda(B)\lesssim \sum_{B\in I_k} r(B)^{n-\alpha}\leq \sup_{B\in I_k}r(B)^{1-\alpha} \sum_{B\in I_k} r(B)^{n-1} \xrightarrow{k\to \infty} 0.$$

\end{proof}

 Next we study the measure away from the free boundary. We should emphasize here that even though the estimates in Lemma \ref{lemFracLaplacianIn0Phase} and Theorem \ref{theoMeasureAway} depend on $E_0$, they will be used to remove the dependence of our other estimates on $E_0$. More precisely, Theorem \ref{theoMeasureAway} will play a role in establishing the continuity of the Green function in Lemma \ref{lemContinuousGreen}.  This qualitative fact is used to prove  the quantitative uniform H\"older character in Theorem \ref{theoDimension}. 

After proving Theorem \ref{theoDimension}, we may drop the hypothesis $\norm{u}_{\HH^\beta(B_2)}\leq E_0$  from both Lemma \ref{lemFracLaplacianIn0Phase} and Theorem \ref{theoMeasureAway}.

\begin{lemma}\label{lemFracLaplacianIn0Phase}
If $u \in \HH^\beta_{\loc}(B_{2})$ is a minimizer of $\mathcal{J}$ in the ball $B_{2}$ with $\norm{u}_{\HH^\beta(B_{2})}\leq E_0  $ and $0\in F(u)$, then for every $x_0=(x',0)\in B_{1,0}(u)$ we get
$$\lim_{y\to 0} |y|^\beta |u_y(x',y)| \approx C \dist(x_0,F(u))^{-\alpha}.$$
Moreover, for every ball $B$ centered at $\R^n\times\{0\}$ with $B'\subset\subset B_{1,0}(u)$, we have that
$$|y|^\beta |u_y(x',y)| \leq C \dist(x,F(u))^{-\alpha},$$
for $|y|<C_B\dist(x,F(u))$, where the constant $C_B$ may depend on $B$.
\end{lemma}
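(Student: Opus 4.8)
The plan is to rescale to unit scale around $x_0=(x',0)$ and then bound the relevant co-normal coefficient from above (using optimal regularity and the boundary behaviour of $\mathcal{L}_\beta u:=\nabla\cdot(|y|^\beta\nabla u)$) and from below (using minimality and a barrier). Set $d:=\dist(x_0,F(u))$; since $x_0$ lies in the interior of the zero phase we have $u\equiv 0$ on $B_d'(x_0)$, so in the open half-ball $B_d^+(x_0)$ the function $u$ solves $\mathcal L_\beta u=0$ weakly. The structural input I will use repeatedly is the Caffarelli--Silvestre duality together with Fabes--Kenig--Serapioni regularity (see \cite{CaffarelliSilvestre,CaffarelliRoquejoffreSire,FabesKenigSerapioni}): if $\mathcal L_\beta w=0$ in a half-ball and $w=0$ on the flat part of the boundary, then the even reflection of $H:=|y|^\beta w_y$ solves $\mathcal L_{-\beta}H=0$ \emph{across} $\{y=0\}$ (the interface term vanishes because $w|_{\{y=0\}}=0$), and since $-\beta\in(-1,1)$ this makes $H$ continuous; hence $\lim_{y\to0}|y|^\beta w_y(x',y)$ exists at interior zero-phase points and, using $1-\beta=2\alpha$, equals $2\alpha$ times the boundary value of $w/|y|^{2\alpha}$. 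Applied to $u$ near $x_0$ this already shows that the limit in the statement exists. Now rescale: $v(z):=d^{-\alpha}u(x_0+dz)$ is an even minimizer with $v\equiv0$ on $B_1'$ and $\dist(0,F(v))=1$; because the $\dot C^\alpha$ seminorm is scale invariant and $u(x_0)=0$, \emph{P1} of Theorem \ref{theoCRS} gives $\norm{v}_{\dot C^\alpha(B_1)}+\norm{v}_{L^\infty(B_1)}\lesssim 1+E_0$ \emph{uniformly in $d$}. Since $|y|^\beta u_y(x_0+dz)=d^{-\alpha}|s|^\beta v_s(z)$ for $z=(z',s)$ (here $\beta+\alpha-1=-\alpha$), the first assertion is equivalent to $\lim_{s\to0}|s|^\beta|v_s(0,s)|\approx1$ and the second to $\sup_{0<s<C_B}|s|^\beta|v_s(0,s)|\lesssim1$. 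A routine splitting lets us assume $d$ small enough that $v$ is a minimizer on a ball of radius $\tfrac54$; the case $d\approx1$ is the same argument on a fixed scale, where $d^{-\alpha}\approx1$.

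\textbf{Upper bound and second assertion.} Let $H:=|y|^\beta v_y$ on $\{y>0\}$, extended evenly; by the duality above $\mathcal L_{-\beta}H=0$ in $B_1$, so by local boundedness for the $A_2$-degenerate operator $\mathcal L_{-\beta}$ (\cite{FabesKenigSerapioni}) and $|y|^{-\beta}|H|^2=|y|^\beta|v_y|^2\le|y|^\beta|\nabla v|^2$,
\[
\sup_{0<s<1/4}|H(0,s)|\le\norm{H}_{L^\infty(B_{1/4})}\lesssim\Big(\int_{B_{1/2}}|y|^{-\beta}|H|^2\Big)^{1/2}\le\norm{\nabla v}_{L^2(B_{1/2},|y|^\beta)}.
\]
Since $v>0$ and $\mathcal L_\beta v=0$ in $B_1^+$, the Caccioppoli inequality (Lemma \ref{cacciop}, applied on $B_1^+$ and reflected) gives $\norm{\nabla v}_{L^2(B_{1/2},|y|^\beta)}\lesssim\norm{v}_{L^\infty(B_1)}\lesssim1+E_0$. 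Hence $\sup_{0<s<1/4}|s|^\beta|v_s(0,s)|\lesssim1+E_0$, and letting $s\to0$ bounds the limit as well. Undoing the rescaling yields both the upper half of the first assertion and the second assertion, with $C_B$ a small constant (the statement permits it to depend on $B$, which is all the rescaling bookkeeping near $F(u)$ and $\partial B_1$ requires).

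\textbf{Lower bound.} Here minimality enters. As $F(v)$ is closed and $\dist(0,F(v))=1$, fix $\zeta\in F(v)$ with $|\zeta|=1$; the interior corkscrew condition \emph{P3} together with non-degeneracy \emph{P2} produce a point $q$ with $v(q)\ge c_1>0$ and $\dist(q,F(v))\ge c_1$, so $v\ge c_1$ on a fixed ball about $q$ contained in $\Omega_+(v)$. Because $F(v)\subset\{y=0\}$, $v>0$ and $\mathcal L_\beta v=0$ in all of $B_{5/4}^+$ by the strong maximum principle, so a Harnack chain of bounded length inside a fixed compact subset of $B_{5/4}^+$ transfers this to $v\ge c_2>0$ on the horizontal slice $\{(z',h):|z'|\le1\}$ for a fixed small height $h$. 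Let $\psi$ be the positive weak solution of $\mathcal L_\beta\psi=0$ in the cylinder $\mathcal C:=B_1'\times(0,h)$ with $\psi=0$ on the bottom and lateral boundary and $\psi=c_2$ on the top. Since $v\equiv0$ on $B_1'$, $v\ge0$, and $v\ge c_2$ on the top disk, the maximum principle gives $v\ge\psi$ in $\mathcal C$. Finally, $\psi$ vanishes on the flat bottom and is positive inside, so the fractional Hopf lemma (\cite[Proposition 4.11]{cabreSire}) gives $\lim_{s\to0}\psi(0,s)/s^{2\alpha}=c_3>0$; combined with $v\ge\psi$ and the already-established existence of $\lim_{s\to0}v(0,s)/s^{2\alpha}$ we obtain $\lim_{s\to0}|s|^\beta v_s(0,s)=2\alpha\lim_{s\to0}v(0,s)/s^{2\alpha}\ge2\alpha c_3>0$. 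Together with the upper bound this proves $\lim_{s\to0}|s|^\beta|v_s(0,s)|\approx1$, i.e. the first assertion.

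\textbf{Main obstacle.} The crux is marshalling the weighted elliptic toolkit in precisely the right form: the Caffarelli--Silvestre duality in the \emph{weak} setting, so that $\lim_{y\to0}|y|^\beta u_y$ is a genuine continuous limit (not merely a $\limsup$); local boundedness, Harnack, the strong maximum principle and the fractional Hopf lemma for $\mathcal L_{\pm\beta}$; and the scale-invariance of $\norm{\cdot}_{\dot C^\alpha}$, which is what keeps the bound on $\norm{v}_{L^\infty(B_1)}$ uniform in $d$ (a naive use of the $\dot\HH^\beta$-based form of \emph{P1} would lose a power of $d$). All of these are available in \cite{FabesKenigSerapioni,CaffarelliSilvestre,CaffarelliRoquejoffreSire,cabreSire}; the remaining ingredients (Caccioppoli, non-degeneracy, the Harnack chain, the cylinder barrier, the $d\approx1$ reduction) are routine.
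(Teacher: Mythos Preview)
Your argument is correct and follows a genuinely different route from the paper's. The paper works with the other conjugate: after taking the odd reflection $\tilde u$, it sets $v_{\rm paper}:=|y|^{\beta}y^{-1}\tilde u$, which is even, $C^\infty$ on $\tfrac12 B$ (via \cite[Lemma 2.2]{Silvestre} and \cite[Lemma 3.26, Corollary 3.29]{VitaThesis}), and solves $\nabla\cdot(|y|^{2-\beta}\nabla v_{\rm paper})=0$. The mean value principle for this equation then gives
\[
g(x_0')=v_{\rm paper}(x_0)=C\,r^{-(n+3-\beta)}\int_{\frac12 B}|y|\,u,
\]
and \emph{P1} (upper) together with \emph{P2}--\emph{P3} (lower) applied directly to this integral yield $g(x_0')\approx r^{-\alpha}$ in a single stroke; the identity $y^\beta u_y=(1-\beta)v_{\rm paper}+y\,(v_{\rm paper})_y$ finishes both assertions at once.

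Your version instead rescales, uses the Caffarelli--Silvestre conjugate $H=|y|^\beta v_y$ with weight $|y|^{-\beta}$ (which stays in the $A_2$ range, a small advantage over the paper's $|y|^{2-\beta}$), and splits the two bounds: the upper one from FKS local boundedness of $H$ plus Caccioppoli, the lower one from a Harnack chain, a cylinder barrier, and the fractional Hopf lemma. The paper's approach is shorter and more unified --- one mean-value identity handles both directions --- whereas yours is more modular and avoids the out-of-range weight and the citation to \cite{VitaThesis}. One small point: your justification that the even reflection of $H$ solves $\mathcal L_{-\beta}H=0$ weakly \emph{across} $\{y=0\}$ (``the interface term vanishes because $w|_{\{y=0\}}=0$'') is brief; the clean route is to first note that the odd reflection $\tilde v$ solves $\mathcal L_\beta\tilde v=0$ weakly in the full ball (since $v\equiv0$ on $B_1'$) and then invoke the conjugate relation, but making the latter rigorous in the weak setting still needs enough regularity to place $H$ in $H^1_{\loc}(-\beta)$ --- which is exactly the content of the expansion $u=|y|^{1-\beta}g(x')+\mathcal O(y^2)$ from \cite{Silvestre} that the paper also invokes.
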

\begin{proof}
Let $u$ be a minimizer, and let $B:=B_r(x_0)$ with $B'\subset B_{1,0}(u)$. 
 By \cite[Lemma 2.2]{Silvestre}, we can write $u(x',y)=|y|^{1-\beta}g(x') +\mathcal{O}(y^2)$, where $g$ is a $C^{1+\beta}(\frac12B')$ function, with a uniform control on the error term in terms of  $\norm{u}_{L^2(B,|y|^\beta)}$. In particular, $\lim_{y\to 0} |y|^{\beta-1} u(x',y)=g(x')$.

Let us define 
\begin{equation}
\widetilde{u}(x',y):=\begin{cases}
u(x',y) &\mbox{if }y\geq 0\\
-u(x',-y) &\mbox{if }y< 0.
\end{cases}
\end{equation}
It is clear that $\mathcal{L}\widetilde{u}\equiv 0$ in $B$. According to  \cite[Lemma 3.26, Corollary 3.29]{VitaThesis} $v(x',y)=|y|^\beta y^{-1} \widetilde{u}(x',y)$ is an even $C^\infty(\frac12B)$ function in $\HH^{2-\beta}(B)$ (note that $1<2-\beta<3$ is out of the usual range of $\beta$) and satisfying $\nabla\cdot(|y|^{2-\beta}\nabla v)=0$. The mean value principle (see Lemma \ref{lemMVP}) applies also to this case, so
$$g(x_0')=v(x_0)= \frac{1}{\int_{\frac12B}|y|^{2-\beta}}\int_{\frac12B}|y|^{2-\beta} v(x)  = C \frac{1}{r^{2-\beta+n+1}} \int_{\frac12B}|y| u(x),$$
and using \emph{P1}-\emph{P3} , if $r=\dist(x_0,F(u))$ we get
$$g(x_0')=v(x_0)\approx C r^{\beta-2+1+\alpha}=Cr^{-\alpha}.$$

On the other hand, on the upper half plane we have $u_y=(y^{1-\beta}v)_y=(1-\beta) y^{-\beta} v + y^{1-\beta}v_y$, so
$$y^\beta u_y(x',y)=(1-\beta) v(x',y) + y v_y(x',y),$$
and 
$$\lim_{y\to 0^+} y^\beta u_y(x',y)  = (1-\beta) g(x')\approx r^{-\alpha},$$
the limit being uniform on compact subsets of $B$.
\end{proof}

\begin{theorem}\label{theoMeasureAway}
If $u \in \HH^\beta_{\loc}(B_2)$ is a minimizer of $\mathcal{J}$ in the ball $B_2$ with $\norm{u}_{\HH^\beta(B_2)}\leq E_0$,
then the measure $\lambda$ is absolutely continuous with respect to the Lebesgue measure, and for $m$-almost every $x\in B_{1}'(u)$ we have that
$$\frac{d\lambda}{dm}(x)=2\lim_{y\to0}|y|^\beta u_y(x',y)\approx \chi_{B_{1,0}(u)}(x) \dist(x,F(u))^{-\alpha},$$
with constants depending on $n$, $\alpha$ and $E_0$.
\end{theorem}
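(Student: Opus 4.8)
The plan is to combine the interior description of $\lambda$ from Lemma \ref{lemFracLaplacianIn0Phase} with the fact, established in Lemma \ref{lemRadonMeasure}, that $\lambda = \nabla\cdot(|y|^\beta\nabla u)$ is a positive Radon measure supported on $\{u=0\}\subset \R^n\times\{0\}$. First I would fix a point $x_0=(x_0',0)\in B_{1,0}(u)$ and a small ball $B=B_\rho(x_0)$ with $B'\subset\subset B_{1,0}(u)$. Since $\lambda$ is supported on the hyperplane $\{y=0\}$ and $u$ is $\mathcal{L}$-harmonic away from $\{u\le 0\}$, testing \eqref{eqMeasureDef} against a cutoff $\eta\in C_c^\infty(B)$ that equals $1$ near $x_0$ and is even in $y$ gives, via the divergence theorem applied on $B\cap\{y>0\}$ (and symmetrically on $\{y<0\}$),
\[
\int \eta\, d\lambda = -\int_{\{y=0\}} \eta(x',0)\, \Bigl(2\lim_{y\to 0^+}|y|^\beta u_y(x',y)\Bigr)\, dx',
\]
where the factor $2$ comes from adding the contributions of the two half-spaces and the sign is fixed by the outer normal $-e_{n+1}$ on $\{y=0\}$ bounding $\{y>0\}$; the trace $\lim_{y\to0^+}|y|^\beta u_y(x',y)$ exists and is continuous on $B'$ by Lemma \ref{lemFracLaplacianIn0Phase}. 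Since $\eta$ was arbitrary, this identifies $\lambda\measurerestr B'$ with the measure $2\lim_{y\to 0^+}|y|^\beta u_y(x',y)\, dm(x')$; in particular $\lambda$ restricted to $B_1'\cap B_{1,0}(u)$ is absolutely continuous with the stated density.

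Next I would upgrade this from ``locally on $B_{1,0}(u)$'' to ``on all of $B_1'$''. The only remaining part of $\R^n\times\{0\}$ is $F(u)$, since $\Omega_+'(u)$ carries no mass ($\lambda$ is supported on $\{u=0\}$) and $\R^n\times\{0\}$ is the disjoint union of $\Omega_0(u)$, $\Omega_+'(u)$ and $F(u)$ for non-negative $u$. But Theorem \ref{theoLambdaBoundAbove} already gives $\lambda(F(u))=0$, and of course $\mathcal{H}^{n-1}(F(u))<\infty$ forces $m(F(u))=0$; hence neither $\lambda$ nor $m$ sees $F(u)$, and absolute continuity on $B_1'$ follows. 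The pointwise two-sided bound $\dfrac{d\lambda}{dm}(x)\approx \chi_{B_{1,0}(u)}(x)\,\dist(x,F(u))^{-\alpha}$ is then exactly the content of the first display in Lemma \ref{lemFracLaplacianIn0Phase}, with the same dependence of the constants on $n$, $\alpha$ and $E_0$.

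The main obstacle is making the integration-by-parts identity rigorous given the degenerate/singular weight $|y|^\beta$ and the merely $W^{1,2}(|y|^\beta)$ regularity of $u$ globally: one must justify that the boundary term on $\{y=0\}$ is genuinely $2\lim_{y\to0^+}|y|^\beta u_y$ and that no distributional mass is lost in the limit $y\to0$. Here I would lean on the expansion $u(x',y)=|y|^{1-\beta}g(x')+\mathcal{O}(y^2)$ with $g\in C^{1+\beta}$ from \cite{Silvestre} (as used in Lemma \ref{lemFracLaplacianIn0Phase}), which shows $|y|^\beta u_y(x',y)\to (1-\beta)g(x')$ \emph{uniformly} on compact subsets of $B_{1,0}(u)$; uniform convergence of the conormal derivative on a neighborhood of $\supp\eta\cap\{y=0\}$ is precisely what is needed to pass to the limit in $\int_{\{|y|=\delta\}}|y|^\beta u_y\,\eta\to$ the boundary integral, while the interior integral $\int_{\{|y|>\delta\}}|y|^\beta\nabla u\cdot\nabla\eta$ converges by dominated convergence since $u\in W^{1,2}(B,|y|^\beta)$. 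Once the identity holds against one such even cutoff at every interior point of $B_{1,0}(u)$, the Riesz representation characterization of $\lambda$ from Lemma \ref{lemRadonMeasure} closes the argument.
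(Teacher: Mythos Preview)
Your approach is essentially the same as the paper's: identify $\lambda$ with the conormal trace $2\lim_{y\to 0^+}|y|^\beta u_y$ via Green's theorem on $\{|y|>\delta\}$, invoke Lemma \ref{lemFracLaplacianIn0Phase} for the density and its uniform convergence on compact subsets of $B_{1,0}(u)$, and dispose of $F(u)$ using Theorem \ref{theoLambdaBoundAbove}. The paper carries out the same computation with cutoffs $\psi_k$ approximating $\chi_{B_r}$ and a sandwich inequality, and it also treats balls contained in $B_{1,+}'(u)$ explicitly (showing $|y|^\beta u_y\to 0$ there via \cite{CaffarelliSilvestre}), whereas you argue more directly that $\lambda$ vanishes on $\Omega_+'(u)$ because it is supported on $\{u=0\}$; both are fine.

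One correction: your displayed identity has the wrong sign. With the outer normal $-e_{n+1}$ on $\{y=\delta\}$ bounding $\{y>\delta\}$, integration by parts gives
\[
-\int_{\{y>\delta\}} y^\beta \nabla u\cdot\nabla\eta \;=\; -\int_{\{y=\delta\}}\eta\,\delta^\beta(\nabla u\cdot\nu)\;=\;+\int_{\{y=\delta\}}\eta\,\delta^\beta u_y,
\]
so after adding the lower half and letting $\delta\to 0$ one obtains $\int\eta\,d\lambda = +2\int\eta(x',0)\lim_{y\to0^+}|y|^\beta u_y(x',y)\,dx'$, matching the paper's \eqref{eqControlByUy}. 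This is consistent with $\lambda\geq 0$ and $u_y(x',0^+)\geq 0$ on the zero phase.
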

\begin{proof}
By Theorem \ref{theoLambdaBoundAbove} we only need to show absolute continuity in  $B_{1,0}(u)\cup B_{1,+}'(u)$.
For $x=(x',0)\in B_{1,+}'(u)$ by \cite[Lemma 4.2]{CaffarelliSilvestre} we have that
$$\lim_{y\to 0}|y|^\beta u_y(x',y)=0,$$
and, for $x\in B_{1,0}(u)$ we have seen in Lemma \ref{lemFracLaplacianIn0Phase} that
$$\lim_{y\to 0}|y|^\beta u_y(x',y)\approx  \dist(x,F(u))^{-\alpha},$$
showing the second part of the statement.



Consider a ball $B_r(x_0)$ with $x_0\in \R^n\times\{0\}$ and a collection of even smooth functions $\chi_{B_r}\leq \psi_k\leq \chi_{B_{r+\frac1k}}$. Then
\begin{equation}\label{eqSandwich}
\lambda(B_r)\leq -\int |y|^\beta \nabla u\cdot \nabla \psi_k\leq \lambda(B_{r+\frac1k}),
\end{equation}
and for every $\varepsilon>0$ we use the Green's theorem to get
$$-\int |y|^\beta \nabla u\cdot \nabla \psi_k= -\int_{|y|\leq\varepsilon} |y|^\beta \nabla u\cdot \nabla \psi_k - \int_{|y|=\varepsilon } |y|^\beta \psi_k \nabla u \cdot \nu\, dm.$$
Using the symmetry properties and taking limits,
\begin{equation}\label{eqControlByUy}
-\int |y|^\beta \nabla u\cdot \nabla \psi_k =2 \lim_{\varepsilon\to 0} \int \varepsilon^\beta \psi_k(x',\varepsilon)  u_y (x',\varepsilon)\, dm(x').
\end{equation}

Next we want to apply the dominated convergence theorem. 
Let us begin by considering a ball $B_r(x_0)\subset B_1$ centered in the zero phase, with $\dist(B'_r(x_0),F(u))\geq 2r$. In this case, by Lemma \ref{lemFracLaplacianIn0Phase} we have
\begin{equation}\label{eqDominateZero}
\varepsilon^\beta   u_y(x',\varepsilon)\lesssim  r^{-\alpha},
\end{equation}
with constants depending perhaps on $u$ and $B_r$ as well.

If instead $B'_r(x_0)\subset\subset B_{1,+}'(u)$, by \cite[Theorem 3.28]{VitaThesis} $u$ is an even $C^\infty$ function on $B'_{r}(x_0)$, so $|y|^\beta u_y=\mathcal{O}(|y|^{1+\beta}).$ Thus
\begin{equation}\label{eqDominatePositive}
\varepsilon^\beta   u_y(x',\varepsilon)\lesssim  r^{2-2\alpha} .
\end{equation}

In both cases, the dominated convergence theorem applies and
$$\lim_{\varepsilon\to 0} \int_{B_{r+\frac1k}\cap \{y=\varepsilon\} } \varepsilon^\beta \psi_k  u_y \, dm= \int_{B_{r+\frac1k}'} \psi_k \lim_{\varepsilon\to 0} (\varepsilon^\beta  u_y)\, dm,$$
and by \rf{eqSandwich} and \rf{eqControlByUy} we obtain
$$\lambda(B_r)\leq 2 \int_{B_{r+\frac1k}'} \psi_k \lim_{\varepsilon\to 0} (\varepsilon^\beta  u_y) \, dm\leq \lambda(B_{r+\frac1k}).$$
In particular $\lim_{\varepsilon\to 0} (\varepsilon^\beta  u_y)\in L^1_{\loc}(B_{1,0}(u)\cup B_{1,+}'(u))$  and taking limits in $k$ we get
$$\lambda(B_r)= 2 \int_{B_{r}'} \lim_{\varepsilon\to 0} (\varepsilon^\beta  u_y) \, dm.$$
\end{proof}

A consequence of our control of the behavior of $\lambda$ is that we can establish the existence of exterior corkscrews. We should note that exterior corkscrews can be also obtained by a purely geometric argument given the non-degeneracy and positive density of Theorem \ref{theoCRS} (see, e.g. the proof of Proposition 10.3 in \cite{davidtoroalmostminimizers}). 
\begin{corollary}\label{coroExteriorCorkscrew}
If $u\in $ is a minimizer in $B_2$ with $\norm{u}_{\HH^\beta(B_2)}\leq E_0$, then $B_{1,+}'(u)$ satisfies the exterior corkscrew condition, i.e. there exists a constant $C_1$ such that for every $x\in F(u)$ and every $0<r<\dist(x,\partial B_1)$ one can find $x_0\in B_r(x)$ so that
$$B(x_0,C_1r)\cap B_{1,+}' (u)= \emptyset.$$
\end{corollary}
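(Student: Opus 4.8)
The plan is to obtain the exterior corkscrew condition as a by-product of the two-sided control of the measure $\lambda=\nabla\cdot(|y|^\beta\nabla u)$ established above: on one side, $\lambda(B_r(x))\le C\,r^{n-\alpha}$ for $x\in F(u)$ (Theorem~\ref{theoLambdaBoundAbove}), together with $\lambda(F(u))=0$; on the other side, by Theorem~\ref{theoMeasureAway}, inside $B_1'$ the measure $\lambda$ is an absolutely continuous, $n$-dimensional measure carried by the zero phase $B_{1,0}(u)$, with density $\frac{d\lambda}{dm}\gtrsim\dist(\cdot,F(u))^{-\alpha}$ there. I would argue by contradiction. Fix $x\in F(u)$ and $r<\dist(x,\partial B_1)$, so $B_r(x)\subset B_1$ and $B_{2r}(x)\subset B_2$; let $C_1\in(0,\tfrac12)$ be a small constant to be chosen, and assume that every ball $B(x_0,C_1 r)$ with $x_0\in B_{r/2}(x)$ meets $B_{1,+}'(u)$. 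Taking the hyperplane-centered choices $x_0=(x_0',0)$ with $x_0'\in B_{r/2}'(x)$ --- for which $B(x_0,C_1 r)\cap(\R^n\times\{0\})=B'(x_0',C_1 r)\subset B_1'$ --- and using that an open thin ball on which $u(\cdot,0)$ vanishes is contained in the \emph{open} set $B_{1,0}(u)$, this assumption says precisely that $B'(x_0',C_1 r)$ meets $\Omega_+'(u)$ for every $x_0'\in B_{r/2}'(x)$.

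The first step is an elementary continuity observation: \emph{if $y\in B_{1,0}(u)$ and $B'(y,s)\cap\Omega_+'(u)\neq\emptyset$, then $\dist(y,F(u))\le s$.} Indeed, along a segment joining $y$ to a point of $\Omega_+'(u)$, the supremum of the parameters at which $u(\cdot,0)$ still vanishes is attained at a point of $\overline{\Omega_+'(u)}\setminus\Omega_+'(u)$ lying in $B'(y,s)$; that point cannot lie in the open set $B_{1,0}(u)$ (a whole neighborhood of which would then be in the zero phase, contradicting its being a limit of points of $\Omega_+'(u)$), so it lies on $F(u)$. Applying this with $s=C_1 r$ to every $y\in B_{1,0}(u)\cap B_{r/2}'(x)$ yields the key inclusion
\begin{equation}\label{e:corkhole}
B_{1,0}(u)\cap B_{r/2}'(x)\ \subset\ \{\,\dist(\cdot,F(u))\le C_1 r\,\}.
\end{equation}

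Next I would feed \eqref{e:corkhole} into the mass estimates. Since inside $B_1'$ the measure $\lambda$ is carried by $B_{1,0}(u)$ and $\lambda(F(u))=0$, the density lower bound of Theorem~\ref{theoMeasureAway}, the inclusion \eqref{e:corkhole}, and the positive density of the zero phase (Theorem~\ref{theoCRS}, \emph{P\ref{itemPositive}}, applied at the free boundary point $x$) give
\begin{equation}\label{e:corkmass}
\lambda\bigl(B_{r/2}(x)\bigr)=\int_{B_{1,0}(u)\cap B_{r/2}'(x)}\frac{d\lambda}{dm}\,dm\ \gtrsim\ (C_1 r)^{-\alpha}\,\bigl|\,B_{1,0}(u)\cap B_{r/2}'(x)\,\bigr|\ \gtrsim\ C_1^{-\alpha}\,r^{n-\alpha}.
\end{equation}
On the other hand Theorem~\ref{theoLambdaBoundAbove}, centered at $x\in F(u)$ with radius $r/2$, gives $\lambda(B_{r/2}(x))\le C_2\,r^{n-\alpha}$. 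Comparing with \eqref{e:corkmass} forces $C_1^{-\alpha}\le C_3$ for some $C_3=C_3(n,\alpha,E_0)$; choosing $C_1<C_3^{-1/\alpha}$ at the outset produces a contradiction, so the exterior corkscrew condition holds with, say, $C_1=\tfrac12 C_3^{-1/\alpha}$, depending only on $n$, $\alpha$ and $E_0$.

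The bulk of the difficulty has already been absorbed into Theorems~\ref{theoLambdaBoundAbove} and~\ref{theoMeasureAway}: the argument succeeds only because of genuine \emph{two-sided}, Ahlfors-type control of $\lambda$, and it is precisely the lower bound $\frac{d\lambda}{dm}\gtrsim\dist(\cdot,F(u))^{-\alpha}$ that converts the upper bound $\lambda(B_{r/2}(x))\lesssim r^{n-\alpha}$ into a quantitative ``hole'' in $\Omega_+'(u)$. The only genuinely new pieces, both routine, are the continuity observation behind \eqref{e:corkhole} and the bookkeeping of scales; one minor subtlety is that \emph{P\ref{itemPositive}} of Theorem~\ref{theoCRS} is invoked at a free boundary point $x$ that need not lie in $B_{1/2}$, which is handled by rescaling (the scale-invariant energy bound is preserved up to a fixed multiple of $E_0$ by the Caccioppoli inequality of Lemma~\ref{cacciop} and optimal regularity). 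As the remark before the statement notes, one can instead avoid $\lambda$ altogether and argue by compactness: a blow-up of a hypothetical sequence of counterexamples would be a minimizer whose positive phase is dense in $B_1'$, contradicting the positive density of the zero phase of the limit minimizer.
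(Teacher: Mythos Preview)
Your proof is correct and follows essentially the same approach as the paper. Both arguments combine the upper bound $\lambda(B_r(x))\lesssim r^{n-\alpha}$ from Theorem~\ref{theoLambdaBoundAbove}, the density lower bound $\frac{d\lambda}{dm}\gtrsim\dist(\cdot,F(u))^{-\alpha}$ on $B_{1,0}(u)$ from Theorem~\ref{theoMeasureAway}, and the positive density of the zero phase from Theorem~\ref{theoCRS}; the only cosmetic difference is that the paper argues directly, writing
\[
r^{n-\alpha}\gtrsim\lambda(B_r)\gtrsim\Bigl(\sup_{B_{1,0}(u)\cap B_r}\dist(\cdot,F(u))\Bigr)^{-\alpha}\,|B_{1,0}(u)\cap B_r|\gtrsim\Bigl(\sup_{B_{1,0}(u)\cap B_r}\dist(\cdot,F(u))\Bigr)^{-\alpha} r^n,
\]
to conclude $\sup_{B_{1,0}(u)\cap B_r}\dist(\cdot,F(u))\gtrsim r$, while you phrase the identical computation as a contradiction.
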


\begin{proof}
This is a consequence of Theorems \ref{theoLambdaBoundAbove} and \ref{theoMeasureAway}, and the positive density condition for the zero phase. Indeed, given a ball $B_r\subset \R^{n+1}$, combining both theorems we get
$$\begin{aligned}r^{n-\alpha}
	& \gtrsim \lambda(B_{1,0}(u)\cap B_r) \geq C_{E_0} \int_{B_{1,0}(u)\cap B_r}\dist(x,\partial B_1)^{-\alpha}\\
	&\geq C \left(\sup_{B_{1,0}(u)\cap B_r} \dist(x,\partial B_1)\right)^{-\alpha} |B_{1,0}(u)\cap B_r |,
\end{aligned}$$
and the positive density condition implies that
$$|B_{1,0}(u)\cap B_r | \geq C_{E_0} r^n  .$$
Thus, $$ \sup_{B_{1,0}(u)\cap B_r}  \dist(x,\partial B_1)\geq C_{E_0}r,$$
which is equivalent to the exterior corkscrew condition.
\end{proof}

\subsection{Uniform H\"older character}\label{s:UniformFinal}

The uniform non-degeneracy of Section \ref{s:UniformNondegeneracy} lets us conclude uniform control on the H\"older norm of $u$. 

\begin{theorem}\label{theoDimension}
Let $u$ be a minimizer of $\mathcal{J}$ in $B_r$ with $0\in F(u)$. Then $|u(x)|\leq C|x|^\alpha$ for every $x\in \partial B_{r/2}$ with $C$ depending only on $n$ and $\alpha$.
\end{theorem}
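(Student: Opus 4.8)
The plan is to reverse the Alt--Caffarelli scheme: deduce the upper $\alpha$-growth of $u$ from the uniform non-degeneracy of Lemma \ref{lemNonDegeneracy} via a Riesz decomposition of $u$ together with the sharp growth bound for its distributional fractional Laplacian. By the rescaling $\tilde u(x):=r^{-\alpha}u(rx)$, which preserves minimality and keeps $0\in F(\tilde u)$, we may assume $r=1$; and since $u$ is $\mathcal L$-subharmonic it suffices to bound $\sup_{B_{1/2}}u$. Set $\mathcal Lu:=-\nabla\cdot(|y|^\beta\nabla u)$, so that by Lemma \ref{lemRadonMeasure} $\mathcal Lu=-\lambda$ with $\lambda\ge0$ a Radon measure carried by $\{u=0\}$. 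Let $v$ be the $\mathcal L$-harmonic replacement of $u$ in $B_{3/4}$, cf.\ \eqref{eqHarmonicReplacement} (so $\mathcal Lv=0$, $v=u$ on $\partial B_{3/4}$, and $v\ge0$ by the maximum principle), and let $G=G_{B_{3/4}}$ be the Green function of $\mathcal L$ on $B_{3/4}$. The starting point is the Riesz-type identity
\begin{equation*}
u(x)=v(x)-\int_{B_{3/4}}G(x,z)\,d\lambda(z),\qquad x\in B_{3/4},
\end{equation*}
which in particular yields $u\le v$ on $B_{3/4}$. That the Green potential on the right is a genuine (continuous) function and that this identity holds is exactly the content of Lemma \ref{lemContinuousGreen}; this is the step in which, qualitatively, Theorem \ref{theoMeasureAway} on the structure of $\lambda$ away from $F(u)$ is used.

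The estimate itself is then short. Since $0\in F(u)$ and $u$ is continuous, $u(0)=0$, so the identity at the origin reads $v(0)=\int_{B_{3/4}}G(0,z)\,d\lambda(z)$. By the maximum principle, $0\le G(0,z)\le C(n,\alpha)\,\Gamma(z)$ for $z\in B_{3/4}$, where $\Gamma$ is the fundamental solution of $\mathcal L$ on $\R^{n+1}$ and $\Gamma(z)\approx|z|^{2\alpha-n}$ (with the standard modifications in the borderline case $n=2\alpha$, where $\Gamma$ is a logarithm, and in the low-dimensional case $n=1$, $\beta<0$, where $\Gamma$ and hence $G(0,\cdot)$ is bounded; the estimate below goes through verbatim). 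Plugging in the growth bound $\lambda(B_s)\le C(n,\alpha)\,s^{n-\alpha}$ of Theorem \ref{theoLambdaBoundAbove} --- which is already dimensional, its proof relying only on Lemma \ref{lemNonDegeneracy} --- a layer-cake argument (equivalently, summing over the dyadic annuli $B_{2^{-j}}\setminus B_{2^{-j-1}}$, on which $G(0,\cdot)\lesssim 2^{j(n-2\alpha)}$ and $\lambda(B_{2^{-j}})\lesssim 2^{-j(n-\alpha)}$) gives
\begin{equation*}
v(0)=\int_{B_{3/4}}G(0,z)\,d\lambda(z)\ \le\ C(n,\alpha)\int_0^1 s^{\alpha-1}\,ds\ =\ C(n,\alpha).
\end{equation*}

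Finally, $v$ is a non-negative $\mathcal L$-harmonic function on $B_{3/4}$, so Harnack's inequality for $A_2$-degenerate operators (with constant depending only on $n$ and $\alpha$; see \cite{FabesKenigSerapioni}, or \cite{CaffarelliRoquejoffreSire}) gives $\sup_{B_{1/2}}v\le C(n,\alpha)\,v(0)\le C(n,\alpha)$, whence $\sup_{B_{1/2}}u\le\sup_{B_{1/2}}v\le C(n,\alpha)$ by $u\le v$; undoing the scaling yields the claim, with constant depending only on $n$ and $\alpha$. The dyadic/Harnack part above is routine; the genuine obstacle is the first step, namely setting up the weighted Riesz decomposition rigorously --- existence of the Green function of the degenerate/singular operator $\mathcal L$ on a ball, its positivity and pointwise comparison with $\Gamma$, and continuity of the Green potential of $\lambda$ --- which is packaged into Lemma \ref{lemContinuousGreen}, and it is precisely for this that the analysis of $\lambda$ in Section \ref{ss:growth} (Theorems \ref{theoLambdaBoundAbove} and \ref{theoMeasureAway}) had to be carried out beforehand.
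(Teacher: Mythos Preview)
Your proof is correct and follows essentially the same approach as the paper: harmonic replacement $v$ of $u$, the Riesz-type identity $v(0)=\int G(0,z)\,d\lambda(z)$ (justified pointwise via Lemma \ref{lemContinuousGreen}), the Green-function bound $G(0,z)\lesssim|z|^{2\alpha-n}$ combined with the growth $\lambda(B_s)\lesssim s^{n-\alpha}$ from Theorem \ref{theoLambdaBoundAbove} to get $v(0)\le C$, and then Harnack plus $u\le v$. The only cosmetic differences are that the paper replaces in $B_r$ rather than $B_{3/4}$, uses the two-sided Fabes--Jerison--Kenig Green estimate in place of your comparison with the fundamental solution, and separates the annulus from the inner ball explicitly; none of this is substantive.
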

\begin{proof}
Again we set $v$ to be the $\mathcal{L}$-harmonic replacement of $u$ inside of $B_r$ as in \rf{eqHarmonicReplacement}. Let $\widetilde{u}:=v- u$, so that
$$\mathcal{L}\widetilde{u} = \mathcal{L}v - \mathcal{L}u = -\lambda = - \nabla\cdot (|y|^\beta \nabla u),$$
and $\widetilde{u}\in H^{1,2}_0(B_r; |y|^\beta)$. 

Consider the Green function $G: B_r\times B_r\to \R$ such that $\mathcal{L} G(\cdot , z) = \delta_z$, and $G(\cdot , z)\in H^{1,2}_{\loc}(\overline{B_r}\setminus\{z\})$ with null trace on $\partial B_r$
(see \cite[Proposition 2.4]{FabesJerisonKenig}).
By \cite[Proposition 2.1, Lemma 2.7]{FabesJerisonKenig} there exists $p_0>1$ so that $\widetilde{u}$ is the unique function in $ H^{1,p_0}_0(B_r;|y|^\beta)$ such that $\mathcal{L}\widetilde{u}=\lambda$, and moreover
\begin{equation}\label{eqGrennRules}
\widetilde{u}(z)=\int_{B_r} G(z,x)\, d\lambda(x),
\end{equation}
for almost every $z\in B_r$.

Below, in Lemma \ref{lemContinuousGreen}, we will see that the equality \rf{eqGrennRules} is in fact valid for every $z\in B_{r/4}$, that is, 
$\widetilde{u}=\int_{B_r} G(\cdot,x)\, d\lambda(x)$. In particular
$$v(0)=\widetilde{u}(0)=\int_{B_r} G(0,x)\, d\lambda(x).$$

Next we use the following estimate (see \cite[Theorem 3.3]{FabesJerisonKenig}): let $z,x\in B_{r/4}$. Then
$$G(z,x)\approx \int_{|x-z|}^r \frac{s \, ds}{w(B(x,s))},$$
where $w$ is the $A_2$ weight $w(x)=|y|^\beta$. Computing, for $x=(x',y)$ we obtain
$$w(B(x,s))\approx s^n \int_{y-s}^{y+s} |t|^\beta\, dt \approx s^n \max\{|y|,s\}^{\beta+1}.$$
First we assume that $n-2\alpha>0$.
Thus, if $x\in B_{r/4}'$ then
\begin{equation}\label{eqGreenEstimate}
G(z,x) \approx \int_{|x-z|}^r s^{-n-\beta} \, ds \approx |x-z|^{-n-\beta+1} = |x-z|^{2\alpha-n}.
\end{equation}

 Note that $\lambda(B_r) \leq Cr^{n-\alpha}$ by Theorem \ref{theoLambdaBoundAbove}. Thus, writing $A_{t,s}:=B_s\setminus B_t$, we have that
 $$ v(0)= \int_{B_r} G(0,x)\, d\lambda(x) \leq \int_{cr^{2\alpha-n}}^\infty \lambda\left(\{x\in B_{r/4}: G(0,x) > t\}\right) dt + \int_{A_{r/4,r}} G(0,x)\, d\lambda(x).$$
By the strong maximum principle, the Green function in the annulus is bounded by $C r^{n-2\alpha}$. This fact, together with Theorem \ref{theoLambdaBoundAbove}, implies that
 $$ v(0) \leq \int_{cr^{2\alpha-n}}^\infty \lambda\left(B_{Ct^{\frac{-1}{(n-2\alpha)}}}\right) dt +Cr^\alpha \leq C \int_{cr^{2\alpha-n}}^\infty t^{-\frac{n-\alpha}{n-2\alpha}} dt +Cr^\alpha = Cr^\alpha.$$
 By the mean value theorem we conclude that
$$\fint_{\partial B_r} v \, d\sigma \leq C r^{\alpha},$$ where $d\sigma=|y|^\beta d\mathcal{H}^n$. 
The theorem follows by observing that, as in \rf{eqComparisonAndHarnack}, the mean of $v$ dominates $u$ by $\sup_{\partial B_{r/2}} u \leq \sup_{\partial B_{r/2}} v \leq C \fint_{\partial B_r} v\, d\sigma$.

In case $n-2\alpha=0$, which could only happen for $n=1$ and $\alpha=1/2$, estimate \rf{eqGreenEstimate} reads as
\begin{equation*}
G(z,x) \approx  \log\left(\frac{r}{|x-z|}\right),
\end{equation*}
and the proof follows the same steps.

In case $n-2\alpha<0$, then  estimate \rf{eqGreenEstimate} reads as
\begin{equation*}
G(z,x) \approx  r^{n-2\alpha},
\end{equation*}
and the estimate is even better compared to the above.

\end{proof}
%
%

\begin{lemma}\label{lemContinuousGreen}
$\int_{B_r} G(z,x)\, d\lambda(x)$ is continuous in $z\in B_{r/4}$.
\end{lemma}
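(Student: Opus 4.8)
The plan is to realize $\Phi(z):=\int_{B_r}G(z,x)\,d\lambda(x)$ on $B_{r/4}$ as a \emph{uniform} limit of continuous functions, so that continuity follows. Recall from Lemma~\ref{lemRadonMeasure} that $\lambda\ge 0$ is supported on $\{u=0\}\subset\R^n\times\{0\}$, so in $\Phi$ the point $x$ runs over the thin space, where we may invoke the two facts proved above: the growth bound $\lambda(B_s(x_0))\lesssim s^{n-\alpha}$ for $x_0\in F(u)$ and small $s$ (Theorem~\ref{theoLambdaBoundAbove}), and the absolute continuity $d\lambda=h\,dm$ with $h\lesssim\chi_{\Omega_0(u)}\,\dist(\cdot,F(u))^{-\alpha}$ (Theorem~\ref{theoMeasureAway}); in particular $\lambda$ is a finite measure on $B_r$. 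We also use the Green function bounds already recalled in the proof of Theorem~\ref{theoDimension}: for $z,x\in B_{r/4}$ with $x$ on the thin space, $G(z,x)\lesssim|z-x|^{2\alpha-n}$ if $n>2\alpha$, $\lesssim 1+\log(r/|z-x|)$ if $n=2\alpha$, and $\lesssim r^{n-2\alpha}$ if $n<2\alpha$ (see \eqref{eqGreenEstimate} and \cite{FabesJerisonKenig}); and the fact that $G(\cdot,x)$, being $\mathcal L$-harmonic away from $x$, is continuous there by the De~Giorgi--Nash--Moser theory for $A_2$-weights \cite{FabesKenigSerapioni}, hence (by symmetry $G(z,x)=G(x,z)$) so is $z\mapsto G(z,x)$ away from $z$.

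The crux is the uniform near-diagonal estimate
\begin{equation}\label{eqUnifTailGreen}
\eta(\tau):=\sup_{z\in B_{r/4}}\int_{B_\tau(z)\cap B_r'}G(z,x)\,d\lambda(x)\ \lesssim\ \tau^\alpha\xrightarrow{\tau\to 0}0 .
\end{equation}
To prove \eqref{eqUnifTailGreen} (in the representative case $n>2\alpha$), fix $z\in B_{r/4}$ and $\tau$ small, set $d:=\dist(z,F(u))$, and break $B_\tau(z)$ into the dyadic shells $B_{2^{-j}\tau}(z)\setminus B_{2^{-j-1}\tau}(z)$, $j\ge0$, on which $|z-x|\approx 2^{-j}\tau$ and which exhaust $B_\tau(z)$ up to the $\lambda$-null point $\{z\}$, so that
$$\int_{B_\tau(z)\cap B_r'}G(z,x)\,d\lambda(x)\ \lesssim\ \sum_{j\ge0}(2^{-j}\tau)^{2\alpha-n}\,\lambda\big(B_{2^{-j}\tau}(z)\big).$$
For $s\le d/2$ the ball $B_s(z)$ stays at distance $\ge d/2$ from $F(u)$, whence $\lambda(B_s(z))\lesssim d^{-\alpha}s^n$ by the density bound, while for $s>d/2$ one has $B_s(z)\subset B_{3s}(x_0)$ for some $x_0\in F(u)$ nearest to $z$, whence $\lambda(B_s(z))\lesssim s^{n-\alpha}$ by the growth bound. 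Splitting the $j$-sum at the index where $2^{-j}\tau\approx d/2$ and summing the two geometric series so obtained (ratios $2^{-\alpha}$ and $2^{-2\alpha}$) gives in every case a bound $\lesssim\tau^\alpha+d^\alpha\lesssim\tau^\alpha$, using that $d\lesssim\tau$ whenever the small-$s$ regime occurs. The cases $n=2\alpha$ (the extra factor $\lesssim j+|\log\tau|$ is absorbed by the geometric decay) and $n<2\alpha$ ($G$ is bounded and one only needs $\lambda(B_\tau(z))\lesssim\tau^{n-\alpha}$, by the same dichotomy) are analogous and easier. I expect \eqref{eqUnifTailGreen} --- in particular the bookkeeping at points $z\in F(u)$, where the singularity of the kernel and that of the density compound --- to be the only genuine difficulty; the rest is soft.

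Given \eqref{eqUnifTailGreen}, put $G_M:=\min\{G,M\}$ and $\Phi_M(z):=\int_{B_r}G_M(z,x)\,d\lambda(x)$. Each $\Phi_M$ is continuous on $B_{r/4}$: if $z_k\to z_0$ then $G_M(z_k,x)\to G_M(z_0,x)$ for every $x\ne z_0$ by continuity of $z\mapsto G(z,x)$ away from $x$, while $\lambda(\{z_0\})=0$ since $\lambda\ll m$ and $|G_M|\le M\in L^1(\lambda)$; so dominated convergence yields $\Phi_M(z_k)\to\Phi_M(z_0)$. Moreover $\Phi_M\uparrow\Phi$ pointwise and uniformly on $B_{r/4}$: from the Green function upper bounds, $\{x\in B_r':G(z,x)>M\}\subset B_{\tau(M)}(z)$ for some $\tau(M)\to0$ as $M\to\infty$, independent of $z$, and therefore
$$0\le\Phi(z)-\Phi_M(z)=\int_{\{G(z,\cdot)>M\}}\big(G(z,x)-M\big)\,d\lambda(x)\le\int_{B_{\tau(M)}(z)\cap B_r'}G(z,x)\,d\lambda(x)\le\eta\big(\tau(M)\big)\xrightarrow{M\to\infty}0 .$$
In particular $\Phi<\infty$ on $B_{r/4}$ and, being there a uniform limit of the continuous $\Phi_M$, it is continuous on $B_{r/4}$. (As a byproduct, since $\Phi$ agrees $m$-a.e.\ with the continuous function $\widetilde u=v-u$ of Theorem~\ref{theoDimension}, the two coincide, so \eqref{eqGrennRules} in fact holds for every $z\in B_{r/4}$, as used there.)
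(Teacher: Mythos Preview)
Your proof is correct and follows essentially the same approach as the paper: the heart of both arguments is the uniform near-diagonal estimate $\int_{B_\tau(z)}G(z,x)\,d\lambda(x)\lesssim\tau^\alpha$, obtained by dyadic decomposition together with the growth bound $\lambda(B_s)\lesssim s^{n-\alpha}$ from Theorems~\ref{theoLambdaBoundAbove} and~\ref{theoMeasureAway}. The only difference is cosmetic packaging---the paper bounds $|\Phi(z_1)-\Phi(z_2)|$ directly by splitting into near-diagonal and far regions (using uniform continuity of $G$ on the latter), whereas you truncate to $G_M$ and pass to a uniform limit; both routes rest on the same estimate and are equally valid.
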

\begin{proof}
Let $\varepsilon<r/2$ and let $z_1, z_2\in B_{r/4}$,  with $|z_1-z_2|\leq \varepsilon/2$. Then 
\begin{align}\label{eqBreakGMinusG}
\int_{B_r}|G(z_1,x)-G(z_2,x)|\, d\lambda(x)
\nonumber	& \leq \int_{B_r\setminus B_\varepsilon(z_1)}|G(z_1,x)-G(z_2,x)|\, d\lambda(x) \\
	& \quad + \int_{B_\varepsilon(z_1)}G(z_1,x)d\lambda(x) + \int_{B_\varepsilon(z_1)}G(z_2,x)d\lambda(x).
\end{align}

Next we use \rf{eqGreenEstimate} and Theorems \ref{theoLambdaBoundAbove} and \ref{theoMeasureAway}. By decomposing the domain on dyadic annuli, in case $n-2\alpha>0$ we get
\begin{align}\label{eqBoundGreenLambda}
\int_{B_\varepsilon(z_1)}G(z_1,x)d\lambda(x)
\nonumber	& \leq \sum_{j\leq0}\int_{A_{2^{j-1}\varepsilon,2^{j}\varepsilon}(z_1)}G(z_1,x)d\lambda(x)
		\lesssim \sum_{j\leq0} \lambda(B_{2^j\varepsilon}(z_1))(2^{j-1}\varepsilon)^{2\alpha-n}  \lesssim\varepsilon^\alpha \sum_{j\leq0} 2^{j\alpha}.
\end{align}
In case $n-2\alpha=0$ we obtain $\varepsilon^\alpha\sum_{j\leq0} 2^{j\alpha}\log\left(\frac{r}{2^j\varepsilon}\right)$ on the right-hand side instead, and in case $n-2\alpha<0$ we obtain $\varepsilon^{n-\alpha}r^{2\alpha-n}\sum_{j\leq0} 2^{j(n-\alpha)}$. In every case, fixing $\varepsilon$ small enough this term can be as small as wanted. The same will happen with the last term on the right-hand side of \rf{eqBreakGMinusG}.

On the other hand, by \cite[Theorem 2.3.12]{FabesKenigSerapioni} Green's function is uniformly continuous on the set $\{(z,x)\in B_r\times B_r: |z-x|>\varepsilon \}$ so $|G(z_1,x_1)-G(z_2,x_2)|\leq \delta_\varepsilon(|z_1-z_2|+|x_1-x_2|)$ with $\delta_\varepsilon(t)\xrightarrow{t\to 0} 0$. Thus, 
$$\int_{B_r\setminus B_\varepsilon(z_1)}|G(z_1,x)-G(z_2,x)|\, d\lambda(x) \leq \delta_\varepsilon(|z_1-z_2|)\lambda(B_r)\to 0 .$$
Assuming that $|z_1-z_2|$ is small enough, we obtain that $\int_{B_r}|G(z_1,x)-G(z_2,x)|\, d\lambda(x)$ is as small as wanted and the claim follows.
\end{proof}

\begin{remark}\label{remUniformRules}
 In light of Theorem \ref{theoDimension}, arguing as in \cite[Theorem 1.1]{CaffarelliRoquejoffreSire} we obtain that every minimizer $u$ in a ball $B_r$ with $0\in F(u)$ has uniform $C^\alpha$ character in $B_{r/2}$. By the Caccioppoli inequality (see Section \ref{secCaccioppoli}) we also obtain the same for the $\HH^\beta$ norm. Moreover, using \cite[Theorem 1.2]{CaffarelliRoquejoffreSire} we can find interior corkscrew points with constants not depending on these norms. This allows us to remove the {\it a priori} dependence on $\|u\|_{\HH^\beta}$ from all of our results above. 
\end{remark}

\subsection{Lower estimates for the distributional fractional Laplacian}
Next we bound the growth of the measure around a free boundary point from below. None of these results will be used in the present paper, but we include them to give a complete picture of the tools under consideration.
\begin{theorem}\label{theoMeasureBall}
Let $u \in \HH^\beta(B_{2r})$ be a minimizer of $\mathcal{J}$ in $B_{2r}$ such that $0\in F(u)$. Then we have
$$\lambda(B_r) \geq C r^{n-\alpha}.$$
\end{theorem}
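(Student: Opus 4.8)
The plan is to mirror the Alt--Caffarelli argument for lower Ahlfors regularity, but with the $\mathcal L$-harmonic replacement and the Green function representation already set up in the proof of Theorem \ref{theoDimension} playing the role of the Laplacian. After a translation I would assume $x_0 = 0$, so that $u(0) = 0$ because $0 \in F(u)$, and let $v$ be the $\mathcal L$-harmonic replacement of $u$ in $B_r$ as in \rf{eqHarmonicReplacement}, with $\widetilde u := v - u$. Everything rests on two inputs. First, by the mean value principle (Lemma \ref{lemMVP}, differentiated in the radius) together with the uniform non-degeneracy of Corollary \ref{coroMeanBoundedBelow},
$$ v(0) = \fint_{\partial B_r} u \, d\sigma \ge C r^\alpha . $$
Second, by \rf{eqGrennRules} upgraded to the single point $z=0$ via the continuity of the Green potential (Lemma \ref{lemContinuousGreen}), and using $u(0) = 0$,
$$ v(0) = \widetilde u(0) = \int_{B_r} G(0,x) \, d\lambda(x), $$
where $G$ is the Green function of $\mathcal L$ on $B_r$ with null boundary values.

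Combining these, I would fix $0 < \rho \le r/4$ and split $C r^\alpha \le v(0) = I_\rho + J_\rho$, with $I_\rho = \int_{B_\rho} G(0,x)\, d\lambda(x)$ and $J_\rho = \int_{B_r\setminus B_\rho} G(0,x)\, d\lambda(x)$. Since $\lambda$ is supported on $\{u=0\}\subset \R^n\times\{0\}$ (Lemma \ref{lemRadonMeasure} together with the strong maximum principle, which forces $u>0$ off the thin space), the estimate \rf{eqGreenEstimate} is available on $\supp\lambda$: in the principal case $n - 2\alpha > 0$ it gives $G(0,x) \approx |x|^{2\alpha-n}$ for $x\in B_{r/4}'$, while on $(B_r\setminus B_{r/4})\cap\supp\lambda$ the maximum principle gives $G(0,\cdot)\lesssim r^{2\alpha-n}\le\rho^{2\alpha-n}$. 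Decomposing $B_\rho$ into dyadic annuli and using $\lambda(B_{2^j\rho}) \le C(2^j\rho)^{n-\alpha}$ from Theorem \ref{theoLambdaBoundAbove} (applicable since $B_{2^{j+1}\rho}\subset B_{2r}$ for $j\le0$), one obtains
$$ I_\rho \lesssim \sum_{j\le 0} (2^j\rho)^{2\alpha-n}\lambda(B_{2^j\rho}) \lesssim \sum_{j\le0}(2^j\rho)^\alpha \lesssim \rho^\alpha, $$
while $G(0,x)\lesssim\rho^{2\alpha-n}$ throughout $(B_r\setminus B_\rho)\cap\supp\lambda$ yields $J_\rho \lesssim \rho^{2\alpha-n}\lambda(B_r)$. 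Taking $\rho = \varepsilon r$ with $\varepsilon\in(0,1/4]$ small enough that $I_{\varepsilon r} \le \tfrac{C}{2} r^\alpha$, the inequality collapses to $\tfrac{C}{2} r^\alpha \lesssim (\varepsilon r)^{2\alpha-n}\lambda(B_r)$, that is, $\lambda(B_r)\gtrsim r^{n-\alpha}$.

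The two borderline ranges of $\alpha$ require only cosmetic changes to \rf{eqGreenEstimate}, exactly as recorded in the proofs of Theorem \ref{theoDimension} and Lemma \ref{lemContinuousGreen}. When $n = 2\alpha$ (so $n=1$, $\alpha=\tfrac12$), $|x|^{2\alpha-n}$ is replaced by $\log(r/|x|)$; the same dyadic sum gives $I_\rho\lesssim\rho^\alpha\log(r/\rho)$ and $J_\rho\lesssim\log(r/\rho)\lambda(B_r)$, and choosing $\rho=\varepsilon r$ with $\varepsilon$ small (so that $\varepsilon^\alpha\log(1/\varepsilon)$ is small) again produces $\lambda(B_r)\gtrsim r^{n-\alpha}$. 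When $n - 2\alpha < 0$, $G(0,\cdot)$ is bounded by $\lesssim r^{2\alpha-n}$ on $B_r$, so no splitting is needed and $C r^\alpha \le v(0)\lesssim r^{2\alpha-n}\lambda(B_r)$ gives the claim at once.

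I expect the only genuinely delicate point to be the second input: that the almost-everywhere Green representation \rf{eqGrennRules} holds at the distinguished point $z=0$. This is precisely what Lemma \ref{lemContinuousGreen} provides, and one should check that its hypotheses (which rest ultimately on the growth bound of Theorem \ref{theoLambdaBoundAbove}) are in force here. The rest is bookkeeping of the Green function size against the already-established $\lambda(B_s)\lesssim s^{n-\alpha}$ and $v(0)\gtrsim r^\alpha$.
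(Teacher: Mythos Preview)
Your argument is correct and shares the Green-function-representation skeleton with the paper, but the implementation differs in a meaningful way. The paper evaluates $\widetilde u = v - u$ at a corkscrew point $z_0\in B_{\kappa r}$ lying in the positive phase: since $\dist(z_0,\supp\lambda)\gtrsim \kappa r$, the Green integral is bounded trivially by $\lambda(B_r)\sup G(z_0,\cdot)\lesssim\lambda(B_r)(\kappa r)^{2\alpha-n}$, with no need for Theorem~\ref{theoLambdaBoundAbove}; the price is that the lower bound $v(z_0)\gtrsim r^\alpha$ is obtained through a harmonic-measure argument (finding a spherical cap where $u\gtrsim r^\alpha$ and invoking \cite{HeinonenKilpelainenMartio} plus Harnack chains), and the subtraction $v(z_0)-u(z_0)$ forces the smallness parameter $\kappa$. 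You instead work at the free boundary point $0$, which has two payoffs: $u(0)=0$ kills the subtraction, and $v(0)\geq Cr^\alpha$ is immediate from Corollary~\ref{coroMeanBoundedBelow}. The cost is that $0$ lies in the closure of $\supp\lambda$, so the Green kernel is singular there; you absorb this via the dyadic estimate on $I_\rho$, which is exactly where the already-proven upper bound $\lambda(B_s)\lesssim s^{n-\alpha}$ enters. Both routes end with the same absorption of a small $\rho^\alpha$ (resp.\ $(\kappa r)^\alpha$) term. Your version is a bit more streamlined given the tools already on the table, while the paper's version is logically independent of Theorem~\ref{theoLambdaBoundAbove}.
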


\begin{proof}
Let $\mathcal{L}u := -\nabla\cdot (|y|^\beta \nabla u)$ and let $v$ be the $\mathcal{L}$-harmonic replacement of $u$ in $B_r$ (see \rf{eqHarmonicReplacement}). Let $\widetilde{u}:=v- u$ and
consider the Green function $G: B_r\times B_r\to \R$  as in the proof of Theorem \ref{theoDimension}.

Let $0<\kappa<1$ to be fixed later. By \emph{P1}-\emph{P3} in Theorem \ref{theoCRS} there exists a point $z_0\in B_{\kappa r}$ with 
\begin{equation}\label{eqU}
u(z_0)\approx (\kappa r)^\alpha,
\end{equation}
with constants depending only on $n$ and $\alpha$ by Remark \ref{remUniformRules}.
By \emph{P1} there is a constant $c$ such that for every $z\in B(z_0,c\kappa r)$  we have that $u(z)\approx (\kappa r)^\alpha$. Since $\lambda$ is supported on the zero phase of $u$, the ball $B(z_0,c\kappa r)$ is away from its support, and
$$\widetilde{u}(z)=\int_{B_r\setminus B(z_0,c\kappa r) } G(z,x)\, d\lambda(x).$$

Using  the strong maximum principle (see \cite[Theorem 6.5]{HeinonenKilpelainenMartio}) and \rf{eqGreenEstimate}, for almost every $z\in B(z_0,c\kappa r/2)$ we get
\begin{align*}
\widetilde{u}(z)
	& \leq \lambda(B_r) \sup_{x\in B_r\setminus B(z_0,c\kappa r) } G(z,x) = \lambda(B_r) \sup_{x\in B_{r/4} \setminus B(z_0,c\kappa r) } G(z,x) \\
	& \approx   \lambda(B_r)\sup_{x\notin B(z_0,c\kappa r) }  |x-z|^{2\alpha-n} =\lambda(B_r) (c \kappa r)^{2\alpha-n}.
\end{align*}
That is,
\begin{equation}\label{eqTildeU}
\widetilde{u}(z)
	 \lesssim \lambda(B_r)  (c\kappa r)^{2\alpha-n}.
\end{equation}

On the other hand, note that $u$ is continuous. By the Riesz representation theorem, there exists a probability measure $\omega^z_\mathcal{L}$ such that
$$v(z)= \int_{\partial B_r} u(x) d\omega^z_\mathcal{L}(x).$$

We can choose $r$ so that $\partial B_r$ intersects a big part of a corkscrew ball, i.e., assume that there exists a point $\xi_0\in \partial B_r'$ which is the center of a ball $B'(\xi_0, cr)$ where $u$ has positive values. This can be done by the interior corkscrew condition, with all the constants involved depending only  on $n$ and $\alpha$. Then, changing the constant if necessary, all points  $\xi\in B(\xi_0, cr)$ satisfy that $u(\xi)\geq C r^\alpha$ by the non-degeneracy condition and the optimal regularity. Call $U:=\partial B_r\cap B(\xi_0, cr)$. Then
$$v(z)\gtrsim r^\alpha \omega^z_\mathcal{L}(U).$$
But $\omega^z_\mathcal{L}(U)$ is bounded below by a constant by \cite[Lemma 11.21]{HeinonenKilpelainenMartio} and the Harnack inequality (use a convenient Harnack chain). All in all, we have that
\begin{equation}\label{eqV}
 v(z)\gtrsim r^\alpha.
\end{equation}

%

 Combining \rf{eqTildeU}, \rf{eqU} and \rf{eqV} and choosing $\kappa$ small enough, depending in $n$ and $\alpha$, we get
$$ \lambda(B_r) \gtrsim \frac{\widetilde{u}(z_0)}{(c \kappa r)^{2\alpha-n}}\geq  \frac{ Cr^\alpha - C' (\kappa r)^\alpha }{(c \kappa r)^{2\alpha-n}} \geq C_{n,\alpha}  r^{n-\alpha} ,$$
for $\kappa$ small enough.

In case $n-2\alpha=0$, that is for $n=1$ and $\alpha=1/2$, using similar changes as in the proof of Theorem \ref{theoDimension}  we get  $\widetilde{u}(z)\lesssim \lambda(B_r)\sup_{x\notin B(z_0,c\kappa r)}  \log\left(\frac{r}{|x-z|}\right) \approx \lambda(B_r)|\log \kappa|$ instead of \rf{eqTildeU}. In case $n-2\alpha<0$, the proof is even easier than before.
\end{proof}

\begin{remark}\label{remDimension}
Theorem \ref{theoMeasureBall} implies that the $(n-\alpha)$-Hausdorff measure of the free boundary is locally finite. This does not suffice to show finite perimeter of the positive phase and, therefore, we had to use the approach in Section \ref{s:finperimeter}.
\end{remark}

The following theorem summarizes the information that we have gathered so far about the measure $\lambda$.

\begin{theorem}\label{theoMeasureComplete}
If $u \in \HH^\beta_{\loc}(\Omega)$ is a  minimizer of $\mathcal{J}$ in $\Omega$, then the measure $\lambda$ is absolutely continuous with respect to the Lebesgue measure in $\Omega'(u)$. Moreover, given $x_0\in F(u)$ and $r>0$ such that $B_{2r}(x_0)\subset \Omega$, then
\begin{equation}\label{eqMeasureBallNice}
\lambda(B_r(x_0)) \approx r^{n-\alpha},
\end{equation}
%
and for almost every $x\in B_{r}'(x_0)$ we have that
$$\frac{d\lambda}{dm}(x)=2\lim_{y\to0}|y|^\beta u_y(x',y)\approx \chi_{\Omega_0(u)}(x) \dist(x,F(u))^{-\alpha},$$
with constants depending only on $n$ and $\alpha$.
\end{theorem}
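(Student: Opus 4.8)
This theorem is a collation of the estimates established in Sections~\ref{s:finperimeter} and~\ref{s:UniformBounds}; the plan is simply to assemble Theorems~\ref{theoLambdaBoundAbove}, \ref{theoMeasureBall} and~\ref{theoMeasureAway} together with Lemma~\ref{lemFracLaplacianIn0Phase}, the only genuinely new ingredient being Remark~\ref{remUniformRules} (equivalently Theorem~\ref{theoDimension}), which allows us to discard the a priori dependence on $\norm{u}_{\HH^\beta}$. Since $\lambda$ is a Radon measure and every assertion is local, I would first reduce to the following local claim: for each $x_0\in F(u)$ and $r>0$ with $B_{2r}(x_0)\subset\Omega$, the stated estimates hold on $B_r(x_0)$ with constants depending only on $n$ and $\alpha$. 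On such a ball, Remark~\ref{remUniformRules} provides $\norm{u}_{\HH^\beta(B_r(x_0))}\leq C(n,\alpha)\,r^{n/2}$; after a harmless rescaling to unit size this places $u$ exactly in the hypotheses of Theorems~\ref{theoLambdaBoundAbove}, \ref{theoMeasureBall} and~\ref{theoMeasureAway} and of Lemma~\ref{lemFracLaplacianIn0Phase}, but now with the auxiliary parameter $E_0$ bounded by a universal constant, so every constant produced by those results depends only on $n$ and $\alpha$.

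\textbf{The scaling estimate.} With this reduction in place, \eqref{eqMeasureBallNice} is immediate: the upper bound $\lambda(B_r(x_0))\lesssim r^{n-\alpha}$ is Theorem~\ref{theoLambdaBoundAbove} and the lower bound $\lambda(B_r(x_0))\gtrsim r^{n-\alpha}$ is Theorem~\ref{theoMeasureBall}, both now with universal constants.

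\textbf{Absolute continuity and the density formula.} Theorem~\ref{theoMeasureAway} already gives that $\lambda$ restricted to $B_r(x_0)'$ is absolutely continuous with density $2\lim_{y\to0}|y|^\beta u_y(x',y)$. By \cite[Lemma~4.2]{CaffarelliSilvestre} this limit vanishes on $\Omega_+'(u)$, by Lemma~\ref{lemFracLaplacianIn0Phase} it is comparable to $\dist(x,F(u))^{-\alpha}$ on $\Omega_0(u)$, and by Theorem~\ref{theoLambdaBoundAbove} we have $\lambda(F(u))=0$. Since $\R^n\times\{0\}$ is the disjoint union $\Omega_0(u)\cup\Omega_+'(u)\cup F(u)$, these three facts combine to give $\dfrac{d\lambda}{dm}(x)\approx\chi_{\Omega_0(u)}(x)\dist(x,F(u))^{-\alpha}$ on $B_r(x_0)'$; recalling from Lemma~\ref{lemRadonMeasure} that $\lambda$ is supported on $\{u=0\}$ (so carries no mass in $\Omega'(u)$ beyond $\overline{\Omega_0(u)}$), the absolute continuity holds on all of $\Omega'(u)$. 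Patching over a covering of $\Omega'(u)$ by balls centered at free boundary points, plus balls contained in $\Omega_0(u)$ or in $\Omega_+'(u)$ (where Lemma~\ref{lemFracLaplacianIn0Phase}, respectively the interior smoothness of $u$ on $\Omega_+'(u)$, gives the same density behaviour directly), yields the global statement with constants depending only on $n$ and $\alpha$.

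\textbf{Main difficulty.} There is essentially no new obstacle here: the analytic content lives entirely in the cited results of the previous two sections, and this statement is a packaging of them. The one point worth emphasizing, and the only one that required real work, is the upgrade from $E_0$-dependent to universal constants, which rests on Theorem~\ref{theoDimension} (whose proof used the Green-function estimates of \cite{FabesJerisonKenig} together with the growth bound of Theorem~\ref{theoLambdaBoundAbove}). The remaining care is bookkeeping: one must choose the covering of $\Omega'(u)$ to consist only of the three admissible types of balls described above, so that the universal constants apply uniformly.
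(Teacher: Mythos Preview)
Your proposal is correct and matches the paper's treatment exactly: the paper presents Theorem~\ref{theoMeasureComplete} as a summary statement (``The following theorem summarizes the information that we have gathered so far about the measure $\lambda$'') with no separate proof, and you have correctly identified all the ingredients---Theorems~\ref{theoLambdaBoundAbove}, \ref{theoMeasureAway}, \ref{theoMeasureBall}, Lemma~\ref{lemFracLaplacianIn0Phase}, and the removal of the $E_0$-dependence via Theorem~\ref{theoDimension}/Remark~\ref{remUniformRules}---and how they assemble into the conclusion.
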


\section{Rectifiability of the singular set}\label{s:Rect}

In this section we use the Rectifiable-Reifenberg and quantitative stratification framework of Naber-Valtorta \cite{NaberValtortaRectifiable} to prove Hausdorff measure and structure results for the singular set. Recall that $k^*_\alpha$ is the first dimension in which there exists non-trivial $\alpha$-homogeneous global minimizers to \eqref{eqLocalizedFunctional} defined in Section \ref{s:finperimeter}. 

\begin{theorem}\label{theoGlobalRectifiability}
Let $u\in \HH^\beta_{\loc}(\Omega)$ be a minimizer of \rf{eqLocalizedFunctional} in a domain $\Omega$. Then $\Sigma(u)$ is $(n-k^*_\alpha)$-rectifiable and for every $D\subset\subset \Omega$, we have
$$\mathcal{H}^{n-k^*_\alpha}(\Sigma(u)\cap D)\leq C_{n,\alpha, \dist(D,\partial\Omega)}.$$
\end{theorem}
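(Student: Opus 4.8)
\emph{The plan} is to run the quantitative-stratification scheme of Naber--Valtorta \cite{NaberValtortaRectifiable}, in the free boundary formulation of Edelen and the first author \cite{EdelenEngelstein} (which treats precisely the case $\beta=0$), using the Allen--Weiss density $\Psi$ of Theorem \ref{theoMonotonicityMinimum} as the monotone quantity and the strong $L^{2}_{\loc}(|y|^\beta)$-convergence of gradients from Lemma \ref{lemCompactness} as the crucial compactness input. I will keep the {\it a priori} hypothesis $\norm{u}_{\HH^\beta}\leq E_0$ throughout and remove it at the end via Remark \ref{remUniformRules} and Section \ref{s:UniformBounds}.

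\emph{Step 1: $\Sigma(u)$ is a quantitative stratum.} First I would show that every blow-up $u_0$ at a point $x_0\in\Sigma(u)$ has spine (maximal translation-invariance subspace, necessarily contained in $\R^n\times\{0\}$) of dimension at most $n-k^*_\alpha$. Indeed $u_0$ is a non-trivial $\alpha$-homogeneous global minimizer that is singular at $0$ (its density exceeds $\tfrac{\omega_n}{2}$ by the Energy Gap corollary); if it were invariant in an $m$-dimensional subspace, iterating Lemma \ref{lemDimensionReduction} would produce an $\alpha$-homogeneous minimizer on $\R^{(n-m)+1}$ with a singular point, forcing $n-m\geq k^*_\alpha$ by Lemma \ref{lemTrivialIsTheChosen}. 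Hence $\Sigma(u)\subset \mathcal S^{\,n-k^*_\alpha}(u)$ in the usual stratification language. A compactness/contradiction argument (using Lemma \ref{lemCompactness}, Corollary \ref{c:ContinuityofEnergy} and the $\epsilon$-regularity Theorem \ref{theoImprovement}) then upgrades this to the quantitative inclusion
$$\Sigma(u)\cap D\ \subset\ \bigcup_{j\in\N}\ \mathcal S^{\,n-k^*_\alpha}_{1/j}(u)\cap D,$$
where $\mathcal S^{\,k}_{\delta}(u)$ is the set of points at which $u$ fails to be $(k+1,\delta)$-symmetric at all small scales: a point at which $u$ were $(n-k^*_\alpha+1,\delta)$-symmetric for every $\delta$ would have a blow-up that is genuinely $(n-k^*_\alpha+1)$-symmetric, hence trivial by the previous sentence, contradicting membership in $\Sigma(u)$.

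\emph{Step 2: the two analytic inputs, and the Reifenberg machine.} The Naber--Valtorta scheme needs (a) a quantitative almost-homogeneity statement --- if the density drop $\Psi^u_{2r}(x)-\Psi^u_{r}(x)$ is small then $u$ is close, in $L^2(B_r(x),|y|^\beta)$, to an $\alpha$-homogeneous minimizer centered at $x$ --- and (b) quantitative cone-splitting --- if $u$ is almost-homogeneous about points that effectively span a $k$-plane, then $u$ is close to a $k$-symmetric cone. Both follow from the strong compactness of Section \ref{s:Compactness} together with the monotonicity identity of Theorem \ref{theoMonotonicityMinimum}, whose integrand $|\alpha u-(x-x_0)\cdot\nabla u|^2/|x-x_0|^{n+2}$ is exactly the infinitesimal defect from $x$-homogeneity, while the rigidity from Step 1 (homogeneity plus $(n-k^*_\alpha+1)$-fold translation invariance forces the trivial solution) makes the limiting cones trivial on the relevant stratum. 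With (a)--(b) in hand one packs $\mathcal S^{\,n-k^*_\alpha}_{1/j}(u)\cap D$ by a Vitali family $\{B_{r_i}(x_i)\}$; summing the density drop over dyadic scales telescopes, and the total drop is bounded by the top-scale density minus $\tfrac{\omega_n}{2}$, which is controlled by a constant depending only on $n$, $\alpha$ and $\dist(D,\partial\Omega)$ once Section \ref{s:UniformBounds} is invoked. Feeding this as the $L^2$-summed flatness hypothesis of the discrete-Reifenberg theorem of \cite{NaberValtortaRectifiable} applied to $\mu=\sum_i r_i^{\,n-k^*_\alpha}\delta_{x_i}$ yields the packing bound, hence $\mathcal H^{n-k^*_\alpha}(\mathcal S^{\,n-k^*_\alpha}_{1/j}(u)\cap D)\leq C$; summing over $j$ and using Step 1 gives the stated Hausdorff bound. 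Applying instead the rectifiable-Reifenberg theorem to $\mathcal H^{n-k^*_\alpha}$ restricted to $\Sigma(u)$ (its tilt coefficients are controlled by the same density drop) gives $(n-k^*_\alpha)$-rectifiability.

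\emph{Main obstacle.} The delicate part is Step 2: one must set up ``distance to a cone'' and the flatness coefficients in the weighted $L^2(|y|^\beta)$ sense, relate them quantitatively to the Allen--Weiss density drop, and prove the effective cone-splitting --- that is, transfer the Naber--Valtorta/Edelen--Engelstein estimates to the degenerate/singular $A_2$ weight and to the nonlocal free boundary condition. This is exactly where the strong (not merely weak) $L^{2}_{\loc}(|y|^\beta)$-compactness of Lemma \ref{lemCompactness} and the $\epsilon$-regularity of Theorem \ref{theoImprovement} are indispensable; once these are secured the remaining Reifenberg-type combinatorics proceed as in \cite{NaberValtortaRectifiable,EdelenEngelstein}.
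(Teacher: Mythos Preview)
Your outline is essentially the paper's own strategy: quantitative stratification plus the Naber--Valtorta discrete/rectifiable Reifenberg theorems, with the Allen--Weiss density as monotone quantity and the strong $L^2(|y|^\beta)$ compactness of Section \ref{s:Compactness} supplying the analytic inputs. The ingredients you list in Step 2 (quantitative almost-homogeneity, effective cone-splitting, the $\beta$-number vs.\ density-drop estimate) are exactly those assembled in Section \ref{s:Rect} (Lemma \ref{lemPreKey}, Theorem \ref{theoKeyDichotomy}, Theorem \ref{theoL2Estimate}).

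There is, however, a genuine slip in Step 1 that would break the Hausdorff-measure bound. The conclusion you record, $\Sigma(u)\cap D\subset \bigcup_{j}\mathcal S^{\,n-k^*_\alpha}_{1/j}(u)$, is just the qualitative inclusion $\Sigma(u)\subset \mathcal S^{\,n-k^*_\alpha}(u)$ rewritten; the constant $C_{n,\alpha,\epsilon}$ in the packing theorem depends on $\epsilon=1/j$, so ``summing over $j$'' (or taking the limit along the increasing union) does not yield a finite bound. What is actually needed --- and what the compactness/contradiction argument you invoke really gives --- is a \emph{single} $\epsilon=\epsilon(n,\alpha)>0$ with $\Sigma(u)\subset \mathcal S^{\,n-k^*_\alpha}_{\epsilon}(u)$ (this is the paper's Proposition \ref{propoSigma}): one assumes $u_i$ is $(n-k^*_\alpha+1,\epsilon_i)$-symmetric at $x_i\in\Sigma(u_i)$ with $\epsilon_i\to 0$, passes to a limit to obtain a genuinely $(n-k^*_\alpha+1)$-symmetric cone, which by Lemmas \ref{lemDimensionReduction} and \ref{lemTrivialIsTheChosen} must be trivial, and then $\epsilon$-regularity forces $x_i\in F_{\rm red}(u_i)$ for large $i$, a contradiction. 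With this single-$\epsilon$ inclusion in hand, the Hausdorff bound follows directly from the packing theorem without any summation; your rectifiability argument (countable union of rectifiable sets) is fine either way.
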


Part of the power of this framework is that it is very general. One needs certain compactness properties on the minimizers and a connection between the drop in the monotonicity formula and the local flatness of the singular set (see Theorem \ref{theoL2Estimate} below). To avoid redundancy and highlight the original contributions of this article, we omit many details here and try to focus on the estimates needed to apply this framework to minimizers of \eqref{eqLocalizedFunctional}. Whenever we omit details we will refer the interested reader to the relevant parts of \cite{EdelenEngelstein}. 

The key first step is to introduce the appropriate formulation of quantitative stratification. First introduced by Cheeger and Naber \cite{CheegerNaber} in the context of manifolds with Ricci curvature bounded from below, this is a way to quantify the intuitive fact that $F(u)$ should ``look" $(n-k^*_{\alpha})$-dimensional near a point $x_0 \in F(u)$ at which the blow-ups have $(n-k^*_\alpha)$-linearly independent translational symmetries. 

\subsection{Quantitative stratification for minimizers to $\mathcal J$}

We have seen in Section \ref{secReduction} that homogeneous functions have linear spaces of translational symmetry. Here we want to quantify (both in terms of size and stability) how far a function is from having no more than $k$ directions of translational symmetry. 

\begin{definition}
We write $V^k$ for the collection of linear $k$-dimensional subspaces of $\R^n$. A function $u$ is said to be $k$-symmetric if it is $\alpha$-homogeneous with respect to some point, and there exists a  $L\in V^k$ so that 
$$u(x+v)=u(x), \quad\quad \mbox{for every } v\in L.$$
A function $u$ is said to be $(k,\epsilon)$-symmetric in a ball $B$ if for some $k$-symmetric $\widetilde{u}$ we have
$$r(B)^{-2-n} \int_{B} |y|^\Beta |u-\widetilde{u}|^2 dy <\epsilon.$$
\end{definition}

Next we define the $k$-stratum $S^k(u)$, the $(k,\epsilon)$-stratum $S^k_\epsilon(u)$ and the $(k,\epsilon,r)$-stratum $S^k_{\epsilon,r}(u)$. A key insight here is to define these strata by the blow-ups having $k$ or fewer symmetries as opposed to exactly $k$ symmetries. 
\begin{definition}
Let $0\leq k\leq n$,  $0<\varepsilon<\infty$ and $0<r<\dist(x,\partial\Omega)$, let $u$ be a continuous function in $\Omega$ and let $x\in F(u)$. We say that:
\begin{itemize}
\item $x\in S^k(u)$ if $u$ has no  $(k+1)$-symmetric blow-ups at $x$.
\item $x\in S^k_\epsilon(u)$ if $u$ is not  $(k+1,\epsilon)$-symmetric in $B_s(x)$ for every $0<s\leq \min\{1,\dist(x,\partial\Omega)\}$.
\item $x\in S^k_{\epsilon,r}(u)$ if $u$ is not  $(k+1,\epsilon)$-symmetric in $B_s(x)$ for every $r\leq s\leq \min\{1,\dist(x,\partial\Omega)\}$.
\end{itemize}
If it is clear from the context we will omit $u$ from the notation.
\end{definition}

We now detail some standard properties of the strata defined above and how they interact with the free boundary $F(u)$. While the proofs are mostly standard, we give the details as the scaling associated to the problem \eqref{eqLocalizedFunctional} adds some technical difficulties. This proof also provides a blueprint for fleshing out the details in Sections \ref{secToolsBad} and \ref{secToolsGood}.

\begin{lemma}\label{lemProperties}
Let $0\leq j \leq  k\leq n$,  $0< \varepsilon \leq\tau<\infty$, $0<r\leq s<\dist(x,\partial\Omega)$, and let $u\in \HH^\beta_{\loc}(\Omega)$ be a minimizer in $\Omega$. Then:
\begin{enumerate}
\item $S^0\subset S^1\subset \cdots \subset S^{n-1} = S^n= F(u)$. Moreover, for the reduced boundary, we have that
$F_{red}(u) \subset S^{n-1}\setminus S^{n-2}$ and $\Sigma (u) \subset S^{n-k^*_\alpha}$.
\item We have $S^j_{\tau}\subset S^k_\epsilon \subset S^k$  and, moreover,  $\displaystyle S^k=\bigcup_{\epsilon>0} S^k_\epsilon$. 
\item Also $S^j_{\tau}\subset S^j_{\tau, r}\subset S^k_{\epsilon, s}$ and, moreover,  $\displaystyle S^k_\epsilon= \bigcap_{r>0} S^k_{\epsilon, r}$.
\item The sets $S^k_\epsilon$ are closed, in both $x$ and $u$: if $u_i\xrightarrow{L^2_{\rm loc}(\Omega;|y|^\Beta)} u$ and $x_i\to x$ with $x_i\in  S^k_\epsilon(u_i)$, then $x\in S^k_\epsilon(u)$.
\item If $u_i \xrightarrow{L^2_{\rm loc}(\Omega; |y|^\Beta)} u$,  $\epsilon_i\to 0$, and $u_i$ are $(k,\epsilon_i)$-symmetric in $B_1$, then $u$ is $k$-symmetric in $B_1$.
\end{enumerate}
\end{lemma}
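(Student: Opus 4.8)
The plan is to dispatch the five items essentially in the order listed: item (1) rests on the structural results for minimizers, while (2)--(5) follow from the definitions together with one scaling identity and a compactness argument. For item (1), the chain $S^0\subset S^1\subset\cdots\subset S^n$ is immediate once one notes that every $(k+2)$-symmetric function is $(k+1)$-symmetric (restrict its invariance subspace to a $(k+1)$-dimensional one), so a point admitting no $(k+1)$-symmetric blow-up admits no $(k+2)$-symmetric one. Since $\R^n$ has no $(n+1)$-dimensional subspace, every $x\in F(u)$ lies in $S^n$; moreover an $n$-symmetric blow-up $u_0$ is $\alpha$-homogeneous and invariant under all of $\R^n$, hence equals $c|y|^\alpha$, which vanishes on $\R^n\times\{0\}$ and hence has empty free boundary, while nondegeneracy (Theorem \ref{theoCRS}) makes every blow-up nontrivial and $0\in F(u)$ forces $0\in F(u_0)$ for every blow-up $u_0$ by Hausdorff convergence of free boundaries (Lemma \ref{lemCompactness}); thus no blow-up is $n$-symmetric and $S^{n-1}=S^n=F(u)$. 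At a reduced point the unique blow-up is a rotation of the trivial solution, $(n-1)$- but not $n$-symmetric, giving $F_{red}(u)\subset S^{n-1}\setminus S^{n-2}$; and if $x\in\Sigma(u)$ had an $(n-k^*_\alpha+1)$-symmetric blow-up $u_0$, cone splitting (Lemma \ref{lemDimensionReduction}) would present the restriction of $u_0$ to the orthogonal complement of its symmetry subspace as an $\alpha$-homogeneous minimizer in $\R^{(k^*_\alpha-1)+1}$, hence a rotation of the trivial solution by Lemma \ref{lemTrivialIsTheChosen}, so that $u_0$ is trivial, $F(u)$ is flat near $x$, and Theorem \ref{theoImprovement} places $x$ in $F_{red}(u)$, a contradiction; hence $\Sigma(u)\subset S^{n-k^*_\alpha}$.

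For items (2) and (3), all the stated inclusions among the $S^\bullet_\bullet$ and $S^\bullet_{\bullet,\bullet}$ reduce, using again ``$(k+1)$-symmetric $\Rightarrow$ $(j+1)$-symmetric'' together with $\epsilon\le\tau$, to the elementary fact that $(k+1,\epsilon)$-symmetry in a ball implies $(j+1,\tau)$-symmetry in that ball, combined with the obvious monotonicity in the set of admissible radii; and $S^k_\epsilon=\bigcap_{r>0}S^k_{\epsilon,r}$ is just the statement that $r\downarrow 0$. The only substantive point here is $S^k=\bigcup_{\epsilon>0}S^k_\epsilon$: the inclusion $\supseteq$ follows from $S^k_\epsilon\subset S^k$, and for $\subseteq$ I would argue by contradiction, producing for $x\in S^k$ lying in no $S^k_\epsilon$ a sequence of admissible balls $B_{s_i}(x)$ in which $u$ is $(k+1,1/i)$-symmetric; rescaling by $z\mapsto x+s_iz$ with the normalizing factor $s_i^{-\alpha}$ — which, since $\beta+2\alpha=1$, leaves the quantity $r^{-n-2}\int_{B_r(x)}|y|^\beta|u-\widetilde u|^2$ invariant and maps $(k+1)$-symmetric functions to $(k+1)$-symmetric functions — I would extract, when $s_i\to 0$, a $(k+1)$-symmetric blow-up of $u$ at $x$ via Corollary \ref{coroWeakLimits} and item (5), and when $\liminf s_i>0$ the fact that $u$ coincides near $x$ with a $(k+1)$-symmetric function, whence via Lemma \ref{lemConeBlowup} again a $(k+1)$-symmetric blow-up at $x$; either way $x\notin S^k$, the desired contradiction.

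Items (4) and (5) are compactness statements. For (4): assuming for contradiction that $u$ is $(k+1,\epsilon)$-symmetric in some admissible ball $B_s(x)$ with witness $\widetilde u$, I would first shrink $s$ slightly — using continuity in the radius of $t\mapsto t^{-n-2}\int_{B_t(x)}|y|^\beta|u-\widetilde u|^2$ — so as to keep the defining inequality strict with room to spare, and then use $\int_{B_s(x_i)\triangle B_s(x)}|y|^\beta|u-\widetilde u|^2\to 0$ and $\int_{B_s(x)}|y|^\beta|u_i-u|^2\to 0$ to conclude that $u_i$ is $(k+1,\epsilon)$-symmetric in $B_s(x_i)$ (with the same $\widetilde u$) for large $i$, contradicting $x_i\in S^k_\epsilon(u_i)$. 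For (5): $(k,\epsilon_i)$-symmetry of $u_i$ in $B_1$ provides $k$-symmetric $\widetilde u_i$ with $\int_{B_1}|y|^\beta|u_i-\widetilde u_i|^2<\epsilon_i\to 0$, hence $\widetilde u_i\to u$ in $L^2(B_1;|y|^\beta)$; writing $\widetilde u_i$ as $\alpha$-homogeneous about a point $p_i$ and invariant under $L_i\in V^k$, compactness of the Grassmannian yields $L_i\to L\in V^k$ along a subsequence, and I would pass to the limit in the homogeneity and invariance relations to obtain that $u$ is $k$-symmetric.

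The step I expect to be the main obstacle is precisely this last limit passage in (5) (and the $\liminf s_i>0$ sub-case of $S^k=\bigcup_\epsilon S^k_\epsilon$): a priori the homogeneity centers $p_i$ could run off to infinity, in which case the limit would acquire an extra translation invariance but might fail $\alpha$-homogeneity. This is the place where one must use that the $u_i$ are minimizers with $0\in F(u_i)$, so that Allen's monotonicity formula (Theorem \ref{theoMonotonicityMinimum}) keeps the densities $\Psi^{u_i}$ bounded and confines the centers to a fixed compact set (or else forces the limit to be a bona fide $k$-symmetric cone). This is carried out exactly as in the corresponding argument of \cite{EdelenEngelstein}; in the write-up I would only record the modifications forced by the $A_2$-weight $|y|^\beta$ and the homogeneity exponent $\alpha=\tfrac{1-\beta}{2}$, and refer the reader there for the remaining routine points.
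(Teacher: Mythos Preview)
Your proposal is correct and follows essentially the same approach as the paper. Two small remarks: for item~(4) the paper recenters by shrinking the ball by a fixed multiplicative factor $\tau<1$ (chosen so that $\epsilon_0/\tau^{n+2}<\epsilon$) rather than handling the symmetric difference as you do; and for item~(5) you are actually more careful than the paper, whose proof only passes $L_i\to L$ along a subsequence and concludes via the triangle inequality that $u$ is $(k,\delta_i)$-symmetric with $\delta_i\to 0$, without explicitly addressing the escape-of-homogeneity-centers obstacle you correctly flag.
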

\begin{proof}
\emph{1}. The inclusions $S^k\subset S^{k+1}$ of the first property are trivial. The last equalities are consequences of the non-degeneracy. The fact that $F_{red}(u) \cap S^{n-2} =\emptyset$ can be deduced from the Hausdorff convergence of the free boundaries described in Lemma \ref{lemCompactness} and Theorem \ref{theoImprovement}. Finally, $\Sigma (u) \subset S^{n-k^*_\alpha}$ is a consequence of Lemmas \ref{lemDimensionReduction} and \ref{lemTrivialIsTheChosen}.

\emph{2}. The inclusions $S^j_{\tau}\subset S^k_\epsilon$ of the second property come from the definitions: if $x\notin S^k_\epsilon$ then there exist a ball $B\subset \Omega$ centered at $x$ and a $(k+1)$-symmetric $\widetilde{u}$ so that $r(B)^{-2-n} \int_{B} |y|^\Beta |u-\widetilde{u}|^2 dy <\epsilon\leq \tau.$ But  $\widetilde{u}$ is also $(j+1)$-symmetric. Thus, $x\notin S^j_\tau$. 

The fact that $S^k_\epsilon \subset S^k$ is a consequence of the uniform convergence on Lemma \ref{lemCompactness}: if $x\notin S^k$, then $u$ has a  $(k+1)$-symmetric blow-up sequence $u_i\to u_0$ at $x$ converging uniformly. Thus,
\begin{align*}
 \int_{B_{\rho_i}(x)} |y|^\Beta \left|u(x)- \rho_i^\alpha u_0\left(\frac{x-x_0}{ \rho_i}\right)\right|^2 dx 
 	& =  \rho_i^{\Beta+2\alpha+n+1} \int_{B_1(x)} |y|^\Beta \left|\frac{u\left(x_0+ \rho_i x\right)}{\rho_i^\alpha}-u_0(x)\right|^2 dx\\
 	& \leq \rho_i^{n+2} \omega(B_1) \norm{u_i-u_0}_{L^\infty}.
\end{align*}
That is, 
$$ \rho_i^{-n-2} \int_{B_{\rho_i}(x)} |y|^\Beta \left|u(x)- r_i^\alpha u_0\left(\frac{x-x_0}{ \rho_i}\right)\right|^2 dx \xrightarrow{i\to\infty} 0,$$
and therefore, for every $\varepsilon$ there exists a ball small enough so that $u$ is $(k+1,\varepsilon)$-symmetric in it. In particular $S^k \supset \bigcup_{\epsilon>0} S^k_\epsilon$. 

To see the converse, assume that $x\notin \bigcup_\epsilon S^k_\epsilon$. Then for every $i\in\N$ there exist a $(k+1)$-symmetric function $\widetilde u_i$, invariant with respect to  $L_i\in V^{k+1}$  and $r_i<\min\{1,\dist(x,\partial\Omega)\}$ such that 
$$\frac{1}{r_i^{n+2}} \int_{B_{r_i}} |y|^\Beta \left| u(x)-\widetilde{u}_i(x)\right|^2 \, dx < \frac1i.$$
In the case when  $r_i$ stays away from zero, since $r_i<1$, we can take a subsequence converging to $r_0\in(0,1)$, and one can see that $u$ is $(k+1)$-symmetric in the ball $B_{r_0}(x_0)$. Otherwise, 
consider $u_i:=\frac{u\left(x_0+ r_i x\right)}{r_i^\alpha}$ and $\widetilde{u}_{i,i}=\frac{\widetilde u_i\left(x_0+ r_i x\right)}{r_i^\alpha}$. Taking subsequences, we can assume that $L_i\to L_0$ locally in the Hausdorff distance, and that $u_i \to u_0$ locally uniformly. One can check also using the H\"older character of $u$ that $\{\widetilde{u}_{i,i}\}$ is uniformly bounded in $L^2( B; |y|^\Beta)$, so taking subsequences again, we can assume the existence of $\widetilde{u}_{0}$ so that
 $\widetilde{u}_{i,i}\to \widetilde{u}_{0}$ in $L^2( B; |y|^\Beta)$. This function will be $(k+1)$-symmetric, being invariant in the directions of $L_0$. By the triangle inequality we get
$$\int_{B_1} |y|^\Beta |u_0- \widetilde{u}_0|^2 \, dx \lesssim \int_{B_1} |y|^\Beta |u_0- u_i|^2 \, dx +\int_{B_1} |y|^\Beta |u_i- \widetilde{u}_{i,i}|^2 \, dx +\int_{B_1} |y|^\Beta |\widetilde{u}_{i,i}- \widetilde{u}_0|^2 \, dx .$$
The first and the last integrals converge to zero by our choice of the subsequence. For the middle term just change variables as before:
$$\int_{B_1} |y|^\Beta |u_i- \widetilde{u}_{i,i}|^2 \, dx = \frac{1}{r_i^{n+2}} \int_{B_{r_i}} |y|^\Beta \left| u(x)-\widetilde{u}_i(x)\right|^2 \, dx\to 0.$$
Thus we have that $u_0=\widetilde{u_0}$ and, therefore, $x\notin S_k$.

\emph{3}. The inclusions $S^j_{\tau}\subset S^j_{\tau, r}\subset S^k_{\epsilon, s}$ of the third property come from the definitions and thus, $S^k_\epsilon \subset \bigcap_{r>0} S^k_{\epsilon, r}$. The converse implication is also trivial.

\emph{4}. The closedness is obtained by a contradiction argument again. It is straightforward but we write it here for the sake of completeness.
 
Assume by contradiction that $x\notin S^k_\epsilon(u)$. Then there exist a $(k+1)$-symmetric function $\widetilde{u}$ and a radius $r$ such that
$$\epsilon_0:=\frac{1}{r^{n+2}} \int_{B_{r}(x)} |y|^\Beta \left| u(x)-\widetilde{u}(x)\right|^2 \, dx <\epsilon.$$
Let $\tau<1$ to be fixed and consider  $i_0\in \N$ so that $B_{\tau r}(x_i)\subset B_r(x)$ for every $i\geq i_0$. By the triangle inequality
$$\frac{1}{(\tau r)^{n+2}} \int_{B_{\tau r}(x_i)} |y|^\Beta \left| u_i(x)-\widetilde{u}(x)\right|^2 \, dx \leq \frac{1}{(\tau r)^{n+2}} \norm{u_i-u} _{L^2(B_{\tau r}(x_i);|y|^\Beta)}^2 + \frac{\epsilon_0}{\tau^{n+2}}.$$
We define $\tau$ so that $\frac{\epsilon_0}{\tau^{n+2}}=\frac{\epsilon+\epsilon_0}{2}$. Choose  $i_0$ big enough so that every $i\geq i_0$ satisfies that $\norm{u_i-u} _{L^2(B_{\tau r}(x_i);|y|^\Beta)}^2 < (\tau r)^{n+2} \frac{\epsilon-\epsilon_0}{2}$. Then $x_i\notin S^k_\epsilon(u_i)$, contradicting the hypothesis. 

\emph{5}. Assume that  $\widetilde u_i$ is invariant with respect to  $L_i\in V^{k+1}$ and
$$\int|y|^\Beta |u_i-\widetilde u_i|^2\leq \epsilon_i.$$ Consider a subsequence $\{u_i\}$ so that the varieties $L_i\to L$ locally in the Hausdorff distance. Using the triangle inequality as in \emph{4} it follows that $u$ is $(k,\delta_i)$-symmetric with $\delta_i\to 0$. 

\end{proof}

\begin{proposition}\label{propoSigma}
There exists $\epsilon(n,\alpha) > 0$ such that if $u\in \HH^\beta_{\loc}(\Omega)$ is a minimizer of $\mathcal{J}$ in a domain $\Omega\subset \R^{n+1}$, then $\Sigma(u)\subset S^{n-k^*_\alpha}_\epsilon(u)$. 
\end{proposition}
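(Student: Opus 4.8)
The plan is a compactness–contradiction argument; the essential point is that, by Remark~\ref{remUniformRules}, all the quantitative estimates for minimizers are independent of $\|u\|_{\HH^\beta}$, so compactness can be applied uniformly over \emph{all} minimizers and all domains. First note that if $\Sigma(u)=\emptyset$ there is nothing to prove, and if $\Sigma(u)\neq\emptyset$ then necessarily $n\ge k^*_\alpha$ — otherwise Lemma~\ref{lemTrivialIsTheChosen} together with the $\epsilon$-regularity Theorem~\ref{theoImprovement} would force every free boundary point to be regular — so $k:=n-k^*_\alpha\ge0$ and $S^k$ is well defined. Suppose the statement fails. Then for each $j\in\N$ there are a domain $\Omega_j$, a minimizer $u_j$ of $\mathcal{J}$ in $\Omega_j$ and a point $x_j\in\Sigma(u_j)$ with $x_j\notin S^k_{1/j}(u_j)$; unwinding the definition produces a radius $0<s_j\le\min\{1,\dist(x_j,\partial\Omega_j)\}$ for which $u_j$ is $(k+1,1/j)$-symmetric in $B_{s_j}(x_j)$.

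Next I would translate and rescale, setting $v_j(x):=(s_j/10)^{-\alpha}u_j\big(x_j+(s_j/10)x\big)$. Then $v_j$ is a minimizer of $\mathcal{J}$ in a domain containing $B_{10}$ with $0\in F(v_j)$, and since being a regular free boundary point is preserved by translations and dilations, $0\in\Sigma(v_j)$. A change of variables, using the cancellation $\beta+2\alpha=1$, shows that the normalized defect $r(B)^{-2-n}\int_B|y|^\Beta|u-\widetilde u|^2$ is scale invariant, so $v_j$ is $(k+1,1/j)$-symmetric in $B_{10}$, hence $(k+1,10^{n+2}/j)$-symmetric in $B_1$. By Remark~\ref{remUniformRules} the norms $\|v_j\|_{C^\alpha(B_5)}$ and $\|v_j\|_{\HH^\beta(B_5)}$ are bounded uniformly in $j$ with bounds depending only on $n,\alpha$, so by Lemma~\ref{lemCompactness} and Corollary~\ref{c:ContinuityofEnergy}, after passing to a subsequence, $v_j\to v_0$ in the sense of Lemma~\ref{lemCompactness} on compact subsets of $B_5$, with $v_0$ a minimizer of $\mathcal{J}$ in $B_5$.

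I would then identify the limit. Hausdorff convergence of the free boundaries gives $0\in F(v_0)$. Moreover $0\in\Sigma(v_0)$: were $0\in F_{red}(v_0)$, then $F(v_0)$ would be a $C^{1,s}$ graph near $0$ and hence $\varepsilon$-flat at small scales; by the Hausdorff convergence of the $F(v_j)$ and the convergence $\chi_{\{v_j>0\}}\to\chi_{\{v_0>0\}}$ in $L^1$, the same flatness would hold for $F(v_j)$ near $0$ for large $j$, and then Theorem~\ref{theoImprovement} would put $0\in F_{red}(v_j)$, contradicting $0\in\Sigma(v_j)$ (this is the reasoning already used in Lemma~\ref{lemProperties}.1). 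On the other hand $v_j\to v_0$ in $L^2(B_1;|y|^\Beta)$ and $v_j$ is $(k+1,10^{n+2}/j)$-symmetric in $B_1$ with $10^{n+2}/j\to0$, so Lemma~\ref{lemProperties}.5 gives that $v_0$ coincides a.e.\ on $B_1$ with a function $\widetilde v_0$ that is $\alpha$-homogeneous with respect to some point and invariant along some $L\in V^{k+1}$.

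Finally I would close the loop: since $v_0$ is a minimizer with $0\in F(v_0)$, Corollary~\ref{coroWeakLimits} provides a blow-up $w$ of $v_0$ at $0$, which is an $\alpha$-homogeneous global minimizer; and since $v_0=\widetilde v_0$ near $0$ and $\widetilde v_0$ is invariant in the directions of the linear space $L$, the rescalings defining $w$ are eventually $L$-invariant on every ball, hence so is $w$. Thus $w$ is $(k+1)$-symmetric, so $0\notin S^k(v_0)$, contradicting $0\in\Sigma(v_0)\subset S^{n-k^*_\alpha}(v_0)=S^k(v_0)$ from Lemma~\ref{lemProperties}.1; this contradiction proves the proposition. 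The main obstacle I anticipate is not a single estimate but the bookkeeping: tracking precisely how the normalized $(k+1,\epsilon)$-symmetry defect transforms under the dilation (the identity $\beta+2\alpha=1$ is exactly what makes it scale invariant) and, more delicately, the non-soft implication $0\in\Sigma(v_j)\ \forall j\ \Rightarrow\ 0\in\Sigma(v_0)$, which genuinely uses $\epsilon$-regularity together with Hausdorff convergence of the free boundaries rather than mere closedness of the singular set.
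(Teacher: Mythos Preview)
Your proof is correct and follows essentially the same compactness--contradiction scheme as the paper: rescale so the $(k+1,\epsilon_j)$-symmetry holds at unit scale, pass to a limit using the uniform estimates of Section~\ref{s:UniformBounds} and Lemma~\ref{lemCompactness}, observe that the limit is $(k+1)$-symmetric near the origin, and derive a contradiction with the singularity of the point. The only organizational difference is that you close the contradiction by invoking the inclusion $\Sigma\subset S^{n-k^*_\alpha}$ from Lemma~\ref{lemProperties}(1), whereas the paper unpacks that inclusion in place (using Lemmas~\ref{lemDimensionReduction} and~\ref{lemTrivialIsTheChosen} to show the blow-up is trivial) and then propagates regularity back to the sequence via Hausdorff convergence and Theorem~\ref{theoImprovement}---exactly the argument you carried out one step earlier to verify $0\in\Sigma(v_0)$.
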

\begin{proof}
It is enough to show that if $u$ is a minimizer of $\mathcal{J}$ in $B_2(0)$, then $\Sigma(u)\cap B_1(0)\subset S^{n-k^*_\alpha}_\epsilon(u)$. 

By contradiction, let us assume that there is a sequence of positive numbers $\epsilon_i\xrightarrow{i\to\infty} 0$, functions $u_i$ minimizing $\mathcal{J}$ in $B_2(0)$ and $x_i\in  \Sigma (u_i)\cap B_1(0)$, $r_i \in (0,1]$, with $u_i$ being $(n-k^*_\alpha+1, \epsilon_i)$-symmetric  in $B_{r_i}(x_i)$, and let $L_i$ be an $(n-k^*_\alpha+1)$-dimensional subspace that leaves invariant one of the admissible $(n-k^*_\alpha+1)$-symmetric approximants. By rescaling we can assume that $r_i=1$. 

Passing to a subsequence we can assume that $L_i \to L_0\in V^{n-k^*_\alpha+1}$ locally in the Hausdorff distance and $x_i\to x_0$. By the compactness results in Lemma \ref{lemCompactness} we have a uniform limit $u_0$ which is a minimizer as well, and it is $(n-k^*_\alpha+1)$-symmetric with invariant manifold $L_0$. By Lemma \ref{lemConeBlowup} any blow-up $u_{0,0}$ at $x_0$ will be $(n-k^*_\alpha+1)$-symmetric as well. Applying Lemma \ref{lemDimensionReduction} $(n-k^*_\alpha+1)$ times we find that the restriction of $u_{0,0}$ to the orthogonal manifold $L_0^{\perp}$ is a $(k^*_\alpha-1)$-dimensional minimal cone which, by Lemma \ref{lemTrivialIsTheChosen} is the trivial solution, and so is $u_{0,0}$. Thus, $x_0$ is a regular point for $u_0$.

On the other hand,  the Hausdorff convergence of Lemma \ref{lemCompactness} together with the improvement of flatness of Theorem \ref{theoImprovement} imply that for $i$ big enough $x_i\in F_{\rm red}(u_i)$, reaching a contradiction.
\end{proof}

\subsection{The Refined Covering Theorem}

Our estimates on the size and structure of the singular set $\Sigma(u)$  come from similar results concerning the $S^k_{\epsilon}(u)$. In particular, we prove the following covering result:

\begin{theorem}\label{theoMain}
Let $u\in \HH^\beta(B_5)$ be a  minimizer to \rf{eqLocalizedFunctional} in $B_5$  with   $0\in F(u)$. For given real numbers $\epsilon>0$, $0<r\leq 1$ and every natural number $1\leq k\leq n-1$, we can find a collection of balls $\{B_r(x_i)\}_{i=1}^N$ with $N\leq C_{n,\alpha,\epsilon} r^{-k}$ such that 
$$S^k_{\epsilon,r}(u)\cap B_1\subset \bigcup_i B_r(x_i).$$
In particular, $|B'_r(S^k_{\epsilon,r}\cap B_1)|\leq C_{n,\alpha,\epsilon} r^{n-k}$ for every $0<r\leq 1$ and 
$$\mathcal{H}^k(S^k_{\epsilon}(u) \cap B_1)\leq C_{n,\alpha,\epsilon}.$$
\end{theorem}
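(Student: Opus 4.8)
The plan is to follow the quantitative stratification scheme of Naber--Valtorta \cite{NaberValtortaRectifiable}, in the version adapted to one-phase free boundary problems by Edelen and the first author \cite{EdelenEngelstein}, using the Allen--Weiss density $\Psi^u$ of Theorem \ref{theoMonotonicityMinimum} as the monotone quantity. The three inputs needed are: uniform bounds on $\Psi^u$, quantitative rigidity and cone-splitting for minimizers of $\mathcal J$, and the $L^2$-best-approximation estimate of Theorem \ref{theoL2Estimate} (stated below), which bounds the $L^2$-distance of the free boundary from a $k$-plane by the drop of $\Psi^u$ over a ball. Granting these, the covering bound $N\le C_{n,\alpha,\epsilon}r^{-k}$ is produced by the abstract covering and discrete-Reifenberg machinery of \cite{NaberValtortaRectifiable,EdelenEngelstein}; the remaining two assertions are then elementary.

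First, by the scaling of \eqref{eqLocalizedFunctional} and Remark \ref{remUniformRules}, a minimizer in $B_5$ with $0\in F(u)$ has $\norm{u}_{\HH^\beta(B_3)}$ bounded in terms of $n$ and $\alpha$ only; hence for $x\in F(u)\cap B_2$ and $r\le 1$ the density $\Psi^u_r(x)$ is defined, monotone by Theorem \ref{theoMonotonicityMinimum}, and satisfies $\frac{\omega_n}{2}\le \Psi^u_0(x)\le \Lambda(n,\alpha)$ by Proposition \ref{propoAlmostGap}. Writing $W^u_s(x):=\Psi^u_{2s}(x)-\Psi^u_s(x)\ge 0$, telescoping yields the \emph{uniform} total-pinching bound $\sum_{j\ge 0} W^u_{2^{-j}r}(x)\le \Lambda-\frac{\omega_n}{2}$, which controls how many ``bad'' scales a free boundary point can have. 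Next one records two quantitative facts, both proved by a contradiction/compactness argument using Lemma \ref{lemCompactness} and Lemma \ref{lemProperties} exactly as in the proof of Proposition \ref{propoSigma}: (i) \emph{rigidity} --- for every $\eta>0$ there is $\delta>0$ so that if $W^u_s(x)<\delta$ for all $s$ in a fixed range then $u$ is $(0,\eta)$-symmetric in the corresponding ball, which uses that the second identity in Theorem \ref{theoMonotonicityMinimum} forces $\alpha u-(\,\cdot-x)\cdot\nabla u$ to be small in weighted $L^2$; and (ii) \emph{cone-splitting} --- if $u$ is $L^2$-close to homogeneous with respect to $k+1$ points spanning an affine $k$-plane in general position, it is $(k,\eta)$-symmetric, via Lemmas \ref{lemConeBlowup} and \ref{lemDimensionReduction}.

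With these in hand I would run the Naber--Valtorta covering induction over dyadic scales from $1$ down to $r$: at each step subdivide every ball of the current cover into ``bad'' balls, where $\Psi^u$ drops by a fixed amount (few of these along any sub-tree, by the total-pinching bound), and ``good'' balls, where the pinching is small over the relevant window so that Theorem \ref{theoL2Estimate} applies and the discrete Reifenberg theorem of \cite{NaberValtortaRectifiable} keeps the ball count comparable to $s^{-k}$ times the parent count; summing over the $\mathcal{O}(\log(1/r))$ scales gives $S^k_{\epsilon,r}(u)\cap B_1\subset\bigcup_{i=1}^N B_r(x_i)$ with $N\le C_{n,\alpha,\epsilon}r^{-k}$. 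The two remaining assertions follow at once: enlarging to $B_{2r}(x_i)$ we get $|B'_r(S^k_{\epsilon,r}\cap B_1)|\le\sum_i|B_{2r}(x_i)'|\le CNr^n\le Cr^{n-k}$; and since $S^k_\epsilon=\bigcap_{r>0}S^k_{\epsilon,r}$ by Lemma \ref{lemProperties}, the same balls cover $S^k_\epsilon(u)\cap B_1$, so $\sum_i(2r)^k\le CNr^k\le C_{n,\alpha,\epsilon}$ and letting $r\to0$ yields $\mathcal H^k(S^k_\epsilon(u)\cap B_1)\le C_{n,\alpha,\epsilon}$.

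The main obstacle is adapting the Naber--Valtorta machinery to a \emph{weighted}, \emph{$\alpha$-homogeneous} problem: the bookkeeping of scaling exponents --- the $r^{-2-n}$ normalization in the definition of $(k,\epsilon)$-symmetry, degree-$\alpha$ homogeneity, the weight $|y|^\beta$ --- must be kept consistent throughout the covering induction, and, most importantly, the quantitative rigidity must be established in precisely the $L^2(|y|^\beta)$-sense appearing in the strata, converting the $|y|^\beta|\,\cdot-x|^{-n-2}$-weighted control of $\alpha u-(\,\cdot-x)\cdot\nabla u$ coming from Theorem \ref{theoMonotonicityMinimum} into genuine $(0,\eta)$-symmetry. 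The technical heart, the $L^2$-best-approximation estimate, is isolated as Theorem \ref{theoL2Estimate} and proved separately, and wherever the covering scheme itself is routine we refer to the corresponding parts of \cite{EdelenEngelstein}.
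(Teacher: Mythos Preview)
Your proposal is correct and follows essentially the same route as the paper: both invoke the Naber--Valtorta quantitative stratification machinery in the form of \cite{EdelenEngelstein}, with the Allen--Weiss density as the monotone quantity, the $L^2$-estimate of Theorem \ref{theoL2Estimate} as the key analytic input, and the discrete Reifenberg theorem for the packing; the paper packages the covering induction as Theorem \ref{theoTreeConstruction} (a good/bad-ball tree construction iterated finitely many times via the uniform energy bound), while you describe the same mechanism directly as a dyadic descent, but both defer the routine bookkeeping to \cite{EdelenEngelstein}. One wording to tighten: your phrase ``summing over the $\mathcal O(\log(1/r))$ scales'' could suggest an unwanted logarithmic loss; in the actual argument the iteration is over \emph{energy} levels (finitely many, since $\Psi^u$ is uniformly bounded and drops by a fixed $\eta$ at each bad step), not over all dyadic scales, which is precisely how the constant $C_{n,\alpha,\epsilon}$ stays independent of $r$.
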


From Proposition \ref{propoSigma} and Theorem \ref{theoMain} we can conclude the following corollary which comprises the second part of Theorem \ref{theoGlobalRectifiability} above. 

\begin{corollary}\label{coroSingular}
If $u\in \HH^\beta(B_5)$ is a minimizer to \rf{eqLocalizedFunctional}  in $B_5$ with  $0\in F(u)$, then $\Sigma(u)$ is $(n-k^*_\alpha)$-rectifiable and for every $0<r\leq 1$ we have 
$$|B_r (\Sigma(u)\cap B_1)|\leq C_{n,\alpha} r^{k^*_\alpha}.$$
In particular, 
$$\mathcal{H}^{n-k^*_\alpha}(\Sigma(u)\cap B_1)\leq C_{n,\alpha}.$$
\end{corollary}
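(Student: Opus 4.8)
The plan is to feed Proposition \ref{propoSigma} into Theorem \ref{theoMain}; the passage is essentially bookkeeping, with all the substance lying upstream. Fix $\epsilon=\epsilon(n,\alpha)>0$ as in Proposition \ref{propoSigma}. If $n<k^*_\alpha$ then, by Lemmas \ref{lemDimensionReduction} and \ref{lemTrivialIsTheChosen}, every $\alpha$-homogeneous global minimizer --- in particular every blow-up of $u$ --- is a rotation of the trivial solution, so $F(u)=F_{\rm red}(u)$ and $\Sigma(u)=\emptyset$, and there is nothing to prove. So we may assume $n\geq k^*_\alpha$ and set $k:=n-k^*_\alpha$ (hence $0\leq k\leq n-3$, using $k^*_\alpha\geq 3$). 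Applying Proposition \ref{propoSigma} to $u$ on $B_2\subset B_5$ gives $\Sigma(u)\cap B_1\subset S^k_\epsilon(u)$.

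Next I would read off the size bounds. By Lemma \ref{lemProperties}(3), $S^k_\epsilon(u)\subset S^k_{\epsilon,r}(u)$ for every $r\in(0,1]$, so $\Sigma(u)\cap B_1\subset S^k_{\epsilon,r}(u)\cap B_1$ and hence $B_r(\Sigma(u)\cap B_1)\subset B_r(S^k_{\epsilon,r}(u)\cap B_1)$. Theorem \ref{theoMain} then yields, for every $0<r\leq 1$,
\[
\bigl|B_r(\Sigma(u)\cap B_1)\bigr|\;\leq\;\bigl|B_r(S^k_{\epsilon,r}(u)\cap B_1)\bigr|\;\leq\;C_{n,\alpha}\,r^{n-k}\;=\;C_{n,\alpha}\,r^{k^*_\alpha},
\]
with a constant depending only on $n$ and $\alpha$ because $\epsilon=\epsilon(n,\alpha)$ is now fixed. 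Similarly, from the last display of Theorem \ref{theoMain},
\[
\mathcal{H}^{n-k^*_\alpha}(\Sigma(u)\cap B_1)\;\leq\;\mathcal{H}^k(S^k_\epsilon(u)\cap B_1)\;\leq\;C_{n,\alpha}.
\]
(When $k=0$, i.e.\ $n=k^*_\alpha$, this just says $\Sigma(u)$ is discrete and locally finite, consistent with Theorem \ref{t:mainregularity}(3).)

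For rectifiability, the covering in Theorem \ref{theoMain} is obtained by running the Rectifiable-Reifenberg theorem of Naber--Valtorta \cite{NaberValtortaRectifiable} along the lines of \cite{EdelenEngelstein}, and that construction also shows that the quantitative stratum $S^k_\epsilon(u)$ is $k$-rectifiable. Since $F(u)$ is relatively closed in $\Omega$ while $F_{\rm red}(u)$ is relatively open in $F(u)$ (by the $\epsilon$-regularity of Theorem \ref{theoImprovement}), the set $\Sigma(u)=F(u)\setminus F_{\rm red}(u)$ is a Borel subset of $S^k_\epsilon(u)$, hence itself $(n-k^*_\alpha)$-rectifiable.

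The main obstacle is not in this corollary at all but in Theorem \ref{theoMain}: its proof requires the $L^2$ best-approximation estimate (Theorem \ref{theoL2Estimate}), which relates the drop in Allen's monotonicity formula across scales to the local flatness of $\Sigma(u)$, together with the strong compactness of minimizers (Lemma \ref{lemCompactness}, Corollary \ref{c:ContinuityofEnergy}) and $\epsilon$-regularity, all needed to verify the hypotheses of the Naber--Valtorta machinery. Once those are established, Corollary \ref{coroSingular} follows by the routine covering/monotonicity bookkeeping sketched above.
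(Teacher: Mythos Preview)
Your proposal is correct and follows the same route as the paper: the corollary is deduced directly from Proposition \ref{propoSigma} (giving $\Sigma(u)\subset S^{n-k^*_\alpha}_\epsilon$ for a fixed $\epsilon=\epsilon(n,\alpha)$) together with the covering/packing Theorem \ref{theoMain}, with rectifiability coming from the Naber--Valtorta Rectifiable-Reifenberg framework (stated in the paper as Theorem \ref{theoMain2}). Your write-up is in fact more detailed than the paper's, which just says ``From Proposition \ref{propoSigma} and Theorem \ref{theoMain} we can conclude the following corollary.''
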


Rectifiability is encoded in the following result. We omit the details of proof here but it is a consequence of the packing result above, the Rectifiable-Reifenberg theorem of \cite{NaberValtortaRectifiable} and Theorem \ref{theoL2Estimate} below. For more details see Sections 2 and 8 of \cite{EdelenEngelstein} (particularly Theorem 2.2 in the former and the proof of Theorem 1.12 in the latter). 

\begin{theorem}\label{theoMain2}
Let $u$ be a non-negative, even minimizer to \rf{eqLocalizedFunctional} in a domain $\Omega$. Then $S^k_\epsilon(u)$ is $k$-rectifiable for every $\epsilon$ and, hence, each stratum $S^k(u)$ is $k$-rectifiable as well.
\end{theorem}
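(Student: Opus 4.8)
The plan is to follow the quantitative-stratification strategy of Naber--Valtorta \cite{NaberValtortaRectifiable}, in the free-boundary formulation of Edelen and the first author \cite{EdelenEngelstein}, feeding in the two problem-specific ingredients proved here: the packing estimate of Theorem \ref{theoMain} and the $L^2$-best-approximation estimate of Theorem \ref{theoL2Estimate}. First I would reduce the statement: since $S^k(u)=\bigcup_{\epsilon>0}S^k_\epsilon(u)$ by Lemma \ref{lemProperties}, and a countable union of $k$-rectifiable sets is $k$-rectifiable, it suffices to prove that $S^k_\epsilon(u)$ is $k$-rectifiable for each fixed $\epsilon>0$; by the scaling invariance of $\mathcal{J}$ and a routine covering of $\Omega$ by balls, it is enough to treat $S^k_\epsilon(u)\cap B_1$ for a minimizer in $B_5$, which is a closed set by Lemma \ref{lemProperties}.

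Next I would set up the measure. For each dyadic scale $2^{-j}$ one chooses a maximal $2^{-j}$-separated subset of $S^k_\epsilon(u)\cap B_1$ and, following \cite[Section 2]{EdelenEngelstein}, builds from these nets a (packing-type) measure $\mu$ supported near $S^k_\epsilon(u)\cap B_1$. Theorem \ref{theoMain} provides the uniform upper Ahlfors bound $\mu(B_s(x))\lesssim s^k$ for all $x$ and $0<s\le 1$. The geometric heart is then Theorem \ref{theoL2Estimate}: it bounds the scale-$s$ Jones $\beta_2$-number of $\mu$ at $x$, i.e. the normalized $L^2$ distance of $\mu$ restricted to $B_s(x)$ from the best approximating $k$-plane, by the drop $\Psi^u_{2s}(z)-\Psi^u_{s}(z)$ of the Allen--Weiss density averaged in $z$ over $B_s(x)$. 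Combined with the monotonicity formula (Theorem \ref{theoMonotonicityMinimum}), which makes $r\mapsto\Psi^u_r(x)$ nondecreasing and, thanks to the energy-independent bounds of Section \ref{s:UniformBounds}, uniformly bounded, summing over $j$ telescopes to the crucial Dini-type estimate $\sum_j\int_{B_1}\beta_2^k(x,2^{-j})^2\,d\mu(x)\lesssim 1$.

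With the packing bound and the $\beta_2$-summability in hand, the discrete Rectifiable-Reifenberg theorem of \cite{NaberValtortaRectifiable} applies as in \cite{EdelenEngelstein} and yields that $\mu$ — and hence $S^k_\epsilon(u)\cap B_1$, which the iteration covers — is $k$-rectifiable with $\mathcal{H}^k(S^k_\epsilon(u)\cap B_1)\lesssim 1$. The covering scheme organizing this is the standard ``good ball versus bad ball'' dichotomy: either the density is pinched over a long dyadic range (forcing, via Theorem \ref{theoL2Estimate}, small $\beta$-numbers and a Reifenberg-flat piece) or there is a definite density drop (which by monotonicity and the uniform density bound can occur only boundedly often); this is carried out essentially verbatim as in \cite[Sections 2 and 8]{EdelenEngelstein}, the only care needed being the bookkeeping of the scaling exponents coming from $\alpha$-homogeneity and the $A_2$-weight $|y|^\Beta$ in the definitions of the strata and of $\beta_2^k$. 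I expect the real obstacle to lie upstream, in Theorem \ref{theoL2Estimate} itself, whose proof must combine the strong $H^1(\beta)$-compactness of minimizers (Lemma \ref{lemCompactness}, Corollary \ref{c:ContinuityofEnergy}) with the quantitative cone-splitting encoded in the monotonicity formula; everything after that is a black-box invocation of \cite{NaberValtortaRectifiable,EdelenEngelstein}.
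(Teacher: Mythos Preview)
Your proposal is correct and follows essentially the same approach as the paper: the paper explicitly omits the details here, stating only that rectifiability is a consequence of the packing result (Theorem \ref{theoMain}), the Rectifiable--Reifenberg theorem of \cite{NaberValtortaRectifiable}, and the $L^2$-estimate of Theorem \ref{theoL2Estimate}, and directs the reader to Sections 2 and 8 of \cite{EdelenEngelstein} for the details --- which is precisely the outline you have given.
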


The proof of Theorem \ref{theoMain} follows from inductively applying the following, slightly more technical, packing result (for details see Section 4 of \cite{EdelenEngelstein}).

\begin{theorem}\label{theoTreeConstruction}
Let $\epsilon>0$. There exists $\eta (n,\alpha, \epsilon)$ such that, for every minimizer $u\in \HH^\beta(B_5)$ of $\mathcal{J}$ in $B_5$ with $0\in F(u)$ and $0<R<1/10$, there is a finite collection $\mathcal{U}$ of balls $B$ with center $x_B \in S^k_{\epsilon, \eta R}$ and  radius $R \leq r_B \leq 1/10$ which satisfy the following properties:
\begin{enumerate}[A)]
\item Covering control:
$$  S^k_{\epsilon, \eta R} \cap B_1  \subset \bigcup_{B\in \mathcal{U}} B.$$
\item Energy drop:
For every $B\in \mathcal{U}$, 
$$\mbox{either}\quad r_B=R, \quad\quad \mbox{or}\quad \sup_{2B} \Psi^u_{2r_B}\leq \sup_{B_2}\Psi^u_2 -\eta.$$
\item Packing:
$$\sum_{B\in \mathcal{U} } r_B^k \leq c(n, \alpha, \epsilon).$$
\end{enumerate}
\end{theorem}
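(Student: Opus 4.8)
\emph{Strategy.} The plan is to run the quantitative-stratification ``tree construction'' of Naber--Valtorta \cite{NaberValtortaRectifiable}, in the free-boundary form of Edelen and the first author \cite[Section 4]{EdelenEngelstein}, once three structural inputs are in place for minimizers of \eqref{eqLocalizedFunctional}; the care throughout is in bookkeeping the nonstandard scaling ($\mathcal J(u,B_r)\sim r^{n}$, homogeneous solutions have degree $\alpha$, the weight $|y|^\Beta$ lives in $\R^{n+1}$) and in keeping every constant independent of $\norm{u}_{\HH^\beta}$, which is what Section \ref{s:UniformBounds} (Remark \ref{remUniformRules}) provides and without which the rescaling/compactness steps below would not be uniform. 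The inputs are: (a) a \emph{uniform two-sided density bound} $\tfrac{\omega_n}{2}\le \Psi^u_r(x)\le \Lambda(n,\alpha)$ for $x\in F(u)$ and $0<r<\dist(x,\partial B_5)$ --- the lower bound is Proposition \ref{propoAlmostGap} with the monotonicity of Theorem \ref{theoMonotonicityMinimum}, the upper bound follows from the uniform $\dot C^\alpha$ and $\HH^\beta$ control and Corollary \ref{c:ContinuityofEnergy}; (b) \emph{quantitative rigidity}: for each $\delta>0$ there is $\eta_0(\delta,n,\alpha)>0$ such that if $u$ is a minimizer, $z\in F(u)$ and $\Psi^u_{2r}(z)-\Psi^u_{2\eta_0 r}(z)<\eta_0$, then $r^{-2-n}\int_{B_{2r}(z)}|y|^\Beta|u-\widetilde u|^2<\delta$ for some $\alpha$-homogeneous minimizer $\widetilde u$; this is a compactness argument (Lemmas \ref{lemCompactness}, \ref{lemUSC}) using the second identity of Theorem \ref{theoMonotonicityMinimum}, which forces $\alpha u(x)-(x-z)\cdot\nabla u(x)\to 0$ in $L^2(|y|^\Beta)$ along any contradicting sequence; and (c) the $L^2$ best-approximation estimate, Theorem \ref{theoL2Estimate} below, bounding the Jones-type flatness numbers of a measure carried by the stratum by an integral of the monotonicity ``drop'' $\Psi^u_{2s}(z)-\Psi^u_{s}(z)$ --- the only genuinely new ingredient, obtained by adapting the Naber--Valtorta computation to Allen's monotone quantity.

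\emph{Cone-splitting.} I would next record the quantitative cone-splitting lemma: given $k$ and $\gamma>0$ there are $\delta_1(n,\alpha,\gamma)>0$ and $C_n$ so that if $x\in F(u)$ and $u$ is $\delta_1$-close (in the $L^2(|y|^\Beta)$ sense of (b)) to $\alpha$-homogeneous cones with vertices at $k{+}1$ points of $B_r(x)$ not contained in the $\tfrac{\gamma}{2}r$-neighbourhood of any $(k{-}1)$-plane, then $u$ is $(k{+}1,\delta_1^{1/2})$-symmetric in $B_{2r}(x)$ with symmetry subspace the span of those points; the proof is the same compactness/contradiction scheme as Lemma \ref{lemConeBlowup} and Lemma \ref{lemProperties}(5). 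Taking the contrapositive: if $x\in S^k_{\epsilon,\eta R}(u)$, if $\eta$ is chosen (for the fixed $\gamma$) small enough that $\delta_1^{1/2}\le\epsilon$ and $\eta\le\eta_0(\delta_1)$, and if $\Psi^u_{2}(z)-\Psi^u_{2r}(z)<\eta$ for all $z\in B_{2r}(x)$ --- which, by monotonicity ($\Psi^u_{2r}(z)\le\Psi^u_2(z)$), holds whenever $\sup_{B_{2r}(x)}\Psi^u_{2r}>\sup_{B_2}\Psi^u_2-\eta$ --- then (b) makes $u$ $\delta_1$-conical at every such $z$, and cone-splitting places $S^k_{\epsilon,\eta R}(u)\cap B_r(x)$ within distance $\gamma r$ of a single $k$-plane; that set is therefore covered by at most $C_n\gamma^{-k}$ balls of radius $\gamma r$ centred on it.

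\emph{The tree.} Fix the geometric ratio $\rho:=\gamma$ and $\eta=\eta(n,\alpha,\epsilon)>0$ small enough for the above and for the Rectifiable--Reifenberg theorem of \cite{NaberValtortaRectifiable}. Cover $S^k_{\epsilon,\eta R}(u)\cap B_1$ by $\le C_n$ balls of radius $1/10$ centred on it; these are the first generation. Given a ball $B=B_r(x)$ of the current generation with $x\in S^k_{\epsilon,\eta R}$ and $R\le r\le 1/10$: \emph{stop} and put $B\in\mathcal U$ if $r=R$ or $\sup_{2B}\Psi^u_{2r}\le\sup_{B_2}\Psi^u_2-\eta$; otherwise the previous paragraph applies, and replace $B$ by the $\le C_n\rho^{-k}$ balls of radius $\max\{\rho r,R\}$ that cover $S^k_{\epsilon,\eta R}(u)\cap B$ and are centred on it. This terminates (radii decrease geometrically, bounded below by $R$), and (A), (B), and the centring requirement are immediate from the rule. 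For (C), let $\mathcal U$ be the leaves and $\mu:=\sum_{B\in\mathcal U}r_B^k\,\delta_{x_B}$: every refined ball $B=B_r(x)$ satisfies $\sup_{2B}\Psi^u_{2r}>\sup_{B_2}\Psi^u_2-\eta$, so (monotonicity) $\Psi^u_2(z)-\Psi^u_{2r}(z)<\eta$ for $z\in 2B$, and telescoping along the chain of refined balls above any leaf the monotonicity drops sum to $<\eta$. Feeding this into Theorem \ref{theoL2Estimate} makes the discrete Reifenberg condition for $\mu$ hold with constant proportional to $\eta$; for $\eta$ small the Rectifiable--Reifenberg theorem gives $\mu(B_s(y))\le Cs^k$ for $y\in\mathrm{spt}\,\mu$, $s\le 2$, hence $\sum_{B\in\mathcal U}r_B^k=\mu(B_1)\le C=c(n,\alpha,\epsilon)$.

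\emph{Main obstacle.} Everything above transcribes \cite[Section 4]{EdelenEngelstein} once input (c), Theorem \ref{theoL2Estimate}, is established, and that is the real work: one must carry out the Naber--Valtorta ``squash and split'' estimate with the weight $|y|^\Beta\,dx$ on $\R^{n+1}$ and degree-$\alpha$ homogeneity, controlling the Hessian-type remainder produced when $\Psi^u$ is differentiated twice and reconciling it with the $r^n$-scaling of $\mathcal J$. This is also where the energy-independent bounds of Section \ref{s:UniformBounds} are indispensable, since the rescalings of $u$ entering the compactness arguments behind (b), (c) and cone-splitting must carry uniformly bounded $\HH^\beta$ norms for their limits to exist and remain minimizers.
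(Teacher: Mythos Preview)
Your overall architecture follows Edelen--Engelstein and the paper closely, but there is a genuine logical gap at the heart of both your refinement step and your packing argument. You claim that the negation of condition (B), namely $\sup_{2B}\Psi^u_{2r}>\sup_{B_2}\Psi^u_2-\eta$, implies $\Psi^u_{2}(z)-\Psi^u_{2r}(z)<\eta$ for \emph{all} $z\in 2B$. This is false: the supremum condition only produces a \emph{single} point $z_0\in 2B$ with $\Psi^u_{2r}(z_0)>E-\eta$, hence with small drop; for an arbitrary $z\in 2B$ you know only $\Psi^u_{2r}(z)\le\Psi^u_2(z)\le E$, which gives no lower bound on $\Psi^u_{2r}(z)$. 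Consequently you cannot apply quantitative rigidity (b) at enough points to run cone-splitting and place the stratum near a $k$-plane, and in the packing step your telescoping along chains of refined balls does not yield the pointwise bound on the Allen--Weiss drop that Theorem \ref{theoL2Estimate} and the Discrete--Reifenberg theorem require. (There is also an off-by-one slip in the cone-splitting statement: $k{+}1$ vertices in $\tfrac{\gamma}{2}r$-general position with respect to $(k{-}1)$-planes yield only $k$-symmetry, not $(k{+}1)$-symmetry; to contradict membership in $S^k_\epsilon$ one needs $k{+}2$ vertices not near any $k$-plane.)

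This gap is exactly what forces the paper's more intricate good/bad ball mechanism. The Key Dichotomy (Theorem \ref{theoKeyDichotomy}, via Lemma \ref{lemPreKey}) is the substitute for your failed implication: it says that in any ball one has \emph{either} $\Psi^u_{\gamma\rho r}\ge E-\eta'$ at \emph{every} point of $S^k_{\epsilon,\eta r}\cap B_r$ (good ball), \emph{or} the set $\{z\in B_r:\Psi^u_{2\eta r}(z)\ge E-\eta\}$ lies within $\rho r$ of some affine $(k{-}1)$-plane (bad ball). In good balls the small drop genuinely holds on the whole stratum, so Theorem \ref{theoL2Estimate} feeds into Theorem \ref{theoDiscreteReifenberg} and gives packing. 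Bad balls are handled by a separate argument: near the $(k{-}1)$-plane one has better-than-$k$-dimensional packing for free, and away from it the density must have dropped by a definite amount, which can happen only boundedly many times along any chain by the two-sided density bound. Your sketch collapses these two regimes into one and in doing so loses precisely the information that distinguishes them.
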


We construct the balls of Theorem \ref{theoTreeConstruction} using a ``stopping time" or ``good ball/bad ball" argument. Much of this argument uses harmonic analysis and geometric measure theory and is completely independent of the original problem \eqref{eqLocalizedFunctional}. However, there are a few places in which we need to connect the behavior of minimizers to the geometric structure of the singular set. Here we will sketch the ``good ball/bad ball" argument, taking for granted the estimates needed to apply this argument to our functional. In the next few subsection we will provide these estimates. For more details on the construction itself we refer the reader to Section 7 in \cite{EdelenEngelstein}.

\smallskip

\noindent {\bf Outline of the Construction in Theorem \ref{theoTreeConstruction}} To find this covering we define good and bad balls as follows: imagine our ball, $B$, has radius 1. We say that $B$ is a \emph{good ball}, if at every point in $x\in S^k_\varepsilon(u)\cap B$ the monotone quantity centered at that point at some small scale, $\rho$, is not much smaller than the monotone quantity on ball $B$ (we say these points have ``small density drop"). A ball $B$ is a \emph{bad ball} if all the points in $S^k_\varepsilon(u)\cap B$ with small density drop are contained in a small neighborhood of a $(k-1)$-plane. This good/bad is a dichotomy follows from Theorem \ref{theoKeyDichotomy} in Section \ref{secToolsBad}. 

In a good ball of radius $r$ we cover $S^k_\varepsilon(u)$ with balls of radius $\rho r$ iterating the construction until we find a bad ball or until the radius of the ball becomes very small. In a bad ball, we cover $S^k_\varepsilon(u)$ away from the $(k-1)$-plane without much care. Close to the $(k-1)$-plane we cover $S^k_\varepsilon(u)$ with balls of radius $\rho r$ iterating the construction until we reach a good ball or until the radius of the ball becomes very small. 

Inside long strings of good balls, the packing estimates follow from powerful tools in geometric measure theory (see Theorem \ref{theoDiscreteReifenberg} below) and the connection between the drop in monotonicity and the local flatness of the singular strata (see Theorem \ref{theoL2Estimate} below). We give more details in Section \ref{secToolsGood}. 

Inside  long strings of bad balls each of which is near the $(k-1)$-plane of the previous bad ball, we have even better packing estimates than expected (as we are effectively well approximated by planes which are lower dimensional). This leaves only points which are in many bad balls and in most of those balls they are far away from the $(k-1)$-plane. However, at these points the monotone quantity drops a definite amount many times, which contradicts either finiteness or monotonicity. This implies that the points and scales inside the bad balls which are not close to the $(k-1)$-plane form a negligible set (the technical term is a Carleson set).  We give more information about the bad balls in Section \ref{secToolsBad}. 

\subsection{Tools for bad balls: key dichotomy}\label{secToolsBad}

\begin{theorem}[Key dichotomy]\label{theoKeyDichotomy}
Let  $\epsilon, \rho, \gamma, \eta'>0$ be fixed numbers with $\rho \gamma<2$. There exists an
$\eta_0 (n, \alpha, \epsilon, \rho, \gamma, \eta')<\rho/100$ such that for every $\eta \leq \eta_0$, every $r>0$, every $E>0$ and every  minimizer $u\in \HH^\beta(B_{4r})$ of $\mathcal{J}$ in $B_{4r}$ with $0\in F(u)$ and $\sup_{B_r}\Psi^u_{2r}\leq E$,
then either
\begin{itemize}
\item $\Psi_{\gamma \rho r}^u\geq E-\eta'$ on $S^k_{\epsilon,\eta r}\cap B_r$, or
\item there exists $\ell \in L^{k-1}$ so that $\{x\in B_r: \Psi^u_{2\eta r}(x)\geq E-\eta\}\subset B_{\rho r}(\ell)$.
\end{itemize}
\end{theorem}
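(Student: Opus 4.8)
I would prove Theorem~\ref{theoKeyDichotomy} by contradiction and compactness, running the Federer--Weiss style dimension reduction that already underlies Lemmas~\ref{lemConeBlowup} and~\ref{lemDimensionReduction}. First rescale so that $r=1$, and dispose of the trivial regime $E>C_0$, where $C_0=C_0(n,\alpha)$ is the universal bound $\sup_{B_1}\Psi^u_2\le C_0$ that follows from the uniform H\"older and Caccioppoli estimates of Section~\ref{s:UniformBounds} (if $E>C_0$ the first alternative is vacuously false while the set in the second alternative is empty). So suppose the statement fails: there are $\eta_i\searrow 0$, minimizers $u_i\in\HH^\beta(B_4)$ with $0\in F(u_i)$ and $\sup_{B_1}\Psi^{u_i}_2\le E_i\le C_0$, points $x_i\in S^k_{\epsilon,\eta_i}(u_i)\cap B_1$ with $\Psi^{u_i}_{\gamma\rho}(x_i)<E_i-\eta'$, and, for every affine $(k-1)$-plane $\ell$, the set $G_i:=\{x\in B_1:\Psi^{u_i}_{2\eta_i}(x)\ge E_i-\eta_i\}$ is not contained in $B_\rho(\ell)$. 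Since a subset of $B_1$ that lies in no $\rho$-neighborhood of a $(k-1)$-plane must contain a $(k+1)$-tuple $p^i_0,\dots,p^i_k$ which is $\rho$-independent (each $p^i_j$ at distance $\ge\rho$ from the affine span of its predecessors), I pick such a tuple in $G_i$; note $G_i\subset F(u_i)$, because an interior point of $\{u_i=0\}$ has $\Psi\equiv 0<E_i-\eta_i$ and a point of the positive phase cannot have $\Psi^{u_i}_{2\eta_i}\ge E_i-\eta_i\ge\tfrac{\omega_n}{2}-\eta_i$.

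Next I pass to subsequential limits. By Corollary~\ref{c:ContinuityofEnergy} and Lemma~\ref{lemCompactness}, $u_i\to u_0$ locally uniformly and strongly in $L^2_{\loc}(B_4,|y|^\beta)$ with $u_0$ a minimizer in $B_4$; also $E_i\to E_\infty\le C_0$, $x_i\to x_\infty\in\overline{B_1}\cap F(u_0)$ (the membership in $F(u_0)$ by Hausdorff convergence of free boundaries), and $p^i_j\to p^\infty_j$, the limiting tuple still being $\rho$-independent and spanning an affine $k$-plane $L_0=p^\infty_0+\vec{L}_0$. For each $j$, upper semicontinuity of the density (Lemma~\ref{lemUSC}, with the vanishing radii $2\eta_i$) gives $\Psi^{u_0}_0(p^\infty_j)\ge\limsup_i\Psi^{u_i}_{2\eta_i}(p^i_j)\ge E_\infty$, while continuity at a fixed radius (Lemma~\ref{lemContinuity}) together with $\Psi^{u_i}_2(p^i_j)\le E_i$ gives $\Psi^{u_0}_2(p^\infty_j)\le E_\infty$. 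Hence $r\mapsto\Psi^{u_0}_r(p^\infty_j)$ is constant $\equiv E_\infty$ on $(0,2]$, and the equality case of the monotonicity formula (Theorem~\ref{theoMonotonicityMinimum}) forces $u_0$ to be $\alpha$-homogeneous about $p^\infty_j$ in $B_2(p^\infty_j)\supseteq B_1$. Cone splitting (the mechanism behind Lemmas~\ref{lemConeBlowup}--\ref{lemDimensionReduction}: homogeneity about $k+1$ affinely independent points yields translation invariance in their difference directions) then shows $u_0$ is invariant in $\vec{L}_0$, hence $k$-symmetric, and therefore $\Psi^{u_0}_\sigma(x)\equiv E_\infty$ for every $x\in L_0$ and $\sigma\in(0,2)$.

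Finally I extract the contradiction. Because $\gamma\rho<2$, taking the limit in $\Psi^{u_i}_{\gamma\rho}(x_i)<E_i-\eta'$ (again Lemma~\ref{lemContinuity}) and using monotonicity gives $\Psi^{u_0}_0(x_\infty)\le\Psi^{u_0}_{\gamma\rho}(x_\infty)\le E_\infty-\eta'$; comparing with the previous paragraph forces $x_\infty\notin L_0$. On the other hand, a routine closedness argument adapting Lemma~\ref{lemProperties}(4) to the regime $\eta_i\searrow 0$ (if $u_0$ were $(k+1,\epsilon/2)$-symmetric in some $B_s(x_\infty)$, then $u_i$ would be $(k+1,\epsilon)$-symmetric in $B_s(x_i)$ with $s\ge\eta_i$ for large $i$) gives $x_\infty\in S^k_{\epsilon/2}(u_0)\subseteq S^k(u_0)$, so no blow-up of $u_0$ at $x_\infty$ is $(k+1)$-symmetric. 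But since $u_0$ is $\vec{L}_0$-invariant, its restriction $\widetilde{u}_0$ to $\vec{L}_0^\perp$ is an $\alpha$-homogeneous minimizer in $n-k$ fewer dimensions (Lemma~\ref{lemDimensionReduction}), and the component of $x_\infty$ in $\vec{L}_0^\perp$ (relative to $p^\infty_0$) is a nonzero free boundary point of $\widetilde{u}_0$; applying Lemma~\ref{lemConeBlowup} there produces a blow-up of $\widetilde{u}_0$ invariant in one further direction, which lifts to a $(k+1)$-symmetric blow-up of $u_0$ at $x_\infty$ --- contradicting $x_\infty\in S^k(u_0)$.

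I expect the main obstacle to be the coordinated limit passage in the middle paragraph: using the three distinct convergence mechanisms (strong $L^2(|y|^\beta)$-convergence, upper semicontinuity of the density at vanishing scale, and continuity of the density at a fixed scale) to simultaneously upgrade the $G_i$-points into \emph{genuine} homogeneity points of $u_0$ \emph{and} preserve the non-$(k+1)$-symmetry of $u_0$ near $x_\infty$, together with the closing dimension-reduction step. Everything else is bookkeeping with the monotonicity formula of Section~\ref{s:monotonicity} and the compactness theory of Section~\ref{s:Compactness}.
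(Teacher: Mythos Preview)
Your argument is correct and follows essentially the same compactness--contradiction scheme the paper invokes: the paper reduces Theorem~\ref{theoKeyDichotomy} to Lemma~\ref{lemPreKey} and then cites \cite[Lemma~3.3]{EdelenEngelstein} for the proof, whereas you write out that very argument (pick $\rho$-independent high-density points, pass to the limit, use Lemmas~\ref{lemUSC} and~\ref{lemContinuity} plus the rigidity case of Theorem~\ref{theoMonotonicityMinimum} to force $k$-symmetry, then cone-split at $x_\infty$ to contradict membership in $S^k$). The only cosmetic wrinkle is your assertion $G_i\subset F(u_i)$: as stated it is not literally true for points within $2\eta_i$ of $F(u_i)$, but since any $p^i_j\in G_i$ lies within $O(\eta_i)$ of $F(u_i)$ (and positive-phase points have $\Psi^{u_i}_{2\eta_i}\to-\infty$ by the surface term), the limits $p^\infty_j$ land in $F(u_0)$ regardless, which is all you use.
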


The key dichotomy is a direct consequence of the Lemma \ref{lemPreKey} below. The core idea is to make effective the following assertion: if $u$ is $k$-symmetric, then along the invariant manifold the Allen-Weiss density is constant, and every point away from the manifold will have $(k+1)$-symmetric blow-ups by Lemma \ref{lemConeBlowup}. 


\begin{lemma}\label{lemPreKey}
Let $\epsilon, \rho, \gamma, \eta'>0$ be fixed numbers with $\gamma\rho <2$. There exist $\eta_0, \theta>0$ such that for every $\eta<\eta_0$, every $E>0$ and every minimizer $u$ of $\mathcal{J}$ in $B_4$ with  $0\in F(u)$ and $\sup_{B_1}\Psi^u_2\leq E$, if  there exist $w_0,\dots, w_k\in B_1$ and affine manifolds $L^i:=\langle w_0,\dots, w_i\rangle\in V^{i}$ with 
$$w_i\notin B_\rho(L^{i-1}),\quad\quad  \mbox{ and } \quad\quad \Psi^u_{2\eta}(w_i) \geq E-\eta \quad\quad\mbox{for every }i\in\{0,\cdots,k\},$$
then, 
\begin{equation}\label{eqSmallJump}
\Psi^u_{\gamma\rho}(x)\geq E-\eta' \quad\quad \mbox{on }B_\theta(L^k)\cap B_1
\end{equation}
and
\begin{equation}\label{eqCloseStrata}
S^k_{\epsilon,\eta}\cap B_1\subset B_\theta(L^k)
\end{equation}
\end{lemma}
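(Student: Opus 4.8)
The plan is to argue by contradiction and compactness, exactly in the spirit of the ``quantitative rigidity'' lemmas of \cite{EdelenEngelstein} and \cite{NaberValtortaRectifiable}. Suppose the statement fails: then there are fixed $\epsilon,\rho,\gamma,\eta'>0$ with $\gamma\rho<2$, a sequence $\eta_i\to 0$, energies $E_i$, minimizers $u_i$ of $\mathcal J$ in $B_4$ with $0\in F(u_i)$ and $\sup_{B_1}\Psi^{u_i}_2\le E_i$, points $w_0^i,\dots,w_k^i\in B_1$ spanning affine subspaces $L^{i,j}=\langle w_0^i,\dots,w_j^i\rangle$ with $w_j^i\notin B_\rho(L^{i,j-1})$ and $\Psi^{u_i}_{2\eta_i}(w_j^i)\ge E_i-\eta_i$ for all $j$, but for which \emph{either} \eqref{eqSmallJump} fails (there is $x_i\in B_{\theta_i}(L^{i,k})\cap B_1$ with $\Psi^{u_i}_{\gamma\rho}(x_i)<E_i-\eta'$) \emph{or} \eqref{eqCloseStrata} fails (there is $x_i\in S^k_{\epsilon,\eta_i}(u_i)\cap B_1$ with $x_i\notin B_{\theta_i}(L^{i,k})$), where $\theta_i\to 0$ is whatever candidate we try. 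First I would normalize: since $\sup_{B_1}\Psi^{u_i}_2\le E_i$ and $0\in F(u_i)$, the uniform $C^\alpha$ bound of Remark \ref{remUniformRules} and the Caccioppoli inequality give uniform $\HH^\beta$ bounds, so by Lemma \ref{lemCompactness} (passing to a subsequence) $u_i\to u_0$ in the strong sense there, $u_0$ is a minimizer, $w_j^i\to w_j^0$, the subspaces $L^{i,j}\to L^{0,j}$ in Hausdorff distance, and the $E_i$ converge (after a further subsequence) to some $E$, with $\sup_{B_1}\Psi^{u_0}_2\le E$ by the upper semicontinuity of Lemma \ref{lemUSC}.

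The heart of the matter is to identify the limit $u_0$ as a $k$-symmetric cone whose invariant space contains $L^{0,k}$, and whose density equals $E$ along it. First, by Lemma \ref{lemContinuity} and Lemma \ref{lemUSC} applied at the points $w_j^i$ with radii $2\eta_i\to 0$, we get $\Psi^{u_0}_0(w_j^0)\ge E$ for each $j$; combined with $\Psi^{u_0}_0(w_j^0)\le \sup_{B_1}\Psi^{u_0}_2\le E$ (using monotonicity and the fact that $w_j^0\in B_1$, so that $B_{2}(w_j^0)$ is within reach after a harmless enlargement of the ambient ball, as in the proof of Lemma \ref{propoAlmostGap}), we obtain $\Psi^{u_0}_0(w_j^0)=E$ \emph{and} $\Psi^{u_0}_r(w_j^0)\equiv E$ is constant in $r$. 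By the equality case of the monotonicity formula, Theorem \ref{theoMonotonicityMinimum}, this forces $u_0$ to be $\alpha$-homogeneous with respect to each $w_j^0$. A function that is $\alpha$-homogeneous about two distinct points $w_0^0,w_1^0$ is invariant in the direction $w_1^0-w_0^0$ (this is the ``cone splitting'' computation; one subtracts the two Euler relations $\alpha u_0 = (x-w_0^0)\cdot\nabla u_0 = (x-w_1^0)\cdot\nabla u_0$ to get $(w_1^0-w_0^0)\cdot\nabla u_0\equiv 0$). Iterating, since $w_j^0\notin B_\rho(L^{0,j-1})$ guarantees the points are genuinely affinely independent in the limit (their mutual separations are bounded below by $\rho$), $u_0$ is translation-invariant in the full $k$-dimensional space parallel to $L^{0,k}$, and being also $\alpha$-homogeneous about $w_0^0$, it is $k$-symmetric with invariant space (the linearization of) $L^{0,k}$; moreover $\Psi^{u_0}_r(x)\equiv E$ for every $x\in L^{0,k}$ by translation invariance.

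Now I would derive the two contradictions. For \eqref{eqSmallJump}: pick $\theta$ small (to be fixed depending only on $n,\alpha,\epsilon,\rho,\gamma,\eta'$, by a second compactness/continuity argument). If it failed along the sequence with $\theta_i\to0$, then $x_i\to x_0\in L^{0,k}$; by Lemma \ref{lemContinuity}, $\Psi^{u_i}_{\gamma\rho}(x_i)\to\Psi^{u_0}_{\gamma\rho}(x_0)=E$ (using $\gamma\rho<2$ to stay inside the ball and $x_0\in L^{0,k}$), contradicting $\Psi^{u_i}_{\gamma\rho}(x_i)<E_i-\eta'$ once $i$ is large. The role of the explicit $\theta$ in the statement is simply that this continuity is uniform on the compact slab $\overline{B_\theta(L^k)}\cap \overline{B_1}$; so I would instead prove directly, by one more contradiction argument in which $\theta$ is \emph{not} sent to zero but fixed, that there is a uniform $\theta_0=\theta_0(n,\alpha,\rho,\gamma,\eta')$ making \eqref{eqSmallJump} hold. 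For \eqref{eqCloseStrata}: if $x_i\in S^k_{\epsilon,\eta_i}(u_i)\cap B_1$ with $x_i\notin B_\theta(L^{i,k})$, pass to a limit $x_0\notin B_{\theta}(L^{0,k})$ (strictly outside the invariant space, using $\theta$ fixed); by part (4) of Lemma \ref{lemProperties} the sets $S^k_\epsilon$ are closed under this convergence, so $x_0\in S^k_\epsilon(u_0)$ — but $u_0$ is $k$-symmetric with invariant space $L^{0,k}$, and any point off that invariant space has a $(k+1)$-symmetric blow-up by Lemma \ref{lemConeBlowup} (the extra symmetry being the radial direction toward $x_0$), so $x_0\notin S^k(u_0)\supset S^k_\epsilon(u_0)$, a contradiction. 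Packaging: choose $\eta_0$ and $\theta$ as the minimum of the thresholds produced by these two compactness arguments; then for $\eta<\eta_0$ both \eqref{eqSmallJump} and \eqref{eqCloseStrata} hold. The main obstacle I anticipate is purely bookkeeping: making sure the compactness arguments produce \emph{uniform} $\eta_0,\theta$ independent of $E$ (this is where the semi-norm–independent estimates of Section \ref{s:UniformBounds} and the upper semicontinuity Lemma \ref{lemUSC} do the real work, since $E$ itself is only controlled along the subsequence), and correctly tracking that the ``two-point $\Rightarrow$ translation invariance'' step really does upgrade the $k+1$ points with $\rho$-separation into a genuinely $k$-dimensional invariant space in the limit.
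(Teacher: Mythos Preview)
Your proposal is correct and follows essentially the same compactness-and-contradiction approach the paper defers to (namely \cite[Lemma 3.3]{EdelenEngelstein}): pass to a limit, use the pinching $\Psi^{u_i}_{2\eta_i}(w_j^i)\ge E_i-\eta_i$ together with $\sup_{B_1}\Psi^{u_i}_2\le E_i$ and the rigidity case of Theorem \ref{theoMonotonicityMinimum} to force $u_0$ to be $\alpha$-homogeneous about each $w_j^0$, hence $k$-symmetric along $L^{0,k}$ by cone splitting, and then derive the two contradictions. Two small points worth tightening: for \eqref{eqCloseStrata} you should not cite Lemma \ref{lemProperties}(4) directly (you only know $x_i\in S^k_{\epsilon,\eta_i}(u_i)$, not $S^k_\epsilon(u_i)$), but rather argue that $u_0$ is $(k{+}1,\epsilon/2)$-symmetric in some fixed $B_{s_0}(x_0)$ and then use $L^2$ convergence plus $\eta_i<s_0$ to contradict $x_i\in S^k_{\epsilon,\eta_i}$; and your instinct about the bookkeeping on $\theta$ is right --- the clean way is to run one contradiction argument with $\theta$ fixed for \eqref{eqCloseStrata} and a separate one (or a nested compactness) for \eqref{eqSmallJump}, using that the limiting $k$-symmetric minimizers lie in a compact family by Remark \ref{remUniformRules} so that the neighborhood width on which $\Psi^{u_0}_{\gamma\rho}\ge E-\eta'/2$ is uniform.
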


The proof follows (with only minor modifications) the proof in \cite[Lemma 3.3]{EdelenEngelstein}. We end this subsection by formally defining the good/bad balls alluded to above: 

\begin{definition}\label{defGoodBad}
Let $x\in B_2$, $0< R<r<2$ and $u$ be a minimizer to $\mathcal J$ in $B_5$. We say that the ball $B_r(x)$ is \emph{good} if 
$$\Psi^u_{\gamma\rho r}\geq E-\eta' \quad\quad \mbox{on } S^k_{\epsilon, \eta R}\cap B_r(x), $$
and  otherwise we say that $B_r(x)$ is \emph{bad}. 
\end{definition}
By Theorem \ref{theoKeyDichotomy} in any bad ball $B$ there exists an affine $(k-1)$-manifold $\ell_B$ with
\begin{equation}\label{eqBadEstimate}
\{w\in B: \Psi^u_{2\eta r}(w)\geq E-\eta\}\subset B_{\rho r}(\ell_B^{k-1}).
\end{equation}

\subsection{Tools for good balls: packing estimates and GMT}\label{secToolsGood}

In this section we control the local flatness of the singular strata by the drop in monotonicity. To do this we introduce a key tool from geometric measure theory which estimates the flatness of a set. Given a Borel measure $\mu$, a point $x$ and a radius $r$, the beta coefficient is defined as follows:
\begin{equation}\label{eqDefBeta}
\beta^k_{\mu,2}(B_r(x))^2:=\beta^k_{\mu,2}(x,r)^2 =\inf_{L \in V^k_a} \frac{1}{r^{k}} \int_{B_r(x)} \frac{\dist(z,L)^2}{r^2}\, d\mu (z)
\end{equation}
where $V^k_a$ stands for the collection of $k$-dimensional affine sets of $\R^n$. The beta coefficients are meant to measure in a scale invariant way how far is a measure from being flat, in this case in the $L^2$ distance, although other $L^p$ versions have been used in the literature for $1\leq p\leq\infty$ quite often, dating back to \cite{JonesTraveling} (for the $L^\infty$ version) and David-Semmes \cite{DavidSemmes} (for the $L^p$ version). 

If we control the size of the $\beta^k$'s we can conclude size and structure estimates on the measure $\mu$. The following theorem says exactly this and represents a major technical achievement. It differs (importantly) from prior work in this area by the lack of {\it a priori} assumptions on the upper or lower densities of the measure involved.

\begin{theorem}[Discrete-Reifenberg Theorem, see {\cite[Theorem 3.4]{NaberValtortaRectifiable}}]\label{theoDiscreteReifenberg}
Let $\{B_{r_q}(q)\}_q$ be a collection of disjoint balls, with $q\in B_1(0)$ and $0<r_q\leq1$, and let $\mu$ be the packing measure $\mu:=\sum_q r_q^k\delta_q$, where $\delta_q$ stands for the Dirac delta at $q$.  There exist constants $\tau_{DR}, C_{DR}>0$ depending only on the dimension such that if
$$\int_0^{2r}\int_{B_r(x)} \beta^k_{\mu,2}(z,s)^2 \, d\mu (z) \frac{ds}{s} \leq \tau_{DR} r^k \quad\quad \mbox{for every } x\in B_1(0), \, 0<r\leq 1,$$
then
$$\mu(B_1(0))=\sum_q r_q^k\leq C_{DR}.$$
\end{theorem}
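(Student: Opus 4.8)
\medskip
\noindent\textbf{Proof plan.} This is the Discrete--Reifenberg theorem of Naber and Valtorta, so in the paper one simply invokes \cite{NaberValtortaRectifiable}; let me nevertheless indicate how the argument runs. The plan is to establish, by a downward induction on scale, the stronger \emph{uniform packing bound}
$$\mu(B_\rho(x))\le C_{DR}\,\rho^{k}\qquad\text{for every }B_\rho(x)\subset B_1(0),$$
which in particular gives $\mu(B_1(0))\le C_{DR}$. The Carleson-type hypothesis $\int_0^{2r}\!\int_{B_r(x)}\beta^k_{\mu,2}(z,s)^2\,d\mu(z)\,\tfrac{ds}{s}\le\tau_{DR}r^{k}$ is exactly the ingredient that prevents the constant from deteriorating as one descends through scales; a pointwise smallness of the $\beta$'s would not be enough.

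First I would set up the linear-algebraic comparison that lies at the heart of the argument: for a ball $B_r(x)$ one relates $\beta^k_{\mu,2}(x,r)^2$ to the spectrum of the rescaled second-moment form $\mathbf b_{x,r}:=r^{-k-2}\int_{B_r(x)}(z-\bar z)\otimes(z-\bar z)\,d\mu(z)$, where $\bar z$ is the barycenter of $\mu$ on $B_r(x)$. The infimum defining $\beta^k_{\mu,2}(x,r)$ is attained by the affine plane through $\bar z$ spanned by the top $k$ eigenvectors of $\mathbf b_{x,r}$, and $\beta^k_{\mu,2}(x,r)^2$ is comparable to $\lambda_{k+1}+\dots+\lambda_n$, the sum of the remaining eigenvalues. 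From this one harvests two facts: (i) if $\beta^k_{\mu,2}(x,r)$ is small then $\mu$ on $B_r(x)$ concentrates in a thin slab around a $k$-plane $L_{x,r}$; and (ii) a ``squash lemma'' controlling the rotation of the best plane between consecutive scales, $\angle(L_{x,r},L_{x,2r})\lesssim\beta^k_{\mu,2}(x,r)+\beta^k_{\mu,2}(x,2r)$.

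Next I would carry out the Reifenberg-type construction. Starting from the best plane $L_0$ at the top scale, one glues the local planes $L_{x,r}$ using a partition of unity subordinate to a Whitney decomposition of $L_0\cap B_1$, obtaining a bi-Lipschitz map $\sigma\colon L_0\to\R^{n}$ with $\|\nabla\sigma-\mathrm{Id}\|$ small. The crucial point is that the telescoping errors in this gluing are estimated by the \emph{integrated} $\beta^2$-quantity along scales, so smallness of $\tau_{DR}$ makes $\sigma$ a small perturbation of the identity; then $\Sigma:=\sigma(L_0\cap B_1)$ is a $k$-dimensional Reifenberg-flat set with $\mathcal H^{k}(\Sigma\cap B_2)\lesssim1$, and $\mathrm{supp}\,\mu\cap B_1$ lies in an arbitrarily thin neighborhood of $\Sigma$. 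Since the balls $\{B_{r_q}(q)\}_q$ are pairwise disjoint and each $q$ lies essentially on $\Sigma$, the pieces $B_{r_q}(q)\cap\Sigma$ are essentially disjoint subsets of $\Sigma$ of $\mathcal H^{k}$-measure $\approx r_q^{k}$, whence $\mu(B_1)=\sum_q r_q^{k}\lesssim\mathcal H^{k}(\Sigma\cap B_2)\lesssim1$; feeding this back into the induction on scale upgrades it to the uniform packing bound at every scale.

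I expect the main obstacle to be the Reifenberg construction together with the bookkeeping needed to dominate every error term by the single quantity $\tau_{DR}r^{k}$ uniformly across all scales and all centers --- this is precisely why the hypothesis is a Carleson condition rather than a pointwise bound on $\beta^k_{\mu,2}$. A secondary subtlety is that $\mu$ is only comparable to, not equal to, $\mathcal H^{k}$ on $\Sigma$, so the disjointness of the balls $B_{r_q}(q)$ is genuinely used in the final mass comparison rather than being cosmetic.
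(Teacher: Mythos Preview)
Your identification is correct: the paper does not prove this theorem at all but simply quotes it from \cite[Theorem~3.4]{NaberValtortaRectifiable} as a black box to be applied later in the covering construction. Your sketch of the Naber--Valtorta argument (second-moment/best-plane analysis, the plane-rotation estimate, the bi-Lipschitz Reifenberg construction controlled by the Carleson quantity, and the downward induction on scales) captures the essential structure of the original proof, though of course what you have written is an outline rather than a complete argument.
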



To obtain the packing estimates required for the Discrete-Reifenberg Theorem, we need to control the beta coefficients. The key estimate of this entire framework lies in the following theorem, which shows the drop in monotonicity at a given point and a given scale controls the beta coefficient at a comparable scale.
\begin{theorem}\label{theoL2Estimate}
Let $\epsilon>0$ be given. There exist $\delta(n,\alpha, \epsilon)$ and $c(n, \alpha, \epsilon)$ such that  for every $u\in \HH^\beta(B_{5r})$ minimizing $\mathcal{J}$ in $B_{5r}(x)$ with  $x\in F(u)$ and
\begin{equation}\label{eq0DeltaNonK1Epsilon}
\begin{cases}
u \mbox{ is }(0,\delta)\mbox{-symmetric in } B_{4r}(x)\\
u \mbox{ is not }(k+1,\epsilon)\mbox{-symmetric in } B_{4r}(x),
\end{cases}
\end{equation}
and every Borel measure $\mu$, we have that
\begin{equation}\label{eqL2Estimate}
\beta^k_{\mu,2}(B_r(x))^2 
	\leq \frac{ c(n,\alpha, \epsilon)}{r^k}  \int_{B_r(x)} \left( \Psi_{4r}^u (w)-\Psi_{r}^u (w)\right) \, d\mu (w).
\end{equation}

\end{theorem}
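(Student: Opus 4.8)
The plan is to follow the now-standard Naber–Valtorta strategy for an $L^2$-best-approximation estimate, adapted to the Allen–Weiss density $\Psi^u$. The key mechanism is that for a minimizer $u$, the ``drop'' $\Psi_{4r}^u(w)-\Psi_r^u(w)$ controls, via the monotonicity formula of Theorem~\ref{theoMonotonicityMinimum}, a weighted $L^2$ norm of the quantity $\alpha u(x) - (x-w)\cdot\nabla u(x)$, i.e. how far $u$ is from being homogeneous about the point $w$. A spanning/dimension argument then shows that if the measure $\mu$ charges a set of points around which $u$ looks too far from homogeneous in directions spanning a $(k+1)$-plane, one can build a $(k+1)$-symmetric approximant, contradicting the second hypothesis in \eqref{eq0DeltaNonK1Epsilon}. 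The $(0,\delta)$-symmetry hypothesis is what lets us extract from the almost-monotone behavior an actual near-homogeneity, so that the blow-down picture is controlled.

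\medskip

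\textbf{Step 1 (From drop to near-homogeneity about individual points).} Using Theorem~\ref{theoMonotonicityMinimum} with $x_0=w$, $\rho\approx r$, $\sigma\approx 4r$, we get
$$
\Psi_{4r}^u(w)-\Psi_r^u(w) \gtrsim \int_{B_{4r}(w)\setminus B_r(w)} |y|^\Beta \frac{|\alpha u(x)-(x-w)\cdot\nabla u(x)|^2}{|x-w|^{n+2}}\,dx.
$$
After rescaling to $r=1$ and using the uniform $\HH^\beta$ and $C^\alpha$ bounds from Remark~\ref{remUniformRules} and the Caccioppoli inequality, this gives an $L^2(|y|^\Beta)$-type control on $\alpha u - (x-w)\cdot\nabla u$ on a fixed annulus, with a constant independent of $u$.

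\textbf{Step 2 (Compactness / contradiction setup).} Suppose \eqref{eqL2Estimate} fails. Then there are minimizers $u_i$ (rescaled so $r=1$, $x=0$), measures $\mu_i$, with $u_i$ being $(0,\delta_i)$-symmetric in $B_4$ and not $(k+1,\epsilon)$-symmetric in $B_4$, but
$$
\beta^k_{\mu_i,2}(B_1(0))^2 > i \int_{B_1(0)}\big(\Psi^{u_i}_{4}(w)-\Psi^{u_i}_1(w)\big)\,d\mu_i(w).
$$
Normalize $\mu_i$ so the best-approximation line gives $\beta^k_{\mu_i,2}(B_1)^2 = c_0 >0$ fixed; then the integrated drop against $\mu_i$ tends to $0$. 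By the compactness of Lemma~\ref{lemCompactness} and Corollary~\ref{c:ContinuityofEnergy}, pass to a limit $u_0$, a minimizer, which is $0$-symmetric (homogeneous about a point) by Lemma~\ref{lemProperties}(5) and the $(0,\delta_i)$-symmetry, and not $(k+1,\epsilon)$-symmetric in $B_4$ by closedness of the complement of $S^k_\epsilon$ — more precisely, using Lemma~\ref{lemProperties}(4)(5) applied to the relevant strata. Likewise extract a weak limit measure $\mu_0$ (after normalizing total mass), with $\beta^k_{\mu_0,2}(B_1)^2 \geq c_0$ by lower semicontinuity of the infimum defining $\beta^k$ under weak convergence of measures on a fixed ball.

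\textbf{Step 3 (The spine of the limit).} The vanishing of the drop forces, via Step 1 and Fatou, that $\alpha u_0(x) - (x-w)\cdot\nabla u_0(x) = 0$ for $\mu_0$-a.e. $w$, i.e. $u_0$ is homogeneous of degree $\alpha$ about every point in $\mathrm{supp}\,\mu_0$. A now-classical fact (cone-splitting, cf. Lemma~\ref{lemConeBlowup} and its iteration, or \cite[Section~3]{EdelenEngelstein}) is that homogeneity about two points $w_1\neq w_2$ gives translational invariance in the direction $w_1-w_2$; homogeneity about points spanning an affine $(k+1)$-plane $V$ gives invariance along the linear span $V - V$ of dimension $k+1$, hence $u_0$ is $(k+1)$-symmetric. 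If instead $\mathrm{supp}\,\mu_0$ is contained in an affine $k$-plane $L$, then $\beta^k_{\mu_0,2}(B_1)=0$, contradicting $\beta^k_{\mu_0,2}(B_1)^2\geq c_0>0$. So $\mathrm{supp}\,\mu_0$ spans a $(k+1)$-plane, $u_0$ is $(k+1)$-symmetric (in particular $(k+1,\epsilon)$-symmetric in $B_4$), contradicting Step 2. This contradiction proves \eqref{eqL2Estimate}.

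\medskip

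\textbf{Main obstacle.} The delicate point is Step 2--Step 3: controlling the measures $\mu_i$ under the limit. Weak-$*$ limits of measures need not preserve $\beta^k$ lower bounds unless one is careful about mass escaping to the boundary of the ball; the standard fix (as in \cite{NaberValtortaRectifiable,EdelenEngelstein}) is to work on a slightly smaller ball $B_{1}\subset\subset B_{4}$ and use that the $\beta$-infimum over affine $k$-planes is lower semicontinuous under weak convergence on the \emph{closed} ball, together with the fact that the normalization can be chosen so no mass is lost. One must also ensure the limiting homogeneity holds $\mu_0$-a.e., which requires that the defect measures $|y|^\Beta|\alpha u_i - (x-w)\cdot\nabla u_i|^2 |x-w|^{-n-2}$ converge appropriately — here strong $L^2_{\loc}$ convergence of $\nabla u_i$ from Lemma~\ref{lemStronger} is exactly what is needed, and is the reason the strong (not merely weak) compactness established earlier in the paper is essential. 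The rest is the cone-splitting bookkeeping, which is routine given Lemma~\ref{lemConeBlowup} and can be cited from \cite{EdelenEngelstein}.
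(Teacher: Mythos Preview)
Your contradiction/compactness strategy is \emph{not} the route the paper takes, and as written it has a real gap in the handling of the measures $\mu_i$.

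The paper follows the direct Naber--Valtorta/Edelen--Engelstein computation: it never passes to a limit in $\mu$. Instead it writes $\beta^k_{\mu,2}(B_r)^2=\frac{\mu(B_r)}{r^{k+2}}\sum_{j>k}\lambda_j$ via the covariance form $Q(v,w)=\fint_{B_r}(v\cdot(z-X))(w\cdot(z-X))\,d\mu$ (Lemma~\ref{lemExplicitBetas}); then, for each eigenvector $v_i$, a direct computation using the monotonicity identity gives $\lambda_i\, r^{-n}\!\int_{A_{2r,3r}}|y|^\Beta(v_i\!\cdot\! Du)^2 \lesssim \fint_{B_r}(\Psi^u_{4r}-\Psi^u_r)\,d\mu$ (Lemma~\ref{lemLambda2}); finally, compactness is used \emph{only on $u$} (no measures involved) to show that under \eqref{eq0DeltaNonK1Epsilon} one has $r^{-n}\!\int_{A_{2r,3r}}|y|^\Beta\sum_{i\le k+1}(v_i\!\cdot\! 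Du)^2 \ge c(n,\alpha,\epsilon)^{-1}$ (Lemma~\ref{lemViBelow}). Combining these three facts with $\lambda_{k+1}\le\lambda_i$ for $i\le k+1$ yields \eqref{eqL2Estimate} in two lines. The point is that the $\mu$-dependence is absorbed entirely into the eigenvalues $\lambda_i$, which are scalars; compactness is then run purely on the $u_i$ side.

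Your argument, by contrast, tries to pass both $u_i$ and $\mu_i$ to a limit. The normalization you propose is internally inconsistent: you first fix $\beta^k_{\mu_i,2}(B_1)^2=c_0>0$, then speak of ``extracting a weak limit $\mu_0$ (after normalizing total mass)''. These are incompatible. If you normalize $\mu_i(B_1)=1$, then $\beta^k_{\mu_i,2}(B_1)^2$ is bounded but may tend to $0$; in that case $\mu_0$ is supported on a $k$-plane, you only obtain that $u_0$ is $k$-symmetric, and there is no contradiction with ``not $(k+1,\epsilon)$-symmetric''. If instead you normalize $\beta^k_{\mu_i,2}(B_1)^2=c_0$, the total mass $\mu_i(B_1)$ may diverge and no weak-$*$ limit exists. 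Since both sides of \eqref{eqL2Estimate} are linear in $\mu$, the failure of the estimate is a statement about a \emph{ratio}, and a single rescaling of $\mu$ cannot pin down both quantities. Your ``Main obstacle'' paragraph flags mass escaping to the boundary, but that is not the issue; the issue is this simultaneous degeneration, and the fix you cite from \cite{NaberValtortaRectifiable,EdelenEngelstein} is precisely the eigenvalue decomposition above, not a smaller-ball trick.

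In short: your Step~1 is correct and is exactly what feeds into Lemma~\ref{lemLambda2}; but Steps~2--3 should be replaced by the linear-algebra reduction of Lemmas~\ref{lemExplicitBetas}--\ref{lemViBelow}, with the compactness argument confined to proving the $u$-dependent lower bound \eqref{eqViBelow}.
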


We follow the proof of \cite[Theorem 5.1]{EdelenEngelstein} closely. First the authors give an explicit formula for the beta coefficients. 

\begin{lemma}\label{lemExplicitBetas}
Let $X$ be the center of mass of a  Borel measure $\mu$ on $B=B_r(x)$. Let $\{\lambda_i\}_{i=1}^n$ be the decreasing sequence of eigenvalues of the non-negative bilinear form
$$Q(v,w):=  \fint_{B}(v\cdot (z-X)) (w\cdot (z-X))\, d\mu (z),$$
and let $\{v_i\}_{i=1}^n$ be a corresponding orthonormal sequence of eigenvectors, that is $v_i\cdot v_j=\delta^{ij}$ and $Q(v_i , v)=\lambda_i v_i\cdot v$. Then
$$\beta_{\mu,2}^k(B)^2= \frac{1}{r^{k}} \int_{B} \frac{\dist(z,L^k)^2}{r^2}\, d\mu (z) = \frac{\mu(B)}{r^{k}}\frac{(\lambda_{k+1} + \dots + \lambda_n)}{r^2},$$
where $L^k := X+ \mathrm{span}\langle v_1,\dots, v_k \rangle$.
\end{lemma}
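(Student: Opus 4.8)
The plan is to compute the infimum in the definition \eqref{eqDefBeta} directly by reducing to an elementary linear-algebra optimization. First I would recall that an affine $k$-plane $L \in V^k_a$ can be written as $L = p + W$, where $p \in \R^n$ is a point and $W$ is a linear $k$-dimensional subspace; then $\dist(z, L)^2 = \|P_{W^\perp}(z - p)\|^2$, where $P_{W^\perp}$ is the orthogonal projection onto the orthogonal complement of $W$. Thus
$$
r^{k+2}\,\beta^k_{\mu,2}(B)^2 = \inf_{W, p} \int_B \|P_{W^\perp}(z - p)\|^2 \, d\mu(z).
$$
For a fixed $W$, the integral is a quadratic function of $p$ (more precisely of $P_{W^\perp}p$), minimized when $P_{W^\perp}p = P_{W^\perp}X$ where $X = \frac{1}{\mu(B)}\int_B z\, d\mu(z)$ is the center of mass; this is the standard fact that the second moment about a point is minimized at the mean. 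So without loss of generality $p = X$, and the problem becomes
$$
r^{k+2}\,\beta^k_{\mu,2}(B)^2 = \inf_{W \in V^k} \int_B \|P_{W^\perp}(z - X)\|^2 \, d\mu(z).
$$

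Next I would rewrite the integrand in terms of the bilinear form $Q$. Fix an orthonormal basis $\{w_{k+1}, \dots, w_n\}$ of $W^\perp$; then $\|P_{W^\perp}(z-X)\|^2 = \sum_{j=k+1}^n (w_j \cdot (z-X))^2$, so integrating and using the definition of $Q$ gives
$$
\int_B \|P_{W^\perp}(z-X)\|^2\, d\mu(z) = \mu(B) \sum_{j=k+1}^n Q(w_j, w_j).
$$
Hence minimizing over affine $k$-planes amounts to minimizing $\sum_{j=k+1}^n Q(w_j, w_j)$ over all orthonormal $(n-k)$-tuples $\{w_{k+1}, \dots, w_n\}$, equivalently over all $(n-k)$-dimensional subspaces $W^\perp$. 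Since $Q$ is a non-negative symmetric bilinear form, the Courant–Fischer / Ky Fan principle says that the minimum of the trace of $Q$ restricted to an $(n-k)$-dimensional subspace equals the sum of the $n-k$ smallest eigenvalues, namely $\lambda_{k+1} + \dots + \lambda_n$, and it is attained when $W^\perp = \mathrm{span}\langle v_{k+1}, \dots, v_n\rangle$, i.e. $W = \mathrm{span}\langle v_1, \dots, v_k\rangle$. Substituting back,
$$
r^{k+2}\,\beta^k_{\mu,2}(B)^2 = \mu(B)\,(\lambda_{k+1} + \dots + \lambda_n),
$$
which rearranges to the claimed identity with $L^k = X + \mathrm{span}\langle v_1, \dots, v_k\rangle$, and the first displayed equality in the statement is just the definition of $\beta^k_{\mu,2}$ evaluated at this optimal plane.

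The argument is essentially routine; the only points requiring a little care are justifying the reduction of the optimal point $p$ to the center of mass $X$ (a completion-of-the-square computation, valid since $\mu$ is a finite Borel measure on the bounded set $B$, so all second moments are finite), and invoking the variational (Ky Fan) characterization of sums of eigenvalues of a symmetric form — the main "obstacle," such as it is, is just being careful that one minimizes the trace over the $(n-k)$-dimensional complement rather than maximizing over the $k$-dimensional plane, but these are dual formulations of the same extremal problem. No feature specific to the weight $|y|^\beta$ or to minimizers of $\mathcal J$ enters here; the lemma is a statement purely about an arbitrary finite Borel measure.
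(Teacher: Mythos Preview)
Your proof is correct and is the standard argument for this well-known linear-algebra fact (essentially the variational characterization underlying principal component analysis). The paper itself does not supply a proof of this lemma; it simply states the formula, attributing it to \cite{EdelenEngelstein}, so there is nothing to compare against beyond noting that your argument is exactly the expected one.
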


Next we find a relation between the eigenvalues of $Q$ and Allen-Weiss' energy.

\begin{lemma} \label{lemLambda2}
Under the hypothesis of Lemma \ref{lemExplicitBetas}, for every $u\in \HH^\beta(B_{5r})$ minimizing $\mathcal{J}$ in $B_{5r}(x)$ and every $i\leq n$, we have that
\begin{equation}\label{eqEigenAndMonotonicity}
\lambda_i \frac{2}{r^{n+2}}\int_{A_{2r,3r}(x)} |y|^\Beta (v_i\cdot Du(z))^2\, dz\leq C \fint_{B_r(x)} \left( \Psi_{4r}^u (w)-\Psi_{r}^u (w)\right) \, d\mu (w) .
\end{equation}
\end{lemma}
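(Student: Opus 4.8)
The plan is to combine the pointwise Weiss-type monotonicity identity of Theorem~\ref{theoMonotonicityMinimum} with the eigenvalue description of the bilinear form $Q$ from Lemma~\ref{lemExplicitBetas}, integrated against $d\mu(w)$. First I would record an algebraic identity. Fix $z\in A_{2r,3r}(x)$ and, for $w\in B:=B_r(x)$, split $z-w=(z-X)-(w-X)$, where $X$ is the center of mass of $\mu$. Writing $a(z):=\alpha u(z)-(z-X)\cdot\nabla u(z)$, which does not depend on $w$, we have $\alpha u(z)-(z-w)\cdot\nabla u(z)=a(z)+(w-X)\cdot\nabla u(z)$; squaring and integrating in $w$ against $\mu$, the cross term vanishes because $\int_B(w-X)\,d\mu(w)=0$, so
$$\int_B\bigl(\alpha u(z)-(z-w)\cdot\nabla u(z)\bigr)^2 d\mu(w)=\mu(B)\,a(z)^2+\int_B\bigl((w-X)\cdot\nabla u(z)\bigr)^2 d\mu(w)\geq \mu(B)\,Q\bigl(\nabla u(z),\nabla u(z)\bigr).$$
Expanding $\nabla u(z)$ in the orthonormal eigenbasis $\{v_j\}_{j=1}^n$ of $Q$ yields $Q(\nabla u(z),\nabla u(z))=\sum_j\lambda_j(v_j\cdot\nabla u(z))^2\geq\lambda_i(v_i\cdot\nabla u(z))^2$ for each fixed $i$, since all $\lambda_j\geq0$; hence $\int_B(\alpha u(z)-(z-w)\cdot\nabla u(z))^2\,d\mu(w)\geq \mu(B)\,\lambda_i(v_i\cdot\nabla u(z))^2$.

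Next I would bring in the monotonicity formula. For $w\in B_r(x)$ and $z\in A_{2r,3r}(x)$ the triangle inequality gives $r<|z-w|<4r$, so $A_{2r,3r}(x)\subset B_{4r}(w)\setminus B_r(w)\subset B_{5r}(x)$ and $|z-w|^{-(n+2)}\geq(4r)^{-(n+2)}$. Applying the identity of Theorem~\ref{theoMonotonicityMinimum} on $B_{4r}(w)$ and discarding the part of the integral outside the annulus,
$$\Psi^u_{4r}(w)-\Psi^u_r(w)=\int_{B_{4r}(w)\setminus B_r(w)}|y|^\Beta\frac{2\,\bigl|\alpha u(z)-(z-w)\cdot\nabla u(z)\bigr|^2}{|z-w|^{n+2}}\,dz\ \geq\ \frac{2}{(4r)^{n+2}}\int_{A_{2r,3r}(x)}|y|^\Beta\bigl|\alpha u(z)-(z-w)\cdot\nabla u(z)\bigr|^2 dz.$$

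Finally I would integrate in $w$ and combine. Integrating the last display against $d\mu(w)$ over $B$, interchanging the order of integration on the right by Tonelli (all integrands are nonnegative), and then inserting the lower bound from the first paragraph,
$$\int_B\bigl(\Psi^u_{4r}(w)-\Psi^u_r(w)\bigr)\,d\mu(w)\ \geq\ \frac{2}{(4r)^{n+2}}\int_{A_{2r,3r}(x)}|y|^\Beta\Bigl(\int_B\bigl|\alpha u(z)-(z-w)\cdot\nabla u(z)\bigr|^2 d\mu(w)\Bigr)dz\ \geq\ \frac{2\,\mu(B)\,\lambda_i}{(4r)^{n+2}}\int_{A_{2r,3r}(x)}|y|^\Beta(v_i\cdot\nabla u(z))^2\,dz.$$
Dividing through by $\mu(B)$ and rearranging gives \eqref{eqEigenAndMonotonicity} with $C=4^{n+2}$.

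I do not expect a serious obstacle here: nonnegativity of the integrands removes all integrability and Fubini/Tonelli concerns, and the only geometric input is the trivial inclusion $A_{2r,3r}(x)\subset B_{4r}(w)\setminus B_r(w)$ valid for $w\in B_r(x)$. The one point deserving a line of care is that the monotonicity identity of Theorem~\ref{theoMonotonicityMinimum} be applicable at the points $w$ in the support of $\mu$; in the applications $\mu$ is (a packing measure) supported on a subset of $F(u)$, so this is automatic.
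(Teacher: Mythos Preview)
Your proof is correct and follows exactly the approach the paper defers to (the argument of \cite[(18) and below]{EdelenEngelstein}, with $u$ replaced by $\alpha u$ in the radial deviation term): expand $(z-w)=(z-X)-(w-X)$, kill the cross term using the center-of-mass property, recognize the remaining $w$-integral as $\mu(B)\,Q(\nabla u(z),\nabla u(z))$, diagonalize $Q$, and feed this into the integrated monotonicity identity over $A_{2r,3r}(x)\subset B_{4r}(w)\setminus B_r(w)$. Your closing caveat is the only subtlety: Theorem~\ref{theoMonotonicityMinimum} is stated with $x_0\in F(u)$, but Allen's identity in fact holds at any center $w=(w',0)$ in the domain of minimization (the free-boundary hypothesis is only used elsewhere to interpret $\Psi^u_0$), so the lemma is indeed valid for an arbitrary Borel measure $\mu$ as asserted in Theorem~\ref{theoL2Estimate}.
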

\begin{proof}
The argument follows as in \cite[(18) and below]{EdelenEngelstein}. In formula \emph{(18)} one needs to change $u(z)$ by $\alpha u(z)$, which can be done with exactly the same argument.
\end{proof}

Finally, using compactness, we bound the left-hand side of \rf{lemLambda2} from below.
\begin{lemma}\label{lemViBelow}
Let $\epsilon>0$ be given. There exists a $\delta(n,\alpha, \epsilon)$ and $c(n, \alpha, \epsilon)$ such that,  for  every orthonormal basis $\{v_i\}_{i=1}^n$ and every $u\in \HH^\beta(B_{5r})$ minimizing $\mathcal{J}$ in $B_{5 r}(x)$ with $x\in F(u)$ and satisfying \rf{eq0DeltaNonK1Epsilon}, we have that
\begin{equation}\label{eqViBelow}
\frac{1}{c(n, \alpha, \epsilon)} \leq r^{-n} \int_{A_{2r,3r}(x)} |y|^\Beta \sum_{i=1}^{k+1} (v_i\cdot Du(z))^2\, dz.\end{equation}
\end{lemma}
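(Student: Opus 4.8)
The plan is to prove \eqref{eqViBelow} by a compactness--contradiction argument, following closely the proof of the analogous $L^2$--best--approximation lower bound in \cite{EdelenEngelstein}, while keeping track of the scaling built into $\mathcal J$. Suppose the statement fails. Then for every $j\in\N$ there are a minimizer $u_j$ of $\mathcal J$ in $B_{5r_j}(x_j)$ with $x_j\in F(u_j)$, an orthonormal family $\{v^j_i\}_{i=1}^n\subset\R^n$ and a radius $r_j>0$, such that $u_j$ is $(0,1/j)$-symmetric and not $(k+1,\epsilon)$-symmetric in $B_{4r_j}(x_j)$, while
\[
r_j^{-n}\int_{A_{2r_j,3r_j}(x_j)}|y|^\Beta\sum_{i=1}^{k+1}\bigl(v^j_i\cdot Du_j\bigr)^2<\tfrac1j .
\]
Replacing $u_j$ by $z\mapsto r_j^{-\alpha}u_j(x_j+r_jz)$ we may assume $x_j=0$ and $r_j=1$; exactly as in the proof of Theorem \ref{FullRegDim2} (where $|y|^\Beta|Du|^2$ is homogeneous of degree $\Beta+2\alpha-2=-1$) the left-hand side is scale invariant, so after this normalization it equals $\int_{A_{2,3}}|y|^\Beta\sum_{i=1}^{k+1}(v^j_i\cdot Du_j)^2$ and still tends to $0$. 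Passing to a subsequence, $v^j_i\to v_i$ with $\{v_i\}_{i=1}^{k+1}$ orthonormal and $L:=\mathrm{span}\langle v_1,\dots,v_{k+1}\rangle\in V^{k+1}$.

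Next I would pass to the limit. By the uniform H\"older bounds of Remark \ref{remUniformRules}, Lemma \ref{lemCompactness} and Corollary \ref{c:ContinuityofEnergy} give, after a further subsequence, a global minimizer $u_0$ with $u_j\to u_0$ locally uniformly and $Du_j\to Du_0$ strongly in $L^2_{\mathrm{loc}}(\,\cdot\,,|y|^\Beta)$; since free boundaries converge in the Hausdorff distance, $0\in F(u_0)$, and $u_0\not\equiv0$ by non-degeneracy (Lemma \ref{lemNonDegeneracy}). By Lemma \ref{lemProperties}(5) $u_0$ is $0$-symmetric, i.e.\ $\alpha$-homogeneous about some point $p_0$, which by the evenness of $u_0$ we may take in $\R^n\times\{0\}$. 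A short triangle inequality, using $u_j\to u_0$ in $L^2(B_4,|y|^\Beta)$ and the fact that no $u_j$ is $(k+1,\epsilon)$-symmetric in $B_4$, shows that $u_0$ is not $(k+1,\epsilon/4)$-symmetric in $B_4$, and in particular $u_0$ is not $(k+1)$-symmetric. Finally, the strong $L^2(|y|^\Beta)$-convergence of $Du_j$ on $A_{2,3}$, together with $v^j_i\to v_i$ and the uniform bound on $\|Du_j\|_{L^2(A_{2,3},|y|^\Beta)}$ coming from the Caccioppoli inequality, yields $\int_{A_{2,3}}|y|^\Beta(v_i\cdot Du_0)^2=0$, hence $v_i\cdot Du_0=0$ a.e.\ on $A_{2,3}(0)$ for $i=1,\dots,k+1$.

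To close the argument I would exploit the homogeneity of $u_0$: since $u_0$ is $\alpha$-homogeneous about $p_0$, the function $v_i\cdot Du_0$ is $(\alpha-1)$-homogeneous about $p_0$, so, vanishing on the annulus $A_{2,3}(0)$, it vanishes on the whole positive cone with vertex $p_0$ over $A_{2,3}(0)$; once one knows that $p_0$ lies in a fixed ball (say $B_3$), this cone is all of $\R^{n+1}\setminus\{p_0\}$, whence $v_i\cdot Du_0\equiv0$ for $i=1,\dots,k+1$. Thus $u_0$ is invariant under translations by the $(k+1)$-plane $L$ and, being $\alpha$-homogeneous about $p_0$, is $(k+1)$-symmetric, contradicting the previous paragraph. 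This contradiction proves \eqref{eqViBelow} with, say, $\delta=1/j_0$ and $c=j_0$ for $j_0$ large depending on $n,\alpha,\epsilon$; combined with Lemma \ref{lemExplicitBetas} and Lemma \ref{lemLambda2} it yields Theorem \ref{theoL2Estimate}.

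The step I expect to be the main obstacle is the analysis of the blow-up limit $u_0$: concretely, showing that its homogeneity point $p_0$ is pinned to a fixed ball, so that the cone over $A_{2,3}$ with vertex $p_0$ fills $\R^{n+1}$ and the pointwise vanishing of $v_i\cdot Du_0$ on the annulus upgrades to \emph{global} translation invariance. This is precisely the place where the non-degeneracy and the positive density of both phases from Theorem \ref{theoCRS}, together with the uniform H\"older character of Remark \ref{remUniformRules}, are genuinely needed, and it is handled as in \cite{EdelenEngelstein}. A minor but necessary bookkeeping point is to arrange the scaling so that the uniform bounds of Remark \ref{remUniformRules} are available out to the annulus $A_{2,3}$, which costs only a harmless adjustment of radii.
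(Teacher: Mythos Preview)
Your proposal is correct and takes essentially the same approach as the paper: the paper's own ``proof'' simply cites \cite[(19)]{EdelenEngelstein} and omits the details, and what you have written is precisely the compactness--contradiction argument from that reference, adapted to the weighted setting with the scaling of $\mathcal J$. Your identification of the pinning of the homogeneity center $p_0$ as the only delicate step is accurate and is handled there exactly as you indicate.
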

\begin{proof}
The proof follows that of \cite[(19)]{EdelenEngelstein} and we omit it.

\end{proof}

\begin{proof}[Proof of Theorem \ref{theoL2Estimate}]
By Lemmas \ref{lemExplicitBetas},  \ref{lemViBelow} and   \ref{lemLambda2} we get that
\begin{align*}
\beta^k_{\mu,2}(B)^2 
	&\leq \frac{\mu(B)}{r^{k+2}}(n-k) \lambda_{k+1} \\
	& \leq \frac{\mu(B)}{r^k} (n-k) c(n,\alpha, \epsilon)  \sum_{i=1}^{k+1}\frac{\lambda_i}{r^{n+2}}\int_{A_{2r, 3r}(x)} |y|^\Beta (v_i\cdot Du(z))^2 \, dz\\
	&\leq \frac{ c(n,\alpha, \epsilon)}{r^k}  \int_{B_r(x)} \left( \Psi_{4r}^u (w)-\Psi_{r}^u (w)\right) \, d\mu (w).
\end{align*}

\end{proof}

\renewcommand{\abstractname}{Acknowledgements}
\begin{abstract}
M.E. was partially supported by an NSF postdoctoral fellowship, NSF DMS 1703306 and by David Jerison's grant DMS 1500771. A.K. acknowledges Financial support from the Spanish Ministry of
Economy and Competitiveness, through the Mar\'ia de Maeztu Programme for Units of
Excellence in R\&D (MDM-2014-0445). M.P. was funded by the European Research Council under the grant agreement 307179-GFTIPFD. G.S. has received funding from the European Union's Horizon 2020 research and innovation programme under Marie Sk{\l}odowska-Curie grant agreement No 665919.  
A.K., M.P. and G.S. were also partially funded by 2017-SGR-0395 (Catalonia) and MTM-2016-77635-P (MINECO, Spain). 
Y.S. is partially supported by the Simons foundation.
	
M.E. would also like to thank Nick Edelen for many fruitful conversations regarding the quantitative stratification and rectifiable Reifenberg framework. 

A.K., M.P., G.S. would like to thank Xavier Cabr\'e, Tom\'as Sanz, Matteo Cozzi, Albert Mas, Maria del Mar Gonz\'alez, Luis Silvestre and Stefano Vita for some conversations around \cite{VitaThesis}. They would also like to thank Mihalis Mourgoglou for some conversations regarding the degenerate elliptic measure. 
\end{abstract}

\appendix
\section*{Appendix}
\section{Relation with the nonlocal Bernoulli problem}

As in \cite[Lemma 2.1]{DipierroValdinoci}, we see that the study of minimizers of $\mathcal{J}$ includes the study of minimizers of $J$.

\begin{proposition}\label{propoExtension}
If $f$ is a minimizer of $J$ in the unit ball of $\R^n$ then $f*P_y$ is a minimizer of $\mathcal{J}$ in every ball $B$ such that $B'\subset \subset B_1'$.

If $u=f*P_y$ is a minimizer of $\mathcal{J}$, then $f$ is a minimizer for $J$. In particular, if $u$ is a minimizer of $\mathcal{J}$ in every ball, positive outside the hyperplane $\{y=0\}$, and $u(x,y)=\mathcal{O}(|(x,y)|^\alpha)$,  then $u|_{\R^n\times\{0\}}$ is a minimizer for $J$ in every ball.
\end{proposition}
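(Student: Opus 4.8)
Following \cite[Lemma 2.1]{DipierroValdinoci}, the argument rests on the Caffarelli--Silvestre dictionary \cite{CaffarelliSilvestre}: for any trace $h$ on $\R^n\times\{0\}$, the Poisson extension $h*P_y$ is the unique minimizer of $v\mapsto\int|y|^\beta|\nabla v|^2$ among $v$ with that trace, its energy equals $c_{n,\alpha}[h]_{\dot H^\alpha(\R^n)}^2$, and it is $\beta$-harmonic off $\R^n\times\{0\}$, continuous up to the hyperplane with trace $h$ when $h$ is continuous, and positive off the hyperplane when $h\ge0,\ h\not\equiv0$. Since minimizers of $J$ are continuous by \cite{CaffarelliRoquejoffreSire}, the trace of $f*P_y$ is $f$, so $\{f*P_y>0\}\cap\R^n=\{f>0\}$ and the measure terms of $\mathcal{J}$ and $J$ agree on every ball. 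I will only use these energy identities for pairs of functions agreeing outside a bounded set, so that, $g-f$ being in $\dot H^\alpha$, the relevant \emph{differences} $c_{n,\alpha}([f]_{\dot H^\alpha}^2-[g]_{\dot H^\alpha}^2)$ — equivalently $-2\int|y|^\beta\nabla(f*P_y)\cdot\nabla\big((g-f)*P_y\big)-\int|y|^\beta|\nabla\big((g-f)*P_y\big)|^2$ — are finite regardless of whether $f$ or $g$ lies globally in $\dot H^\alpha$.

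\textbf{$J$-minimizer $\Rightarrow\mathcal{J}$-minimizer.} Given $f$ minimizing $J$ in $B_1'$, a ball $B$ with $B'\subset\subset B_1'$, $u:=f*P_y$, and a competitor $v\in\HH^\beta(B)$ with $v|_{\partial B}=u|_{\partial B}$, I would extend $v$ to $\hat v$ by putting $\hat v=u$ off $B$. Then $\hat v-u$ is globally defined, supported in $\overline B$, and has trace $g-f$ where $g:=v|_{\R^n\times\{0\}}$, so it is an admissible competitor for the minimization defining $(g-f)*P_y$; hence $\int|y|^\beta|\nabla\big((g-f)*P_y\big)|^2\le\int_B|y|^\beta|\nabla(v-u)|^2$. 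Expanding $\int_B|y|^\beta|\nabla v|^2$ around $u$, using that $\int_B|y|^\beta\nabla u\cdot\nabla(v-u)$ depends only on the trace $g-f$ of $v-u$ (because $u$ is $\beta$-harmonic off $\R^n\times\{0\}$ and $\nabla\cdot(|y|^\beta\nabla u)$ is a measure carried by $\R^n\times\{0\}$), and invoking the polarized identity above, one arrives at $\int_B|y|^\beta|\nabla u|^2-\int_B|y|^\beta|\nabla v|^2\le c_{n,\alpha}([f]_{\dot H^\alpha}^2-[g]_{\dot H^\alpha}^2)$. Finally the minimality of $f$ bounds the right side by $m(\{g>0\}\cap B_1')-m(\{f>0\}\cap B_1')=m(\{v>0\}\cap B')-m(\{u>0\}\cap B')$, which is precisely $\mathcal{J}(u,B)\le\mathcal{J}(v,B)$.

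\textbf{$\mathcal{J}$-minimizer $\Rightarrow J$-minimizer — the main obstacle.} Now let $u=f*P_y$ be a local minimizer of $\mathcal{J}$ and $g$ a $J$-competitor for $f$ ($f-g\in\dot H^\alpha$, $f=g$ off $\overline{B_1'}$). The essential new difficulty, and the only conceptual obstacle I anticipate, is nonlocality: $w:=g*P_y$ differs from $u$ \emph{everywhere}, not just over $B_1'$, so $w$ is not a legitimate competitor for the local $\mathcal{J}$-minimality of $u$ on any fixed ball. My plan is to truncate at a large radius. For $R\gg1$ take $\eta_R\in C_c(B_R)$ with $\eta_R\equiv1$ on $B_{R/2}$, $|\nabla\eta_R|\lesssim1/R$, and set $\bar v_R:=u+\eta_R(w-u)$, an admissible $\mathcal{J}$-competitor in $B_R$ with trace $g$. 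From $\mathcal{J}(u,B_R)\le\mathcal{J}(\bar v_R,B_R)$ one gets $\int_{B_R}|y|^\beta|\nabla u|^2-\int_{B_R}|y|^\beta|\nabla\bar v_R|^2\le m(\{g>0\}\cap B_1')-m(\{f>0\}\cap B_1')$ (the measure difference being $R$-independent since $f=g$ off $\overline{B_1'}$). Expanding the left side: the annular contributions on $B_R\setminus B_{R/2}$ — the $\nabla\eta_R$ terms and the cross terms there — are $\mathcal{O}(R^{-\alpha})$, because a direct computation with the kernel \eqref{eqPoisson} shows $(g-f)*P_y$ and its gradient decay fast enough, while $\int_{B_R\setminus B_{R/2}}|y|^\beta|\nabla u|^2=\mathcal{O}(R^n)$ by Caccioppoli (Lemma~\ref{cacciop}) and the growth bound; and the main term $\int_{B_{R/2}}|y|^\beta(|\nabla u|^2-|\nabla w|^2)$ converges as $R\to\infty$ (an integration by parts in which the only surviving boundary term is $\lim_{y\to0}|y|^\beta u_y$ on $\R^n\times\{0\}$, i.e.\ the fractional Laplacian of $f$) to $c_{n,\alpha}([f]_{\dot H^\alpha}^2-[g]_{\dot H^\alpha}^2)$. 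Passing to the limit gives $c_{n,\alpha}([f]_{\dot H^\alpha}^2-[g]_{\dot H^\alpha}^2)\le m(\{g>0\}\cap B_1')-m(\{f>0\}\cap B_1')$, i.e.\ $J(f,B_1')\le J(g,B_1')$; rescaling and translating handle an arbitrary ball.

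\textbf{The ``in particular'' clause.} Let $u$ minimize $\mathcal{J}$ in every ball, $u\ge0$, even, positive off $\R^n\times\{0\}$ (which for an even minimizer just says $u\not\equiv0$), and $u=\mathcal{O}(|(x,y)|^\alpha)$; put $f:=u|_{\R^n\times\{0\}}$, which by optimal regularity (Theorem~\ref{theoCRS}) is continuous, nonnegative and $\mathcal{O}(|x|^\alpha)$. I would first show $u=f*P_y$. Since $\nabla\cdot(|y|^\beta\nabla u)=0$ weakly off $\R^n\cap\{u\le0\}$, $u$ is $\beta$-harmonic on $\R^{n+1}\setminus(\R^n\times\{0\})$; so is $f*P_y$, and both are continuous with trace $f$ and of growth $\mathcal{O}(|(x,y)|^\alpha)$ (the extension inherits the growth of $f$ by scaling). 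Hence $\psi:=u-f*P_y$ is $\beta$-harmonic in $\{y>0\}$, vanishes continuously on $\{y=0\}$, is even, and is $\mathcal{O}(|(x,y)|^\alpha)$. A Liouville/Phragm\'en--Lindel\"of argument then forces $\psi\equiv0$: a nonzero $\beta$-harmonic function on a half-space that vanishes continuously on the boundary must grow at least like $|(x,y)|^{2\alpha}$ (the homogeneity of the model solution $y|y|^{-\beta}$), while $\alpha<2\alpha$. Thus $u=f*P_y$, and the previous implication shows $f=u|_{\R^n\times\{0\}}$ minimizes $J$ in every ball. Here the growth hypothesis is indispensable — it is exactly what kills $\psi$ — whereas the positivity hypothesis only rules out the trivial case $u\equiv0$.
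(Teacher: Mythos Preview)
Your overall strategy is correct and in the same spirit as the paper's, but the execution differs in a way worth noting. The paper packages the entire energy comparison into a single black-box result (Lemma~\ref{lemCRS}, taken from \cite[Section~7]{CaffarelliRoquejoffreSavin}): for $f,g$ with $f-g$ compactly supported in $B_1'$, one has $J_0(g,B_1)-J_0(f,B_1)=c_{n,\alpha}\inf\int_\Omega|y|^\beta(|\nabla v|^2-|\nabla(f*P_y)|^2)$, the infimum over symmetric domains $\Omega$ with $\Omega'\subset B_1'$ and symmetric $v$ with trace $g$ and $v-f*P_y$ compactly supported in $\Omega$. Both directions then follow in two lines by plugging the given competitor into the infimum and invoking minimality. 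Your first direction essentially re-derives this identity by hand (integration by parts against the measure $\lambda$ plus the energy-minimizing property of the Poisson extension), which is correct but longer; your second direction replaces the use of Lemma~\ref{lemCRS} by a large-radius cutoff argument, which is a genuinely different and more explicit route. Note that your cutoff argument, as written, uses $\mathcal J$-minimality in $B_R$ for arbitrarily large $R$, so it proves the converse only for \emph{global} minimizers---this matches the ``in particular'' clause but is weaker than what Lemma~\ref{lemCRS} gives if one wants a local statement.

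For the ``in particular'' clause your plan is again the paper's: show $u=f*P_y$ by a Liouville-type uniqueness and then apply the converse. The paper records this uniqueness separately as Proposition~\ref{propoUniquenessDirichlet} and proves it not via Phragm\'en--Lindel\"of but by showing each tangential derivative $\partial_i v$ is itself a solution vanishing on $\{y=0\}$ with $|\partial_i v(x)|\lesssim|x|^{\alpha-1}$ (from boundary estimates of \cite{FabesKenigSerapioni} and Caccioppoli), hence identically zero by the maximum principle; then $v=v(y)$ solves an ODE forcing $v=cy^{2\alpha}$, and the growth bound kills $c$. Your Phragm\'en--Lindel\"of assertion (``a nonzero $\beta$-harmonic function on a half-space vanishing on the boundary must grow at least like $|(x,y)|^{2\alpha}$'') is morally the same conclusion, but you have not justified it, and it is not an off-the-shelf statement for degenerate weights---the paper's explicit argument is what actually closes this step.
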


We follow \cite[Lemma 2.1]{DipierroValdinoci}, that is, we use the following result from \cite[Section 7]{CaffarelliRoquejoffreSavin}.
\begin{lemma}[see {\cite[Section 7]{CaffarelliRoquejoffreSavin}}]\label{lemCRS}
Let $f,g$ satisfy that $J_0(f,B_1), J_0(g,B_1) < \infty$, and suppose that $f-g$ is compactly supported in $B_1\subset\R^n$.
Then we have that
\begin{equation*}
J_0(g,B_1)  - J_0(f,B_1)
= c_{n,\alpha} \inf \int_{\Omega} |y|^\beta (|\nabla v(x,u)|^2 - |\nabla (f*P_y)(x)|^2),
\end{equation*}
where the infimum is taken among all the symmetric bounded Lipschitz domains $\Omega$ with the property that $\Omega\cap (\R^n\times\{0\})\subset B_1$ and among all symmetric functions $v$ with trace $g$ satisfying that $v-f*P_y$ is compactly supported on $\Omega$.
\end{lemma}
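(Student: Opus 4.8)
The plan is to derive this identity as the localized ``difference form'' of the Caffarelli--Silvestre extension theorem: the full Gagliardo seminorm of $f$ and the full weighted Dirichlet energy of $u:=f*P_y$ may be infinite, but their differences against a compactly supported perturbation are finite, and those differences are exactly what appear on the two sides. Throughout I write $\phi:=g-f$; by hypothesis $\phi$ is supported in $B_1$, and since $J_0(f,B_1),J_0(g,B_1)<\infty$ it has finite seminorm $[\phi]_{\dot H^\alpha(\R^n)}$. The compact support of $\phi$ will be used repeatedly to make the relevant integrals converge.

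First I would expand the left-hand side. Writing $g=f+\phi$ and using that $\phi(x)-\phi(\xi)=0$ whenever $x,\xi\notin B_1$ — so that the cross term and the quadratic term integrated over $(B_1^c)^2$ both vanish — a direct algebraic manipulation gives
$$ J_0(g,B_1)-J_0(f,B_1)= c'\Big(2\,\langle f,\phi\rangle_{\dot H^\alpha}+[\phi]_{\dot H^\alpha}^2\Big),\qquad \langle f,\phi\rangle_{\dot H^\alpha}:=\iint_{\R^{2n}}\frac{(f(x)-f(\xi))(\phi(x)-\phi(\xi))}{|x-\xi|^{n+2\alpha}}\,d\xi\,dx, $$
where $c'$ is the dimensional constant built into the definition of $J_0$, and the compact support of $\phi$ makes $\langle f,\phi\rangle_{\dot H^\alpha}$ finite, equal up to a dimensional constant to $\int_{B_1}\phi\,(-\Delta)^\alpha f$.

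Next I would expand the right-hand side. For an admissible symmetric bounded Lipschitz domain $\Omega$ with $\Omega\cap(\R^n\times\{0\})\subset B_1$, write a competitor as $v=u+w$ with $w$ even, compactly supported in $\Omega$, and with trace $\phi$ on $\Omega\cap\{y=0\}$; then $\int_\Omega|y|^\beta(|\nabla v|^2-|\nabla u|^2)=2\int_\Omega|y|^\beta\nabla u\cdot\nabla w+\int_\Omega|y|^\beta|\nabla w|^2$. The cross term is independent of $\Omega$ and of the choice of $w$: since $u$ solves $\operatorname{div}(|y|^\beta\nabla u)=0$ away from $\{y=0\}$ and $w$ vanishes near $\partial\Omega$, Green's formula applied separately in $\{y>0\}$ and $\{y<0\}$, together with the evenness of $u$ and $w$ and the flux identity $\lim_{y\to0^+}y^\beta u_y=-(-\Delta)^\alpha f$ of \cite{CaffarelliSilvestre}, gives $\int_\Omega|y|^\beta\nabla u\cdot\nabla w=2\int_{\R^n}(-\Delta)^\alpha f\cdot\phi$, a fixed multiple of $\langle f,\phi\rangle_{\dot H^\alpha}$. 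What remains is the genuinely variational part: to compute $\inf_\Omega\inf_w\int_\Omega|y|^\beta|\nabla w|^2$ over such $\Omega$ and $w$. The lower bound follows by extending $w$ by zero and invoking the minimality of the Poisson extension, $\int_\Omega|y|^\beta|\nabla w|^2\ge\int_{\R^{n+1}}|y|^\beta|\nabla(\phi*P_y)|^2$, which is a fixed multiple of $[\phi]_{\dot H^\alpha}^2$. For the matching upper bound I would use the quantitative decay of the Poisson extension of the compactly supported $\phi$: $\int_{\R^{n+1}\setminus B_R}|y|^\beta|\nabla(\phi*P_y)|^2\to0$ and $R^{-2}\int_{B_{2R}\setminus B_R}|y|^\beta|\phi*P_y|^2\to0$, so cutting $\phi*P_y$ off smoothly at scale $R$ inside $\Omega\supset B_{3R}$ yields an admissible $w$ whose energy is $\int_{\R^{n+1}}|y|^\beta|\nabla(\phi*P_y)|^2+o_{R\to\infty}(1)$. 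Hence the infimum equals $\int_{\R^{n+1}}|y|^\beta|\nabla(\phi*P_y)|^2$, and collecting the three computations reduces the claimed identity to a check that the dimensional constants match — which is precisely the content of the normalization $c_{n,\alpha}$ in the statement (equivalently, of the choice of constant in the Poisson kernel \eqref{eqPoisson} and in $(-\Delta)^\alpha$).

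The hardest part will be the analytic justification of the boundary manipulations rather than the bookkeeping: making the integration by parts and the flux identity $\lim_{y\to0^+}y^\beta u_y=-(-\Delta)^\alpha f$ rigorous at the degenerate hyperplane $\{y=0\}$ for $u=f*P_y$ in the $A_2$-weighted Sobolev setting — which requires sufficient regularity of $u$ up to $\{y=0\}$ and a careful approximation argument adapted to the weight — together with the decay estimates for $\phi*P_y$ guaranteeing that the infimum over the exhausting family of domains $\Omega$ is genuinely realized in the limit by the global Poisson extension of $\phi$, with no residual energy escaping to infinity. Matching the constant $c_{n,\alpha}$ through the nonlocal and the extension normalizations is a nuisance but not a real obstacle.
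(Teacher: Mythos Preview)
The paper does not give its own proof of this lemma: it is stated with the attribution ``see \cite[Section 7]{CaffarelliRoquejoffreSavin}'' and is invoked as a black box in the proof of Proposition~\ref{propoExtension}. So there is nothing in the paper to compare your argument against.

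That said, your outline is the natural and essentially correct route. The decomposition of both sides into a cross term (bilinear in $f$ and $\phi$) plus a quadratic term in $\phi$, the identification of the cross terms via the Caffarelli--Silvestre flux relation, and the computation of the infimum of the quadratic piece via the minimality of the Poisson extension of $\phi$ together with a cutoff/exhaustion argument --- this is exactly how the result in \cite[Section 7]{CaffarelliRoquejoffreSavin} is established. One point to be careful about: you invoke the pointwise identity $\lim_{y\to 0^+} y^\beta u_y = -(-\Delta)^\alpha f$, but under the sole hypothesis $J_0(f,B_1)<\infty$ the function $f$ need not be regular enough for this to hold pointwise. The clean way around this is to work entirely at the level of the bilinear form: the identity $\int_{\R^{n+1}} |y|^\beta \nabla(f*P_y)\cdot\nabla(\phi*P_y) = c\,\langle f,\phi\rangle_{\dot H^\alpha}$ holds by polarizing the trace/extension isometry, and then one checks that replacing $\phi*P_y$ by any admissible $w$ with the same trace does not change this integral (since their difference has zero trace and $u$ is $\mathcal L$-harmonic). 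This avoids the pointwise flux computation altogether. You flag this difficulty yourself at the end, so the proposal is sound.
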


\begin{proof}[Proof of Proposition \ref{propoExtension}]
Let $f$ be a minimizer of $J$ in the unit ball of $\R^n$ and let $B_r$ be a ball such that $B_r'\subset \subset B_1'$. We want to show that $u:=f*P_y$ is a minimizer of $\mathcal{J}$ in $B_r$.

Let $v:\R^{n+1}\to \R$ so that $v\equiv u$ in $\R^{n+1}\setminus B_r$ and $v \in H^1(\beta, B_r)$. Let $g$ be the trace of $v$ in ${\R^n\times\{0\}}$.
By Lemma \ref{lemCRS} we have that
\begin{equation}\label{eqQuality}
J_0(g,B_1)  - J_0(f,B_1)\leq c_{n,\alpha} \int_{B_{r+\varepsilon}} |y|^\beta (|\nabla v|^2 - |\nabla u|^2)
\end{equation}
for every $\varepsilon>0$.

In particular, since $g|_{(B')^c }\equiv 0$,  $g$ is an admissible competitor for $f$
and
$J(f,B_1)\leq J(g,B_1)$, i.e.,
\begin{align}\label{eqFinestQuality}
J_0(g,B_1)  - J_0(f,B_1)    & \geq
-m(\{g>0\}\cap B_1) + m(\{f>0\}\cap B_1)\\
\nonumber & =  m(\{u>0\}\cap B_r')-m(\{v>0\}\cap B_r').
\end{align}
The proposition follows combining \rf{eqQuality} and \rf{eqFinestQuality} and letting $\varepsilon\to 0$.

The converse follows the same sketch: every global minimizer can be expressed as the Poisson extension of its restriction to the hyperplane by Proposition \ref{propoUniquenessDirichlet} and it is left to the reader.
\end{proof}

As a consequence of the previous proposition, all the results that we have proven for minimizers of $\mathcal{J}$ also apply to minimizers of $J$:
\begin{corollary}
If $u:\R^n\to \R$ is a minimizer to $J$ in $B_2\subset \R^n$ and $0\in F(u)$, then $\norm{u}_{C^\alpha(B_1)}\leq C$, it satisfies the nondegeneracy condition $u(x)\geq C \dist(x,F(u))^\alpha$ for $x\in B_1$, the positive phase satisfies the corkscrew condition, every blow-up limit is $\alpha$-homogeneous, and the boundary condition in \rf{eqEulerLagrangeFrac} is satisfied at $F_{\red}(u)$. 

Moreover, the positive phase $\{u>0\}\cap B_1$ is a set of  finite perimeter, the singular set is an $(n-3)$-rectifiable set, it is discrete whenever $n=3$ and it is empty if $n\leq 2$. 

All the constants depend only on $n$ and $\alpha$.
\end{corollary}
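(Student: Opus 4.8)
The plan is to deduce the corollary entirely from the theory already developed for minimizers of $\mathcal{J}$, using Proposition \ref{propoExtension} as the bridge. Given a minimizer $u\colon\R^n\to\R$ of $J$ in $B_2\subset\R^n$ with $0\in F(u)$, I would set $\hat u:=u*P_y$ and extend it evenly across $\{y=0\}$. By the forward implication of Proposition \ref{propoExtension} (after the harmless rescaling turning $B_2$ into the unit ball), $\hat u$ is a local minimizer of $\mathcal{J}$ in every Euclidean ball $B\subset\R^{n+1}$ with $B'\subset\subset B_2'$; in particular $\hat u$ minimizes $\mathcal{J}$ in $B_{3/2}\subset\R^{n+1}$, and rescaling once more we may assume $\hat u$ minimizes $\mathcal J$ in $B_1\subset\R^{n+1}$ with $0\in F(\hat u)$. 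Then each assertion of the corollary is read off from the corresponding statement for $\hat u$: optimal $C^\alpha$-regularity and nondegeneracy from P1--P2 of Theorem \ref{theoCRS} together with Remark \ref{remUniformRules} to make the constants dimensional; the interior corkscrew condition from P3 and the exterior one from Corollary \ref{coroExteriorCorkscrew}; $\alpha$-homogeneity of blow-ups from Corollary \ref{coroWeakLimits} and P5; the boundary condition in \rf{eqEulerLagrangeFrac} at $F_{\red}$ from P6 and Theorem \ref{theoMeasureComplete}; and finite perimeter, $(n-k^*_\alpha)$-rectifiability of $\Sigma$, discreteness for $n=3$ and emptiness for $n\le2$ from Theorems \ref{t:mainregularity}, \ref{t:singset} and \ref{FullRegDim2}.

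The substantive part of the write-up is checking that the ``dictionary'' between $u$ and $\hat u$ preserves all the relevant geometric objects. Since $P_y$ is a probability kernel, $\hat u$ is nonnegative and even; once we know $\hat u\in C^\alpha$ its trace on $\{y=0\}$ equals $u$ pointwise, so $\{\hat u(\cdot,0)>0\}=\{u>0\}$, $\Omega_0(\hat u)\cap(\R^n\times\{0\})=\{u=0\}^\circ$, and $F(\hat u)=F(u)$ as subsets of $\R^n$. Consequently $\norm{u}_{C^\alpha(B_1)}\le\norm{\hat u}_{\dot C^\alpha(B_1)}$, the estimate $u(x)\gtrsim\dist(x,F(u))^\alpha$ for $x\in B_1'$ is exactly P2 restricted to $\{y=0\}$, the statement ``$\{u>0\}\cap B_1$ has finite perimeter in $B_1\subset\R^n$'' is precisely part (1) of Theorem \ref{t:mainregularity} for $B_{1,+}'(\hat u)=\Omega_+'(\hat u)$, and the reduced boundary of $\{u>0\}$ coincides with $F_{\red}(\hat u)$, where the free boundary condition holds by P6. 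The rectifiability, discreteness and emptiness claims are likewise verbatim translations.

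The one point deserving genuine care is the correspondence between \emph{blow-ups}. A blow-up of $u$ at $x_0\in F(u)$ is a limit of $u_{r_k}(x):=r_k^{-\alpha}u(x_0+r_k x)$, and one must verify that the Poisson extension commutes with this rescaling, i.e. $\widehat{u_{r_k}}=(\hat u)_{r_k}$; this follows from the scaling $P_{ry}(r\xi)=r^{-n}P_y(\xi)$ of the kernel in \rf{eqPoisson}, whence the blow-ups of $u$ are exactly the traces on $\{y=0\}$ of the blow-ups of $\hat u$ and $\alpha$-homogeneity transfers. I expect this bookkeeping---together with the mild localization subtlety that Proposition \ref{propoExtension} only yields minimality in balls $B$ with $B'\subset\subset B_2'$, handled by the factor-of-two room above---to be the only real work; no new analytic obstacle arises, and the $\norm{u}$-independence of all constants is inherited from Section \ref{s:UniformBounds} and Remark \ref{remUniformRules}.
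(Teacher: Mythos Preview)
Your proposal is correct and follows exactly the approach the paper intends: the corollary is stated immediately after Proposition~\ref{propoExtension} with the remark ``As a consequence of the previous proposition, all the results that we have proven for minimizers of $\mathcal{J}$ also apply to minimizers of $J$,'' and no further proof is given. Your write-up supplies precisely the dictionary and citations (Theorem~\ref{theoCRS}, Remark~\ref{remUniformRules}, Corollary~\ref{coroWeakLimits}, Theorems~\ref{t:mainregularity}, \ref{t:singset}, \ref{FullRegDim2}) that the paper leaves implicit, and your care with the Poisson-kernel scaling for blow-ups and the localization $B'\subset\subset B_2'$ is appropriate.
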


\section{Uniqueness of extensions}

In Proposition \ref{propoExtension} we have used the following result, included in \cite[Proposition 3.1]{CaffarelliRoquejoffreSire}. Here we provide a proof which is different than the one appearing in \cite{CaffarelliRoquejoffreSire}.

\begin{proposition}\label{propoUniquenessDirichlet}
Let $\alpha\in(0,1)$, $\beta=1-2\alpha$, and set $\mathcal{L}u=-\dive(|y|^{\beta}\nabla u)$ in $\mathbb R^{n+1}$. Suppose that $v:\overline{\mathbb R^{n+1}_+}\to\mathbb R$ is nonnegative outside $\mathbb R^n$, it is a solution to $\mathcal{L}v=0$ in $\mathbb R^{n+1}_+$ with $v(x',0)=0$ for all $x'\in\mathbb R^n$ and $|v(x)|\leq C|x|^{\alpha}$. Then $v\equiv 0$.
\end{proposition}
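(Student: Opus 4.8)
The plan is to use a Liouville-type argument for the degenerate operator $\mathcal L = -\dive(|y|^\beta\nabla\cdot)$ combined with the odd reflection across $\{y=0\}$. First, since $v(x',0)=0$ for all $x'\in\R^n$ and $v$ solves $\mathcal L v = 0$ in $\R^{n+1}_+$, I would extend $v$ to all of $\R^{n+1}$ by odd reflection, setting $\widetilde v(x',y) := v(x',y)$ for $y\ge 0$ and $\widetilde v(x',y) := -v(x',-y)$ for $y<0$. The standard fact here (used already in the proof of Lemma~\ref{lemFracLaplacianIn0Phase} via \cite{VitaThesis}) is that, because the Dirichlet data on $\{y=0\}$ vanishes, the odd extension $\widetilde v$ is a weak solution of $\mathcal L\widetilde v=0$ across the whole hyperplane $\R^{n+1}$ — the degenerate weight $|y|^\beta$ is even, so the reflection is compatible with the equation and no jump in the conormal derivative is created precisely because $v=0$ on $\{y=0\}$ makes the relevant trace term disappear.

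Next I would record the growth bound: $|v(x)|\le C|x|^\alpha$ transfers immediately to $|\widetilde v(x)|\le C|x|^\alpha$ on all of $\R^{n+1}$, since $|(x',-y)| = |(x',y)|$. So $\widetilde v$ is a global weak solution of the $A_2$-degenerate elliptic equation $\mathcal L\widetilde v=0$ with sublinear growth of order $\alpha<1$. The conclusion $\widetilde v\equiv 0$ should then follow from a Liouville theorem for such operators. Concretely, I would invoke interior gradient (or, more robustly, interior oscillation/Hölder) estimates for $\mathcal L$-harmonic functions in the Fabes–Kenig–Serapioni theory \cite{FabesKenigSerapioni}: scaling the estimate
$$
\osc_{B_R}\widetilde v \;\le\; C\Big(\frac{R}{2R}\Big)^{\sigma}\osc_{B_{2R}}\widetilde v
$$
for some $\sigma\in(0,1]$, or directly a De~Giorgi–Nash–Moser Harnack-type bound on balls, and then letting $R\to\infty$ using $\osc_{B_{2R}}\widetilde v \le 2CR^\alpha$. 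Since we may always subtract a constant (and here $\widetilde v(0)=0$ anyway), this forces $\osc_{B_R}\widetilde v = o(R^{\sigma})\cdot R^\alpha$-type decay which, if $\sigma>0$ can be chosen independent of $R$, kills all polynomial growth of degree $<\sigma^{-1}$... so the cleanest route is: use the scale-invariant interior Hölder estimate to bound $|\widetilde v(x)-\widetilde v(0)|\le C R^{-\gamma}\sup_{B_{2R}}|\widetilde v|\cdot |x|^\gamma$ on $B_R$ for fixed $|x|$, plug in $\sup_{B_{2R}}|\widetilde v|\le C(2R)^\alpha$, and since $\gamma$ can be taken as large as we like by iterating the estimate on nested balls (or simply because $\alpha<1\le$ any integer and polynomial $\mathcal L$-harmonic functions of degree $\ge 1$ are excluded by a Bernstein/Liouville argument), send $R\to\infty$ to get $\widetilde v(x)=\widetilde v(0)=0$ for every $x$.

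The main obstacle, and the step to be careful about, is the Liouville statement itself for the degenerate operator: one must make sure the relevant Harnack inequality or Hölder estimate from \cite{FabesKenigSerapioni} genuinely applies to $\widetilde v$ across $\{y=0\}$ (it does, since $|y|^\beta\in A_2$ on $\R^{n+1}$ and $\widetilde v$ is a global weak solution there), and that the sublinear growth $\alpha<1$ is strictly below the first nontrivial homogeneity threshold. An alternative, perhaps more transparent, way to finish that avoids invoking a polynomial-Liouville theorem: apply the Caccioppoli inequality (Lemma~\ref{cacciop} applied to $\widetilde v$, which is $\mathcal L$-harmonic everywhere) on $B_R$ to get $\int_{B_{R/2}}|y|^\beta|\nabla\widetilde v|^2 \lesssim R^{-2}\int_{B_R\setminus B_{R/2}}|y|^\beta \widetilde v^2 \lesssim R^{-2}\cdot R^{2\alpha}\cdot R^{n+1+\beta} = R^{n+\beta+2\alpha-1} = R^{n-1+2\alpha-1+2\alpha}$... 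I would recompute the exponent carefully ($\beta = 1-2\alpha$ gives $n+\beta+2\alpha-1 = n$), so this only gives boundedness of the Dirichlet energy on unit balls, not decay; hence one genuinely does need the Hölder/Harnack route rather than a pure energy estimate. I expect the cleanest writeup to cite \cite[Theorem 2.3.12]{FabesKenigSerapioni} for interior Hölder continuity with scale-invariant constants, apply it on $B_R$ to $\widetilde v$, and let $R\to\infty$.
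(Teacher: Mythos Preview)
Your odd reflection step is correct and $\widetilde v$ is indeed a global weak solution of $\mathcal L\widetilde v=0$ on $\R^{n+1}$ with $|\widetilde v(x)|\le C|x|^\alpha$. The gap is in the Liouville step. The Fabes--Kenig--Serapioni theory gives you a scale-invariant interior H\"older estimate with \emph{some} exponent $\gamma\in(0,1)$ depending only on the $A_2$ constant of $|y|^\beta$; it does \emph{not} let you choose $\gamma>\alpha$, and iterating the estimate on nested balls does not improve $\gamma$. Plugging in the growth bound yields only $|\widetilde v(x)|\le CR^{\alpha-\gamma}|x|^\gamma$, which is vacuous when $\gamma\le\alpha$. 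Your parenthetical escape route (``polynomial $\mathcal L$-harmonic functions of degree $\ge 1$ are excluded by a Bernstein/Liouville argument'') is precisely the statement you are trying to prove, so it is circular. And you already observed yourself that the Caccioppoli route gives exponent $n$, hence no decay. In short, a generic $A_2$ Liouville theorem with sublinear growth does not obviously exist, and for this particular weight the borderline homogeneous odd solution is $|y|^{2\alpha}\mathrm{sgn}(y)$, of degree $2\alpha$, so one really needs to exploit that $\alpha<2\alpha$ rather than the unrelated inequality $\gamma>\alpha$.

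The paper avoids this issue entirely by a different mechanism. It does \emph{not} attempt a direct Liouville theorem for $v$; instead it shows that each tangential derivative $\partial_i v$ ($i=1,\dots,n$) is itself a solution of $\mathcal L(\partial_i v)=0$ in $\R^{n+1}_+$ vanishing on $\{y=0\}$, and then combines the interior and boundary Moser estimates with Caccioppoli to get the pointwise bound $|\partial_i v(x)|\le C|x|^{\alpha-1}$. Since $\alpha-1<0$, the maximum principle on half-balls $B_R^+$ (using $\partial_i v=0$ on the flat part and $|\partial_i v|\le CR^{\alpha-1}$ on the spherical part) forces $\partial_i v\equiv 0$ as $R\to\infty$. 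Thus $v=v(y)$ depends only on $y$, and the equation becomes the ODE $(y^\beta v')'=0$, whose only solution with $v(0)=0$ and $|v(y)|\le Cy^\alpha$ is $v\equiv 0$. The key advantage is that differentiating gains a full power of $|x|^{-1}$, turning sublinear growth into genuine decay, after which the maximum principle closes the argument without any appeal to a quantitative H\"older exponent.
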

\begin{proof}
First, since $|y|^{\beta}$ is $C^{\infty}$ away from the hyperplane $\mathbb R^n$, $v\in C^{\infty}_{\loc}(\mathbb R^{n+1}_+)$. Let now $i\in\{1,\dots n\}$, and set
\[
f_m(x)=\frac{v\left(x+\frac{1}{m}e_i\right)-v(x)}{1/m}.
\]
Let $B_r=B_r(x',0)$ be a ball centered at $(x',0)\in\mathbb R^n \times\{0\}$ with radius $r$, and let $B_{2r}$ be its double ball. Set also $w(x)=w(x',y)=y^{\beta}$ for $y>0$. Since $f_m$ is a solution of $\mathcal{L}f_m=0$ in $B_r^+=B\cap\mathbb R^{n+1}_+$, \cite[Theorem 2.4.3]{FabesKenigSerapioni} shows that
\[
\max_{B_r^+}|f_m(x)|\leq C\left(\frac{1}{w(B_{2r}^+)}\int_{B_{2r}^+}|f_m|^2w\right)^{1/2}.
\]
From convergence of difference quotients (similarly to \cite[Theorem 3, page 277]{Evans}), if $v\in H^1(\beta,B_{2r}^+)$, the last estimate will imply that $f_m$ is uniformly bounded in $B_r^+$ by a constant $C_r$. Therefore, from the boundary Caccioppoli estimate  (\cite[(2.4.2)]{FabesKenigSerapioni}) we have that
\[
\int_{ B_{r/2}^+}|\nabla f_m|^2w\leq\frac{C}{r^2}\int_{ B_r^+}|f_m|^2w\leq \frac{C}{r^2}\int_{ B_r^+}C_r^2w\leq C_{n,r,w}<\infty,
\]
hence $\{f_m\}$ is bounded in $H^1(\beta,B_{r/2}^+)$. From weak compactness, a subsequence of $\{f_m\}$ converges to a solution of $\mathcal{L}u=0$ in $B_{r/2}^+$, and since $f_m\to\partial_i v$ pointwise, we obtain that $\partial_iv$ is  an $H^1(\beta,B_{r/2}^+)$ solution in $B_{r/2}^+$. Hence $\partial_iv$ is a solution to $\mathcal{L}u=0$ in $\mathbb R^{n+1}_+$.

Now, for $x=(x',y)\in\mathbb R^{n+1}_+$, let $R=|x|$. We distinguish between two cases: $y>R/16$, and $y<R/16$. 

In the first case, set $B_R$ to be the ball of radius $R$, centered at $x$. Note then that $B_{R/16}\subseteq\mathbb R^{n+1}_+$. Then, from \cite[Theorem 2.3.1]{FabesKenigSerapioni}, Caccioppoli's estimate and the assumption $|v(x)|\leq C|x|^{\alpha}$,
\begin{align*}
|\partial_iv(x)|{^2}&\leq \frac{C}{w(B_{R/32})}\int_{B_{R/32}}|\partial_i v|^2w\leq \frac{C}{w(B_{R/32})}\frac{C}{R^2}\int_{B_{R/16}}|v|^2w\leq \frac{C}{R^2}\frac{w(B_{R/16})}{w(B_{R/32})}\sup_{B_{R/16}}|v|\leq CR^{2\alpha-2}.
\end{align*}
In the second case, let $B_R$ be the ball centered at $(x',0)$ with radius $R$, and denote $B_R^+=B_R\cap\mathbb R^{N+1}_+$. Then $x\in B_{R/8}^+$, therefore from \cite[Theorem 2.4.3]{FabesKenigSerapioni} and the boundary Caccioppoli estimate,
\begin{align*}
|\partial_iv(x)|{^2}&\leq {\frac{C}{w(B_{R/8}^+)}\int_{B_{R/8}^+}|\partial_i v|^2w\leq\frac{C}{w(B_{R/8}^+)}\frac{C}{R^2}\int_{B_{R/4}^+}|v|^2w\leq \frac{C}{R^2}\frac{w(B_{R/4}^+)}{w(B_{R/8}^+)}\sup_{B_{R/4}^+}|v|\leq CR^{2\alpha-2}.}
\end{align*}
So, in all cases, $|\partial_i v(x)|\leq C|x|^{\alpha-1}$. Letting $R\to\infty$ and using the maximum principle, we find that $\partial_iv=0$ for any $i=1,\dots n$. Therefore $v$ does not depend on the first $n$ variables, { so} $v(x',y)=v(y)$. {Hence,} in $\mathbb R^{n+1}_+$,
\[
0=-\dive(y^{\beta} {\nabla} v(y))=-{ \partial_y}(y^{\beta}v'(y))\,\,\Rightarrow\,\, y^{\beta}v'(y)=\tilde{c},
\]
for some constant $\tilde{c}$. From \cite[Theorem 2.4.6]{FabesKenigSerapioni}, $v$ is H{\"o}lder continuous up to the boundary, therefore for any $y>0$,
\[
v(y)=v(y)-v(0)=\int_0^yv'=\int_0^y\tilde{c}s^{-\beta}\,ds=\frac{\tilde{c}}{1-\beta}y^{1-\beta},
\]
which implies that
\[
|\tilde{c}|=(1-\beta)y^{\beta-1}|v(y)|=(1-\beta)y^{\beta-1}|v(0,y)|\leq (1-\beta)y^{\beta-1}y^{\alpha}=(1-\beta)y^{-\alpha},
\]
for any $y>0$. Letting $y\to\infty$ we obtain that $\tilde{c}=0$, hence $v'(y)=0$ as well, which implies that $v$ is a constant. Since $v$ vanishes on $\mathbb R^n$, this implies that $v\equiv 0$.
\end{proof}

\bibliography{Llibres}
\end{document}